\documentclass{amsart}
\usepackage{amssymb}
\usepackage{verbatim}
\usepackage{array}
\usepackage{latexsym}
\usepackage{enumerate}
\usepackage{amsmath}
\usepackage{amsfonts}
\usepackage{amsthm}
\usepackage{color}
\usepackage[english]{babel}
\usepackage{comment}

\newtheorem{theorem}{Theorem}[section]
\newtheorem{proposition}[theorem]{Proposition}
\newtheorem{lemma}[theorem]{Lemma}

\newtheorem{result}[theorem]{Result}

{\theoremstyle{definition}
\newtheorem{remark}[theorem]{Remark}

}

\def\cC{\mathcal C}

\def\cE{\mathcal E}

\def\cH{\mathcal H}

\def\cQ{\mathcal Q}

\def\cV{\mathcal V}

\def\cX{\mathcal X}
\def\cY{\mathcal Y}

\def\K{\mathbb{K}}
\def\PG{{\rm{PG}}}
\def\ord{\mbox{\rm ord}}
\def\deg{\mbox{\rm deg}}

\def\Sym{\mbox{\rm Sym}}

\def\gg{\mathfrak{g}}

\newcommand{\PSL}{\mbox{\rm PSL}}
\newcommand{\PGL}{\mbox{\rm PGL}}
\newcommand{\AGL}{\mbox{\rm AGL}}
\newcommand{\PSU}{\mbox{\rm PSU}}
\newcommand{\PGU}{\mbox{\rm PGU}}
\newcommand{\SU}{\mbox{\rm SU}}
\newcommand{\Sz}{\mbox{\rm Sz}}
\newcommand{\Ree}{\mbox{\rm Ree}}

\newcommand{\aut}{\mbox{\rm Aut}}

\newcommand{\ha}{{\textstyle\frac{1}{2}}}

\newcommand{\thr}{\textstyle\frac{3}{2}}

\newcommand{\bv}{{\bf v}}

\title{Large automorphism groups compared to the $p$-rank of algebraic curves in characteristic $p$  }\thanks{The
research  was supported by
Ministry for Education, University and Research of Italy MIUR (Project
PRIN 2012 "Geometrie di Galois e strutture di incidenza") and by the Italian National
Group for Algebraic and Geometric Structures and their Applications (GNSAGA
- INdAM). }

\author{Massimo Giulietti}
\address{Massimo Giulietti\\              Dipartimento di Matematica e Informatica - Universit\`a degli Studi di Perugia\\ Via Vanvitelli, 1 - 06123 Perugia (Italy)\\
}
               \email{massimo.giulietti@unipg.it}

\author{G\'abor Korchm\'aros}
\address{G\'abor Korchm\'aros\\
 Dipartimento di Scienze di Base e Applicazioni - Universit\`a della Basilicata\\ Contrada Macchia
Romana - 85100 Potenza (Italy)\\
               }
              \email{gabor.korchmaros@unibas.it}
\author{Marco Timpanella}
\address{Marco Timpanella\\              Dipartimento  di Matematica e Informatica- Universit\`a degli Studi di Perugia\\ Via Vanvitelli, 1 - 06123 Perugia (Italy)\\
}
               \email{marco.timpanella@unipg.it}

\date{}

\begin{document}


\begin{abstract}
Let $\cX$ be a (projective, geometrically irreducible, non-singular) algebraic curve of genus $\ge 2$ and positive $p$-rank $\gamma(\cX)$, defined over an algebraically closed field $\mathbb{K}$ of positive characteristic $p>0$. Contrary to what occurs for the genera, no function $h(\gamma)$ exists such that $|\aut(\cX)|\le h(\gamma)$ whenever $\gamma=\gamma(\cX)$. Thus, to have a bound on $|\aut(\cX)|$ only depending on $\gamma(\cX)$, some restrictions on $\cX$ and $\aut(\cX)$ are needed. In this context, the following theorem is proven.  Let $\Gamma$ be a subgroup of $\aut(\cX)$. Assume the existence of a point $P\in \cX$ such that
if $S_P$ is the Sylow $p$-subgroup of $\Gamma_P$ fixing $P$, then the quotient curve $\cX/S_P$ is rational.
Then 
the following $p$-rank analog of the Riemann-Hurwitz bound
\begin{equation*}
|\Gamma|<900 \left(\frac{p}{p-1}\right)^4 \gamma(\cX)^4
\end{equation*}
holds, unless a subgroup of index $\le 2$ of $\Gamma$ fixes $P$. This bound is sharp apart from the constant.
\end{abstract}
\maketitle
\vspace{0.5cm}\noindent {\em Keywords}:
algebraic curve, function field, positive characteristic, automorphism group, $p$-rank.
\vspace{0.2cm}\noindent

\vspace{0.5cm}\noindent {\em Subject classifications}:
\vspace{0.2cm}\noindent  14H37, 14H05.

\section{Introduction}
The classical theory of birational invariants of algebraic curves was developed over time, originating from Riemann's work on complex curves (Riemann surfaces) in the mid-19th century. The most important birational invariants of a (projective, geometrically irreducible, non-singular, algebraic) curve $\cX$ defined over a field $\mathbb{K}$ include its genus $\gg(\cX)$ and the order of its automorphism group $\aut(\cX)$ fixing $\mathbb{K}$ elementwise. The earliest major achievement was the Riemann-Hurwitz formula yielding the upper bound  $84(\gg(\cX)-1)$ on the order of the automorphism group of any complex curve other than rational and elliptic curves. Ever since, comparison of numerical birational invariants of curves has attracted remarkable interest, and the extensive literature also contains contributions that may be viewed as refinements of the Riemann-Hurwitz bound for automorphism groups with special structures, such as cyclic groups and groups whose order is a prime-power; see \cite{BCG,HRA,ZR} and the references therein.

The classical results on birational invariants hold true for curves defined over any field of zero characteristic, whereas the picture changes radically whenever the characteristic is assumed to be positive. A first sign is that the classical Riemann-Hurwitz bound fails. Although the finiteness of the automorphism group of any curve other than the rational and elliptic remains valid, nevertheless the Riemann-Hurwitz type upper bound on $|\aut(\cX)|$ increases to $16\gg(\cX)^4$, with a unique exception of the Hermitian curve of genus $\ha q(q-1)$ whose automorphism group is isomorphic to $PGU(3,q)$ and hence it has order $(q^3+1)q^3(q^2-1)$ where $q$ is any power of the characteristic; see \cite{stichtenoth1973I,stichtenoth1973II}.

As a matter of fact, in positive characteristic, automorphism groups of curves present several different features in the non-tame case which occurs when the curve $\cX$ contains a point $P$ such that the stabilizer  $\Gamma_P$ of $P$ in $\Gamma=\aut(\cX)$ has a non-trivial $p$-subgroup.  A possible cause and explanation of this phenomenon may be the structural difference in $\Gamma_P$, since $\Gamma_P$ is cyclic in zero characteristic while it is the semidirect product of $S_P$ with a cyclic group where $S_P$ is the unique (normal) Sylow $p$-subgroup of $\Gamma_P$ whose structure varies depending on the local properties of the curve. This is apparent in ramification theory and its applications.

In characteristic $p>0$, a third relevant numerical birational invariant is the $p$-rank (equivalently the Hasse-Witt invariant) $\gamma(\cX)$ of a curve $\cX$ where $0\le \gamma(\cX)\le \gg(\cX)$.
From previous work it emerged that curves with large automorphism groups have zero $p$-rank, and this raised the problem of determining a function $h(\gg)$ defined for $\gg\ge 2$ such that if $|G|>h(\gg)$ for a subgroup $G\le \aut(\cX)$ of a curve $\cX$ of genus $\gg=\gg(\cX)$ then $\gamma(\cX)=0$. Such a function exists, as Henn's classification \cite{henn1978} shows that there are only four curves with $|\aut(\cX)|\ge 8\gg(\cX)^3$ and each of them has zero $p$-rank. It seems plausible that the amplitude of $h(\gg)$ may be much smaller, namely $h(\gg)\approx c\gg^2$. So far, this has been proven in odd characteristic: for solvable groups $G$ with $c=126$, and for non-solvable groups $G$ with $c=900$ under the hypothesis that $\gg$ is even; see \cite{gk2}. In both cases, the structure of $\aut(\cX)$ was also determined. For $p=2$, an analogous result is not available yet, although a complete classification of the structure of $\aut(\cX)$ was given \cite{gklondon}.

In this paper, the focus is on curves with positive $p$-rank,
and the subject is the analog of the Riemann-Hurwitz bound on $|\aut(\cX)|$ where the $p$-rank replaces the genus. Actually, such a bound does not always exist; see Example 4,  and, for instance, the family of Fermat curves in \cite{BH}. Nevertheless, some restriction may ensure that this does not happen, as it follows from our main result.

\begin{theorem}
\label{themain18122025}
Let $\cX$ be a (projective, geometrically irreducible, non-singular) algebraic curve of genus $\ge 2$ and positive $p$-rank $\gamma(\cX)$, defined over an algebraically closed field $\mathbb{K}$ of positive characteristic $p>0$. Let $\Gamma$ be a subgroup of $\aut(\cX)$. Assume the existence of a point $P\in \cX$ such that
\begin{itemize}
\item[(*)] if $S_P$ is the Sylow $p$-subgroup of $\Gamma_P$ fixing $P$, then the quotient curve $\cX/S_P$ is rational.
\end{itemize}
 Then 
 the following $p$-rank analog of the Riemann-Hurwitz bound
\begin{equation}
\label{eq18122025}
|\Gamma|<900 \left(\frac{p}{p-1}\right)^4 \gamma(\cX)^4
\end{equation}
holds, unless a subgroup of index $\le 2$ of $\Gamma$ fixes $P$.
\end{theorem}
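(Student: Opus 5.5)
We work with $S=S_P$ and the quotient $\cX/S$, which is rational by (*). The tool is the Deuring--Shafarevich formula for the $p$-group $S$:
\[
\gamma(\cX)-1=|S|\,(\gamma(\cX/S)-1)+\sum_{Q\in\cX}(|S_Q|-1).
\]
Since $\cX/S$ is rational, $\gamma(\cX/S)=0$. The term at $P$ contributes $|S|-1$, which cancels the $-|S|$, so
\[
\gamma(\cX)=\sum_{Q\neq P}(|S_Q|-1).
\]
Because $\gamma(\cX)>0$, $S$ has short orbits other than $\{P\}$. Each nontrivial $S$-orbit $o$ of a point $Q\ne P$ with $|S_Q|>1$ contributes $|o|(|S_Q|-1)=|S|(1-1/|S_Q|)\ge |S|(p-1)/p$. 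This gives the first key inequality
\[
|S|\le \frac{p}{p-1}\,\gamma(\cX).
\]

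Next I would bound the index of $S$ in $\Gamma_P$. We have $\Gamma_P=S\rtimes C$ with $C$ cyclic of order prime to $p$. The group $C$ normalizes $S$, so it acts on the set $\Omega$ of points $Q\neq P$ with nontrivial $S_Q$. The number of $S$-orbits in $\Omega$ is at most $\frac{p}{p-1}\gamma/|S|$, and $C$ permutes these orbits. Then $C$ acts on $\cX/S\cong\PG(1)$ fixing the image $\bar P$ of $P$. Since $C$ is cyclic of order prime to $p$, it fixes exactly one further point and acts semiregularly elsewhere. Hence all but at most two of the images of $\Omega$ lie in $C$-orbits of length $|C|$, and there is at least one such image point. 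This gives
\[
|C|\le \#(\text{$S$-orbits in }\Omega)+O(1)\quad\text{or}\quad \text{all of }\Omega\text{ lies over the second fixed point.}
\]
In the latter case $C$ fixes a second point $\bar R$, and $\Omega$ is a single $S$-orbit over $\bar R$, so the contribution formula forces $|\Omega|$, and hence the stabilizer of that orbit, to be controlled. I expect this to give $|\Gamma_P|\le c\,(\frac{p}{p-1})^2\gamma^2$ in all cases, or more carefully bounds of the shape $|S|\le \frac{p}{p-1}\gamma$ and $|C|\le$ something like $\frac{p}{p-1}\gamma$ up to a small constant such as $30$.

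Finally I would bound the index $[\Gamma:\Gamma_P]=|P^\Gamma|$, which is where the exponent $4$ and the constant $900$ come from. Assume no subgroup of index $\le2$ fixes $P$, so the orbit of $P$ has length $\ge3$. Every point $Q$ in the orbit of $P$ has a stabilizer conjugate to $\Gamma_P$, with Sylow subgroup $S_Q$ of order $|S|$, and $\cX/S_Q$ is rational. I would use the Deuring--Shafarevich formula for $S_Q$ applied to $P$, and the displayed identity for $S_P$ applied to the points $Q\neq P$ in the orbit. The contribution of $P$ to $\gamma$ computed via $S_Q$ is $|S_Q|-|(S_Q)_P|$ summed over its orbit. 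This should bound the number of points $Q$ of the orbit of $P$ lying in $\Omega$ by roughly $\gamma$, with at most $|S|$ points outside. The hard part is the orbit points $Q$ where $S_P\cap S_Q=1$, that is, where $S_P$ acts semiregularly on the orbit of $Q$. For these I would invoke the structure of groups with a $2$-transitive-type action: $\Gamma$ acting on the orbit of $P$ with point stabilizers having normal Sylow $p$-subgroups that are TI. This is the setting of the Hermitian, Suzuki and Ree-type classification (Hering/Kantor--O'Nan--Seitz), and there $|P^\Gamma|$ is at most about $|S|^2$ or $|S|^{2}\cdot$const. Combining $|\Gamma|=|P^\Gamma|\,|S|\,|C|$ with the bounds above would give $|\Gamma|<900(\frac{p}{p-1})^4\gamma^4$.

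The main obstacle I anticipate is this last step: controlling $|P^\Gamma|$ in terms of $|S|$ without $2$-transitivity assumptions, and handling the cases where $C$ is large relative to $|S|$. If the group-theoretic classification route fails, I would try the alternative of applying the Hurwitz genus formula to $\cX\to\cX/\Gamma$ together with Nakajima's bound $\gamma\le 1+\ldots$ for the $p$-subgroups.
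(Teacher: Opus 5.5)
Your first step is correct: it is the paper's bound $|S_P|\le\frac{p}{p-1}\gamma(\cX)$. The dichotomy in your second step is Lemma \ref{leterribileHW}: either $\gamma(\cX)\ge\frac12|\Gamma_P|$, or $S_P$ has a single short orbit $\Delta_0$ besides $\{P\}$.

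\textbf{The gap in step two.} The problem is how you treat the second alternative, which is the case the whole paper revolves around. There Deuring--Shafarevich only yields $\gamma(\cX)=|S_P|-|\Delta_0|$, and all of $\Gamma_P$ stabilizes $\Delta_0$, so nothing local controls the cyclic complement $C$.
\begin{itemize}
\item You assert your bound ``in all cases'', i.e.\ before the orbit-length hypothesis is used. Example 4 in Section \ref{exam} refutes this: for $\Gamma=T\times V$ and $P$ one of the two places centred at $Y_\infty$, one has $\Gamma_P=T\times U$ with $|U|=m$ arbitrarily large, while $\gamma(\cX)=q-1$.
\item The bound also fails under the hypotheses of the theorem. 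In Proposition \ref{prop190923} with $N=1$, the orbit of $P$ has length $q+1\ge 3$, $|S_P|=pq$, $|C|=q^2-1$ and $\gamma(\cX)=q(p-1)$. So $|C|\approx\gamma(\cX)^2/(p-1)^2$ and $|\Gamma_P|\approx p\gamma(\cX)^3/(p-1)^3$.
\end{itemize}
Hence neither $|C|=O(\gamma)$ nor $|\Gamma_P|=O(\gamma^2)$ holds. The exponent $4$ cannot come from multiplying independently bounded factors $|S_P|$, $|C|$, $|P^\Gamma|$. In the extremal family a large $|C|\sim\gamma^2$ is compensated by a short orbit $|P^\Gamma|\sim\gamma$, so any proof must trade $|C|$ against $|P^\Gamma|$ using global information about $\Gamma$.

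\textbf{Why step three does not close it.} The groups $S_Q$, $Q\in P^\Gamma$, are not TI in general.
\begin{itemize}
\item When every point of $P^\Gamma$ is fixed by a nontrivial element of $S_P$ (condition (***)), they all contain the kernel $p$-subgroup $M$ of Proposition \ref{prop280623}.
\item When (***) fails, $S_P\cap S_R\ne 1$ for $R\in\Delta_0$, and the $\Gamma_P$-invariant set $\Delta_0$ obstructs $2$-transitivity.
\end{itemize}
So Hering or Kantor--O'Nan--Seitz cannot be applied to $\Gamma$ acting on $P^\Gamma$. Your estimates ``at most $|S|$ orbit points outside $\Omega$'' and ``$|P^\Gamma|\lesssim|S|^2$'' therefore have no justification.

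\textbf{How the paper proceeds.} It splits on (***).
\begin{itemize}
\item If (***) holds and $S_P$ is transitive on $P^\Gamma\setminus\{P\}$ (otherwise Proposition \ref{pro16072025} already gives a quadratic bound), then $\Gamma$ is $2$-transitive on $P^\Gamma$. The condition $|P^\Gamma|>2$ is exactly the negation of the index-$\le 2$ exception, and it lets the classification in Propositions \ref{pro020222} and \ref{pro11022025} identify $\Gamma/K$, where $K=M\rtimes C_0$ is the kernel. Outside one sporadic case, $|C_0|\le\frac32|P^\Gamma|$ and $\gamma(\cX)=(|P^\Gamma|-1)(|M|-1)$. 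These tie $|\Gamma_P|$ to $|P^\Gamma|$ and lead to Theorem \ref{teo07092023}.
\item If (***) fails, the idea you are missing is the Castelnuovo--Riemann inequality. For $Q\in P^\Gamma$ with $S_P\cap S_Q=1$ one has $\mathbb{K}(\cX)=\mathbb{K}(x,y)$, where $\mathbb{K}(x)$ and $\mathbb{K}(y)$ are the fixed fields of $S_P$ and $S_Q$. Hence $\mathfrak{g}(\cX)\le(|S_P|-1)^2\le(\frac{p}{p-1})^2\gamma(\cX)^2$ (Lemmas \ref{lem14set23} and \ref{lem14dic21}), which turns the problem into a genus problem.
\item Hurwitz-type counting then gives $|P^\Gamma|<3(\mathfrak{g}(\cX)-1)|H_P|$ (Proposition \ref{leterribile24}), which settles the case of small $|H_P|$ (Proposition \ref{pro04052025}).
\item The remaining cases need a long classification-based analysis: the Li--Zhang list of primitive groups with solvable point stabilizers, Hering--Shult for $p=2$, and a check of alternating, Lie-type and sporadic groups for $p>2$.
\end{itemize}
Your fallback via Nakajima's bound cannot replace any of this. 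It bounds $p$-subgroups in terms of $\gamma$ and says nothing about the genus or the $p'$-part of $\Gamma$.
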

Property (*) appears in various investigations of curves of zero $p$-rank, especially in the study of Harbater-Gabber covers, Nottingham groups and big-actions.

Bound (\ref{eq18122025}) is sharp, apart from the constant $900(\frac{p}{p-1})^4$ ; see Proposition \ref{prop190923} for an example. 

The proof of Theorem \ref{themain18122025} is carried out by a careful analysis of the possible actions of $\aut(\cX)$ on the non-tame orbit $\Delta$ containing the point $P$ in (*). Doing so, the Riemann-Hurwitz and the Deuring--Shafarevich formulas and some  results from Function field theory, such as Castelnuovo's bound, are combined with deeper Group theory relying on a number of results which were the milestones in CSG (the classification of finite simple groups), and on more recent results on primitive permutation groups depending on CSG.

As for the Riemann-Hurwitz bound, possible refinements of the bound (\ref{eq18122025}) are worth investigating. In this direction, our main result is stated in the following theorem where $\Delta_0$ stands for the set consisting of all points $Q$ of $\cX$, other than $P$, which are fixed by some non-trivial element of $S_P$. Note that $\Delta_0$ is non-empty. In fact, since $\gamma(\cX)\ge 1$, (*) implies that the cover $\cX|(\cX/S_P)$ is also ramified at some point $Q\neq P$ of $\cX$, and therefore $\Delta_0\ne \emptyset$ by \cite[Lemma 11.129]{HKT}.
\begin{theorem}
\label{themain19122025}
Let $\cX$ be a (projective, geometrically irreducible, non-singular) algebraic curve of genus $\ge 2$ and positive $p$-rank $\gamma(\cX)$, defined over an algebraically closed field $\mathbb{K}$ of positive characteristic $p>0$. Let $\Gamma$ be a subgroup of $\aut(\cX)$. Assume the existence of a point $P\in \cX$ such that both (*) and (***) hold where
\begin{itemize}
\item[(***)] the orbit $\Delta$ of $P$ under the action of $\Gamma$ coincides with $\Delta_0\cup \{P\}$, i.e. $\Delta$ consists of all points of $\cX$ fixed by some non-trivial element of $S_P$.
\end{itemize}
Then 
the following $p$-rank analog of the Riemann-Hurwitz bound
\begin{equation}
\label{eq19122025}
|\Gamma|\le
2\left(\frac{1}{p-1} \gamma(\cX)+1\right)\gamma(\cX)
\end{equation}
holds, unless $S_P$ is transitive on $\Delta\setminus\{P\}$.
\end{theorem}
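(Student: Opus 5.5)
We plan to combine the Deuring--Shafarevich formula for the cover $\cX\to\cX/S_P$ with a count of the orbit $\Delta$. Since $\cX/S_P$ is rational by (*), Deuring--Shafarevich gives
\begin{equation*}
\gamma(\cX)-1=-|S_P|+\sum_{Q\in\cX}\bigl(|S_P|-\ell_Q\bigr),
\end{equation*}
where $\ell_Q$ is the length of the $S_P$-orbit of $Q$. The only points with $\ell_Q<|S_P|$ are $P$ (with $\ell_P=1$) and the points of $\Delta_0$. By (***), $\Delta_0=\Delta\setminus\{P\}$. Hence
\begin{equation*}
\gamma(\cX)-1=-1+\sum_{o}\bigl(|S_P|-|o|\bigr)\cdot\frac{|o|}{|o|}\ \text{summed over $S_P$-orbits }o\subseteq\Delta_0 ,
\end{equation*}
that is, $\gamma(\cX)=\sum_{o}(|S_P|-|o|)$ over the $S_P$-orbits $o$ in $\Delta_0$, counting each orbit once. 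In particular every $S_P$-orbit in $\Delta_0$ is a proper short orbit, so each contributes at least $|S_P|-|S_P|/p=|S_P|(p-1)/p$. If $k$ is the number of $S_P$-orbits in $\Delta_0$, this gives
\begin{equation*}
\gamma(\cX)\ge k\,|S_P|\,\frac{p-1}{p},\qquad\text{so}\qquad |S_P|\le\frac{p}{(p-1)k}\gamma(\cX).
\end{equation*}
Since $|\Delta_0|\le k|S_P|/p$, we also get $|\Delta|-1\le\gamma(\cX)/(p-1)$.

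The next step bounds $|\Gamma|=|\Delta|\,|\Gamma_P|$, with $\Gamma_P=S_P\rtimes C$ and $C$ cyclic of order prime to $p$. We control $C$ through its action on $\Delta_0$. Because $S_P$ is normal in $\Gamma_P$, $C$ normalizes $S_P$ and therefore permutes the $k$ $S_P$-orbits of $\Delta_0$. Let $Q\in\Delta_0$ and let $c\in C$ fix the $S_P$-orbit of $Q$. Then $c$ normalizes $S_P$ and fixes $P$, and the stabilizer of $Q$ in $S_P\langle c\rangle$ contains a $p'$-element conjugate to a power of $c$. This $p'$-element fixes both $P$ and a point $Q'$ in the orbit. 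Using that a tame cyclic group fixing two points acts faithfully on the fiber, and that orbits in $\Delta_0$ have length $\le |S_P|/p$, we aim to show that the subgroup of $C$ stabilising an $S_P$-orbit has order at most $2$ times something bounded by the orbit length structure. Concretely, we expect $|C|\le 2k\cdot(\text{orbit length})$ or $|C|\le 2(|\Delta|-1)$ in the worst case. Together with $|\Delta|\le \gamma/(p-1)+1$ and $|S_P|\le p\gamma/((p-1)k)$, this would give a bound of the shape $|\Gamma_P|\le 2\gamma$, up to adjusting constants. The exceptional case, where $S_P$ is transitive on $\Delta\setminus\{P\}$, is exactly $k=1$; there $C$ can be large relative to the orbit, and the argument breaks down, which matches the statement.

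The bound we aim for is then $|\Gamma|=|\Delta|\,|\Gamma_P|\le 2\bigl(\gamma/(p-1)+1\bigr)\gamma$. The main obstacle is the middle step, namely proving $|\Gamma_P|\le 2\gamma(\cX)$ when $k\ge2$. My first attempt will use the $\Gamma$-transitivity on $\Delta$: $\Gamma_Q$ is conjugate to $\Gamma_P$, so $S_Q$ is conjugate to $S_P$ and $|\Gamma_P|=|\Gamma_Q|$. I will then look at $\Gamma_P\cap\Gamma_Q$, whose $p$-part is $S_P\cap S_Q$ and whose $p'$-part is cyclic, and compare it with the orbit-length counts from Deuring--Shafarevich, using $k\ge 2$ to rule out the one-orbit configuration. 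If that fails, I will instead apply the Hurwitz genus formula to $\cX/\Gamma_P$ together with the bound $\gamma\le\gg$.
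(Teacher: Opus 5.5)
\textbf{There is a genuine gap in the central step.} Your first step is correct and agrees with the paper. Deuring--Shafarevich for $S_P$, with $\cX/S_P$ rational, gives $\gamma(\cX)=\sum_o(|S_P|-|o|)$ over the $k$ orbits of $S_P$ in $\Delta_0$. Hence $\gamma(\cX)\ge k|S_P|(p-1)/p$ and $|\Delta|-1\le\gamma(\cX)/(p-1)$; the latter is the paper's Lemma \ref{leA260223}. Writing $|\Gamma|=|\Delta||\Gamma_P|$ then reduces everything to showing $|\Gamma_P|\le 2\gamma(\cX)$ when $k\ge 2$.

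That step carries the whole content of the theorem, and you do not prove it. The substitutes you sketch do not give it:
\begin{itemize}
\item A bound like $|C|\le 2(|\Delta|-1)$, combined with $|S_P|\le p\gamma(\cX)/((p-1)k)$, only gives $|\Gamma_P|=O(\gamma(\cX)^2)$. That yields a cubic bound on $|\Gamma|$, not a linear bound on $|\Gamma_P|$ ``up to constants''.
\item The fallback through the Hurwitz formula and $\gamma\le\gg$ runs the wrong way. An upper bound in terms of $\gg(\cX)$ cannot be turned into one in terms of the smaller quantity $\gamma(\cX)$.
\end{itemize}

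\textbf{The missing idea} is to use (*) a second time, on $\Gamma_P/S_P\cong C$ acting on the rational curve $\bar\cX=\cX/S_P$.
\begin{enumerate}
\item If $C=1$, your inequality already gives $|\Gamma_P|=|S_P|\le\frac{p}{p-1}\gamma(\cX)$. So assume $C\ne1$.
\item As a nontrivial cyclic $p'$-subgroup of $\PGL(2,\K)$, $C$ fixes exactly two points of $\bar\cX$: $\bar P$ and one other point $\bar R$. Every other $C$-orbit has length $|C|$.
\item Since $C$ normalizes $S_P$, it permutes the $k$ branch points $\bar Q_1,\dots,\bar Q_k$ lying under the $S_P$-orbits in $\Delta_0$.
\item If $k\ge2$, some $\bar Q_j\ne\bar R$. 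Its $C$-orbit, of length $|C|$, lies inside $\{\bar Q_1,\dots,\bar Q_k\}$, so $k\ge|C|$.
\item Substituting into your own inequality gives $\gamma(\cX)\ge\frac{p-1}{p}|S_P||C|\ge\frac12|\Gamma_P|$. Together with $|\Delta|\le\gamma(\cX)/(p-1)+1$, this is the bound.
\end{enumerate}
This is exactly the paper's Lemma \ref{leterribileHW} feeding into Proposition \ref{pro16072025}.

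Your observation that a $p'$-element stabilising an orbit fixes a point of it points in this direction. But the right conclusion is that at most one $S_P$-orbit in $\Delta_0$ has nontrivial stabiliser in $C$, namely the one over $\bar R$. It is not a bound on $|C|$ in terms of orbit lengths.
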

Theorem \ref{themain19122025} shows that if $|\Gamma|>4\gamma(\cX)^2$, then $\Gamma$ acts on $\Delta$ as a doubly transitive permutation group. Therefore, if $K$ is the subgroup of $\Gamma$ fixing $\Delta$ pointwise, then $\Gamma/K$ is either solvable, or one of the cases of Proposition \ref{pro020222} occurs. Moreover, $K=M\rtimes C$ is the semidirect product of the (unique) normal subgroup $M$ and a cyclic group $C$. Another refinement of Theorem \ref{themain18122025} in terms of $K$ is the following theorem.
\begin{theorem}
\label{themain19122025A}
Let $\cX$ be a (projective, geometrically irreducible, non-singular) algebraic curve of genus $\ge 2$ and positive $p$-rank $\gamma(\cX)$, defined over an algebraically closed field $\mathbb{K}$ of positive characteristic $p>0$. Let $\Gamma$ be a subgroup of $\aut(\cX)$. Assume the existence of a point $P\in \cX$ such that (*), (***), (****) and (*****) hold where
\begin{itemize}
\item[(****)] \emph{$C_K(M)= M$, i.e. no element of $K\setminus M$ commutes with every element in $M$}.
\item[(*****)] \emph{$M$ is a minimal normal subgroup of $\Gamma$}.
\end{itemize}
Then 
the following $p$-rank analog of the Riemann-Hurwitz bound
\begin{equation}
\label{eq19122025A}
|\Gamma|\le \frac{4}{p-1}\gamma(\cX)^3
\end{equation}
holds, unless $P$ is fixed by a subgroup of index at most $2$ of $\Gamma$.
\end{theorem}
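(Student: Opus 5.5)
We need a plan for Theorem \ref{themain19122025A}. We fix notation: $\Delta=\Delta_0\cup\{P\}$ is the orbit of $P$, $K$ is the kernel of the action of $\Gamma$ on $\Delta$, and $K=M\rtimes C$.

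First we reduce to the case where $S_P$ is transitive on $\Delta\setminus\{P\}$. Otherwise Theorem \ref{themain19122025} already gives $|\Gamma|\le 2(\gamma/(p-1)+1)\gamma$. This is at most $\frac{4}{p-1}\gamma^3$ once $\gamma\ge 2$, with $\gamma=1$ handled directly. So $\Gamma$ acts $2$-transitively on $\Delta$. Write $n=|\Delta|$, so $n-1=|S_P:(S_P)_Q|$ is a power of $p$.

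The basic counting tool is the Deuring--Shafarevich formula applied to $S_P$. Since $\cX/S_P$ is rational and the only short orbits of $S_P$ are $\{P\}$ and $\Delta\setminus\{P\}$, we get
\[
\gamma-1=-|S_P|+(|S_P|-1)+\bigl(|S_P|-|S_P|/|S_Q\cap S_P|\bigr)\ \text{(up to the exact orbit count)},
\]
so that $\gamma-1=|S_P|-|S_P|/|(S_P)_Q|-1$, i.e.\ $\gamma=(n-1)(|(S_P)_Q|-1)$. In particular $n-1\le \gamma$ and $|(S_P)_Q|\le \gamma+1$.

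Next we bound $|\Gamma|=|K|\cdot|\Gamma/K|$. For $K$ we use (****) and (*****). Since $K$ fixes $P$, it lies in $\Gamma_P=S_P\rtimes C_0$, so $M=K\cap S_P$ is a $p$-group contained in $(S_P)_Q$. Then $|M|\le|(S_P)_Q|\le \gamma/(n-1)+1$. Condition (****) forces $C$ to act faithfully on $M$. Condition (*****) makes $M$ elementary abelian, so $|C|$ divides the order of an element of $GL(M)$ normalizing a tame cyclic structure. Since $C$ fixes $P$ and is tame, it embeds in the multiplicative group of a field of order $|M|$, giving $|C|\le |M|-1$ and hence $|K|<|M|^2$.

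For $\Gamma/K$ we apply the classification of $2$-transitive groups in which the one-point stabiliser has a normal $p$-subgroup regular on the remaining points. Such groups are either affine-like with $|\Gamma/K|\le n(n-1)^2$, or one of the rank-one Lie type cases of Proposition \ref{pro020222}: $\PSL(2,q)$, $\PSU(3,q)$, $\Sz(q)$, $\Ree(q)$, extended by outer automorphisms. In each such case $|\Gamma/K|\le 2n^{3}$ or less in terms of $n-1=q,q^3,q^2,q^3$. The index-$2$ exception arises when $n=2$, i.e.\ $\Delta\setminus\{P\}$ is a single point.

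Combining the bounds gives $|\Gamma|\le |M|^2\cdot c(n-1)^3$. Using $(n-1)(|M|-1)\le\gamma$ and $|M|\ge p$, so that $|M|\le \frac{p}{p-1}(|M|-1)$, we aim to get $\frac{4}{p-1}\gamma^3$.

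The main obstacle is this final optimisation, in particular in the $\PSL(2,q)$ and affine cases where $(n-1)^3$ is large relative to $|M|$. There we must sharpen the estimate on $|M|\,|C|$, for example by showing $M\cong$ a subgroup with $|C|$ dividing $\gcd$ data forced by the action of $\Gamma_P$ on $M$, or by exploiting that $S_P/M$ acts regularly on $\Delta\setminus\{P\}$. My first attempt would be to bound $|\Gamma_P|\le |S_P|\,|C_0|$ with $|C_0|\le |S_P|/|M|$-type estimates, and then use $|\Gamma|=n|\Gamma_P|$.
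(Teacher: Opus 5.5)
Your route differs from the paper's and can be completed, but as written it does not close, and the obstruction is not where you locate it. With the bounds you actually state, the final chain fails by a constant factor for small $p$. For $p=2$, from $|\Gamma/K|\le 2n^3$, $|K|<|M|^2$, $|M|\le 2(|M|-1)$ and $(n-1)(|M|-1)\le\gamma(\cX)$ you only get $|\Gamma|<8\bigl(\frac{n}{n-1}\bigr)^3\gamma(\cX)^3$. Even keeping $|K|\le |M|(|M|-1)$ you get $4\bigl(\frac{n}{n-1}\bigr)^3\gamma(\cX)^3$, still above the target.

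The remedies you propose (sharpening $|M|\,|C|$, bounding $|C_0|$) are not what is needed; the slack is in $|\Gamma/K|$. By Proposition \ref{prop280623}(III), $\Gamma/K$ is one of the explicit groups of Proposition \ref{pro11022025}. Field-automorphism extensions are already excluded there, and this matters, since ${\rm P\Gamma L}(2,q)$ would defeat any bound of the form $c(n-1)^3$. In every case $|\Gamma/K|\le (n-1)^3$, because $q(q^2-1)\le q^3$, $q^3(q^3+1)(q^2-1)\le q^9$, $q^2(q^2+1)(q-1)\le q^6$, $q^3(q^3+1)(q-1)\le q^9$, $1512\le 27^3$, $2^r(2^r-1)r\le (2^r-1)^3$ for $r\ge 2$, $u(u-1)\le (u-1)^3$ for $u\ge 3$, and $144,72,216\le 8^3$. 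Combine this with $|K|\le |M|(|M|-1)$, with $\gamma(\cX)=(n-1)(|(S_P)_Q|-1)\ge (n-1)(|M|-1)$, with $|M|\ge p$ (as $M\ne 1$ by (*****)), and with the fact that $x/(x-1)^2$ decreases on $(1,\infty)$. This gives
\[
|\Gamma|\le (n-1)^3|M|(|M|-1)\le \frac{|M|}{(|M|-1)^2}\,\gamma(\cX)^3\le \frac{p}{(p-1)^2}\,\gamma(\cX)^3\le \frac{2}{p-1}\,\gamma(\cX)^3,
\]
which is even better than (\ref{eq19122025A}).

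Two smaller repairs are also needed. First, your justification of $|C|\le |M|-1$ is wrong, although the inequality is true. A cyclic $p'$-subgroup of $GL(h,p)$ need not embed in $\mathbb{F}_{p^h}^*$ (e.g.\ $C_3\le GL(3,2)$), and the fact that $C$ fixes $P$ plays no role. Argue instead as follows. By (****), $C$ acts faithfully on $M\cong\mathbb{F}_p^h$. By Maschke, $M$ is a sum of irreducible $C$-modules of dimensions $d_i$ with $\sum d_i=h$, and on each of them a generator of $C$ has order dividing $p^{d_i}-1$. Hence $|C|\le \mathrm{lcm}_i(p^{d_i}-1)\le\prod_i(p^{d_i}-1)\le p^h-1$.

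Second, in the reduction, $2\bigl(\frac{\gamma}{p-1}+1\bigr)\gamma\le \frac{4}{p-1}\gamma^3$ is equivalent to $\gamma+p-1\le 2\gamma^2$. This follows from $\gamma(\cX)\ge p-1$ (Lemma \ref{leA260223}), not from $\gamma\ge 2$ alone.

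Once completed, your argument is a uniform few-line estimate that uses only the faithful action of $C$ on a non-trivial elementary abelian $M$. The paper instead splits according to whether $C_\Gamma(M)$ exceeds $M$ (Propositions \ref{pro0302} and \ref{pro0302bis}). It uses Schur multipliers, the decomposition $\Gamma=G'\times K$, semiregularity of $C$ on $M\setminus\{1\}$ and Result \ref{res05022025}, and handles the solvable cases separately (Proposition \ref{pro10072025}, Lemma \ref{lem13022025}). That longer route yields structural information on $\Gamma$ and sharper exponents in several subcases, which your estimate does not.
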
 
Bound (\ref{eq19122025A}) is sharp, apart from the constant $\frac{4}{p-1}$; see Proposition \ref{prop150723} for an example. 

The paper is organized as follows. Sections \ref{prAG}, \ref{bgGTNT} fix notation and quote previous results from Algebraic geometry, Function field theory and Group theory. Sections \ref{prag} and \ref{gr5} establish preliminary results used in the sequel. Section \ref{pr***} provides the proofs of Theorems \ref{themain19122025} and \ref{themain19122025A}. It also contains a proof of Theorem \ref{themain18122025} under the hypothesis (***); to deal with the general case, however, much more is needed and the proof is carried out in Section \ref{pro***no}. Finally, Section \ref{exam} gives some examples.

In more details, Section \ref{prAG} gives an overview of the known Riemann-Hurwitz type bounds. Section \ref{bgGTNT} recalls several previous results from Group theory which play a role in the paper, and it also contains the proofs of some  apparently unquoted results from Group theory and Number theory. We begin to get to the heart of the paper in Section \ref{prag}: Lemma \ref{terribile042025} is a new refinement of the Riemann-Hurwitz bound in the spirit of \cite[Lemma 4.1]{gk2} while Propositions \ref{pro020222} and \ref{pro11022025} 
classify  subgroups of $\aut(\cX)$ which are doubly transitive permutation groups on a non-tame orbit.

From Section \ref{gr5} on, both hypotheses (*) and (**) are assumed where
\begin{itemize}
\item[(**)] \emph{the $p$-rank $\gamma(\cX)$ of $\cX$ is positive.}
\end{itemize}
It may be observed that (*) together with (***) imply (**).
Lemma \ref{leterribileHW} establishes that either $\gamma(\cX)\ge \ha |\Gamma_P|$ holds, or $S_P$ has just one short-orbit other than $\{P\}$. For the proofs of our theorems, the latter case can be assumed, and then $\ha |S_P|\le \gamma(\cX)\le |S_P|-1$ by Lemma \ref{lem27092025A}.
In the proofs, the Riemann-Hurwitz and Deuring-Shafarevic formulas have a prominent role. Nevertheless, another ingredient from Function field theory comes to play in Section \ref{gr5}, namely the Castelnuovo bound, which together with Galois theory allow us to prove Lemma \ref{lem14set23}. This lemma is used in several proofs making us understand why hypothesis (***) is so crucial in our group-theory based approach. In fact, let $\mathbb{K}(x)$ be the fixed field of $S_P$. Choose another  point $Q\in \cX$ from the $\Gamma$-orbit $\Delta$ of $P$, and let $\mathbb{K}(y)$ be the fixed field of $S_Q$. Then
$\mathbb{K}(x,y)$ is the fixed field of $N=S_P \cap S_Q$, and Castelnuovo's bound shows $$\bar{\mathfrak{g}}\leq \left(\frac{|S_P|}{|N|}-1 \right)^2$$
where  $\bar{\mathfrak{g}}$ denotes the genus of $\mathbb{K}(x,y)$; see Lemma \ref{lem14dic21}.
Thus, if (***) fails, then there exists $Q$ such that $N$ is trivial, whence
\begin{equation}
\label{eq21122025}
\gg(\cX)\le (|S_P|-1)^2<4\gamma(\cX)^2.
\end{equation} In this case, since $|\Gamma|<8\gg(\cX)^3$ by (**), the bound $|\Gamma|<512 \gamma(\cX)^6$ is obtained which shows the existence of a function $h(\gamma)$ but it does not provide yet a proof for Theorem \ref{themain18122025} for the special case where (***) is false.

In Section \ref{pr***}, hypothesis (***) is assumed.

First the proof of Theorem \ref{themain19122025} is given. This theorem makes it possible to suppose in the proofs of Theorems \ref{themain18122025} and \ref{themain19122025A} that $S_P$ acts on $\Delta_0$ as a transitive permutation group. Then $\Gamma$ is doubly transitive on $\Delta$ and the properties of $\Gamma$, collected in Proposition \ref{prop280623}, are stringent enough. In fact, with the use of these properties, and some technical tools, one gets to Theorem \ref{themain18122025} through Propositions \ref{pro0302}, \ref{pro0302bis} and Theorem \ref{teo07092023}. Finally,  Example 4 shows that hypothesis $|\Delta|>2$ cannot be dropped in Theorem \ref{themain18122025}.

In Section \ref{pro***no}, the case where hypothesis (***) fails is worked out. Unfortunately, bound (\ref{eq21122025}) combined with previous results are not sufficient to get directly to a proof for Theorem \ref{themain18122025} except in a few special cases described in Proposition \ref{prop290723}. To overcome the impasse, a careful analysis of the possible actions of $\Gamma$ on $\Delta$ is needed which is carried out in several steps .

The first one consists of proving Theorem \ref{themain18122025} when either $\Gamma$ has a unique short orbit, or it has no tame-orbit, or $S_P$ and $\Gamma_P$ have at least two short orbits other than $\{P\}$, or the action of $\Gamma$ on $\Delta$ is not faithful; see Propositions \ref{pro24072024}, \ref{prop290723} and Lemmas \ref{lem14012025}, \ref{lem27092025}. For the rest of Section \ref{pro***no}, all these cases are dismissed, and $S_P$ is assumed to be a Sylow $p$-subgroup of $\Gamma$ by Lemma \ref{lem27092025A}.

The second step, motivated by the fact that $|\Gamma|=|\Gamma_P||\Delta|$, aims at obtaining a linear upper bound on the size of $\Delta$ in terms of $\gg(\cX)$ up to a factor depending only on the order of $H_P$ where $\Gamma_P=S_P\rtimes H_P$ and $H_P$ is a cyclic group of prime to $p$ order. Such a bound,  established in Proposition \ref{leterribile24},  states, apart from two sporadic cases, that 
\begin{equation}
\label{eq21122025B}|\Delta|<3(\gg(\cX)-1)|H_P|
\end{equation}
The proof of Proposition \ref{leterribile24} is a bit long but elementary and also uses some ideas from previous papers \cite{nakajima1987,liat}; see Proposition \ref{leterribile24}. The usefulness of (\ref{eq21122025B}) becomes apparent in the successive steps. In particular, (\ref{eq21122025B}) together with (\ref{eq21122025}) and one of the properties from Proposition \ref{prop280623}, namely $\gamma(\cX)=|S_P|-(|\Delta|-1)$, provide a proof of Theorem \ref{themain18122025} when $|H_P|$ is small, that is, $|H_P|\le 8$, apart from two sporadic cases. Bound (\ref{eq21122025B}) is also a key ingredient in the proof of Proposition \ref{prop03082024} which shows some more features of the action of $\Gamma$. In particular, if $\Gamma$ is imprimitive on $\Delta$, which is the case where Permutation group theory may not work well, Proposition \ref{prop03082024} shows  a useful combinatorial property of the action of $\Gamma$ on $\Delta$, namely  either $\Delta_0\cup\{P\}$ is a block, or it is contained in any block through $P$.

After some preliminary results on the structure of $S_P$; see Propositions \ref{pro96072024A}, \ref{pro06072024}, \ref{pro10072024} and \ref{pro10072024A}, the third step is to settle the case where $\Gamma$ is a primitive permutation group on $\Delta$. Theorem \ref{the03072024} and Proposition \ref{pro16082024} show that $\Gamma$ may be assumed to have a simple non-abelian normal subgroup $N$ containing $S_P$ that is still primitive on $\Delta$. As $N$ has trivial centralizer in $\Gamma$, it turns out that the group $\Gamma$ may be supposed to be almost simple with socle $N$; see Theorem \ref{th06072024}. For $p=2$, this together with the Hering-Shult theorem, stated as Result \ref{HeringShult}, yields that $N$ is double transitive on $\Delta$ whence $\Delta=\Delta_0\cup \{P\}$ follows; in particular (***) does not fail. For $p>2$, we rely on the classification \cite{LiZhang} of the pairs $(G,H)$ where $G$ is a simple primitive permutation group with solvable $1$-point stabilizer $H$. A case by case analysis, supported by some technical results established in Proposition \ref{pro08072024}, leaves only a few sporadic possibilities for $\Gamma$ with $p=3$; see Proposition \ref{pro14072024}. However, a direct computation shows that none of them provides a counterexample. Therefore, Theorem \ref{th08072024} holds and Theorem \ref{themain18122025} follows whenever (***) fails and $\Gamma$ is primitive on $\Delta$.

As we have already mentioned, the investigation of the case where $\Gamma$ is imprimitive on $\Delta$ appears to be much more difficult and it requires a dynamic interaction between  Group theory, Function field theory and Combinatorics. This is apparent in the quite long proof of Lemma \ref{le11040205} which allows us to 
assume that $\Gamma$ is a non-abelian simple group and that Case (iia) in Proposition \ref{prop03082024} occurs, i.e. $\Lambda_1=\Delta_0 \cup \{P\}$ is a block of $\Gamma$ in its action on $\Delta$. A consequence of Lemma \ref{le11040205} is that the subgroup $G$ of $\Gamma$ preserving $\Lambda_1$ is doubly transitive on $\Lambda_1$, and hence $\cX$ with $G$ satisfy (*) and (***) so that the claims in Proposition \ref{prop280623} hold when referred to $G$ and $\Lambda_1$ in place of $\Gamma$ and $\Delta$, respectively.

For $p=2$, Lemma \ref{le11040205}  together with  Result \ref{heringshu}, the Hering-Shult theorem from Group theory,  and  Result \ref{le11.77},  Serre's theorem from Ramification theory,  provide a proof of Theorem \ref{teo03042025} from which Theorem \ref{themain18122025} follows. 

Unfortunately, the approach  in the proof of Theorem \ref{teo03042025} cannot be used when $p>2$ as Result \ref{heringshu}  may fail if the normal subgroup $Q$ has odd order. Also, the
several properties collected in  Proposition \ref{prop280623} are only sufficient to settle the case where $G$ is solvable; see Proposition \ref{pro09102025}. Therefore, we go back to Lemma \ref{le11040205} and use the classification of simple groups to find the possibilities for $\Gamma$. This is done in the proofs of Propositions \ref{pro06052025}, \ref{pro19042025B}, \ref{pro19042025A} and \ref{pro19042025C}. As a corollary, Theorem \ref{themain18122025} follows.

Section \ref{exam} exhibits curves that hit the bounds in Theorems \ref{themain18122025} and \ref{themain19122025A}. We also point out in Proposition \ref{prop190923} that a large automorphism group $\aut(\cX)$ with respect to (\ref{eq18122025}), say $|\aut(\cX)|\approx c \gamma(\cX)^4$, may not be large compared to the genus as being $|\aut(\cX)|\approx c \gg(\cX)$.

\section{Background and basic results from Algebraic Geometry}
\label{prAG}
In this paper, $\cX$ stands for a (projective, geometrically irreducible, non-singular) algebraic curve of genus $\gg(\cX)\ge 2$ defined over an algebraically closed field $\mathbb{K}$ of positive characteristic $p>0$. Furthermore, $\aut(\cX)$ denotes the automorphism group of $\cX$ fixing $\mathbb{K}$ elementwise.

For a subgroup $G$ of $\aut(\cX)$, let $\bar \cX$ denote a non-singular model of $\K(\cX)^G$, that is,
an algebraic
curve with function field $\K(\cX)^G$, where $\K(\cX)^G$ consists of all elements of $\K(\cX)$
fixed by every element in $G$. As customary, $\bar \cX$ is called the
quotient curve of $\cX$ by $G$ and denoted by $\cX/G$. The field extension $\K(\cX)|\K(\cX)^G$ is  Galois of degree $|G|$.

Since our approach is mostly group theoretical, we prefer to use terminology from Group theory as in \cite{HKT,gktrans,gklondon,gk2}. For Algebraic geometry and Function fields, our reference are \cite{HKT,stichtenoth1993}.

Let $\Phi$ be the Galois cover of $\cX\to \bar{\cX}$ where $\bar{\cX}=\cX/G$ is a quotient curve of $\cX$ with respect to $G$.
 A point $P\in\cX$ is a ramification point of $G$ if the stabilizer $G_P$ of $P$ in $G$ is non-trivial; the ramification index $e_P$ is $|G_P|$; a point $\bar{Q}\in\bar{\cX}$ is a branch point of $G$ if there is a ramification point $P\in \cX$ such that $\Phi(P)=\bar{Q}$; the ramification (branch) locus of $G$ is the set of all ramification (branch) points. The $G$-orbit of $P\in \cX$ is the subset of $\cX$
$o=\{R\mid R=g(P),\, g\in G\}$, and it is {\em long} if $|o|=|G|$, otherwise $o$ is {\em short}. For a point $\bar{Q}$, the $G$-orbit $o$ lying over $\bar{Q}$ consists of all points $P\in\cX$ such that $\Phi(P)=\bar{Q}$. For $P\in o$, the Orbit theorem states that $|o|=|G|/|G_P|$; hence $\bar{\cQ}$ is a branch point if and only if $o$ is a short $G$-orbit. It may be that $G$ has no short orbits. This is the case if and only if every non-trivial element in $G$ is fixed--point-free on $\cX$, that is, the cover $\Phi$ is unramified. On the other hand, $G$ has a finite number of short orbits. For a non-negative integer $i$, the $i$-th ramification group of $\cX$
at $P$ is denoted by $G_P^{(i)}$ (or $G_i(P)$ as in \cite[Chapter
IV]{serre1979})  and defined to be
$$G_P^{(i)}=\{g\mid \ord_P(g(t)-t)\geq i+1, g\in
G_P\}, $$ where $t$ is a uniformizing element (local parameter) at
$P$. Here $G_P^{(0)}=G_P$.
The structure of $G_P$ is well known; see for instance \cite[Chapter IV, Corollary 4]{serre1979} or \cite[Theorem 11.49]{HKT}:
\begin{result}
\label{res74} The stabilizer $G_P$ of a point $P\in \cX$ in $G$ has the following properties.
\begin{itemize}
\item[\rm(i)] $G_P^{(1)}$ is the unique Sylow $p$-subgroup of $G_P$;
\item[\rm(ii)] For $i\ge 1$, $G_P^{(i)}$ is a normal subgroup of $G_P$ and the quotient group $G_P^{(i)}/G_P^{(i+1)}$ is an elementary abelian $p$-group.
\item[\rm(iii)] $G_P=G_P^{(1)}\rtimes U$ where the complement $U$ is a cyclic group whose order is prime to $p$.
\end{itemize}
\end{result}
The Hurwitz genus formula  is the following equation
    \begin{equation}
    \label{eq1}
2\gg(\cX)-2=|G|(2\gg(\bar{\cX})-2)+\sum_{P\in \cX} d_P.
    \end{equation}
    where
\begin{equation}
\label{eq1bis}
d_P= \sum_{i\geq 0}(|G_P^{(i)}|-1).
\end{equation}
Here $D(\cX|\bar{\cX})=\sum_{P\in\cX}d_P$ is the Hilbert {\emph{different}}. For a tame subgroup $G$ of $\aut(\cX)$, that is for $p\nmid |G_P|$,
$$\sum_{P\in \cX} d_P=\sum_{i=1}^m (|G|-\ell_i)$$
where $\ell_1,\ldots,\ell_m$ are the sizes of the short orbits of $G$.

A subgroup of $\aut(\cX)$ is a $p'$-group (or a prime to $p$ group) if its order is prime to $p$. A subgroup $G$ of $\aut(\cX)$ is {\em{tame}} if the $1$-point stabilizer of any point in $G$ is a $p'$-group. Otherwise, $G$ is {\em{non-tame}} (or {\em{wild}}). Obviously, every $p'$-subgroup of $\aut(\cX)$ is tame, but the converse is not always true. From the classical Hurwitz's bound,
if $|G|>84(\gg(\cX)-1)$ then $G$ is non-tame; see  \cite{stichtenoth1973II} or \cite[Theorems 11.56]{HKT}.
An orbit $o$ of $G$ is {\em{tame}} if $G_P$ is a $p'$-group for $P\in o$, otherwise $o$ is a {\em{non-tame orbit}} of $G$. The following result due to Stichtenoth is on the number of short orbits of large automorphism groups \cite{stichtenoth1973II}; see also Theorems 11.56 and 11.116 in  \cite{HKT}.
\begin{result}
\label{res56.116}  If $|G|>4(\gg(\cX)-1)$ then the quotient curve $\cX/G$ is rational. If $p>2$ and $G$ has odd order, then $|G|\le 15 (\gg(\cX)-1)$. If the order of $G$ exceeds $84(\gg(\cX)-1)$, then $G$ has either
\begin{itemize}
\item[\rm(a)] exactly three short orbits$,$ two tame and
one non-tame$,$ and $|G|< 24\gg(\cX)^2$, or
\item[\rm(b)] exactly two short orbits$,$ both non-tame$,$ and  $|G|<16 \gg(\cX)^2$, or
\item[\rm(c)] only one short orbit which is non-tame, or
\item[\rm(d)] exactly two short orbits$,$ one tame and one non-tame.
\end{itemize}
\end{result}
The $p$-rank of $\cX$ is  defined to be the rank of the (elementary abelian) group of the $p$-torsion points in the Jacobian variety of $\cX$, and it coincides with the Hasse-Witt invariant of $\cX$. The $p$-rank of $\cX$ is denoted by $\gamma(\cX)$.  Then $0\le \gamma(\cX)\le \gg(\cX)$, and $\cX$ is an \emph{ordinary curve} if $\gamma(\cX)=\gg(\cX)$.
Let $\bar{\cX}=\cX/S$ for a $p$-subgroup $S$ of $\aut(\cX)$. The Deuring-Shafarevich formula, see \cite{sullivan1975} or \cite[Theorem 11.62]{HKT}, states that

\begin{equation}
    \label{eq2deuring}
\gamma(\cX)-1={|S|}(\gamma({\bar{\cX}})-1)+\sum_{i=1}^k (|S|-\ell_i)
    \end{equation}
where $\ell_1,\ldots,\ell_k$ are the sizes of the short orbits of $S$.

The Stichtenoth bound, see \cite{stichtenoth1973I,stichtenoth1973II}, is $|\aut(\cX)|\le 16\mathfrak{g}(\cX)^4$ valid for every curve $\cX$ of genus $\mathfrak{g}(\cX)\ge 2$, with a unique exception, namely
the Hermitian curve of genus $\ha q(q-1)$ whose automorphism group has order $(q^3+1)q^3(q^2-1)$ where $q$ is any power of the characteristic. An improvement on the Stichtenoth bound was given by Henn \cite{henn1978}; see also \cite[Theorem 11.127]{HKT} who was able to determine the curves $\cX$ with $|\aut(\cX)|\geq 8\gg(\cX)^3$. They are listed below, up
to birational equivalence over $\K$:
\begin{enumerate}
    \item[\rm(i)] The hyperelliptic curve $\bv(Y^2+Y +X^{2^k+1})$ with
$p=2$; $\gg=2^{k-1},\,k\geq 2$; $|\aut(\cX)|=2^{2k+1} (2^k+1)$,
$\aut(\cX)$ fixes a point $P\in\cX$.

    \item[\rm(ii)] The  Roquette curve  $\bv(Y^2
-(X^n-X))$ with $p>2$, $n=p^k>3$, $\gg=\ha(n-1)$;
$|\aut(\cX)|=2(n^3-n)$. $\aut(\cX)/M\cong \PGL(2,n)$, $|M|$=2.
    \item[\rm(iii)] The Hermitian curve  $\bv(Y^n+Y - X^{n+1})$
with $n=p^k\geq 3$; $\gg=\ha(n^2-n)$;  $\aut(\cX)\cong \PGU(3,n)$.

    \item[\rm(iv)] The {\rm{DLS}} curve (the Deligne-Lusztig curve arising from the Suzuki group) $\bv(X^{n_0}(X^n+X) -
(Y^n+Y))$ with $p=2,\,n_0=2^r, r\geq 1, n=2n_0^2,$ $\gg=n_0(n-1);$
$\aut(\cX)\cong \Sz(n)$ where $\Sz(n)$ is the Suzuki group.
\end{enumerate}
Another relevant examples in this direction are the following.
\begin{enumerate}
    \item[\rm(v)] The {\rm{DLR}} curve (the Deligne-Lusztig curve arising from the Ree group)
        $\bv(Y^{n^2}-[1+(X^n-X)^{n-1}]Y^n+(X^n-X)^{n-1}Y-X^n(X^n-X)^{n+3n_0}),$
with $p=3,\,n_0=3^r,\ r\geq 0,\,n=3n_0^2$; $\gg=\thr
n_0(n-1)(n+n_0+1)$;
$\aut(\cX)\cong \Ree(n)$ where $\Ree(n)$ is the Ree group.
\item[\rm(vi)] The GK curve $\bv(Y^{n^3+1}+(X^n+X)(\sum_{i=0}^n
(-1)^{i+1}X^{i(n-1)})^{n+1})$ with $n=p^k,$
$\gg=\ha\,(n^3+1)(n^2-2)+1;$ $\aut(\cX)$ has order $(n^3+1)n^3(n^2-1)(n^2-n+1)$ and contains a subgroup isomorphic to $\SU(3,n)$.
\end{enumerate}
Here, a corollary of Henn's result is stated.
\begin{result}
\label{henn} Let  $\cX$ be a curve of genus $\mathfrak{g}(\cX)\ge 2$, and $G$ a subgroup of $\aut(\cX)$. If $|G|\ge 8\mathfrak{g}^3(\cX)$,  then $\cX$ has zero $p$-rank. This holds true for the curves in (v) and (vi).
\end{result}
Nakajima \cite{nakajima1987} proved that if $G_P^{(2)}$ is trivial for every $P\in \cX$, then $|G|\le 84\mathfrak{g}(\cX)(\mathfrak{g}(\cX)-1)$. Here a recent improvement is quoted; see \cite{liat}.
\begin{result}
\label{LT} If $|G_P^{(2)}|=1$ for every $P\in\cX$, (in particular, if $\mathfrak{g}(\cX)=\gamma(\cX)$), then $|G|\le 48(\mathfrak{g}(\cX)-1)^2$.
\end{result}

For automorphism groups with special structures or actions on $\cX$, the classical Hurwitz bound can be improved; see for instance \cite[Theorems 11.108, 11.79, 11.60]{HKT}, and \cite{DiGi,KMo,KLT}.
\begin{result}
\label{res60.79.108} Let $\mathfrak{g}(\cX)\ge 2$.
\begin{enumerate}
\item[\rm(i)] If $G$ is abelian then $|G|\leq 4\gg(\cX)+4$.
\item[\rm(ii)] If $G$ fixes a point $P$ of $\cX$ and its order is prime to $p$ then $|G|\leq 4\gg(\cX)+2$.
\end{enumerate}
\end{result}
\begin{result}
\label{resratcurve} If $\cX$ is a rational curve, then  $\aut(\cX)\cong PGL(2,\mathbb{K})$.
\end{result}
The automorphism groups of elliptic and hyperelliptic curves are well known, see \cite{silverman2009}, \cite[Theorem 11.94]{HKT} and \cite[Theorem 11.98]{HKT}.
For the following results concerning the $1$-point stabilizer of the automorphism group of an elliptic curve; see \cite[Theorem 10.1]{silverman2009}, \cite[Theorem 11.94]{HKT}, \cite[Examples 11.96 and 11.97]{HKT}.
Here we add that if $p=2$ and $\cE$ is not supersingular then the Sylow $2$-subgroup of $\aut(\cE)_P$ has order $2$. In fact, if $S_2$ was a subgroup of order $4$ in $\aut(\cE)_P$, then the Deuring-Shafarevich formula applied to $S_2$ would yield $0=\gamma(\cE)-1=4(-1)+4-1+d$ with an even integer $d$, a contradiction.

\begin{result}
\label{silv} Let $\cE$ be an elliptic curve over $\K$. For any point $P$ of $\cE$, one of the following holds.  
\begin{itemize}
 \item $\aut(\cE)_P\cong C_2,C_3,C_4,C_6$;
 \item $\aut(\cE)_P\cong C_3\rtimes C_4$, $p=3$ and $j(\cE)=0$;
 \item $\aut(\cE)_P\cong {\rm{SL}}(2,3)$, $p=2$ and $j(\cE)=0$.
\end{itemize}
\end{result}
\begin{result}
\label{res94} Let $G$ be an automorphism group of an elliptic curve $\cE$ over $\K$. If $P$ is any point of $\cE$ and $G_P$ is abelian then $G_P$ is cyclic and $|G_P|\in \{1,2,4,6\}$.
\end{result}
\begin{proof} From Result \ref{silv}, the order of $G_P$ divides $6$ unless either $p=2$ and $|G_P|=24$, or $p=3$ and $|G_P|=12$. In the exceptional cases, $G_P$ is not abelian. In fact, for $p=2$ the Sylow $2$-subgroup of $\aut(\cE)_P$ is isomorphic to the quaternion group of order $8$ by (iii) \cite[Theorem 11.94]{HKT}, whereas, for $p=3$, $\aut(\cE)_P\not\cong C_3\times C_4$. To show the latter claim, consider the quotient curve $\bar{\cE}=\cE/\langle \iota \rangle$ where $\iota$ is the elliptic involution of $\aut(\cE)_P$. Since $\iota$ centralizes $\aut(\cE)_P$, we have $\aut(\bar{\cE})_{\bar{P}}\cong (C_3\rtimes C_4)/\langle \iota\rangle$. Furthermore, $\bar{\cE}$ is rational.
Since $\aut(\bar{\cE})_{\bar{P}}$ has order $6$, this implies that
$\aut(\bar{\cE})_{\bar{P}}$ is not abelian. Therefore,  $\aut(\cE)_P\not\cong C_3\times C_4$.
\end{proof}

Automorphisms of curves of prime order are studied in \cite{hom}; see also \cite[Theorem 11.108]{HKT} and \cite{nazarspeziali}. Here we quote the following claim.
\begin{result}
\label{reshomma} If a prime number $n$ other than $p$ is the order of an automorphism of $\cX$ then $n\le 2\gg(\cX)+1$.
\end{result}
Non-tame abelian groups fixing a point are studied by Serre \cite{serre1979}; see also \cite[Lemma 11.77, 11.75]{HKT}.
\begin{result}
\label{le11.77} Let $G$ be a non-tame subgroup of $\aut(\cX)$ fixing a point $P$. If $S_P$ is the Sylow $p$-subgroup of $G$ so that $G=S_P\rtimes H$ with a cyclic group of order $h$,
then the following claims hold.
\begin{itemize}
\item Let $\alpha\in H$ and $\beta\in S_P^{(k)}$, $k\ge 1$. Then the commutator $\alpha\beta\alpha^{-1}\beta^{-1}$ belongs to $S_P^{(k+1)}$ if and only if either $h|k$, or $\beta \in S_P^{(k+1)}$
\item If $G$ is abelian, i.e. $G=S_P\times H$, then $S_P=S_P^{(1)}=\cdots =S_P^{(h)}$ and $2\gg(\cX)\ge (h-1)(|S_P|-1).$
\end{itemize}
\end{result}
The automorphism groups of genus $2$ (hyperelliptic) curves in odd characteristic are classified; see \cite[Theorem 2]{SV}.
\begin{result}
\label{g=2} Let $G$ be an automorphism group of a genus $2$  curve over $\K$. If $p\ne 2$, then $G$ isomorphic to one of the following groups $C_2,C_{10},C_2\times C_2,D_4,D_6,C_3\rtimes D_4,GL(2,3), 2^+PGL(2,5)$. If $p=3$, $GL(2,3)$ occurs for the curve of equation $Y^2=X(X^4-1)(X^8-X^4+1)$.
Moreover, if $\omega$ is the central involution of $\aut(\cH)$, then $G\langle \omega\rangle/\langle \omega \rangle$ is a subgroup isomorphic to $PGL(2,\mathbb{K})$.
\end{result}
For the study of the possible structures and actions of the $p$-subgroups of $\aut(\cX)$ the following results are useful.

\begin{result} (Nakajima \cite{nakajima1987}, \cite[Theorem 11.84]{HKT})
\label{resnaka} Let $S$ be a $p$-subgroup of $\aut(\cX)$. If $\cX$ has $p$-rank $\gamma\ge 2$, then
$$|S|\le
\begin{cases}
\frac{p}{p-2}(\gamma-1)\,\mbox{when $p\ge 3$};\\
4(\gamma-1)\,\,\, \mbox{when $p=2$};\\
\end{cases}
$$
If $\cX$ has $p$-rank $\gamma=1$, then
$$
\begin{cases}
{\mbox{$|S|$ divides $(\gg(\cX)-1)$, $S$ is cyclic and it has no fixed point when $p\ge 3$}};\\
|S|\le 4(\gg(\cX)-1)\,\,\, \mbox{when $p=2$}.
\end{cases}
$$
\end{result}

\begin{result} (\cite[Lemma 3.1]{gklondon},\cite[Lemma 11.129]{HKT})
\label{lem11.129} If $\cX$ has zero $p$-rank, and $S$ is a $p$-subgroup of $\aut(\cX)$, then $S$ fixes a point $P\in \cX$ but no non-trivial element in $S$ fixes a point of $\cX$ other than $P$.
\end{result}
\begin{result} (\cite[Lemma 3.2]{gklondon},\cite[Lemma 11.131]{HKT})
\label{lem11.131}
 Let $S$ be a $p$-subgroup of $\aut(\cX)$ fixing a point $P\in\cX$. If
 the quotient curve $\cX/S$ has p-rank zero, and no non-trivial element of $S$ fixes a point other than $P$, then $\cX$ has $p$-rank zero, as well.
\end{result}
\begin{result}(\cite[(ii) Theorem 1.1]{gklondon})
\label{GKJA} For $p=2$, let $\Tilde{\cX}$ be curve of zero $2$-rank with a solvable subgroup $\Tilde{\Gamma}$ of $\aut(\Tilde{\cX})$ of even order which fixes no point of $\Tilde{\cX}$.
If\, $O(\Tilde{\Gamma})$ is the largest normal subgroup of odd order of\, $\Tilde{\Gamma}$, then either
$\Tilde{\Gamma}=O(\Tilde{\Gamma})\rtimes S_2$ or $\Tilde{\Gamma}/O(\Tilde{\Gamma})\cong SL(2,3)$, or $\Tilde{\Gamma}/O(\Tilde{\Gamma})\cong GL(2,3)$, or $\Tilde{\Gamma}/O(\Tilde{\Gamma})\cong \mathcal{G}_{48}$
where $\mathcal{G}_{48}$ is the double-cover of $\rm{Sym}_4$.
\end{result}
\begin{result}
(\cite[Theorem 6.12]{gk2})
\label{structure} Let $p>2$.
Assume that $\gg(\cX)\ge 2$ is even. If $G\le \aut(\cX)$ has order at least $900 \gg(\cX)^2$ then $\gamma(\cX)=0$. If $G$ is a non-abelian simple group, then one of the following cases occurs for $G$ up to isomorphism:
$\PSL(2,q)$ with $q\ge 3$, $\PSL(3,q)$ with $q\equiv 3 \pmod 4$, $\PSU(3,q)$ with $q\equiv 1 \pmod 4$,
${\rm{Alt}}_7$, $M_{11}$.
\end{result}

\section{Background from Group Theory and Number Theory}
\label{bgGTNT}
Definitions and results from Group theory which play a role in the proofs are quoted below.

Our notation and terminology comes from \cite{wil}; see also \cite{gorenstein1980,huppertI1967,hall,HWi}. In particular, extensions of groups are written in the following ways: $A\times B$ denotes the direct product, with normal subgroups $A,B$; also $A\rtimes B$ denotes a semidirect product (or split extension) with a normal subgroup $A$ and a subgroup $B$; and $A\cdot B$ denotes a non.split extension with a normal subgroup $A$ and quotient $B$, but no subgroup $B$; finally $A.B$ or $AB$ denotes an unspecified extension. Moreover, $O(G)$ stands for the largest normal subgroup of a finite group $G$ of odd order. Furthermore, $q$ denotes a power of a prime.

The finite subgroups of $PGL(2,\mathbb{K})$ were classified by Dickson; see \cite{maddenevalentini1982} and also Theorem A.8 in  \cite{HKT}.

\begin{result}[Dickson's classification of finite subgroups of the projective linear group $PGL(2,\mathbb{K})$]
\label{resdickson}
Any finite subgroup of the group $PGL(2,\mathbb{K})$ is isomorphic to
a subgroup of $\PGL(2,q)$ for some $q=p^h$. The subgroups of $\PGL(2,q)$
are listed below.
\begin{itemize}
\item[\rm(i)] cyclic group whose order divides $q\pm1$;
\item[\rm(ii)] elementary abelian $p$-group whose order divides  $q$;
\item[\rm(iii)] dihedral groups whose order divides $2(q\pm 1)$;
\item[\rm(iv)] the alternating group ${\rm{Alt}}_4$, for $p > 2$, or $p = 2$ and $h$ even;
\item[\rm(v)] the symmetric group ${\rm{Sym}}_4$, for $p>2$;
\item[\rm(vi)] the alternating group ${\rm{Alt_5}}$, $q^2\equiv 1 \pmod{5} $;
\item[\rm(vii)] the semidirect product of an elementary abelian
$p$-group of order $p^a$, $a\le h$, by a cyclic group of order $n>1$ with $n\mid(p^a-1);$
\item[\rm(viii)] $\PSL(2,p^f)$, $f\mid h$;
\item[\rm(ix)] $\PGL(2,p^f)$, $f\mid h$.
\end{itemize}
In particular, no non-trivial element of $PGL(2,\mathbb{K})$ fixes more than two points.
For $q>11$, the minimal size of a set on which $\PSL(2,q)$ can act transitively is $q+1$; see \cite[Kapitel II, Satz 8.28]{huppertI1967}.
\end{result}
The maximal subgroups of $\PSU(3,q)$ for $q$ odd were classified by Mitchell \cite{mitchell}; see also \cite{onan} and \cite[Theorem A.10]{HKT}. In this paper
we only need the following corollaries of Mitchell's classification; see \cite[Theorem 30]{mitchell}.
\begin{result}
\label{resmitchell} Let $\mu={\rm{gcd}}(3,q+1)$. The subgroups of $\PSU(3,q)$ of order $q^3(q^2-1)/\mu$ are maximal. For $q>5$, they are the largest subgroups of $PSU(3,q)$. For $q=5$
the largest subgroups of $\PSU(3,q)$ have order $2520$ and are isomorphic to ${\rm{Alt}}_7$. Let $\Omega$ be a set of smallest size on which $\PSU(3,q)$ may have a non-trivial action. Then $|\Omega|\geq q^3+1$ with an exception for $q=5$ and $|\Omega|=60$.
\end{result}
A group $G$ is of (group-theoretic) $2$-rank $h$ if $h$ is the maximum rank of an abelian $2$-subgroup of $G$.
$2$-groups with a cyclic subgroup of index $2$ were classified by Zassenhaus; see \cite[Theorem 12.5.1]{hall}, or \cite[Kapitel I, Satz 14.9]{huppertI1967}, or \cite{wong1}, 
\begin{result}[Classification $2$-groups with a cyclic subgroup of index $2$]
\label{res26feb2018}
The non-cyclic groups of order $2^n$ which contain a cyclic subgroup of index $2$ are generated by two elements $a,b$ and are of the following types:
\begin{itemize}
\item $a^{2^{n-1}}=1$, $b^2=1$, $ba=ab$ (direct product of two cyclic groups);
\item $n\geq 3$, $a^{2^{n-1}}=1$, $b^2=a^{2^{n-2}}$, $ba=a^{-1}b$ (generalized quaternion group);
\item $n\geq 2$, $a^{2^{n-1}}=1$, $b^2=1$, $ba=a^{-1}b$ (dihedral group);
\item $n\geq 4$, $a^{2^{n-1}}=1$, $b^2=1$, $ba=a^{-1+2^{n-2}}b$ (semidihedral - also called quasidihedral - group);
\item $n\geq 4$, $a^{2^{n-1}}=1$, $b^2=1$, $ba=a^{1+2^{n-2}}b$ (modular maximal-cyclic group, that is type (3) group with notation from \cite{huppertI1967}).
\end{itemize}
\end{result}
Observe that the dihedral group for $n=2$ is an elementary abelian group of order $4$.
Some elementary properties of the above $2$-groups are as follows; see \cite[Kapitel I, Satz 14.9]{huppertI1967}. A generalized quaternion group has rank $1$ as it contains a unique involution, the other groups listed in  Result \ref{res26feb2018} have rank $2$ as they contain an elementary abelian group of order $4$ but no elementary abelian group of order $8$.
Generalized quaternion groups, as well as dihedral and semidihedral groups, have center of order $2$. Dihedral, semidihedral and modular maximal-cyclic groups have non-central involutions. The semidihedral group of order $2^n$ has as many as $2^{n-2}+1$ involutions.
The quotient groups of a generalized quaternion group, as well as of a semidihedral group, with respect to its center is a dihedral group.
A modular maximal-cyclic group has exactly three involutions and four elements of order $4$. In particular, it contains neither a dihedral group of order $2^n\geq 8$ nor a generalized quaternion group.

\begin{result}[The Schur-Zassenhaus theorem; see Kapitel I, Hauptsatz 18.1 in \cite{huppertI1967}]
\label{reszass} Let $N$ be a normal subgroup of $G$ such that $|N|$ and $[G:N]$ are coprime. Then $N$ has a complement in $G$, that is, $G=N\rtimes U$ for a subgroup $U$ of $G$ of order $[G:N]$. If $G$ has a cyclic Sylow $2$-subgroup then $G$ has a normal subgroup  $N$ of odd order. 
\end{result}
A representation of a group $G$ over a vector space $V$ is \emph{completely irreducible} if every $G$-invariant subspace $W$ has a $G$-invariant complement $W'$, that is, $V=W\oplus W'$.
\begin{result} [The Maschke theorem; see Kapitel I, Satz 17.7 in \cite{huppertI1967}]
\label{resmaschke}
A representation of a finite group over a field whose characteristic is prime to the order of the group is completely reducible.
\end{result}
\begin{result}
\label{resschur} Let $G$ be a perfect group with a non-trivial subgroup $M$ contained in the center of $G$, that is, $G$ is a central extension of $\bar{G}=G/M$. If $\bar{G}\cong \PSL(2,q)$ then $G\cong {\rm{SL}}(2,q)$ with $|M|=2$ and $q$ odd, unless $q=9$ and $|M|\in \{3,4,6\}$. If $\bar{G}\cong \PSU(3,q)$ then $G={\rm{SU}}(3,q)$ with $q\equiv -1 \pmod{3}$ and $|M|=3$. If $\bar{G}\cong Sz(q)$ then $q=8$ and one of two cases occurs, according as $|M|=2$, or $M\cong C_2\times C_2$. For $\bar{G}\cong \Ree(q)$, no such central extension exists.
\end{result}

\begin{result} [The Burnside-Thompson theorem \cite{thompson1}]
\label{thom} Let $G$ be a Frobenius group with Frobenius kernel $F$ and complement $C$. Then the Sylow subgroups of $C$ are cyclic or generalized quaternion groups, and $F$ is a nilpotent characteristic subgroup of $G$.
\end{result}

\begin{result}[The Gorenstein-Walter theorem \cite{gw1,gw2}; see also Chapter 16 in \cite{gorenstein1980}]
\label{resgor}  Let $G$ be a group with a dihedral Sylow $2$-subgroup $S_2$. Then, either $G=O(G)\rtimes S_2$, or $G/O(G)\cong\rm{Alt}_7$, or $G/O(G)$ is isomorphic to a subgroup of ${\rm{P\Gamma L}}(2,q)$ containing  $\PSL(2,q)$.
\end{result}
\begin{result}[The Brauer-Suzuki theorem \cite{bs}; see also Chapter 12 in \cite{gorenstein1980}]
\label{resbs} Let $G$ be a group with a generalized quaternion Sylow $2$-subgroup. Then the center of $G/O(G)$ has order $2$.
\end{result}
For $q$ odd, a Sylow $2$-subgroup of $SL(2,q)$ is a generalized quaternion group.
\begin{result}[The Alperin-Brauer-Gorenstein theorem \cite{abg}]
\label{resabg} Let $G$ be a non-abelian simple group containing a semidihedral Sylow $2$-subgroup $S_2$.
Then, either
$G\cong \PSL(3,q)$ with $q\equiv 3 \pmod 4$, or $G\cong \PSU(3,q)$ with $q \equiv 1 \pmod 4$ where $q$ is an odd prime power, or   $G$ is isomorphic to the Mathieu group $M_{11}$.
\end{result}
\begin{result}[The Alperin theorem \cite{alp}]
\label{resalp}  A Sylow $2$-subgroup of a non-abelian simple group with $2$-rank equal to $2$ is either dihedral, or semidihedral, or wreathed, or isomorphic to a Sylow $2$-subgroup of $SU(3,2)$.
\end{result}

\begin{result}
\label{2rank} Any non-abelian simple group of (group theoretic) $2$-rank equal to $2$ is isomorphic to one of the following groups: $\PSL(2,q), q\ge 5, \PSL(3,q), \PSU(3,q), \PSU(3,4), Alt_7, M_{11}$ where $q$ is a power of an odd prime. The Ree group $\Ree(q)$ with $q$ a power of $3$ has an elementary abelian Sylow $2$-subgroup of order $8$, and its (group theoretic) $2$-rank is equal to $3$.
\end{result}

Hering \cite{her1} partly relying on previous work by E. Shult \cite{Shult}  proved the following classification theorem.
\begin{result}[Hering's classification of transitive group spaces]
\label{HC} Let $G$ be a group acting transitively (but not necessarily faithfully) on a set $\Omega$ with $|\Omega|>2$. Assume that for some $P\in\Omega$ the stabilizer $G_P$ contains a normal
$d$-subgroup $Q$  which is sharply transitive on $\Omega\setminus \{P\}$. If $S$ is the normal
closure of $Q$ in $G$, then $S$ induces a $2$-transitive permutation group on $\Omega$ and one of the following cases occurs:
\begin{itemize}
\item[(i)] $S\cong \PSL(2,q),\, {\rm{SL}}(2,q),\, \Sz(q),\, \PSU(3,q),\, {\rm{SU}}(3,q),\, \Ree(q)$, where $q$ is a power of $d$, and  $|\Omega|$ equals $q+1$ in the linear case, $q^{2}+1$ in the Suzuki case and $q^{3}+1$ in the unitary and Ree case.
\item[(ii)] $S\cong {\rm{P\Gamma L}}(2,8)$ and $|\Omega|=28$.
\item[(iii)] $S$ is a sharply 2-transitive permutation group on $\Omega$.
\item[(iv)] $|\Omega|=u^{2}$ for $u\in\{3,5,7,11,23,29,59\}$, $S=O_{u}(S)\rtimes Q$, $O_{u}(S)$ is extraspecial
of order $u^{3}$ and expo- nent $u$, $Z(O_{u}(S))=Z(S)$ is the kernel of the action of $S$ on $\Omega$,  and
$S$ induces a sharply 2-transitive permutation group on $\Omega$.
\end{itemize}
If $|\Omega|-1$ is a power of a prime $d$, then any Sylow $d$-subgroup $S_d$ fixes exactly one point and no non-trivial element of $S_d$ fixes more than one point. In particular, any two Sylow $d$-subgroups of $S$ have trivial intersection.
\end{result}
Each of the groups $S$ in Result \ref{HC} is called a $He$-group of degree $|\Omega|$.

For $|\Omega|$ odd, the hypothesis on the sharply-transitivity of $Q$ on $\Omega$ can be weakened by requiring the semi-regularity of $Q$ on $\Omega$; \cite{her1, Shult}:
\begin{result}
\label{heringshu}
Let $G$ be a group acting transitively (but not necessarily faithfully) on a set $\Omega$. Suppose that the stabilizer $G_P$ with $P\in \Omega$ has a normal subgroup $Q$ of even order that is semiregular on $\Omega$. Then the normal closure of $Q$ in $G$ either has a transitive normal subgroup of odd order and $Q$ is a Frobenius complement, or acts 2-transitively on $\Omega$ as one of the groups $\PSL(2,2^e)$, $Sz(2^e)$ or $\PSU(3,2^e)$ (for some e), in its usual 2-transitive representation.
\end{result}
Since groups of odd order are solvable, a direct corollary of this theorem is the following.
\begin{result}
\label{HeringShult} Let $N$ be non-abelian simple group acting transitively on a set $\Delta$. Suppose that the stabilizer $N_P$ with $P\in \Delta$ has a normal subgroup $U_P$ of even order that is semiregular on $\Delta$. Then $N$ acts 2-transitively on $\Delta$.
\end{result}
\begin{result}(Holt, \cite[Main Theorem]{holt})
\label{holtre} Let $G$ be a finite $2$-transitive permutation group of even degree, and suppose that the $1$-point stabilizer of $G$ is solvable. Then either $G$ has a regular normal subgroup, or $G$ has a normal $2$-transitive subgroup $W$ isomorphic to $\PSL(2,q)$, $\PSU(3,q)$ (for some odd prime power $q$), or to $\Ree(q)$. In the latter case, the action of $W$ is the natural $2$-transitive permutation representation of $\PSL(2,q), \PSU(3,q)$ and $\Ree(q)$ respectively, with only one exception: $G\cong \rm{P}\Gamma L(2,8)$ and $W\cong \PSL(3,2)\cong \PSL(2,8)$ with degree $28$.
\end{result}
\begin{result}(O'Nan, \cite[Theorem B]{onan})
\label{onanre} Let $G$ be a finite $2$-transitive group of odd degree, and suppose that the $1$-point stabilizer $G$ has an abelian normal subgroup of order $>1$. Then $G$ has either a regular normal subgroup, or a normal $2$-transitive subgroup $W$ isomorphic to
\begin{itemize}
\item[(i)] $\PSL(r+1,q)$, with  $1 + q +\ldots + q^r$ odd and $r\ge 1$, or
\item[(ii)] $\PSU(3,2^k)$, or
\item[(iii)] $Sz(2^{2k+1})$,
\end{itemize}
and the action of $W$ is the natural $2$-transitive representation of $\PSL(r+1,q)$, $\PSU(3,2^k)$ and $Sz(2^{2k+1})$, respectively.
\end{result}
The following result is a corollary of the classification of finite $2$-transitive permutation groups.

\begin{result}
\label{cam} Let $G$ be a non-abelian simple $2$-transitive permutation group of degree $n$. Suppose that either $n=u^h+1$ or $n=2u^h+1$ with $u$ prime and $h\ge 2$. Then one of the following cases occurs
\begin{itemize}
\item[(i)] $G\cong {\rm{Alt}}_n$, $n\ge 5$, $n\ne 6$.
\item[(ii)] $G\cong \PSL(2,q)$, $n=q+1=u^h+1$, $q\ge 4$.
\item[(iii)] $G\cong \PSU(3,q)$, $n=q^3+1=u^h+1$, $q\ge 3$.
\item[(iv)] $G={\rm{PSp}}(2d,2)$, $n=2^{2d-1}\pm 2^{d-1}=u^h+1$.
\item[(iv)] $G\cong Sz(q)$, $n=q^2+1=2^h+1$, $q\ge 8$.
\item[(v)] $G\cong \Ree(q)$, $n=q^3+1=u^h+1$, $u=3, h\ge 3$.
\end{itemize}
\end{result}
\begin{proof} The list of all non-abelian simple $2$-transitive permutation groups is found, for instance, in \cite{pcam}. A direct inspection shows that there is enough to rule out the case $G\cong \PSL(d,q)$ for $d\ge 3$. In this case,
$n=1+q+q^2+\ldots+q^{d-1}$, and comparison with the extra-condition on $n$ shows that either $q(1+q+\ldots+q^{d-2})=u^h$, or $q(1+q+\ldots+q^{d-2})=2u^h$. Since $(q,1+\ldots+q^{d-2})=1$, the former case cannot occur whereas, in the latter case, $q=2$ and $1+2+\ldots+2^{d-2}=u^h$ whence $2^{d-1}+1=u^h$. From this $h=1$; see Result \ref{lecatalan}, a contradiction.
\end{proof}

\begin{result}(Huppert, \cite[Theorem 7.3]{huppert3})
\label{huppsolv} Any double transitive solvable permutation group on $\Delta$ is isomorphic to a subgroup of $H$ of $A\Gamma L(1,d^r)$ for a prime $d$, and acts on $\Delta$ as $H$ on the elements of the finite field $\mathbb{F}_{d^r}$, except when $d^r\in \{3^2,5^2,7^2,11^2,23^2,3^4\}$.
\end{result}
\begin{result}(Gleason, \cite[Lemma 2]{her1})
\label{resgle} Let $G$ be a permutations group on a finite set $\Delta$ such that, for some prime $d$ and each $P\in\Delta$, there exists an
element of $G$ of order $d$ which fixes $P$  but has no other fixed points. Then $G$ is transitive on $\Delta$.
\end{result}
\begin{result}(\cite[Theorem 12.2]{HWi})
\label{isprimitive} Any normal subgroup of a primitive permutation group is transitive.
\end{result}
\begin{result}(\cite[Hilfssatz 6]{herzw})
\label{heinv} Let $G$ be a transitive permutation group on a set $\Delta$  such that each involution has exactly $2$ fixed points. Then a Sylow $2$-subgroup of $G$ is either dihedral or semi-dihedral, unless  $G$ has a subgroup $F$ of index $2$, and $|\Delta|\equiv 2 \pmod{4}$.
In the exceptional case, $\Delta$ splits into two $F$-orbits of the same length, say $\Delta_1$ and $\Delta_2$, so that each involution of $F$ has exactly one fixed point in $\Delta_1$ (and one in $\Delta_2$). The action on $\Delta_1$ and the structure of $F$ is determined by Result \ref{heringshu}.

\end{result}
The following necessary and sufficient condition for a transitive group to be primitive is a classical result, see \cite[Theorem 7.4]{HWi}. The second claim is shown in the proof of \cite[Theorem 7.4]{HWi}.
\begin{result}
\label{resstab} Let $G$ be a transitive permutations group on a finite set $\Delta$. Then $G$ is imprimitive on $\Delta$ if and only if, for any $P\in \Delta$,
the stabilizer $G_P$ of $P$ in $G$ has a strongly intermediate subgroup $Z$, i.e. $G_P\subsetneqq Z \subsetneqq G$. If this is the case, then the $Z$-orbit of $P$ is a block of $G$.
\end{result}
\begin{result}(\cite[Theorem 7.5]{HWi})
\label{resstab1}
Let $G$ be an imprimitive transitive permutations group on a finite set $\Delta$. For any $P\in \Delta$, the lattice of intermediate subgroups between $G_P$ and $G$ is isomorphic to the lattice of blocks of $G$ containing $P$.
\end{result}
\begin{result}
\label{res3/2} (Bamberg,Giudici,Liebeck,Praeger,Saxl  \cite[Theorem 1.2]{bglps})
Let $G$ be a finite simple $\frac{3}{2}$-transitive group of degree $n$ on a set $X$. Then one of the following holds:
\begin{itemize}
\item[(i)] $G$ is 2-transitive on $X$;
\item[(ii)] $n=21$ and $G\cong {\rm{Alt}}_7$ acting on the set of pairs of elements of $\{1,\ldots,7\}$; the size of the nontrivial subdegrees is 10.
\item[(iii)] $n=\ha q(q-1)$ where $q = 2^f\ge 8$, and $G\cong \PSL(2,q)$; the nontrivial subdegrees has size $q + 1$, and their number equals $\ha q-1\ge 3$.
\end{itemize}
\end{result}
The following result is a corollary of \cite[Theorem A.13.2]{ber1}.
\begin{result}
\label{bersym} In $Sym_n$, let $S_d$ be a Sylow $d$-subgroup and $N_d$ its normalizer. If the component of $S_d$ in $N_d$ is cyclic, then $n=d$.
\end{result}
The following result comes from \cite{gl}.
\begin{result}
\label{res17052025} Let $G$ be isomorphic to one of the classical simple groups over $\mathbb{F}_2$. For an odd prime $r$, let $S_r$ be a Sylow $r$-subgroup of $G$. Then
$$
|S_r|\le |G|^{(\lfloor \log_r{(M)}+1\rfloor/K)}
$$
where
$$(K,M)=
\begin{cases}
\mbox{$(n-1,n)$ when $G\cong \PSL(n,2), n\ge 3$};\\
\mbox{$(\ha (n-1),2n)$ for $2|(n-1)$ and $(\ha n,2(n-1))$ for $2\nmid (n-1)$,  when $G\cong \PSU(n,2), n\ge 3;$}\\
\mbox{$(m,2m)$ when $G\cong {\rm{PSp}}_{2m}(2), m\ge 3;$}\\
\mbox{$(\ha m,2(m-1))$ when $G\cong {\rm{P\Omega}}_{2m}^+(2), m\ge 3$},\\
\mbox{$(\ha, 2m)$ when ${\rm{P\Omega}}_{2m}^-(2), m\ge 2.$}\\
\end{cases}
$$
\end{result}
Occasionally, we also need some elementary, apparently unquoted results from Group theory and Number theory.
\begin{result}
\label{resdirect}
Let $A,B$ be two subgroups of $G$ such that $A$ is a normal subgroup of $G$ and $AB=A\rtimes B$.
If every element in $B$ commutes with every element in $A$, then $AB=A\times B$. If $A\rtimes B=A\times B$ and $A$ is a $p$-group while $B$ is a prime to $p$ group, then $B$ comprises all elements of $A\times B$ whose orders are prime to $p$.
\end{result}
In fact, if every element in $B$ commutes with every element in $A$, implies that $B$ is also a normal subgroup of $AB$. Since $A$ and $B$ intersect trivially, $AB$ is the direct product of $A$ and $B$.
The last claims follows from the fact that if $ab=ba$ for two elements $a,b$ in a group, then the order of $ab$ is equal to smallest  common multiple of the orders of $a$ and $b$.

\begin{result}
\label{resgq} Let $S$ be a generalized quaternion group. If $M$ be a proper normal subgroup of $S$ such that the factor group $S/M$ is an elementary abelian group, then $S/M$ has order either two or four.
\end{result}
\begin{proof} Let $N=Z(S)$. Then $|N|=2$ and $N\le M$. Moreover,
$$S/M\cong \frac{S/N}{M/N}$$
Since $S/N$ is a dihedral group, this shows that $S/M$ is also a dihedral group. Therefore, $S/M$ is elementary abelian if and only if either $|S/M|=2$, or $|S/M|=4$.
\end{proof}

The following result is \cite[Lemma 5]{wong}.
\begin{result}
\label{reswong}  Let $U$ be a solvable group with a semidihedral Sylow $2$-subgroup $S$. Then either $U$ has a normal 2-complement or $|S|=16$ and $U$ has a normal subgroup $N$ of odd order such that $U/N$ is isomorphic with $GL(2,3)$.
\end{result}
A group $G$ containing an index $2$ subgroup $T$ is \emph{generalized dicyclic} with respect to $T$, if each element in the coset $G\setminus T$
is an involution.
\begin{result}
\label{resgdi} If $G$ is a generalized dicyclic group with respect to $T$, then $T$ is abelian.
\end{result}
\begin{proof} Take $u\in T$ and $v\in G\setminus T$. Then $vu\in G\setminus T$. Therefore, $vu$ has order $2$ whence $vuv=u^{-1}$ and
$vu^{-1}v=u$. If, in addition, $w\in G\setminus T$ then $wvuvw=wu^{-1}w=u$ whence $(wv)u=u(wv)$. Therefore, $wv$ commutes with $u$. Since each element in $T$ is of the form $wv$ with $w,v\in G\setminus T$, the claim follows.
\end{proof}

\begin{result}
\label{res14072024} Let $H=H_1.H_2.\cdots. H_k$. If $H$ has a cyclic Sylow $2$-subgroup then the Sylow $2$-subgroups of the factors $H_i$ are also cyclic.
\end{result}
\begin{proof} Let $1=A_0\lhd A_1\lhd A_2\lhd \cdots \lhd A_k=H$
be a subnormal series of $H$ corresponding to
the extension $H=H_1.H_2.\cdots. H_k$.
Then $H_j=A_{j+1}/A_j$ with $j=0,\ldots k-1$ are the factors. If a Sylow $2$-subgroup of $H$ is a cyclic then all $A_i$ has a cyclic Sylow $2$-subgroup and hence the factors $H_i$ also have this property.
\end{proof}
We will use the following  corollary of Result \ref{res14072024}.
\begin{result}
\label{resB14072024} Let $H=H_1.H_2.\cdots. H_k$. For some $1\le i \le k$, let $H_i$ be isomorphic to one of the following groups ${\rm{Alt}}_4,\rm{\Sym}_4,Q_8,C_2\times C_2, S_3\times S_3,GL(2,3),SL(2,3), SU(3,2), PSL(2,3), PSU(3,2)$, and the central products $C_4\odot GL(2,3)$ and $SL(2,3)\odot SL(2,3)$. Then Sylow $2$-subgroups of $H$ are not cyclic.
\end{result}

\begin{result}
\label{resA14072024} Let $U=U_0.U_1$ with a cyclic subgroup $U_0$. Let $M$ be a normal subgroup of $U$ whose order is a power of a prime $d$. If $d\nmid |U_1|$ then $M$ is contained in $U_0$. In particular, $M$ is cyclic.
\end{result}
\begin{proof} From the first homomorphism theorem, the factor group $MU_0/U_0$ is isomorphic to a subgroup of $U_1$. Since $d\nmid |U_1|$, the Lagrange theorem yields that $MU_0/U_0$ has order $1$, i.e. $MU_0$ and hence $M$ is contained in $U_0$.
\end{proof}
\begin{result}
\label{res18072024} Assume that a group $U$ has a central involution $u$ such that  $\bar{U}=U/\langle u \rangle$. Let  $U=U_1.U_2.\cdots. U_k$. If a Sylow $2$-subgroup of $\bar{U}$ is cyclic then the Sylow $2$-subgroups of $U_i$ are abelian.
\end{result}
\begin{proof} Let $S_2$ be a Sylow $2$-subgroup of $U$. Then $u$ is a central involution of $S_2$
 and the factor group $\bar{S}_2=S_2/\langle u \rangle$ is a Sylow $2$-subgroup of
 $\bar{U}=U/\langle u \rangle$. Since $\bar{S}_2$ is cyclic, this implies that $S_2$ is abelian; see for instance \cite[Chapter 1, Lemma 3.4]{gorenstein1980}. Now, since the factor groups of abelian groups are abelian, the proof of Result \ref{res14072024} can be used to complete the proof.
\end{proof}

\begin{result}
\label{lem020222}Let $q=d^m\ge 5$ with $d$ prime and $m>1$. Consider the natural action of ${\rm{P\Gamma L}}(2,q)$ on the projective line ${\rm{PG}}(1,\mathbb{F}_q)$. Let $G$ be a subgroup of ${\rm{P\Gamma L}}(2,q)$ properly containing $\PSL(2,q)$. If $G\ne \PGL(2,q)$, then the one-point stabilizer  of $G$ is not a semidirect product of a Sylow $d$-subgroup with a cyclic component.
\end{result}
\begin{proof}
Assume first that $G\cap \PGL(2,q)=\PSL(2,q)$. Then $|G|=h|PSL(2,q)|$ where $q=d^m$, with $h| m$.
 To prove the claim on the contrary,  $h>1$ may be assumed. Write $h=d^r s$ with $r\ge 0$ and $p\nmid s$, and suppose first $r\geq 1$. Then, a Sylow $d$-subgroup $S_d$ of $G$ can be written as $M\rtimes L$ where $M=S_d\cap \PSL(2,q)$ and $L$ is a cyclic group of order $d^r$. Since $M$ has a unique fixed point, say $P\in \Delta$, the subgroup $L$ also fixes $P$. Choose $\infty$ for $P$. Then $L$ consists of all maps
 $$\varphi_{i,b}(x)=x^{d^{is}}+b,\quad i=1,\ldots,d^{r},\,\, b\in \mathbb{F}_q.$$
 The stabilizer $U$ of $P$ in $\PSL(2,q)$ contains the cyclic group $C$ of order $\ha(q-1)$ for $q$ odd (and $q-1$ for $q$ even) consisting of all maps $\psi_a(x)=ax$ where $a\in A$, that is, $a^{(q-1)/2}=1$ or $a^{q-1}=1$ according as $q$ is odd or even. Then the stabilizer of $P$ in  $G$ contains the subgroup $T$ consisting of all maps
 $$\tau_{i,b,a}(x)=ax^{d^{is}}+b,\quad i=1,\ldots,d^{r},\,\, b\in \mathbb{F}_q, a\in A.$$
 In particular, $|T|=qd^r|A|$, and $L$ is a Sylow $d$-subgroup of $T$ and $G$. However, $L$ is not normal subgroup of $T$. In fact, take any $a\in \mathbb{F}_q^*$ such that $a$ is not fixed by the automorphism $a\rightarrow a^{d^s}$. Then $\psi_a^{-1} \varphi_{1,0}\psi_{a}(x)=a^{d^s-1}x$ which is not in $L$. Thus $L$ is not a normal subgroup of the stabilizer of $P$ in $G$.  This proves the claim for $r\ge 1$.  We may assume $h=s$. Then the stabilizer $T$ of $P=\infty$ in $G$ consists of all maps
 $$\tau_{i,b,a}(x)=ax^{d^{i}}+b,\quad i=1,\ldots,h,\,\, b\in \mathbb{F}_q, a\in A.$$
Therefore, $T$ is the semidirect product of $M$ with a complement $V$ consisting of all $\tau_{i,0,a}$. Here, $V$ is not abelian as $\tau_{1,0,a}$ and $\tau_{1,0,c}$ commute if $a^dc=c^da$, that is, $c/a\in \mathbb{F}_d$.
The case where $\PGL(2,q)\le\tilde{\Gamma}$ can  similarly be treated.

\end{proof}

\begin{result}
\label{res05022025} Let $V(h,p)$ be a vector space of dimension $h$ over the prime field $\mathbb{F}_p$. Let $C$ be a cyclic subgroup of $GL(h,p)$ acting semiregularly on the non-zero vectors of $V(h,p)$. For a power $q$ of $p$,
assume that $GL(h,p)$ contains a subgroup $G$ isomorphic to one of the simple groups in (i) of Result \ref{HC}. If $C$ centralizes $G$ and $GC=\langle G,C\rangle$ is irreducible on $V(h,p)$,
then $$\frac{p^h-1}{|C|}\le
\begin{cases}
\mbox{$q+1$ for $G\cong PSL(2,q)$};\\
\mbox{$q^3+1$ for $G\cong PSU(3,q)$};\\
\mbox{$q^2+1$ for $G\cong Sz(q)$};\\
\mbox{$q^3+1$ for $G\cong Ree(q)$}.
\end{cases}
$$
\end{result}
\begin{proof} 
Let $S_p$ be a Sylow $p$-subgroup of $G$. Since the number of non-zero vectors of $V(h,p)$ is prime to $p$, there exists a non-zero vector $v\in V(h,p)$ fixed by $S_p$. If $G$ fixes $v$, then the set of all vectors fixed by $G$ is a subspace $W$ of $V$. Since $C$ centralizes $G$, $W$ is also preserved by $C$. Then $W$ is $GC$-invariant, a contradiction.
Therefore  the $G$-orbit $\cV$ of $v$ in $V(h,p)$ is not trivial.
Let $E$ be the set of vectors in $\cV$ which are fixed by $S_p$. From the last claim in Result \ref{HC}, any two Sylow $p$-subgroups of $G$ have trivial intersection. Thus, $E$ together with its images by elements of $G$ form a partition of $\cV$ such that each member is the set of all vectors in $\cV$ which are fixed by a Sylow $p$-subgroup of $G$.
Let $n+1$ denote the number of Sylow subgroups of $G$. Then $|\cV|=(n+1)|E|$. Moreover, let $D$ be the subgroup of $C$ which preserves $\cV$. Since $D$ centralizes $S_p$, $E$ is left invariant by $D$. Since $C$ is semiregular on $E$, no non-trivial element of $D$ fixes a vector in $E$. Therefore, $|D|\le |E|$. Furthermore, since $G$ is a normal subgroup of $\langle G,C \rangle =GC$, 
the $G$-orbits form a $C$-invariant partition of the non-zero vectors of $V(h,p)$. Then the images of $\cV$ under the action of $C$ are as many as $|C|/|D|$. Therefore,
$$(n+1)|C|=(n+1)\frac{|C|}{|D|}|D|\le (n+1)\frac{|C|}{|D|}|E|=|\cV| \frac{|C|}{|D|}\le p^h-1.$$   \end{proof}


For the rest of this section, let $\Gamma$ act not-faithfully on a set $\Delta$. Let $K$ be the kernel of the representation of $\Gamma$ on $\Delta$. Assume that $K=M\rtimes C$ where $M$ is a Sylow $p$-subgroup of $K$,
and $C$ is a cyclic complement. Let $\bar{\Gamma}=\Gamma/M$. Then $\bar{C}=CM/M \cong C$ is a normal subgroup of $\bar{\Gamma}$, and
\begin{equation}
\label{eqB09022025}
\bar{\Gamma}/\bar{C}\cong  \frac{\Gamma/M}{CM/M}=\frac{\Gamma/M}{K/M}\cong\Gamma/K.
\end{equation}
Moreover, $\bar{\Gamma}$ acts on the elements of $\bar{C}$ by conjugacy. Let $\Phi$ be the kernel of this action. Then $\Phi=C_{\bar{\Gamma}}(\bar{C})$ is the centralizer of $\bar{C}$ in $\bar{\Gamma}$, and $\bar{C}\le \Phi$ as $\bar{C}$ is cyclic, and hence abelian. From the second homomorphism theorem
\begin{equation}
    \label{eq28122025}
\bar{\Gamma}/\Phi\cong \frac{\bar{\Gamma}/\bar{C}}{\Phi/\bar{C}}.
\end{equation}
\begin{result}
\label{res05022025A} With the baove notation, if $\Gamma/K$ is a non-abelian simple group then $\bar{\Gamma}$ centralizes $\bar{C}$.
\end{result}
\begin{proof} Since $\bar{\Gamma}/\bar{C}\cong \Gamma/K$ and $\Gamma/K$ is non-abelian simple, (\ref{eqB09022025}) and (\ref{eq28122025}) yield that
either $\Phi=\bar{C}$, or $\Phi=\bar{\Gamma}$. In the former case, $\bar{\Gamma}/\Phi=\bar{\Gamma}/\bar{C}$ is an automorphism group of $\bar{C}$. Thus this case cannot actually occur as the automorphism group of a cyclic group is cyclic, and hence is abelian.
\end{proof}
Let $S_P$ the Sylow $p$-subgroup of the stabilizer of $P$ in $\Gamma$.
The factor group $\bar{S}_P=S_P/M$ is a Sylow $p$-subgroup of $\bar{\Gamma}$.
Suppose that any $p$-element fixing two points of $\Delta$ fixes $\Delta$ pointwise. Then any two distinct Sylow $p$-subgroups of $\bar{\Gamma}$ intersect trivially. Suppose, in addition, that $\Delta\setminus\{P\}$ is an $S_P$-orbit, i.e. it is a $\bar{S}_P$-orbit. Since $\bar{S}_P$ is a normal subgroup of $\bar{\Gamma}$, Result \ref{HC} applies to $(G,\Omega)=(\bar{\Gamma},\Delta)$.
Let $\bar{\Sigma}$ be the He-subgroup of $\bar{\Gamma}$, i.e. the subgroup generated by the Sylow $p$-subgroups of $\bar{\Gamma}$. 
Then either $\bar{\Sigma}$ acts faithfully on $\Delta$, or
$\bar{\Sigma}\cong {\rm{ SL}}(2,q)$ with $q+1=|\Delta|$, $q$ odd and $|\bar{\Sigma}\cap \bar{C}|=2$, or $\bar{\Sigma}\cong {\rm{SU}}(3,q)$ with $q^3+1=|\Delta|$, $q\equiv -1 \pmod{3}$ and $|\bar{\Sigma}\cap \bar{C}|=3$. This together with Result \ref{res05022025A} prove  the following claim.
\begin{result}
\label{res0602205}  With the above notation, if $\bar{\Gamma}=\Gamma/K$ is a non-abelian simple group,     there exists subgroup $\bar{\Sigma}$ of $\bar{\Gamma}$ such that either $\bar{\Gamma}=\bar{\Sigma}\times \bar{C}$, or
$\bar{\Gamma}$ is the central product $\bar{\Sigma}\odot \bar{C}$ of $\bar{\Sigma}$ and $\bar{C}$ where either $\bar{\Sigma}\cong {\rm{SL}}(2,q)$ with $q+1=|\Delta|$, $q$ odd and $|\bar{\Sigma}\cap \bar{C}|=2$, or $\bar{\Sigma}\cong {\rm{SU}}(3,q)$ with $q^3+1=|\Delta|$, $q\equiv -1 \pmod{3}$ and $|\bar{\Sigma}\cap \bar{C}|=3$. In particular, $\bar{\Sigma}$ centralizes $\bar{C}$.
\end{result}

\begin{result}
\label{resD14072024} Let $u$ be an odd prime-power.
\begin{itemize}
\item[(i)] If $r$ is an odd prime then
$$r\nmid \frac{u^r+1}{(u+1)(r,u+1)},\quad {\mbox{and}} \quad r\nmid \frac{u^r-1}{(u-1)(r,u-1)}.$$
\item[(ii)] $3,5\nmid (u^8\pm u^7 \mp u^5 - u^4 \mp u^3 \pm u +1)$.
\item[(iii)] If  $u=2^{2m+1}$ with $m\ge 1$ then $$3\nmid (u^2\pm u\sqrt{2u}+u\pm \sqrt{2u}+1).$$
\end{itemize}
\end{result}
\begin{proof} (i) Since $u^r \equiv u \pmod{r}$ by the little Fermat theorem, the claim holds for $r\nmid (u+1)$ and $r\nmid (u-1)$, respectively. Now, suppose $u+1=r^mh$ for an integer $h$ prime to $r$. It is enough to show that $r^{m+2}$ does not divide $u^r+1$. From $u=r^mh-1$,
$$u^r+1=1+(r^mh-1)^r=1+\sum_{i=0}^r \binom{r}{i}(-1)^{r-i}(r^mh)^i=r^{m+1}h-\ha r^{2m+1} (r-1)h^2+ \sum_{i=3}^r\binom{r}{i}(-1)^{r-i}r^{mi}h^i.$$
The last two but the third to last terms on the right hand side are divisible by $r^{2m}$ whence the claim follows. The same argument works for the case $u-1=r^mh$.
(ii) Let $f^{\pm}(u)=u^8\pm u^7 \mp u^5 - u^4 \mp u^3 \pm u +1$. For $u\not\equiv 0 \pmod{d}$ with $d=3,5$, $f^{\pm}(u)\equiv 1 \pmod d$ holds whence the claim follows. (iii) Let $f^{\pm}(u)=u^2\pm u\sqrt{2u}+u\pm \sqrt{2u}+1$. Since  $u=2^{2m+1}$ with $m\ge 1$, $f^{\pm}(u)\equiv \mp 1 \pmod 3$, and (iii) follows.
\end{proof}
\begin{result}\cite{M21}
\label{lecatalan}
For a prime power $q$ and a prime $d$, the diophantine equation $p^a + 1 = d^r$ has solutions only in three cases:
\begin{itemize}
\item $d > 2, p =2, a>1, r=1$;
\item $d = 2, a=1$, that is, $p$ is a Mersenne prime;
\item $d=3, p=2, a=3, r = 2.$
\end{itemize}
\end{result}
\begin{result}
\label{res03102025}
For a prime power $p^h$, $h\ge 2$, and an integer $d\ge 2$, the diophantine equation $2^{2d-1}\pm 2^{d-1}=p^h+1$ has two solutions, namely $p=3,h=2,d=2$, and $p=3,h=3,d=3$.
\end{result}
\begin{proof} Replacing $2^{d-1}$ by $x$, the equation reads $ 2x^2\pm x-(p^h+1)=0$ whence
$$x_{1,2}=\frac{\mp 1\pm \sqrt{1+8(p^h+1)}}{4}.$$
Therefore, there exists a positive integer $u$ such that $u^2=1+8(p^h+1)$. From this,
$8p^h=(u+3)(u-3)$ whence either $8=u-3,p^h=u+3$, or $8=u+3,p^h=u-3$, or $p|(u+3)$ and $p|(u-3)$. The first two cases cannot actually occur. In the latter case, $p|6$, and hence either $p=2$, or $p=3$. For $p=2$, we have $2^{h+3}=(u+3)(u-3)$, and hence $2^{v}=u+3, 2^{w}=u-3$ for two positive integers $v,w$. Then $2^v-2^w=6$, and hence $2^v=8$ and $2^w=2$. Thus $u=5$, and $h=1$, a contradiction. For $p=3$, we have $u=3m$, and $3^{h-2}=\frac{1}{8}(m+1)(m-1)$. If $m\equiv 1 \pmod{4}$, then both
$\ha(m+1)$ and $\frac{1}{4} (m-1)$ are integers. Thus, if $m>5$, then both $\ha(m+1)$ and $\ha (m-1)$ are divisible by $3$, and their difference is also divisible by $3$, a contradiction. Otherwise $m=5$, and in this case, $h=3$ and hence $d=3$. If $m=3$, then $h=2$, and hence $d=2$.  The same argument applied to $m\equiv 3 \pmod{4}$ yields $m=3$. In this case, $p=3,h=2,d=2$.
\end{proof}
\section{Preliminary results on automorphism groups of algebraic curves}
\label{prag}
\begin{lemma}
\label{lem25jan2022}
Let $S_p$ be a $p$-subgroup of $\aut(\cX)$ and $\bar{\cX}=\cX/S_p$ be the quotient curve of $\cX$ with respect to $S_p$.
Assume that the cover $\cX|\bar{\cX}$ ramifies, and let $p^\ell$ be the size of the longest short orbit of $S_p$, and $p^s$ be the size of the shortest orbit of $S_p$.  If $q=|S_p|$ and $N$ denotes the number of points of $\cX$ which are fixed by at least one non-trivial element of $S_p$, then
\begin{equation*}
    N \le
\begin{cases}
{\mbox{$\frac{p^\ell}{q-p^\ell}\,(\gamma(\cX)-1)$  if $\gamma(\bar{\cX})\ge 1$}},\\
{\mbox{$\frac{p^\ell}{q-p^\ell}\,(\gamma(\cX)+p^s-1)+p^s$ if $\gamma(\bar{\cX})=0$}}.
\end{cases}
\end{equation*}
\end{lemma}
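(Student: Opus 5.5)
The plan is to apply the Deuring--Shafarevich formula (\ref{eq2deuring}) to $S_p$ and compare the short-orbit contribution with $N$. Let the short orbits of $S_p$ be $o_1,\ldots,o_k$, with $|o_i|=\ell_i=p^{a_i}$ and $p^s\le \ell_i\le p^\ell<q$. A point of $\cX$ is fixed by a non-trivial element of $S_p$ exactly when its stabilizer in $S_p$ is non-trivial, that is, when its orbit is short. Hence
\[
N=\sum_{i=1}^k \ell_i ,
\]
and $k\ge 1$ because the cover ramifies. Deuring--Shafarevich gives
\[
\gamma(\cX)-1=q(\gamma(\bar{\cX})-1)+\sum_{i=1}^k (q-\ell_i).
\]

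The key inequality is that each term satisfies $q-\ell_i\ge \ell_i\,\frac{q-p^\ell}{p^\ell}$. This is equivalent to $q/\ell_i\ge q/p^\ell$, which holds since $\ell_i\le p^\ell$. Summing over all short orbits gives
\[
\sum_{i=1}^k (q-\ell_i)\ \ge\ \frac{q-p^\ell}{p^\ell}\,N .
\]

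I then split into two cases according to $\gamma(\bar{\cX})$.

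\emph{Case $\gamma(\bar{\cX})\ge 1$.} Here $q(\gamma(\bar{\cX})-1)\ge 0$, so $\gamma(\cX)-1\ge \frac{q-p^\ell}{p^\ell}N$. This is the first bound.

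\emph{Case $\gamma(\bar{\cX})=0$.} Now $\gamma(\cX)-1=-q+\sum_i(q-\ell_i)$, so the term $-q$ has to be absorbed. I would single out one short orbit of smallest size $p^s$, say $o_1$. Its term is $q-p^s$, so
\[
\gamma(\cX)-1+q=(q-p^s)+\sum_{i\ge 2}(q-\ell_i)\ \ge\ (q-p^s)+\frac{q-p^\ell}{p^\ell}\,(N-p^s).
\]
Cancelling $q$ gives $\gamma(\cX)+p^s-1\ge \frac{q-p^\ell}{p^\ell}(N-p^s)$, hence
\[
N\le \frac{p^\ell}{q-p^\ell}\,(\gamma(\cX)+p^s-1)+p^s ,
\]
which is the second bound.

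The main point needing care is the meaning of $p^s$. The argument above requires $p^s$ to be the size of a \emph{short} orbit. If $p^s$ instead denoted the shortest orbit overall, it could coincide with a long orbit only when $S_p$ had no short orbit, and that is excluded by ramification. So the orbit chosen in the second case is legitimate.
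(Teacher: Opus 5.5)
Your proof is correct and follows essentially the same route as the paper. Both apply the Deuring--Shafarevich formula to $S_p$, use $N=\sum_i|\Omega_i|$ and the termwise bound $q-\ell_i\ge(q/p^\ell-1)\ell_i$, and, when $\gamma(\bar{\cX})=0$, absorb the term $-q$ by splitting off one shortest short orbit of size $p^s$.
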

\begin{proof} Let $\Omega_1,\ldots,\Omega_m$ denote the short orbits of $S_p$. Then $N=\sum_{i=1}^m |\Omega_i|$. From the Deuring-Shafarevic formula applied to the $p$-ranks $\gamma(\cX)$ and $\gamma(\bar{\cX})$,
\begin{equation}\label{DSLemma}
\gamma(\cX)-1=|S_p|(\gamma(\bar{\cX})-1)+\sum_{i=1}^m (|S_p|-|\Omega_i|).
\end{equation}

If $\gamma(\bar{\cX})\ge 1$ then
$$\gamma(\cX)-1\ge \sum_{i=1}^m (|S_p|-|\Omega_i|)=\sum_{i=1}^{m}\left(\frac{|S_p|}{|\Omega_i|}-1\right)|\Omega_i|\ge \left(\frac{q}{p^\ell}-1\right)\sum_{i=1}^{m}|\Omega_i|=\left(\frac{q}{p^\ell}-1\right)N$$ whence the first claim follows.

From now on $\gamma(\bar{\cX})=0$ is assumed. Let $\Omega_m$ denote the shortest orbit of $S_p$. Then $$\sum_{i=1}^m (|S_p|-|\Omega_i|)=|S_p|-p^s+\sum_{i=1}^{m-1} (|S_p|-|\Omega_i|),$$ which combined with \eqref{DSLemma} yields
$$\gamma(\cX)-1=-p^s+\sum_{i=1}^{m-1} (|S_p|-|\Omega_i|)=-p^s+\sum_{i=1}^{m-1}\left(\frac{|S_p|}{|\Omega_i|}-1\right)|\Omega_i|\ge -p^s+\left(\frac{q}{p^\ell}-1\right)(N-p^s).$$
 whence the second claim follows.
\end{proof}

\begin{lemma}
\label{terribile042025} Let $H$ be a subgroup of $\aut(\cX)$ of an algebraic curve $\cX$ of genus $\gg(\cX) \ge 2$ containing a normal Sylow $d$-subgroup $Q$ with $p\neq d$ such that $[H:Q]$ is prime to $d$. If a complement $U$ of $Q$ in $H$ is abelian but non-cyclic, then
\begin{equation}
\label{eq22bisdic2015}
|H|\le
\begin{cases}
{\mbox{$15(\gg(\cX)-1)$}};\\
{\mbox{$12(\gg(\cX)-1)$ for $d\ge 5$}};
\end{cases}
\end{equation}
\end{lemma}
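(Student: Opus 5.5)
The plan is to run the Hurwitz formula (\ref{eq1}) for $H$ acting on $\cX$ and to use the hypothesis on $U$ through the abelian quotient $H/Q\cong U$. Write $\bar\cX=\cX/H$. If $\gg(\bar\cX)\ge 2$, then (\ref{eq1}) gives $|H|\le \gg(\cX)-1$. If $\gg(\bar\cX)=1$, then there must be at least one short orbit, and each short orbit contributes at least $\ha|H|$ to the different; this already gives $|H|\le 4(\gg(\cX)-1)$. So the real case is $\bar\cX$ rational, where (\ref{eq1}) reads
\[
2\gg(\cX)-2\;\ge\; |H|\Big(-2+\sum_{i}\big(1-\tfrac{1}{e_i}\big)\Big),
\]
with $e_1,\ldots,e_m$ the orders of the stabilizers on the short orbits. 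Equality holds when all short orbits are tame. A non-tame orbit only increases the contribution, since the higher ramification groups add further terms to $d_P$ in (\ref{eq1bis}). Exactly as in the classical proof of the Hurwitz bound, the inequality $|H|>12(\gg(\cX)-1)$ (respectively $>15(\gg(\cX)-1)$) forces $m=3$ and a signature close to $(2,3,n)$, $(2,4,n)$, $(3,3,n)$, $(2,5,5)$ and so on. The first step is to list these finitely many families.

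The second step is structural. Every stabilizer $H_P$ has the form $S_P\rtimes C$ with $C$ cyclic of order prime to $p$, by Result \ref{res74}. Hence $H_P\cap Q$ is a $p'$-group fixing $P$, so it is cyclic. The image $H_PQ/Q$ in $U$ is an abelian quotient of $S_P\rtimes C$. Since $\bar\cX$ is rational and $\cX|\bar\cX$ is a Galois cover, $H$ is generated by the stabilizers of points in the short orbits. Moreover, in the tame situation $H$ is a quotient of the triangle group $\langle x_1,x_2,x_3\mid x_i^{e_i}=x_1x_2x_3=1\rangle$. Therefore $U$ is a quotient of the abelianized triangle group $C_{e_1}\times C_{e_2}/\langle(x_1x_2)^{e_3}\rangle$, and the orders of the images of the $x_i$ must be prime to $d$.

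I would then go through the signatures. The non-cyclic abelian groups that arise this way must have two generators whose orders share a prime $r\neq d$ which also divides $e_3$. This excludes signatures such as $(2,3,n)$, $(3,3,5)$, $(2,5,5)$ and $(2,4,5)$. What survives has total contribution $\sum(1-1/e_i)-2\ge 1/6$, giving the $12(\gg(\cX)-1)$ bound. The distinction at $d=3$ arises because $3\mid e_i$ may come entirely from $Q$: one then has to allow signatures like $(3,3,n)$ in which $U$ is generated through the remaining factors. This yields the weaker $15(\gg(\cX)-1)$, and I would treat $d=3$ and $d\ge5$ separately there.

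The main obstacle will be the non-tame short orbits. There the triangle-group presentation is not available, and $H_P$ may contain a non-trivial Sylow $p$-subgroup. I would handle this by showing that a non-tame orbit forces a larger different: its contribution to $d_P$ is at least $2(|S_P|-1)+(|C|-1)$. This is enough to push $|H|$ below $12(\gg(\cX)-1)$ unless $|S_P|$ is small. In the small cases, I would again argue via the images in $U$: there $S_P$ maps into the $p$-part of $U$, so these cases reduce to the same abelian-quotient count.
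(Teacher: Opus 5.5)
Your tame analysis can be made to work, granting Grothendieck's description of tame covers of $\mathbb{P}^1$ minus three points, which is what supplies the triangle-group presentation in characteristic $p$. The non-tame case, however, is a genuine gap, and not a marginal one. The lemma only assumes $d\neq p$, so $p$ may divide $|U|$; this can happen in the paper's applications to Borel subgroups $T\rtimes H$ with $H$ a torus. In that case the Hurwitz formula for $H$ itself gives no bound.

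Your estimate $d_P\ge 2(|S_P|-1)+(|C|-1)$, or even $d_P\ge(|H_P|-1)+(|S_P|-1)$, does not make the right-hand side positive. Suppose $H$ has a single short orbit and it is non-tame. Then the formula reads $2\gg(\cX)-2=|H|\,(d_P/|H_P|-2)$. The first two ramification groups only give $d_P/|H_P|-2\ge -1+1/|C|-2/|H_P|$, which is negative. The true value is governed by the higher groups $H_P^{(i)}$, $i\ge 2$, which Hurwitz for $H$ does not control, however small $|S_P|$ is. The same happens for one non-tame orbit with $|C|\ge 2$ together with one tame orbit.

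Nothing in your argument excludes these configurations. With a single short orbit, $H$ is generated by the conjugates of $H_P$, and these all have the same image in the abelian group $U$. So the hypotheses merely force $U\cong S_P\times C'$ with $S_P$ abelian non-cyclic. Your fallback of arguing ``via the images in $U$'' fails as well. The abelian-quotient count rests on the product-one relation among tame inertia generators. The $p$-part of $U$ is the group of an Artin--Schreier--Witt subcover of $\cX/Q\to\cX/H$. Covers $y^q-y=f(x)$ with $f$ a polynomial show that an elementary abelian $p$-group of any rank can be the inertia group at a single point of $\mathbb{P}^1$, with no relation to other branch points.

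The missing idea is to apply Riemann--Hurwitz to $Q$ instead of $H$. Since $d\neq p$, the cover $\cX\to\cX/Q$ is tame, so only orbit sizes enter. All wildness is transferred to the action of $U\cong H/Q$ on $\cX/Q$, where it is controlled geometrically. This is the paper's proof, split by the genus of $\cX/Q$:
\begin{itemize}
\item If $\gg(\cX/Q)\ge 2$, combine $|U|\le 4\gg(\cX/Q)+4$ with $\gg(\cX)-1\ge |Q|(\gg(\cX/Q)-1)$.
\item If $\cX/Q$ is elliptic, use that point stabilizers in $U$ are small cyclic groups, and count the $H$-images of a short $Q$-orbit.
\item If $\cX/Q$ is rational, Dickson's list leaves only a Klein group ($p>2$) or an elementary abelian $p$-group of order $>p$ for $U$. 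In both cases $U$ multiplies the number of short $Q$-orbits.
\end{itemize}
One caution on the elliptic case. When there is a stabilizer $\langle\sigma\rangle$ of order $6$, which forces $d\ge 5$, the paper's count only yields $15(\gg(\cX)-1)$. To get $12(\gg(\cX)-1)$, note that $1-\sigma$ is then an automorphism of the elliptic curve. Hence every element of an abelian $U\ni\sigma$ fixes the same point, and $U$ is cyclic.

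Separately, your tame count is not correct as stated. The filter ``some prime $r\neq d$ divides every $e_i$'' still admits $(2,4,6)$, $(2,4,8)$ and $(2,4,10)$. Their contributions $\frac1{12},\frac18,\frac3{20}$ are below $\frac16$. To remove them you need that $\langle x_i\rangle\cap Q$ is the full Sylow $d$-subgroup of $\langle x_i\rangle$. Then the image of $x_i$ in $U$ has order exactly the $d'$-part of $e_i$. For $(2,4,8)$, for instance, this would require an element of order $8$ in a quotient of $C_2\times C_4$.

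Your $(3,3,n)$ remark for $d=3$ is also backwards: in that case $x_1,x_2\in Q$, so $U$ is cyclic. With these corrections the tame case gives $12(\gg(\cX)-1)$ for every $d$.
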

\begin{proof}
Assume on the contrary that (\ref{eq22bisdic2015}) does not hold.
Let $|Q|=d^k$. Three cases are treated separately according as the quotient curve $\bar{\cX}=\cX/Q$ has genus $\gg(\bar{\cX})$ at least $2$, or $\bar{\cX}$ is elliptic, or rational.

For $\gg(\bar{\cX})\geq 2$, since $\aut(\bar{\cX})$ has a subgroup isomorphic to $U$,  Result \ref{res60.79.108} yields $4\gg(\bar{\cX})+4\geq |U|.$ Furthermore, from
the Hurwitz genus formula applied to $Q$, we have $\gg(\cX)-1\geq |Q|(\gg(\bar{\cX})-1)$. Therefore,
$$(4\gg(\bar{\cX})+4)|Q|\geq |U||Q|=|H|>12(\gg(\cX)-1) \geq 12 |Q|(\gg(\bar{\cX})-1),$$
whence $4\gg(\bar{\cX})+4>12(\gg(\bar{\cX})-1)$,
a contradiction with $\gg(\bar{\cX})\geq 2$.

If $\bar{\cX}$ is elliptic, then the cover $\cX|\bar{\cX}$ ramifies, otherwise $\cX$ itself would be elliptic. Thus, $Q$ has some short orbits. Take one of them, say $o_1$, together with its images
$o_1,\ldots,o_m$ under the action of $H$. Since $Q$ is a normal subgroup of $H$, $o=o_1\cup\ldots\cup o_m$ is a $H$-orbit of size $md^{v}$ where $d^{v}=|o_1|=\ldots =|o_m|$. From the Orbit theorem, the stabilizer $H_P$ of a point $P\in o$ has order
$d^{k-v}|U|/m$, and it is the semidirect product $Q_1\rtimes U_1$ where $|Q_1|=d^{k-v}$ and $|U_1|=|U|/m$ for a subgroup $Q_1$ of $Q$ and $U_1$ of $U$, respectively.
The point $\bar{P}$ lying under $P$ in the cover $\cX| \bar{\cX}$ is fixed by the factor group $\bar{U}_1=U_1Q/Q\cong U_1$.
From Result \ref{res94}, $|\bar{U}_1|\in \{2,4,6\}$. 
If $|U_1|=6$ then $d\ne 2,3$, as $(d,|U|)=1$. Hence $d\ge 5$. Therefore $|Q|-d^v\ge \frac{4}{5} |Q|$ whence, by the Riemann Hurwitz formula applied to $Q$,
$$ 2\gg(\cX)-2\geq m(|Q|-d^v)\geq\textstyle{\frac{4}{5}} m |Q|\geq \textstyle{\frac{4}{5}} \frac{|U|}{|U_1|}|Q|\ge \textstyle{\frac{4}{30}}|H|,$$
i.e. $15(\gg(\cX)-1)\ge |H|,$ a contradiction.

If $|U_1|=2,4$ then $d\ne 2$, as $(d,|U|)=1$. Hence $d\ge 3$. Therefore $|Q|-d^v\ge \frac{2}{3} |Q|$ whence
$$ 2\gg(\cX)-2\geq m(|Q|-d^v)\geq \textstyle{\frac{2}{3}} m |Q|\geq \textstyle{\frac{2}{12}}|H|,$$
i.e.
$12(\gg(\cX)-1)\ge |H|,$
a contradiction.

Finally, assume that $\bar{\cX}$ is rational. Since $p\nmid |Q|$, the Hurwitz genus formula applied to $Q$ reads
$$
2\gg(\cX)-2=-2|Q| + \sum_{i=1}^n (|Q|-|o_i|),
$$
where $o_1,\ldots,o_n$ are the short orbits of $Q$ on $\cX$. This shows that $Q$ has at least three short orbits on $\cX$. Moreover, $\bar{U}=UQ/Q$ is isomorphic to a subgroup of $PGL(2,\K)\cong\aut(\bar{\cX})$. Since $U\cong \bar{U}$ with $\bar{U}$ abelian non-cyclic, Result \ref{resdickson} shows that $\bar{U}$ is either a Klein group and $p>2$, or it is an elementary abelian $p$-group of order larger than $p$.

In the former case, $|H|=4|Q|$, $|Q|$ is odd and $|Q|>3(\gg(\cX)-1)$.
Moreover, since the quotient curve $\bar{\cX}/\bar{U}$ is also rational, the Riemann-Hurwitz formula applied to $\bar{U}$ shows that $\bar{U}$ has exactly three short orbits of size $2$ on $\bar{\cX}$, whereas the other orbits have size $4$ Since $\bar{U}$ fixes no point of $\bar{\cX}$, $U$ does not preserve any $Q$-orbit. Hence $Q$ has at least four short orbits on $\cX$.
Now, the Hurwitz genus formula applied to $Q$ reads
$$
2\gg(\cX)-2=-2|Q| + \sum_{i=1}^n (|Q|-|o_i|)\geq -2|Q|+n(|Q|-\textstyle{\frac{1}{3}}|Q|)=|Q|(-2+\frac{2}{3}n)\ge|Q|(-2+\frac{2}{3}4)\ge \textstyle{\frac{2}{3}}|Q|\ge \textstyle{\frac{1}{6}}|H|,
$$
whence $12(\gg(\cX)-1)\ge |H|$, a contradiction.

If $U$ is an elementary abelian $p$-group, then $\bar{U}$ fixes a unique point $\bar{P}$ of $\bar{\cX}$ but no non-trivial element of $\bar{U}$ fixes a point other than $\bar{P}$. Therefore, $U$ preserves a unique $Q$-orbit. Since $Q$ has at least three short-orbits, there exists a short $Q$-orbit, say $o$, such that no non-trivial element of $U$ preserves $o$. Therefore, the images of $o$ under the action of $U$ give rise to as many as $|U|$ short $Q$-orbits.

Assume first $p\neq 2$. Since $U$ is not cyclic and $p\neq 2$, we have $|U|\ge 9$. Then from the Hurwitz genus formula applied to $Q$,
$$2\mathfrak{g}(\cX)-2\ge -2|Q|+|U|(|Q|-|o|)\ge -2|Q|+\ha |U||Q|= \ha |H| -2\frac{|H|}{|U|}\geq \ha |H| -\textstyle{\frac{2}{9}}|H|,$$
whence $12(\gg(\cX)-1)> |H|$, a contradiction.

On the other hand, if $p=2$, then $|U|\ge 4$ and $|Q|$ is odd. Therefore,
$$2\mathfrak{g}(\cX)-2\ge -2|Q|+|U|(|Q|-\textstyle{\frac{1}{3}}|Q|)= -2|Q|+\textstyle{\frac{2}{3}} |U||Q|\geq \textstyle{\frac{2}{3}} |H| -2\textstyle{\frac{1}{4}}|H|=\textstyle{\frac{1}{6}}|H|,$$
whence $12(\gg(\cX)-1)\ge |H|$, a contradiction.
\end{proof}
\begin{proposition}
\label{pro17052025} For $p\neq 2$,
let $G$ be a subgroup of $\aut(\cX)$ isomorphic to $\PSL(n,2)$, $n\ge 3$.
Let $S_p$ be a Sylow $p$-subgroup of $G$. If $|S_p|^2\ge \gg(\cX)$, then $\cX$ has zero $p$-rank.
\end{proposition}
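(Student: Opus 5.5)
The plan is to argue by contradiction. Assume $\gamma(\cX)\ge 1$ and play off three bounds against each other. The first two are upper bounds for $|G|$ and $|S_p|$ in terms of $\gg(\cX)$ that positive $p$-rank forces. The third is the group-theoretic fact that a Sylow $p$-subgroup of $\PSL(n,2)$ is tiny compared with $|G|$.

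\textbf{Step 1: bound $|G|$.} Since $\gamma(\cX)>0$, Result \ref{henn} gives $|G|<8\gg(\cX)^3$. Combined with the hypothesis $\gg(\cX)\le|S_p|^2$, this yields
$$|G|<8|S_p|^6 .$$

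\textbf{Step 2: bound $|S_p|$ and conclude for large $n$.} Since $p$ is odd, Result \ref{res17052025} applies with $(K,M)=(n-1,n)$ and gives $|S_p|\le |G|^{(\lfloor \log_p n\rfloor+1)/(n-1)}$. Substituting into Step 1,
$$|G|<8\,|G|^{6(\lfloor\log_p n\rfloor+1)/(n-1)} .$$
Write $e=6(\lfloor\log_p n\rfloor+1)/(n-1)$. Then $e<1$ as soon as $n-1>6(\lfloor\log_3 n\rfloor+1)$, which holds for $n\ge 20$ or so. In that range we get $|G|^{1-e}<8$, which is impossible because $|\PSL(n,2)|$ is huge. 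For large $n$ the contradiction is therefore immediate.

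\textbf{Step 3: the finitely many small $n$.} For $3\le n\le 19$ the crude bound is not enough, so I would treat each $n$ directly. Here I would use the exact value of $|S_p|$: for $p\ge 3$ it is the $p$-part of $\prod_{i=2}^{n}(2^i-1)$. I would also replace the Henn bound by sharper ones. If $\gamma\ge 2$, Nakajima's bound (Result \ref{resnaka}) gives $|S_p|\le \frac{p}{p-2}(\gamma-1)\le 3(\gg(\cX)-1)$. If $\gamma=1$, Result \ref{resnaka} says $S_p$ is cyclic and fixed-point-free with $|S_p|$ dividing $\gg(\cX)-1$.

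Next I would exploit the structure of $N_G(S_p)$. Result \ref{res74} says a point stabilizer in $G$ is $S\rtimes C$ with $C$ cyclic. I would use this to control the short orbits of $G$ through Result \ref{res56.116}, and then apply the Deuring--Shafarevich formula to $S_p$. The aim is to show that $|G|$ is too large compared with $84(\gg(\cX)-1)$ or $24\gg(\cX)^2$, given $\gg(\cX)\le|S_p|^2$. When $|G|>84(\gg(\cX)-1)$, Result \ref{res56.116} forces a non-tame short orbit of $G$ whose stabilizer has the shape in Result \ref{res74}. Checking this against the list of subgroups of $\PSL(n,2)$ should then force $\cX/S_p$ to be rational with $S_p$ having a single fixed point, and Result \ref{lem11.131} would give $\gamma(\cX)=0$.

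\textbf{Main obstacle.} The hardest part will be the smallest cases, above all $n=3$, where $G\cong\PSL(2,7)$ and $p\in\{3,7\}$. There $|S_p|^2$ is comparable to $|G|$, so size counting alone does not work. For these cases I would first run a careful Riemann--Hurwitz and Deuring--Shafarevich case analysis over the possible orbit structures: with $p=7$, the $S_7$-orbits have size $1$ or $7$ and the stabilizers lie in $C_7\rtimes C_3$.
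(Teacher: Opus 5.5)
Your Steps 1 and 2 reproduce the paper's argument for large $n$. Result \ref{henn} turns $\gamma(\cX)>0$ and $\gg(\cX)\le|S_p|^2$ into $|G|<8|S_p|^6$, and Result \ref{res17052025} rules this out once $n$ is large. You keep the factor $8$ correctly, which the paper's displayed chain does not. The genuine gap is Step 3. It is a list of tools rather than an argument, and it cannot be completed, because for $n=3$ the proposition is false.

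Here is a counterexample. Take $p=7$ and let $\cX$ be given by $y^3=F(x)$ with $F(x)=(x^{49}-x)/(x^7-x)=\prod_{\beta\in\mathbb{F}_{49}\setminus\mathbb{F}_7}(x-\beta)$.
\begin{itemize}
\item This is a tame cyclic triple cover of $\mathbb{P}^1$, ramified exactly at the $42$ roots of $F$. It is unramified over $x=\infty$ because $3\mid 42$. Hence $2\gg(\cX)-2=-6+84$, so $\gg(\cX)=40$.
\item For $\sigma(x)=(ax+b)/(cx+d)$ in $\PGL(2,7)$ one has $F(\sigma(x))=\lambda F(x)(cx+d)^{-42}$. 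So $\sigma$ lifts by $y\mapsto\mu y(cx+d)^{-14}$ with $\mu^3=\lambda$.
\item The lifts of $\PSL(2,7)$ commute with $y\mapsto\omega y$. They therefore form a central extension of $\PSL(2,7)$ by $C_3$. This extension splits, since the Schur multiplier of $\PSL(2,7)$ has order $2$.
\item Hence $\aut(\cX)$ contains $G\cong\PSL(2,7)\cong\PSL(3,2)$, and $|S_7|^2=49\ge 40=\gg(\cX)$.
\item A Sylow $7$-subgroup $S_7$ is generated by $(x,y)\mapsto(x+1,y)$. Its fixed points are exactly the three points of $\cX$ over $x=\infty$.
\item The Deuring--Shafarevich formula then gives $\gamma(\cX)=7\gamma(\cX/S_7)+12\ge 12$.
\end{itemize}

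In particular, the endpoint you aim for in Step 3 fails here: $S_7$ has three fixed points, not one, so you never reach Result \ref{lem11.131}. Also $|G|=168<84(\gg(\cX)-1)$, so the short-orbit classification in Result \ref{res56.116} is not available either.

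This is not a trick you missed. The paper disposes of all $n\le 24$ by a claimed computer check that $|G|>8|S_p|^6$, and that claim is false for small $n$. For example, $168<8\cdot 3^6$ for $n=3$, and $20160<8\cdot 9^6$ for $n=4$ with $p=3$. Your remark that size counting breaks down at $n=3$ is correct.

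What Steps 1--2, and the paper's argument, actually establish is the statement for $n$ beyond an explicit threshold (about $n\ge 20$). For the remaining $n$ the proposition cannot be proved as written. It has to be restricted, or supplemented by the hypotheses under which it is later applied, before any case analysis can succeed.
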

\begin{proof} Assume on the contrary that $\gamma(\cX)>0$. From 
Result \ref{henn},
$|G|<8\gg(\cX)^3$. Therefore, $|G|<8|S_p|^6$. A Magma aided inspection on the order formula of $\PSL(n,2)$ shows $|G|>8|S_p|^6$ for $n\le 24$. Therefore, $n\ge 25$ is assumed.
From Result \ref{res17052025},
$$|G|<8|S_p|^6< 8|G|^{6(\lfloor \log_r{(M)}+1\rfloor)/K}=8|G|^{6(\log_p{(n)}+1)/(n-1)}$$ whence $1< \log_{|G|}8+6(\log_p{(n)}+1)/(n-1)<6(\ln(n)/\ln(3)+1)/(n-1)$. From this
\begin{equation}
\label{eq17052025}
\frac{\ln(n)}{\ln(3)}+1 > \textstyle{\frac{1}{6}}(n-1).
\end{equation}

The real function
\begin{equation}
\label{eq17052025A}
f(x)=\left(\frac{\ln(x)}{\ln(3)}+1\right)/\left(x-1\right)
\end{equation}
defined in the interval $(2,\infty)$ is decreasing and $f(25)<1/6$.
Therefore, for $n\geq 25$ we have a contradiction with \eqref{eq17052025}. 

\end{proof}
\begin{proposition}
\label{pro020222} Let $\Tilde{\cX}$ be a curve defined over an algebraically closed field of characteristic $p$. Let $\Tilde{\Gamma}$ be a subgroup of $\aut(\Tilde{\cX})$ with a non-tame orbit $\Delta$, $|\Delta|>2$, such that
\begin{itemize}
\item[(i)] $\Tilde{\Gamma}$ is doubly transitive on $\Delta$,
\item[(ii)] the kernel of the action of $\Tilde{\Gamma}$ on $\Delta$ is trivial.
\end{itemize}
Then either 
$\Tilde{\Gamma}$ is solvable, or $\Tilde{\Gamma}$ acts on $\Delta$  as one of the following groups in its natural $2$-transitive permutation representation where $q$ is a power of $p$.
\begin{itemize}
\item[(A)] $\Tilde{\Gamma}\cong \PSL(2,q)$ with $|\Delta|=q+1$, and $q\ge 4$;
\item[(B)] $\Tilde{\Gamma}\cong \PGL(2,q)$ with $|\Delta|=q+1$ and $q\ge 5$ odd;
\item[(C)] $\Tilde{\Gamma}\cong \PSU(3,q)$ with $|\Delta|=q^3+1$, and $q\ge 3$;
\item[(D)] $\Tilde{\Gamma}\cong \PGU(3,q)$ with $|\Delta|=q^3+1$ and $q\equiv -1 \pmod 3$, $q\ge 3$;
\item[(E)] $\Tilde{\Gamma}\cong Sz(q)$ with $q=2q_0^2$, $q_0=2^s, s\ge 2$, and $|\Delta|=q^2+1$;
\item[(F)] $\Tilde{\Gamma}\cong \Ree(q)$ with $q=3q_0^2$, $q_0=3^s, s \ge 3$, and $|\Delta|=q^3+1$;
\item[(G)] $\Tilde{\Gamma}\cong {\rm{P\Gamma L}}(2,8)=Ree(3)$ and $|\Delta|=28$.
\end{itemize}
\end{proposition}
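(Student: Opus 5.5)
Let $P\in\Delta$ and write $\Tilde{\Gamma}_P=S_P\rtimes H_P$ as in Result \ref{res74}, where $S_P\neq 1$ because $\Delta$ is non-tame and $H_P$ is cyclic of order prime to $p$. Since the kernel of the action on $\Delta$ is trivial, $\Tilde{\Gamma}$ is a faithful doubly transitive permutation group on $\Delta$ whose one-point stabilizer is solvable (indeed $p$-group by cyclic) with a non-trivial normal $p$-subgroup $S_P$. We may assume $\Tilde{\Gamma}$ is non-solvable. The plan is to use the classification of $2$-transitive groups with solvable point stabilizer, namely Holt's theorem (Result \ref{holtre}) when $|\Delta|$ is even and O'Nan's theorem (Result \ref{onanre}) when $|\Delta|$ is odd. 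For the latter, $Z(S_P)$ is a non-trivial abelian normal subgroup of $\Tilde{\Gamma}_P$.

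\textbf{Step 1: reduce to a simple socle.} A regular normal subgroup would be elementary abelian, since the socle of an affine $2$-transitive group is elementary abelian. Then $\Tilde{\Gamma}\le \AGL(1,\ldots)$-type, and with solvable stabilizer $\Tilde{\Gamma}$ is solvable; this case is discarded. Otherwise $\Tilde{\Gamma}$ has a normal $2$-transitive subgroup $W$ from the lists: $\PSL(2,q)$, $\PSU(3,q)$, $\Ree(q)$ or $\PSL(2,8)$ of degree $28$ (Holt), and $\PSL(r+1,q)$, $\PSU(3,2^k)$ or $Sz(2^{2k+1})$ (O'Nan). Each appears in its natural action, and since $W_P$ is solvable, $r=1$ in the $\PSL(r+1,q)$ case.

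\textbf{Step 2: identify the characteristic.} In every natural action the stabilizer $W_P$ is a Borel subgroup $U\rtimes T$, with $U$ a Sylow subgroup for the defining prime $d$ and $T$ abelian, often cyclic. The group $W_P\le\Tilde{\Gamma}_P$ is a $p$-group extended by a cyclic group. Its unique Sylow $p$-subgroup $W_P\cap S_P$ is normal in $W_P$ with cyclic quotient, so it must contain $U$ when $U$ is non-cyclic or when $T$ does not centralize $U$. This forces $d=p$, so $q$ is a power of $p$. The small cases where $U$ is cyclic of prime order, for instance $\PSL(2,d)$, need a direct check: then $W_P=C_d\rtimes C_{(d-1)/2}$, which is metacyclic, so we argue that $S_P\cap W$ must be non-trivial. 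Here I would use that $S_P$ is normal in $\Tilde{\Gamma}_P$, so $S_P\cap W\trianglelefteq W_P$. If $S_P\cap W=1$, then $S_P$ centralizes $W_P$ modulo considerations, which yields a contradiction via $C_{\Tilde{\Gamma}}(W)=1$.

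\textbf{Step 3: pin down $\Tilde{\Gamma}$ between $W$ and $\mathrm{Aut}(W)$.} Since $C_{\Tilde{\Gamma}}(W)=1$, we have $W\le\Tilde{\Gamma}\le \mathrm{Aut}(W)$. Now $\Tilde{\Gamma}_P$ has a cyclic complement to its Sylow $p$-subgroup. For $W=\PSL(2,q)$, Result \ref{lem020222} excludes every overgroup involving field automorphisms except $\PGL(2,q)$, giving (A) and (B). Here $q\ge4$ because $\PSL(2,2)$ and $\PSL(2,3)$ are solvable, and $\PGL(2,q)=\PSL(2,q)$ for even $q$. An analogous computation with the Borel subgroup of $\mathrm{P\Gamma U}(3,q)$ leaves $\PSU(3,q)$ and $\PGU(3,q)$, the latter distinct only when $3\mid q+1$, giving (C) and (D). The same computation in $\mathrm{Aut}(Sz(q))$ and $\mathrm{Aut}(\Ree(q))$ leaves only the simple groups, giving (E) and (F), with small $q$ excluded by solvability or by coincidences with other cases. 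The degree-$28$ case gives $\mathrm{P\Gamma L}(2,8)$, where the stabilizer $C_9\rtimes C_6$-type has normal Sylow $3$-subgroup and cyclic quotient with $p=3$, giving (G).

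\textbf{Main obstacle.} The delicate part is Step 3: showing that adjoining field automorphisms destroys the structure "normal Sylow $p$ with cyclic complement" in each family. The bookkeeping of small $q$ in (E) and (F) is also delicate. I would try first to imitate the explicit matrix computation of Result \ref{lem020222}, exhibiting two non-commuting elements in a would-be complement.
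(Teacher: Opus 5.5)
Your strategy is the paper's: Holt's theorem for even $|\Delta|$ and O'Nan's theorem (fed by $Z(S_P)$) for odd $|\Delta|$; the embedding $W\le\Tilde{\Gamma}\le\aut(W)$; identifying the defining characteristic with $p$ because $W_P\cap S_P$ is a normal Sylow $p$-subgroup of $W_P$ with cyclic quotient; and then Result \ref{lem020222} and its analogues to exclude field automorphisms. The only methodological difference is in Step 3 for $\PSU(3,q)$ and $\Ree(q)$. There the paper does not compute in the Borel subgroup of $\aut(W)$. Instead it restricts to the stabilizer of a Baer subconic (resp.\ the centralizer of an involution), where the group induces a subgroup of ${\rm P\Gamma L}(2,q)$, and so reduces to the $\PSL(2,q)$ case. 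Your direct Borel computation modelled on Result \ref{lem020222} is a legitimate alternative, and is essentially what the paper does for $\Sz(q)$.

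One step fails as written, plus some smaller slips.

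\textbf{Step 1.} Solvability of $W_P$ does not force $r=1$ in O'Nan's case (i). $\PSL(3,2)$ on $7$ points has stabilizer ${\rm Sym}_4$, and $\PSL(3,3)$ on $13$ points has stabilizer $3^2{:}{\rm GL}(2,3)$; both are solvable. Remove them with your own Step 2 criterion: neither group has a normal $p$-subgroup with cyclic quotient for any prime $p$, since the quotients by $O_p$ are ${\rm Sym}_3$, ${\rm Sym}_4$, ${\rm GL}(2,3)$ or the whole group. The same check is needed for ${\rm P\Gamma L}(2,4)\cong{\rm Sym}_5$ on $5$ points, which Result \ref{lem020222} (stated for $q\ge 5$) does not cover. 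The paper handles $\PSL(3,3)$ via the non-cyclic Sylow $2$-subgroup of the stabilizer and passes over the other two.

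\textbf{Step 2.} The ``direct check'' paragraph for $\PSL(2,d)$ is not a valid argument as it stands, but it is also unnecessary. If $d\neq p$, then $[U,T]\le U\cap S_P=1$. But $T\cong C_{(d-1)/2}$ acts faithfully on $U\cong C_d$ for $d\ge 5$, so your criterion already gives $d=p$.

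\textbf{Small $q$ in (E) and (F).} Your small-$q$ remark removes only $\Sz(2)$ (solvable) and $\Ree(3)$ (which is (G)). The groups $\Sz(8)$, $\Ree(27)$ and $\Ree(243)$ survive; they act on the corresponding Deligne--Lusztig curves. So your argument yields (E) for $q\ge 8$ and (F) for $q\ge 27$, not the ranges $s\ge 2$ and $s\ge 3$ printed in the statement. Those ranges look like misprints, since the paper's own proof works with $q_0\ge 2$ and $q_0\ge 1$.
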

\begin{proof}
From (i) and (ii), $\Tilde{\Gamma}$ can be viewed as a $2$-transitive permutation group on $\Delta$. Thus, $\Tilde{\Gamma}$ has trivial center, and the (unique) minimal normal subgroup $W$ of $\Tilde{\Gamma}$ is either non-solvable or an elementary abelian group. Since $2$-transitivity implies primitivity, Result \ref{isprimitive} yields that $W$ is transitive on $\Delta$. From Result \ref{res74}, the $1$-point stabilizer of $P$ in $\Tilde{\Gamma}$ is solvable. Therefore, if $W$ is elementary abelian then $\Tilde{\Gamma}$ is solvable. In particular, if $\Tilde{\Gamma}$ has a regular normal subgroup, then $\Tilde{\Gamma}$ is solvable.

We may assume that $W$ and hence $\Tilde{\Gamma}$ are non-solvable. If $|\Delta|$ is even, Result \ref{holtre} shows that $W$ is isomorphic to either $\PSL(2,q)$, or $\PSU(3,q)$,  or $\Ree(q)$, and $W$ also satisfies Conditions (i) and (ii), or $W\cong \PSL(2,8)$ and $|\Delta|=28$.
For the case where $|\Delta|$ is odd, we point out that the hypotheses of Result \ref{onanre} are satisfied. If $p$ divides the order of the $1$-point stabilizer $\Tilde{\Gamma}_P$ of $\Tilde{\Gamma}$, then the center of the Sylow $p$-subgroup of $\Tilde{\Gamma}_P$ is an abelian characteristic subgroup of $\Tilde{S}_P$ and hence it is an abelian normal subgroup of $\Tilde{\Gamma}_P$ as $\Tilde{S}_P$ is a normal subgroup of $\Tilde{\Gamma}_P$ by (i) of Result \ref{res74}. Otherwise, $\Tilde{\Gamma}_P$ is cyclic, and it is itself an abelian normal subgroup of $\Tilde{\Gamma}_P$. From Result \ref{onanre}, $W$ is isomorphic to either $\PSL(r+1,q)$, or $\PSU(3,q)$, or $Sz(q)$ for some $q$. The case (i) in Result \ref{onanre} only occurs for $r=1$ since the $1$-point stabilizer of $\PSL(r+1,q)$ for $r>1$ is isomorphic to $\AGL(2,q)$ and hence it is not solvable with an exception for $r=2,q=3$. However, $\AGL(2,3)$ has a non-cyclic Sylow $2$-subgroup, and hence the exceptional case cannot actually occur either. Thus in case (i), $q$ is even and $W\cong \PSL(2,q)$.

From the above discussion, $\Tilde{\Gamma}$ is isomorphic to a subgroup of $\aut(W)$ containing $W$ where $W$ is isomorphic to one of the simple groups $\PSL(2,q),\PSU(3,q),Sz(q),\Ree(q)$.
We investigate $\aut(W)$ case by case.

\subsection{$W\cong \PSL(2,q)$, $|\Delta|=q+1$}
\label{ss1} In this case, $\PSL(2,q)\le \Tilde{\Gamma} \le{\rm{P\Gamma L}}(2,q)$ up to an isomorphism. We may apply Proposition \ref{lem020222} for $d=p$ since the one-point stabilizer of $\PSL(2,q)$ contains no normal subgroup of order different from a power of $d$, and  hence Result \ref{res74} implies $d=p$.  Therefore,
Proposition \ref{lem020222} together with Result \ref{res74} show the claim.

\subsection{$W\cong \PSU(3,q)$, $|\Delta|=q^3+1$}
\label{ss2} In this case  $\PSU(3,q)\le \Tilde{\Gamma} \le {\rm{P\Gamma U}} (3,q)$ up to an isomorphism.  where $q=d^r$ with a prime $d$. As in \ref{ss1}, $d=p$, since the one-point stabilizer of $\PSU(3,q)$ contains no normal subgroup of order different from a power of $d$.
In its unique doubly transitive representation, ${\rm{P\Gamma U}}(3,q)$ is the semilinear group on the set $\cH_q(\mathbb{F}_{q^2})$ of $\mathbb{F}_{q^2}$-rational points of the Hermitian curve $\cH_q$ of homogeneous equation $Y^qZ+YZ^q-X^{q+1}=0$. For $q$ odd, let $\mathcal{C}$ be the conic of homogeneous equation $YZ-\ha X^2=0$, while for $q$ even take for $\mathcal{C}$ the line of equation $X=0$. First, the odd order case is investigated. In $\PG(2,q^2)$, $\mathcal{C}$ is a Baer subconic of $\cH_q$ consisting of $q+1$ points, as it comprises all points of $\cH_q$ which are fixed by a Baer-involution, i.e. $\mathcal{C}\cap \cH_q$ is the set of all fixed points of a semilinear involution of ${\rm{P\Gamma U}}(3,q)$. The Baer-involutions of ${\rm{P\Gamma U}}(3,q)$ form a unique conjugacy class in $\PGU(3,q)$. Let $\mathcal{D}$ be the set of all Baer subconics of $\cH_q$. The subgroup of ${\rm{P\Gamma U}}(3,q)$ which preserves the Baer subconic $\mathcal{C}\cap \cH_q$ is isomorphic to ${\rm{P\Gamma L}}(2,q)$ and it acts faithfully on $\mathcal{C}\cap \cH_q$ as ${\rm{P\Gamma L}}(2,q)$ in its natural $2$-transitive permutation representation.
Let $\Tilde{\Gamma}^*={\rm{P\Gamma L}}(2,q)\cap \Tilde{\Gamma}$. From the Orbit theorem applied to the action on $\mathcal{D}$, it turns out that if $\Tilde{\Gamma}$ is not contained in $\PGU(3,q)$ then $\Tilde{\Gamma}^*$ is not contained in $\PGL(2,q)$. But we have already ruled out this possibility in the proof of \ref{ss1}. The same argument may be adapted to settle the even order case, the unique formal difference being that Baer subconic has to be replaced by Baer subline, as a Baer-involution of  ${\rm{P\Gamma U}}(3,q)$ for even $q$ fixes the common points of $\cH_q$ with a line.
\subsection{$W\cong Sz(q)$, $q=2q_0^2$, $q_0=2^s \ge 2 $, $|\Delta|=q^2+1$}\label{szu}  In this case,
$|\Tilde{\Gamma}|=h(q^2+1)q^2(q-1).$ 
Moreover $p=2$, since the one-point stabilizer of $Sz(q)$ contains a non-cyclic $2$-subgroup.
Assume that $h>1$. Since $\Tilde{\Gamma}$ is $2$-transitive on $\Delta$, it has a cyclic, semilinear subgroup of order $h$ fixing two points. In the canonical representation of $Sz(q)$ on the points of the Suzuki plane curve $\cC$ of homogeneous equation $X^{q_0}(X^q+XZ^{q-1})=Y^qZ^{q_0}+YZ^{q+q_0-1}$, take its points $P_\infty=(0:1:0)$ and $P_0=(0:0:1)$. The semilinear transformation $\varphi(X,Y,Z)\mapsto (X^\sigma,Y^\sigma,Z^\sigma)$ with $\sigma=2^{m/h}$ generates a subgroup  $L$ of $\Tilde{\Gamma}$ of order $h$. The linear transformations $\varphi_\lambda(X,Y,Z)\mapsto (X,\lambda Y,Z), \lambda\in \mathbb{F}_{q}^*$ form a cyclic subgroup $C$ of $Sz(q)$. Since $L$ normalizes $C$, $CL$ is a subgroup of odd order which fixes both $P_\infty$ and $P_0$. Therefore, $CL$ is a cyclic by  Result \ref{res74}. On the other hand, $\psi$ and  $\varphi_\lambda$ do not commute if $\lambda$ is chosen in $\mathbb{F}_q$ so that $\lambda$ does not belong to any proper subfield of $\mathbb{F}_q$.

\subsection{$W\cong \Ree(q)$, $q=3q_0^2$, $q_0=3^s\ge 1$, $|\Delta|=q^3+1$}
\label{ss3}
Here, $\Ree(q)\le \Tilde{\Gamma}\le \aut(\Ree(q))\cong \Ree(q)\rtimes C_{2s+1}$.
Moreover $p=3$, since the one-point stabilizer of $\Ree(q)$ contains a non-cyclic $3$-subgroup.
In its unique doubly transitive representation, $\aut(\Ree(q))$ is the automorphism of the Ree unital. The involutions of $\aut(\Ree(q))$ are those in $\Ree(q)$. The centralizer of an involution $z\in\ \Ree(q)$ in $\aut(\Ree(q))$ is, up to an isomorphism, the direct product $\langle z \rangle \times {\rm{ P\Gamma L}}(2,q)$ and ${\rm{P\Gamma L}}(2,q)$ acts on the set of fixed points of $z$ as ${\rm{P\Gamma L}}(2,q)$ in its usual doubly transitive permutation representation. The arguments used in the final part of \ref{ss2} shows that if
$\Tilde{\Gamma}\subsetneqq \Ree(q)$ then $\Tilde{\Gamma}^*={\rm{P\Gamma L}}(2,q)\cap \Tilde{\Gamma}$ is not contained in $\PGL(2,q)$.  But this possibility is ruled out in \ref{ss1}.
 \end{proof}
 
The following result can be verified by using the database of small primitive groups implemented in MAGMA.
\begin{result}
\label{cladegree9} The solvable $2$-transitive permutation groups of degree $9$ are:
\begin{itemize}
\item[\rm(L1)] $\AGL(2,3)\cong {\rm{P\Gamma U}}(3,2)\cong(C_3\times C_3)\rtimes H$, with  $H/Z(H)\cong {\rm{Sym}}_4, |Z(H)|=2$;
\item[\rm(L2)] ${\rm{ASL}}(2,3)\cong \PGU(3,2)\cong(C_3\times C_3)\rtimes H$, with  $H/Z(H)\cong {\rm{Alt}}_4, |Z(H)|=2$;
\item[\rm(L3)] ${\rm{A\Gamma L}}(1,9)\cong (C_3\times C_3)\rtimes H$, with  $H/Z(H)\cong D_4, |Z(H)|=2$;
\item[\rm(L4)] $\AGL(1,9)\cong (C_3\times C_3)\rtimes H$, with  $H=C_8$;
\item[\rm(L5)] ${\rm{A\gamma L}}(1,9)\cong \PSU(3,2)\cong (C_3\times C_3)\rtimes H$, with  $H/Z(H)\cong Q_8, |Z(H)|=2$;
\end{itemize}
where $D_4$ and $Q_8$ have order $8$ and they are the dihedral group and the ordinary quaternion group, respectively.
\end{result}

 \begin{proposition}
\label{pro11022025} In Proposition \ref{pro020222}, if $|\Delta|-1$ is a power of $p$ then one of the cases {\rm{(A),(B),(C),(D),(E), (F),(G)}} and {\rm{(H'),(J'),(L), (L'), (L")}} holds where
\begin{itemize}
\item[\rm(H')] $p>2$, either $\Tilde{\Gamma}\cong \AGL(1,2^r)$ or $\Tilde{\Gamma} \cong {\rm{A\Gamma L}}(1,2^r)$ with $r$ prime, $p=2^r-1$, and $|\Delta|=2^r$.
\item[\rm(J')] $p=2$, $\Tilde{\Gamma}\cong \AGL(1,u)$, with $u=2^r+1$ prime, or $u=9$, and $|\Delta|=u$.
\item[\rm(L)]  $p=2$, $\Tilde{\Gamma}\cong {\rm{A\Gamma L}}(1,9)$, $|\Delta|=9$, and a Sylow $2$-subgroup of $\Tilde{\Gamma}$ is a semi-dihedral group of order $16$.
\item[\rm(L')] $p=2$, 
$\Tilde{\Gamma}\cong {\rm{A\gamma L(1,9)}}\cong \PSU(3,2),$ $|\Delta|=9,$ and a Sylow $2$-subgroup of $\Tilde{\Gamma}$ is a quaternion group of order $8.$
\item[(L")] $p=2$, $\Tilde{\Gamma}\cong \PGU(3,2)$, $|\Delta|=9$, and a Sylow $2$-subgroup of $\Tilde{\Gamma}$ is a quaternion group of order $8.$
\end{itemize}
Write the stabilizer of $\tilde{P}$ in $\tilde{\Gamma}$ as the semidirect product $\tilde{S}_P\rtimes \tilde{H}_P$ where $\tilde{S}_P=S_P/K$ and $p\nmid |\tilde{H}_P|$. 
Then 

$$|\tilde{H}_P|=
\begin{cases} {\mbox{$\ha |\Delta_0|$ when Case (A) occurs for $|\Delta_0|=q$, odd}};\\
{\mbox{$|\Delta_0|$ when Case (B) occurs for $|\Delta_0|=q$, odd}};\\
{\mbox{$\textstyle{\frac{1}{3}}(q^2-1)$ when Case (C) occurs for $|\Delta_0|=q^3$, $q\equiv -1 \pmod{3}$}};\\
{\mbox{$q^2-1$ when Case (D) occurs for $|\Delta_0|=q^3$, $q\equiv 0,1 \pmod{3}$}};\\
{\mbox{$q-1$ when Case (E) occurs for $|\Delta_0|=q^2$}};\\
{\mbox{$q-1$ when Case (F) occurs for $=|\Delta_0|=q^3$}};\\
{\mbox{$2$ when Case (G) occurs for $|\Delta_0|=27$;}}\\
{\mbox{$r$ when Case (H') with ${\rm{A\Gamma L}}(1,2^r)$ occurs for $|\Delta_0|=2^r-1$.}}\\
{\mbox{$1$ when Case (H') with $\AGL(1,2^r)$ occurs for $|\Delta_0|=2^r-1$.}}\\
{\mbox{$1$ when Case (J') occurs with $\AGL(1,2^r)$ occurs for $|\Delta_0|=2^r-1$.}}\\
{\mbox{$1$ when Case (L) with  $\AGL(1,u)$ occurs for $|\Delta_0|=u-1$;}}\\
{\mbox{$1$ when Case (L') occurs for $|\Delta_0|=8$;}}\\
{\mbox{$3$ when Case (L'') occurs for $|\Delta_0|=8$.}}\\
\end{cases}
$$

\end{proposition}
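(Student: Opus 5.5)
The plan is to start from Proposition \ref{pro020222} and add the constraint that $|\Delta|-1=p^k$. By Proposition \ref{pro020222}, either $\Tilde{\Gamma}$ is one of (A)--(G), or $\Tilde{\Gamma}$ is solvable. So the real work is the solvable case, followed by reading off $|\tilde H_P|$ in each case.

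For solvable $\Tilde{\Gamma}$, Huppert's theorem (Result \ref{huppsolv}) shows that $\Tilde{\Gamma}$ acts as a subgroup of ${\rm A\Gamma L}(1,d^r)$ on $\mathbb{F}_{d^r}$, with $|\Delta|=d^r$, apart from the exceptional degrees $3^2,5^2,7^2,11^2,23^2,3^4$. The condition $d^r-1=p^k$ is a Catalan-type equation, handled by Result \ref{lecatalan}, and there are two subcases.
\begin{itemize}
\item If $d=2$, then $p$ is odd and $2^r-1=p$ is a Mersenne prime, so $r$ is prime. The group lies between $\AGL(1,2^r)$ and ${\rm A\Gamma L}(1,2^r)$. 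Since $[{\rm A\Gamma L}:\AGL]=r$ is prime, only these two groups occur. This gives (H').
\item If $p=2$ and $d$ is odd, then either $r=1$ and $d=2^k+1$ is a Fermat prime, or $d^r=9$.
\end{itemize}
In the Fermat prime case, the 1-point stabilizer lies in $\mathbb{F}_d^*$ and must contain the Sylow $2$-subgroup of order $2^k$, so $\Tilde{\Gamma}=\AGL(1,d)$; this is (J'). In the case $|\Delta|=9$, I use the list in Result \ref{cladegree9}. The point stabilizer must be $\tilde S_P\rtimes(\text{cyclic of odd order})$ with $\tilde S_P$ transitive on the $8$ remaining points. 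This forces $\tilde S_P$ to be $C_8$, $Q_8$ or $SD_{16}$, and it excludes (L1), where $H$ has non-cyclic odd part, since $H/Z(H)\cong{\rm Sym}_4$ involves $Q_8\rtimes C_3$ with $C_3$ not normal in a suitable way. The survivors are (L4) $=\AGL(1,9)$, (L3), (L5) and (L2), which correspond to (J') with $u=9$, (L), (L') and (L''). For each exceptional Huppert degree, only $9$ and $3^4$ are of the form $2^k+1$; $3^4=81$ gives $80$, which is not a power of $2$. So only degree $9$ needs treating, and it is already covered by the magma list.

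For the non-solvable cases (A)--(G), the condition $|\Delta|-1=p^k$ holds automatically, with $|\Delta_0|=q,q^3,q^2,q^3,27$. In (G) we have $p=3$ and $|\Delta|-1=27$.

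Next comes the computation of $|\tilde H_P|$. In every case I take the 1-point stabilizer in the natural action and divide its order by $|\Delta_0|$, i.e.\ by its $p$-part.
\begin{itemize}
\item $\PSL(2,q)$: the stabilizer is $q(q-1)/(2,q-1)$, giving $\ha(q-1)$ for odd $q$. The value stated in the table as "$\ha|\Delta_0|$" should be read as $\ha(|\Delta_0|-1)$; I will check this against the intended statement.
\item $\PGL(2,q)$: $q-1$.
\item $\PSU(3,q)$: $q^3(q^2-1)/\mu$ with $\mu=\gcd(3,q+1)$.
\item $\PGU(3,q)$: $q^2-1$.
\item $Sz(q)$: $q-1$.
\item $\Ree(q)$: $q-1$.
\item ${\rm P\Gamma L}(2,8)$: $|{\rm P\Gamma L}(2,8)|/28=54$, giving $2$.
\item Affine cases: the stabilizer orders are $(2^r-1)r$ and $2^r-1$ for ${\rm A\Gamma L}(1,2^r)$ and $\AGL(1,2^r)$, giving $r$ and $1$. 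For $\AGL(1,u)$ the stabilizer is $2^k$, giving $1$. In the degree $9$ cases the stabilizers are $16$, $8$ and $24$, giving $1$, $1$ and $3$.
\end{itemize}

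The main obstacle is the degree-$9$ analysis, specifically deciding which of (L1)--(L5) admit a point stabilizer of the form Sylow $2$-subgroup $\rtimes$ cyclic that is transitive on the other $8$ points. I would settle this by inspecting $H$ directly. For (L1), $H\cong GL(2,3)$ has a Sylow $2$-subgroup $SD_{16}$ that is not normal, so it fails Result \ref{res74}. For (L2), $H\cong SL(2,3)=Q_8\rtimes C_3$, which is admissible. For (L3), $H\cong SD_{16}$, and for (L5), $H\cong Q_8$.
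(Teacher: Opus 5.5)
Your proposal is correct and follows essentially the paper's own route: reduce to solvable $\tilde\Gamma$ via Proposition~\ref{pro020222}, combine Huppert's Result~\ref{huppsolv} with the Catalan-type Result~\ref{lecatalan}, and settle degree $9$ from the list in Result~\ref{cladegree9}, discarding (L1) because the Sylow $2$-subgroup of $H\cong GL(2,3)$ is not normal, contrary to Result~\ref{res74}. (Your first justification for (L1), a ``non-cyclic odd part'', is wrong since that odd part is $C_3$, but your closing paragraph gives the correct reason.) Your explicit computation of $|\tilde H_P|$, which the paper only states, is also correct, and you are right that the entry for (A) should read $\frac{1}{2}(|\Delta_0|-1)$; likewise the entry for (B) should be $|\Delta_0|-1=q-1$, as your own computation gives.
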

    \begin{proof} We may assume that $\Tilde{\Gamma}$ is solvable so that  Result \ref{huppsolv} applies. Note that $|\Delta|\notin \{5^2,7^2,11^2,23^2,3^4\}$.
    From the result \ref{huppsolv}, $|\Delta|=d^r$ where $d$ is a prime. This together with $|\Delta|-1=p^a$ yield $p^a+1=d^r$.
    For $|\Delta|>9$, Result \ref{lecatalan} shows that either $p=2, r=1$, or $p$ is a Mersenne prime with $a=1,d=2,p>2$, and $r$ prime. In the former case,  ${\rm{A\Gamma L}}(1,d^r)={\rm{A\Gamma L}}(1,d)=\AGL(1,d)$, and in the latter, either $\Tilde{\Gamma}\cong {\rm{A\Gamma L}}(1,2^r)$, or
    $\Tilde{\Gamma}\cong \AGL(1,2^r)$.
Now, the case $|\Delta|=9$ is investigated. According to Result \ref{cladegree9} there are five possibilities for $\Tilde{\Gamma}$. With the notation in Result \ref{cladegree9}, (L1) does not occur, as the Sylow $2$-subgroup of the $1$-point stabilizer is not a normal subgroup of $H$, violating Result \ref{res74}. Therefore,
is either $\Tilde{\Gamma}\cong \PGU(3,2)$, or $\Tilde{\Gamma}\cong {\rm{A\Gamma L}}(1,9)$, or $\Tilde{\Gamma}$ is one of the two sharply $2$-transitive permutation groups of degree $9$.
This shows (J') for $r=3$ and (L'), respectively.

\end{proof}
\begin{remark} Let $p=2$.  
Then the Fermat curve $\mathcal{H}_2$ of homogeneous equation $X^3+Y^3+Z^3=0$ provides an example for cases (L') and (L'') as $\aut(\mathcal{H}_2)$ has a subgroup isomorphic to $\PGU(3,2)$ that acts on the nine inflection points of $\mathcal{H}_2$ as $\PGU(3,2)$ in its natural 2-transitive representation of degree $9$. Since $\PGU(3,2)$ has a subgroup isomorphic to $\AGL(1,3) \cong {\rm{Sym}}_3$, Case (J') with $u=3$ occurs for $\mathcal{H}_2$. Furthermore,  Case (J') also occurs for $u=5$, in fact the elliptic curve (the Suzuki curve $\mathcal{S}_2$ formally defined for $q_0=1$) of homogeneous equation $X^3+XZ^2+Y^2Z+YZ^2=0$ has subgroup in $\aut(\mathcal{S}_2)$ that is isomorphic to $C_5\rtimes C_4$. In these examples $\gamma(\cX)=0$. However, in Case (L), $\Tilde{\Gamma}$ has an involution with more than one fixed points, and hence $\gamma(\cX)\ne 0$  by Result \ref{lem11.129}.
\end{remark}

\section{General results under hypotheses (*) and (**)}
\label{gr5}

We prove some further preliminary results concerning curves $\cX$ of genus $\gg(\cX)\ge 2$ which satisfy hypotheses (*) and (**). Throughout the section,  $\Gamma\le \aut(\cX)$ stands for a non-tame automorphism group together with a $\Gamma$-orbit $\Delta$ and a point $P\in \Delta$ such that the Sylow $p$-subgroup $S_P$ of $\Gamma_P$ is not trivial. We assume that $\Gamma$ does not fix $P$.


\begin{lemma}
\label{lem24072024} Assume that (*) holds. Then $S_P$ has a short orbit other than $\{P\}$. Moreover, if $S_P$ has a unique short orbit $\Delta_0$ other than $\{P\}$, then no non-trivial element of $\Gamma_P$ fixes a point of $\cX$ outside $\{P\}\cup \Delta_0$, and hence $\Delta_0$ is also the unique short orbit of $\Gamma_P$ other than $\{P\}$.
\end{lemma}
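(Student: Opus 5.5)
The plan is to apply the Deuring--Shafarevich formula (\ref{eq2deuring}) to $S_P$. By (*), the quotient $\cX/S_P$ is rational, so $\gamma(\cX/S_P)=0$. The formula then reads
$$\gamma(\cX)-1=-|S_P|+\sum_{i=1}^k(|S_P|-\ell_i),$$
where $\ell_1,\ldots,\ell_k$ are the sizes of the short orbits of $S_P$. The orbit $\{P\}$ contributes $|S_P|-1$. If $\{P\}$ were the only short orbit, the right-hand side would equal $-1$, giving $\gamma(\cX)=0$. This contradicts (**), which is assumed throughout this section. Alternatively, one may invoke Result \ref{lem11.131}: since $\cX/S_P$ is rational it has zero $p$-rank, so if no non-trivial element of $S_P$ fixed a point other than $P$, then $\gamma(\cX)=0$. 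Either way, $S_P$ has a short orbit other than $\{P\}$.

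For the second claim, assume $\Delta_0$ is the unique short orbit of $S_P$ besides $\{P\}$. Since $S_P$ is normal in $\Gamma_P$ by Result \ref{res74}, the group $\Gamma_P$ permutes the $S_P$-orbits and preserves their sizes. Hence $\Gamma_P$ preserves the set of points lying in short $S_P$-orbits, namely $\{P\}\cup\Delta_0$. It therefore suffices to show that no non-trivial $g\in\Gamma_P$ fixes a point $Q\notin\{P\}\cup\Delta_0$.

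Write $g=su$ with $s\in S_P$ and $u$ a $p'$-element, using $\Gamma_P=S_P\rtimes H_P$. We may take $g$ of prime order: if some non-trivial element fixes $Q$, a suitable power of it has prime order and still fixes $Q$. If $g$ is a $p$-element, then $g\in S_P$, since $S_P$ is the unique Sylow $p$-subgroup of $\Gamma_P$. Then $Q$ lies in a short $S_P$-orbit, so $Q\in\{P\}\cup\Delta_0$, a contradiction.

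If $g$ has order prime to $p$, then $g$ fixes both $P$ and $Q$. The key step is to pass to the rational quotient $\bar\cX=\cX/S_P$ with its induced group $\Gamma_P/S_P\cong H_P$. The image $\bar g$ is a non-trivial element of $PGL(2,\K)$, since $\gcd(|g|,|S_P|)=1$ means $g\notin S_P$. By Result \ref{resdickson}, $\bar g$ fixes at most two points of $\bar\cX$. It fixes $\bar P$ and $\bar Q$, which are distinct because $Q\notin\{P\}$ and $\{P\}$ is a full fibre. It also fixes the image of $\Delta_0$, because $\Delta_0$ is a single $S_P$-orbit preserved by $g$. That image is a third point, distinct from $\bar P$ and $\bar Q$ since $Q\notin\Delta_0$. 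Thus $\bar g$ fixes three points, a contradiction.

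Finally, every short orbit of $\Gamma_P$ other than $\{P\}$ consists of points fixed by non-trivial elements of $\Gamma_P$, so it lies in $\Delta_0$. Conversely, $\Delta_0$ is $\Gamma_P$-invariant and consists of fixed points of non-trivial elements, so $\Delta_0$ is the unique short orbit of $\Gamma_P$ other than $\{P\}$. The main point requiring care is the three-fixed-points argument on the rational quotient; the remaining steps follow directly from Results \ref{res74}, \ref{resdickson} and the Deuring--Shafarevich formula.
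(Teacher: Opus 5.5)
Your proposal is correct and follows essentially the same route as the paper. The first claim comes from the Deuring--Shafarevich formula, or equivalently Result~\ref{lem11.131}, with the rational quotient and $\gamma(\cX)>0$. The second claim comes from passing to $\cX/S_P$, where $\Gamma_P/S_P$ acts as a $p'$-subgroup of $PGL(2,\K)$ fixing $\bar P$ and the image of $\Delta_0$, so no non-trivial element can fix a third point. Your reduction to elements of prime order, with $p$-elements handled separately, is just an explicit form of the paper's remark that $\Gamma_{P,Q}$ is a $p'$-group when $Q\notin\{P\}\cup\Delta_0$.
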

\begin{proof} The first claim follows from Lemma \ref{lem11.131}.  
By (iii) of Result \ref{res74}, $S_P$ is a normal subgroup of $\Gamma_P$. Since $\Delta_0$ is the unique short orbit of $S_P$ other than $\{P\}$, $\Gamma_P$ preserves $\Delta_0$.
Let $\bar{\cX}=\cX/S_P$ be the quotient curve of $\cX$ by $S_P$. From (iii) of Result \ref{res74}, $\bar{\Gamma}_P=\Gamma_P/S_P$ is a cyclic subgroup of $\aut(\bar{\cX})$ whose order is prime to $p$, and $\bar{\Gamma}_P$ fixes the point $\bar{R}$ lying under $\Delta_0$. Result \ref{resdickson} together with (*) yield that $\bar{\Gamma}_P$ fixes exactly two points of $\bar{\cX}$, one of them is the point $\bar{P}$ lying under $P$ the other is $\bar{R}$.
On the other hand, let $Q\in \cX$ be a point fixed by some non-trivial element of $\Gamma_P$. Then $H=\Gamma_{P,Q}$ is non-trivial. If $Q\not\in \Delta_0$ then $H$ has order prime to $p$.
Therefore, $HS_P/S_P$ is a subgroup of $\bar{\Gamma}_P$ fixing the point $\bar{Q}$ of $\bar{\cX}$ lying under $Q$. It turns out that $\bar{Q}=\bar{R}$, and hence $Q\in \Delta_0$.
\end{proof}

\begin{lemma}
\label{leterribileHW} If both (*) and (**) hold, then $$\gamma(\cX)\ge \ha |\Gamma_P|,$$ unless $S_P$  has a unique short orbit other than $\{P\}$.
Moreover, if $\Gamma_P\ne S_P$, then $\Gamma_P$ preserves only one (not necessarily short) $S_P$-orbit other than $\{P\}$.
\end{lemma}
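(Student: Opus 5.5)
Write $\Gamma_P=S_P\rtimes H_P$ with $H_P$ cyclic of order prime to $p$, as in (iii) of Result \ref{res74}, and set $\bar{\cX}=\cX/S_P$. By (*), $\bar{\cX}$ is rational. The approach is to combine the Deuring--Shafarevich formula for $S_P$ with the action of the cyclic group $\bar{H}=\Gamma_P/S_P$ on the rational curve $\bar{\cX}$.

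Since $\gamma(\bar{\cX})=0$, formula (\ref{eq2deuring}) for $S_P$ reads
$$\gamma(\cX)-1=-|S_P|+\sum_{i=1}^k(|S_P|-\ell_i),$$
where $\{P\}$ is one of the short orbits and contributes $|S_P|-1$. The other short orbits have size $\ell_i\le |S_P|/p$, so each contributes at least $(1-\frac1p)|S_P|\ge \ha |S_P|$. Hence
$$\gamma(\cX)\ge \frac{k-1}{2}|S_P|,$$
where $k-1$ is the number of short $S_P$-orbits other than $\{P\}$. By Lemma \ref{lem24072024} we have $k\ge 2$. If $S_P$ does not have a unique short orbit besides $\{P\}$, then $k\ge 3$, and we get $\gamma(\cX)\ge |S_P|$. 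This already gives the claimed bound when $\Gamma_P=S_P$.

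For the general case we use $\bar{H}$, a cyclic group of order $h=|H_P|$ acting on $\bar{\cX}\cong\mathbb{P}^1$. The short $S_P$-orbits are permuted by $\Gamma_P$, so $\bar{H}$ permutes the corresponding branch points $\bar{Q}_1,\dots,\bar{Q}_{k-1}$ of $\bar{\cX}$ other than $\bar{P}$. By Result \ref{resdickson}, a nontrivial element of $\bar{H}$ fixes at most two points of $\mathbb{P}^1$. One of these is $\bar{P}$, so at most one other point is fixed.

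It follows that among $\bar{Q}_1,\dots,\bar{Q}_{k-1}$ at most one point is fixed by a nontrivial element of $\bar{H}$. Every other such point lies in a regular $\bar{H}$-orbit of size $h$. Two cases arise:
\begin{itemize}
\item if some $\bar{Q}_j$ lies in a regular orbit, then $k-1\ge h$;
\item otherwise $k-1=1$, which is the excluded case of a unique short orbit.
\end{itemize}
Therefore, outside the excluded case, $k-1\ge h$, and so
$$\gamma(\cX)\ge \frac{h}{2}|S_P| = \ha|\Gamma_P|.$$

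The delicate step is the case where exactly one $\bar{Q}_j$ is $\bar{H}$-fixed and the remaining ones form regular orbits. There $k-1\ge h+1$, which is even better, so it causes no problem. The case $h=1$ reduces to the first paragraph.

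For the second claim, assume $h>1$. $\Gamma_P$ permutes the $S_P$-orbits, and these correspond to points of $\bar{\cX}$. An $S_P$-orbit other than $\{P\}$ is $\Gamma_P$-invariant exactly when its image point is fixed by $\bar{H}$. Since $\bar{H}$ is a nontrivial cyclic subgroup of $PGL(2,\mathbb{K})$ of order prime to $p$, it fixes exactly two points of $\bar{\cX}$: one is $\bar{P}$, and the other is unique. Hence exactly one $S_P$-orbit other than $\{P\}$ is preserved by $\Gamma_P$.

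The main obstacle is the correct bookkeeping in the Deuring--Shafarevich sum, namely getting the bound $\ell_i\le |S_P|/p$ for the non-$P$ short orbits. This holds because these orbits are short, so they have size dividing $|S_P|$ and strictly smaller than it.
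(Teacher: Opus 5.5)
Your proposal is correct and is essentially the paper's own argument. Both apply Deuring--Shafarevich to $S_P$ over the rational quotient $\cX/S_P$, and both use Dickson's fact that the nontrivial cyclic $p'$-group $\Gamma_P/S_P$ fixes exactly two points of $\cX/S_P$, which forces all but at most one of the remaining branch points into regular orbits of length $|H_P|$. One small point: your inference that at most one $\bar{Q}_j$ is fixed by \emph{some} nontrivial element of $\bar{H}$ needs that all nontrivial elements of $\bar{H}$ share the same two fixed points. This is true because $\bar{H}$ is cyclic of order prime to $p$, but you only state it in your final paragraph, so invoke it already at that step.
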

\begin{proof}
Let $\{P\}, \Omega_1,\ldots, \Omega_n$ denote the short orbits of $S_P$. In the cover $\cX|\bar{\cX}$ with $\bar{\cX}=\cX/S_P$, let $\bar{P},\bar{Q_1},\ldots,\bar{Q}_n$ denote the points of $\bar{\cX}=\cX/S_P$ lying under $\{P\}, \Omega_1,\ldots, \Omega_n$, respectively. From Result \ref{res74}, $\Gamma_P=S_P\rtimes H$ with a cyclic complement $H$ of order $k$ with $p\nmid k$. Therefore,  $\bar{H}=\Gamma_P/S_P\cong H$ is a subgroup of $\aut(\bar{\cX})$.
If $k=1$, that is $\Gamma_P=S_P$, then the Deuring-Shafarevic formula applied to $S_P$ yields
$$
\gamma(\cX)-1=-|S_P|+|S_P|-1+\sum_{i=1}^n (|S_P|-|\Omega_i|)\geq -1+\ha n|S_P|,
$$
whence $\gamma(\cX)\ge \ha |S_P|=\ha |\Gamma_P|$.
Therefore $k>1$ may be assumed. Then $\bar{H}$ is a non-trivial (cyclic) subgroup of $\aut(\bar{\cX})$  of order $k$ of the rational curve $\bar{\cX}$. So, Result \ref{resdickson} implies that $\bar{H}$ has exactly two fixed points in $\bar{\cX}$ and that no non-trivial element of $\bar{H}$ fixes a further point. This proves the last claim. Moreover, one of the fixed points of $\bar{H}$ is $\bar{P}$, the other is a further point $\bar{R}$ not necessarily in $\mathcal{\bar{Q}}=\{\bar{Q}_1,\ldots,\bar{Q}_n\}$. Up to relabeling, either $\bar{R}\notin \mathcal{\bar{Q}}$, or $\bar{R}=\bar{Q}_1$. Assume that $n>1$. Then, in the latter case, we can consider the $\bar{H}$-orbit $\Psi$ of $\bar{Q}_2$, which has length $k$. Now take any point $\bar{T}\in \Psi$.
 Then $\bar{T}$ is mapped by any element in $\bar{H}$ to a point in $\mathcal{\bar{Q}}$ as $H$ takes any short orbit of $S_P$ to a short orbit of $S_P$. 
Therefore, $n\ge k +1$. A similar argument shows that if $\bar{R}\notin \mathcal{\bar{Q}}$ then $n\ge k$. From the Deuring-Shafarevic formula applied to $S_P$, we have
$$\gamma(\cX)-1=-|S_P|+|S_P|-1+\sum_{i=1}^n (|S_P|-|\Omega_i|).$$ Since $|S_P|-|\Omega_i|\ge \ha |S_P|$,
we obtain $\gamma(\cX)\ge \ha k |S_P|=\ha |\Gamma_P|$.
\end{proof}

\begin{lemma}
\label{leA27042025} Assume that (*) holds. If $p\equiv 1 \pmod{4}$ then $\mathfrak{g}(\cX)$ is even, and a Sylow $2$-subgroup of $\Gamma$ has a cyclic subgroup of index at most $2$.
\end{lemma}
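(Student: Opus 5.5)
The plan has two independent parts: a congruence computation for $\gg(\cX)$ via the Hurwitz genus formula applied to $S_P$, and then a fixed-point count for a central involution of a Sylow $2$-subgroup, which relies on $\gg(\cX)$ being even.

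\emph{Evenness of the genus.} Apply the Hurwitz genus formula (\ref{eq1}) to $S_P$. By (*), the quotient $\cX/S_P$ is rational, so
$$2\gg(\cX)-2=-2|S_P|+\sum_{Q\in\cX} d_Q,\qquad d_Q=\sum_{i\ge 0}\bigl(|(S_P)_Q^{(i)}|-1\bigr).$$
Every group $(S_P)_Q^{(i)}$ is a $p$-group, so each summand $|(S_P)_Q^{(i)}|-1$ has the form $p^b-1$. Such a number is divisible by $p-1$, hence by $4$ since $p\equiv 1\pmod 4$. Therefore $\sum_Q d_Q\equiv 0\pmod 4$. On the other hand, $|S_P|=p^a$ is odd, so $2|S_P|\equiv 2\pmod 4$. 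Hence $2\gg(\cX)-2\equiv -2\pmod 4$, that is, $4\mid 2\gg(\cX)$, and $\gg(\cX)$ is even.

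\emph{The Sylow $2$-subgroup.} Let $T$ be a Sylow $2$-subgroup of $\Gamma$; we may assume $T\neq 1$. Since $p$ is odd, $T$ is a tame group, and for every point $R$ the stabilizer $T_R$ has order prime to $p$. By (iii) of Result \ref{res74}, $T_R$ is therefore cyclic. Choose a central involution $u$ of $T$ and let $r$ be the number of fixed points of $u$. The Hurwitz formula for $\langle u\rangle$ (tame, with $r$ short orbits of length $1$) gives
$$2\gg(\cX)-2=2\bigl(2\gg(\cX/\langle u\rangle)-2\bigr)+r,$$
so $r=2\gg(\cX)+2-4\gg(\cX/\langle u\rangle)$. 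Since $\gg(\cX)$ is even, $r\equiv 2\pmod 4$; in particular $r>0$.

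Because $u$ is central in $T$, the set $\mathrm{Fix}(u)$ is $T$-invariant, so it splits into $T$-orbits whose lengths are powers of $2$. Their sum is $r\equiv 2\pmod 4$, so at least one $T$-orbit in $\mathrm{Fix}(u)$ has length $1$ or $2$. For a point $R$ in such an orbit, the stabilizer $T_R$ has index at most $2$ in $T$, and $T_R$ is cyclic by the remark above. Thus $T$ has a cyclic subgroup of index at most $2$.

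I expect the main obstacle to be the second claim. A direct Hurwitz analysis of $T$ only produces short orbits with small stabilizers, not a cyclic subgroup of large index. The step that overcomes this is choosing a \emph{central} involution, so that $T$ permutes its fixed points, and then combining the congruence $r\equiv 2\pmod 4$ (which uses the first part) with the cyclicity of tame point stabilizers.
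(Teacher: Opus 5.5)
Your proof is correct in both parts. The evenness of $\mathfrak{g}(\cX)$ is proved exactly as in the paper: apply the Hurwitz formula to $S_P$ and reduce modulo $4$.

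For the second claim you take a slightly different route. The paper applies the Hurwitz formula directly to the Sylow $2$-subgroup $T$. You apply it to a central involution $u$ of $T$ and then count $T$-orbits on $\mathrm{Fix}(u)$.

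The obstacle you anticipated does not arise, because the direct route gives a large cyclic stabilizer just as easily. If $|T|\ge 4$, then $|T|\bigl(2\mathfrak{g}(\cX/T)-2\bigr)\equiv 0 \pmod 4$ and $|T|-\ell_i\equiv -\ell_i \pmod 4$ for each short-orbit length $\ell_i$. Combined with $2\mathfrak{g}(\cX)-2\equiv 2\pmod 4$, the Hurwitz formula for $T$ yields $\sum_i \ell_i\equiv 2 \pmod 4$. Hence some short $T$-orbit has length $1$ or $2$, and its stabilizer is tame, hence cyclic, of index at most $2$.

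Your detour reaches the same conclusion using only the Hurwitz formula for a group of order $2$. The cost is the extra step that $\mathrm{Fix}(u)$ is $T$-invariant. Both arguments are equally elementary and rest on the same mod-$4$ orbit count.
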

\begin{proof} For any $P\in \cX$, the $i$-th ramification group of $S_P^{(i)}$ has order a power of $p.$ Hence $|S_P^{(i)}|-1$ is divisible by $p-1$. Since $p\equiv 1 \pmod 4$, $|S_P^{(i)}|-1$ is divisible by $4$. This holds true for $d_P$ and hence for the degree of the Hilbert different. Therefore, from the Hurwitz genus formula applied to $S_P$, the genus of $\cX$ is even and the first claim holds. Now, it is enough to apply the Hurwitz genus formula to a Sylow $2$-subgroup of $\Gamma$ to obtain the second claim.
\end{proof}

\begin{lemma}
\label{lem27092025A} Assume that both (*) and (**) hold. If  $S_P$ has a unique short orbit $\Delta_0$  other than $\{P\}$,  then
\begin{itemize}
\item[(i)] $\ha |S_P|\le \gamma(\cX)\le |S_P|-1$;
\item[(ii)] $\gamma(\cX)=|S_P|-1$ if and only if $|\Delta_0|=1$;
\item[(iii)] either $S_P$ is a Sylow $p$-subgroup of $\Gamma$, or $p=2$ and $S_P$ fixes another point $Q$, and a Sylow $2$-subgroup $S$ of $\Gamma$ has order $2|S_P|$.
In the latter case, if $\Gamma$ has a unique non-tame orbit, then one of the following two cases occurs:
\begin{itemize}
\item[(iiia)]  some involution in $S$ has no fixed point; moreover $S$ is a dihedral group,  $S_P$ is cyclic, and $\Gamma_P$ is abelian.
\item [(iiib)] every involution in $S$ has exactly two fixed points; moreover either $\Gamma$ has an index $2$ subgroup with two non-tame orbits, or $S$ is a semidihedral group and $\Gamma_P=S_P\times H_P$, unless $|S|=16$.
\end{itemize}
\item[(iv)] If $|\Delta|>2$ and $\Gamma$ has another non-tame orbit, then $\Gamma$ acts faithfully on $\Delta$ as a Frobenius group.
\end{itemize}
\end{lemma}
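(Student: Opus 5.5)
Parts (i) and (ii) come straight from the Deuring--Shafarevich formula. By (*), $\gamma(\cX/S_P)=0$, and the short orbits of $S_P$ are exactly $\{P\}$ and $\Delta_0$, so (\ref{eq2deuring}) reads
$$\gamma(\cX)-1=-|S_P|+(|S_P|-1)+(|S_P|-|\Delta_0|),\qquad\text{i.e.}\qquad \gamma(\cX)=|S_P|-|\Delta_0|.$$
Since $\Delta_0$ is a short orbit of a $p$-group, $|\Delta_0|$ is a power of $p$ with $1\le|\Delta_0|\le \ha|S_P|$. This gives both inequalities in (i), and it gives (ii) because $|\Delta_0|=1$ is the only case in which $\gamma(\cX)=|S_P|-1$.

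For (iii), I take a Sylow $p$-subgroup $S$ of $\Gamma$ containing $S_P$, and suppose $S_P\ne S$. Then $S_P\lhd N_S(S_P)$ properly, so there is $g\in N_S(S_P)\setminus S_P$. This $g$ maps $P$ to $g(P)\neq P$, since otherwise $g\in \Gamma_P$ would be a $p$-element lying in $S_P$. Because $S_P=S_{g(P)}$, the point $g(P)$ is fixed by $S_P$, so $\{g(P)\}$ is a short $S_P$-orbit, hence $\Delta_0=\{g(P)\}=\{Q\}$. Now $N_S(S_P)$ permutes $\{P,Q\}$ with kernel $S_P$, so $[N_S(S_P):S_P]=2$; this is a power of $p$, so $p=2$. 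A $2$-group properly containing $S_P$ with normalizer index $2$ forces $N_S(S_P)=S$ by the usual normalizer-growth argument, applied to $S$ acting on the $S_P$-fixed set $\{P,Q\}$. Hence $|S|=2|S_P|$, and by (ii) $\gamma(\cX)=|S_P|-1$.

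For the subcases I will use an involution $u\in S\setminus S_P$ swapping $P,Q$, together with Result \ref{heinv}. Since by Lemma \ref{lem24072024} no element of $\Gamma_P$ fixes points outside $\{P,Q\}$, every involution of $S_P$ fixes exactly $P,Q$. If some involution of $S$ is fixed-point-free, I apply Deuring--Shafarevich to $S$ acting on $\cX$ with quotient $(\cX/S_P)/C_2$, which is rational. Comparing with $\gamma(\cX)=|S_P|-1$ should force $S$ to have few short orbits, leaving $S_P$ cyclic of index $2$ in a dihedral $S$ by Result \ref{res26feb2018}. Then $H_P$ centralizes the cyclic $S_P$ via Result \ref{le11.77}, which gives (iiia). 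Otherwise every involution has exactly two fixed points, and Result \ref{heinv} applied to $\Gamma$ on the unique non-tame orbit gives either a dihedral or semidihedral $S$, or the index-$2$ exceptional subgroup with two orbits. The dihedral case is excluded here. In the semidihedral case Result \ref{reswong}, or a direct commutator argument, yields $\Gamma_P=S_P\times H_P$ except when $|S|=16$.

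For (iv), if $\Gamma$ has another non-tame orbit $\Delta'$, I take $Q\in\Delta'$. Then $S_Q$ fixes points outside its own unique short orbit, and the one-fixed-point structure from Lemma \ref{lem24072024} implies that no nontrivial element of $\Gamma$ fixes two points of $\Delta$. Faithfulness holds since the kernel fixes $|\Delta|>2$ points. Transitivity together with this stabilizer condition is exactly the Frobenius property.

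The main obstacle is (iiia) and (iiib). They need a careful Deuring--Shafarevich count for $S$ combined with Results \ref{heinv}, \ref{reswong} and \ref{le11.77}, and the exceptional case $|S|=16$ will need separate care.
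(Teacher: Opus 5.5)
Parts (i) and (ii) are correct and are exactly the paper's argument: Deuring--Shafarevich for $S_P$ gives $\gamma(\cX)=|S_P|-|\Delta_0|$. The genuine gap is in (iiia). Take $\gamma(\cX)=|S_P|-1$, $|S|=2|S_P|$ and $\cX/S$ rational. The Deuring--Shafarevich formula for $S$ then only tells you that, besides $\{P,Q\}$, $S$ has exactly one more short orbit $\Sigma$, of length $|S_P|$. On $\Sigma$, $S_P$ acts regularly and the point stabilizers in $S$ have order $2$. This count never uses the hypothesis of (iiia), so it cannot distinguish (iiia) from (iiib); in particular it cannot force $S_P$ to be cyclic or $S$ to be dihedral. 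The missing idea is the paper's parity argument on $\Sigma$ (for $|S_P|\ge 4$):
\begin{itemize}
\item The stabilizer of $R\in\Sigma$ in $S$ is generated by an involution $\iota\in S\setminus S_P$. Conjugating $R$ to $P$ (possible because $\Delta$ is the unique non-tame orbit) shows $\iota$ has exactly two fixed points, both in $\Sigma$. So $\iota$ is an odd permutation of $\Sigma$, namely a product of $\frac12|S_P|-1$ transpositions.
\item A fixed-point-free involution $w\in S\setminus S_P$ is an even permutation of $\Sigma$.
\item Hence $w\iota\in S_P$ acts on $\Sigma$ as an odd permutation all of whose cycles have the same $2$-power length. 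This forces a single $|S_P|$-cycle, so $S_P$ is cyclic.
\end{itemize}
To get $S$ dihedral from Result~\ref{res26feb2018} you still need an involution count. The $\frac12|S_P|$ involutions stabilizing points of $\Sigma$, plus $w$, plus the involution of $S_P$, exceed the number of involutions in every non-dihedral group of that list. Also, Result~\ref{le11.77} does not show that $H_P$ centralizes $S_P$. What does is that $\aut(S_P)$ is a $2$-group once $S_P$ is cyclic, while $|H_P|$ is odd; this is quicker than the paper's generalized-dicyclic argument via Result~\ref{resgdi}. The same conjugation into $S_P$ is what you need in (iiib) to know that every involution with a fixed point has exactly two.

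For the main claim of (iii) you take a different route from the paper. The paper uses Nakajima's bound (Result~\ref{resnaka}) when $p\ge 3$ or $|S_P|>2$, and the hyperelliptic involution when $|S_P|=2$. Your normalizer argument settles odd $p$ neatly. For $p=2$, however, the step $N_S(S_P)=S$ is not justified. As a statement about $2$-groups it is false: a non-central involution of $D_8$ has index $2$ in its normalizer. Moreover, $S$ is only known to act on $\{P,Q\}$ if it normalizes $S_P$. It can be repaired as follows. If $N=N_S(S_P)<S$, pick $h\in N_S(N)\setminus N$. Then $S_P$ and $hS_Ph^{-1}=S_{h(P)}$ are distinct index-$2$ subgroups of $N$, meeting in a subgroup of order $\frac12|S_P|$ that fixes $P$, $Q$ and $h(P)$. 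For $|S_P|\ge 4$, Lemma~\ref{lem24072024} forces $h(P)\in\{P,Q\}$, hence $hS_Ph^{-1}=S_P$, a contradiction. For $|S_P|=2$ you must use, as the paper does, that (*) makes $\cX$ hyperelliptic with $S_P$ generated by the central hyperelliptic involution.

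In (iv) the decisive point is $\Delta_0\cap\Delta=\emptyset$, after which Lemma~\ref{lem24072024} gives the Frobenius property. Your sentence about $S_Q$ for $Q\in\Delta'$ does not establish this, since the short-orbit hypothesis concerns $S_P$ only. When $S_P$ is a Sylow $p$-subgroup of $\Gamma$ it does follow. If $\Delta_0\subseteq\Delta$, every nontrivial $p$-element of $\Gamma$ is conjugate into $S_P$ and so fixes only points of $\Delta$, leaving no room for a second non-tame orbit. In the non-Sylow case of (iii), however, $\Delta_0=\{Q\}\subseteq\Delta$, so that case needs a separate argument. The paper's brief proof of (iv) does not provide one either.
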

\begin{proof}
The first two claims are straightforward consequences of the Deuring-Shafarevic formula applied to $S_P$. To show the third claim,
assume that $\Gamma$ (and hence $\aut(\cX)$) has a $p$-subgroup $S$ containing $S_P$ properly. Then $|S|=p^h |S_P|$ with $h\ge 1$. For $p\ge 3$, (i) yields $\gamma(\cX)\ge 2$, and Result \ref{resnaka} together with (i) show
$$p^h(\gamma(\cX)-1)\le p^h(|S_P|-2)<p^h|S_P|=|S|\le \frac{p}{p-2}(\gamma(\cX)-1)$$
whence $p^h(p-2)< p$, a contradiction. For the rest of the proof $p=2$ is assumed.
If $|S_P|>2$, then $\gamma(\cX)\ge 2$ by (i), and $|S|\le 4(\gamma(\cX)-1)$ from Result \ref{resnaka}. For this case, the above computation gives $h<2$.

If $|S_P|=2$, then $\gamma(\cX)=1$ from (i). 
By (*), $|S_P|=2$ implies that $\cX$ is hyperelliptic. Let $\iota$ be the hyperelliptic involution of $\aut(\cX)$. From (*) and the Deuring Shafarevic formula, $\iota$ fixes exactly two points. Since $\iota$ is in the center of $\aut(\cX)$, it turns out that $\Gamma$ either fixes those two points, or it has an orbit of length $2$. In the former case, $S=S_P$ and $h=0$. In the latter case,  $\aut(\cX)/I$ is acts on the rational quotient curve $\cX/I$ with an orbit of size $2$. Since $\aut(\cX)/I$  is isomorphic to a subgroup of $\PSL(2,q)$, 
From Result \ref{resdickson},  $\aut(\cX)/I$
is either a dihedral group of order $2m$, or a cyclic group of order $m$ where $m$ is a divisor of $q+1$. Since $q+1$ is odd, we have $|S|=4$, and $|S|=2|S_P|$.

Now, the case $h=1, |S_P|>2$ is investigated, more closely. Since $h=1$, $S_P$ is an index $2$ subgroup of $S$, and the Orbit Theorem implies that the $S$-orbit of $P$ has length $2$, say $\{P,Q\}$. As $S_P$ has a unique short orbit $\Delta_0$ other than $\{P\}$, we have $\Delta_0=\{Q\}$. From Lemma \ref{lem24072024}, every non-trivial element in $S_P$ has exactly two fixed points, namely $P$ and $Q$.
We point out that every involution in $S$ which has some fixed point has exactly two fixed points.
Let $w\in \aut(\cX)$ be an involution fixing a point $R\in\cX$. Since $p=2$ and $\aut(\cX)$ has a unique non-tame orbit $\Delta$, there exists $g\in \aut(\cX)$ which takes $R$ to $P$. Then $gwg^{-1}$ is an involution fixing $P$, and hence $gwg^{-1}\in S_P$. Then $gwg^{-1}$ and hence $w$ has exactly two fixed points in $\cX$.

By (*), the quotient curve $\hat{\cX}=\cX/S_P$ is rational. Since $S_P$ is a normal subgroup of $S$, we have that $\hat{S}=S/S_P$ is a subgroup of $\aut(\hat{\cX})$ of order $2$. Take $g\in S$ such that $\hat{g}$ is the involution generating $\hat{S}=S/S_P$. Since $g\notin S_P$, $g$ swaps the two points $P$ and $Q$, and hence $\hat{g}$ swaps the points $\hat{P}$ and $\hat{Q}$ lying under $P$ and $Q$, respectively. Since $p=2$, and $\hat{\cX}$ is rational, $\hat{g}$ has a unique fixed point $\hat{R}$. The $S_P$-orbit $\Sigma$ lying over $\hat{R}$ is disjoint from $P$ and $Q$. Therefore, $\Sigma$ is a long $S_P$-orbit, and it is the unique short orbit of $S$ other than $\{P,Q\}$. Moreover, $|\Sigma|=\ha |S|$. From the Orbit theorem, for any point $R\in \Sigma$, the stabilizer of $R$ in $S$ has order $2$,
and hence it contains an involution $\iota$. Since $|\Sigma|=|S_P|$, $\iota$ has another fixed point $R'\in \Sigma$, 
and the fixed points of $\iota$ are exactly $R$ and $R'$. The conjugates of $\iota$ by elements of $S_P$ are also involutions with exactly one pair of fixed points in $\Sigma$. Moreover, these pairs form a partition of $\Sigma$. In fact, take an involution $\chi\in S\setminus S_P$ conjugated with $\iota$ by an element in $S_P$. If $\chi$ fixes either $R$ or $R'$, then $\iota\chi$ fixes $R$ or $R'$. On the other hand, since $[S:S_P]=2$,  $\iota\chi\in S_P$, a contradiction. Thus, those involutions are as many as $\ha|S_P|$, and each induces an odd permutation on $\Sigma$.

Assume that $w\in S\setminus S_P$ is an involution without fixed point in $\Sigma$. Then $w$ induces an even permutation on $\Sigma$. Since $w,\iota\in S\setminus S_P$ their product $u=w\iota$ is in $S_P$ and induces an odd permutation on $\Sigma$. Since $|\Sigma|=2^m$ with $m\ge 2$, and $u$ has no fixed point in $\Sigma$, $u$ is the product of $n$ cycles of the same length $\ell$. If $\ell<|\Sigma|$ then $n$ is even as $n\ell=|\Sigma|$, and hence $u$ induces an even permutation on $\Sigma$. Therefore, $n=1$, i.e. $u$ has order $|\Sigma|$ and $S_P$ is generated by $u$. It turns out that $S$ has an index $2$ cyclic subgroup. Thus Result \ref{res26feb2018} applies. Since $S$ has more than $\ha|S_P|+1$ involutions, only one case for $S$ occurs, that is, $S$ is dihedral. Let $U$ be the subgroup of $\Gamma$ generated by $S$ and $H_P$. Then $S_P$ is a normal subgroup of $U$. Look at the action of $U$ on $\cX$. As $\hat{U}=U/S_P$ is a dihedral group of order $2|H_P|$, it contains as many as $|H_P|$ involutions. Since each of them fixes exactly one point of $\hat{\cX}$, we obtain $d=|H_P|$ points, namely $\hat{R}_1,\dots, \hat{R}_d$, and these points form an $\hat{U}$-orbit of length $d$. In particular, the points $\hat{R}_i$ are pairwise distinct.
Each such point $\hat{R}_i$ gives rise a unique short $S_2$-orbit $\Sigma_i$, and hence $|S_P|$ involutions in $U\setminus \Gamma_P$. In this way, $|H_P||S_P|$ involutions are obtained. They are pairwise distinct, since if two such involutions $h_i$ and $h_j$ coincide then $i=j$ otherwise $\hat{h}_i=\hat{h}_j$ fixes two distinct points in $\hat{\cX}$, namely $\hat{R}_i$ and $\hat{R}_j$.
Thus $U\setminus \Gamma_P$ entirely consists of involutions, i.e. $U$ is a generalized dicyclic group with respect to $\Gamma_P$, and $\Gamma_P$ is abelian by Result \ref{resgdi}.

Otherwise, every involution in $S\setminus S_P$ has a fixed point in $\Sigma$, hence it has exactly two fixed points in $\cX$. Since this holds true for involutions in $S_P$, it turns out that every involution in $S$ has exactly two fixed points, and both are in $\Delta$. Moreover, $S$ is not dihedral, as a dihedral group of order $|S|$ has $\ha |S|+1$ involutions, and hence $S$ contains some involutions without fixed points in $\Sigma$. Now, Result \ref{heinv} shows that either $\Gamma$ has a subgroup of index $2$ with two non-tame orbits, or $S$ is semidihedral. In the latter case, let $U$ be the subgroup of $\Gamma$ generated by $\Gamma_P$ and $S$. Here $U$ coincides with the stabilizer of $\{P,Q\}$ in $\Gamma$, and hence $U$ is solvable.  Result \ref{reswong} applies to the subgroup $U$. Thus $U$ has a $2$-complement, unless $|S|=16$. In the former case, $U$ has a normal subgroup $V$ of odd order such that $U=V\rtimes S$. Since $V$ fixes $P$, this shows that $\Gamma_P=V\rtimes S_P$. Since $\Gamma_P=S_P\rtimes V$ also holds, we have that $V$ centralizes $S_P$.

To show (iv) assume that $\Delta^*$ is a non-tame $\Gamma$-orbit other than $\Delta$. Since $S_P$ has a unique short orbit other than $\{P\}$, the set $\Delta_0$ comprising all points other than $P$ which are fixed by non-trivial elements in $S_P$ is contained in $\Delta^*$. From Lemma \ref{lem24072024}, the points other than $P$ which are fixed by non-trivial elements of $\Gamma_P$ also belong to $\Delta_0$ and hence they are off $\Delta.$ By $|\Delta|>2,$ $\Gamma$ faithfully acts on $\Delta$ as a Frobenius group.

\end{proof}

It may be noticed in that if Case (iiib) occurs and $\Gamma$ has an index $2$ subgroup $\Psi$, then the action and the structure of $\Psi$ is determined by Results \ref{heinv} and \ref{heringshu}. We will not use this result.  
\begin{proposition}
\label{pro22092025} Assume that (*) and (**) hold, but (***) does not hold.. If $p=2$ and $\gg(\cX)=3$ then $\aut(\cX)$ has no subgroup isomorphic to $\PSL(2,3)$.
\end{proposition}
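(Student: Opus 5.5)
We argue by contradiction: suppose $p=2$, $\gg(\cX)=3$, (*) and (**) hold, (***) fails, and $\aut(\cX)$ contains a subgroup $G\cong\PSL(2,3)\cong{\rm{Alt}}_4$. The normal Klein four-group $V\lhd G$ is the Sylow $2$-subgroup of $G$, and an element $\tau\in G$ of order $3$ permutes the three involutions of $V$ transitively. The plan is to pin down how $V$ acts on $\cX$ using the Deuring--Shafarevich formula, the Hurwitz genus formula and the local information from (*), and then show that no genus-$3$ curve with positive $2$-rank admits this configuration.

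\emph{Step 1: the $2$-subgroups.} By Lemma~\ref{lem27092025A}(i), $\ha|S_P|\le\gamma(\cX)\le 3$, hence $|S_P|\le 6$, so $|S_P|\in\{2,4\}$. By Result~\ref{resnaka} with $p=2$, every $2$-subgroup of $\aut(\cX)$ has order at most $4(\gamma(\cX)-1)\le 8$. The first task is to determine the short orbits of $V$. By Result~\ref{lem11.129}, $\gamma(\cX)>0$ already excludes the configuration in which $V$ fixes a point and no nontrivial element of $V$ fixes any other point.

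\emph{Step 2: case analysis on $V$.} Apply Deuring--Shafarevich to $V$: $\gamma(\cX)-1=4(\gamma(\cX/V)-1)+\sum(4-\ell_i)$ with short orbit sizes $\ell_i\in\{1,2\}$. Since $\tau$ normalizes $V$ and permutes its involutions cyclically, the fixed-point sets of the three involutions are permuted by $\tau$. In particular the three involutions have the same number $f$ of fixed points, and any point fixed by $V$ contributes to all three sets. Apply Hurwitz to each involution $\iota$: $4=2(2\gg(\cX/\langle\iota\rangle)-2)+\sum d_R$ with $d_R\ge 2$ even. This gives $f\le 4$, and $f\le 2$ unless $\cX/\langle\iota\rangle$ is rational. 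Combining these relations with $\gamma(\cX)\in\{1,2,3\}$ leaves only a short list of orbit patterns. For each pattern I would compute $\gg(\cX/V)$ and $\gamma(\cX/V)$, and then impose the $\tau$-action on $\cX/V$, where $\tau$ induces an automorphism of order $3$ or acts trivially.

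\emph{Step 3: the Alt$_4$ quotient.} Hurwitz applied to $G$ itself, $4=12(2\gg(\cX/G)-2)+\sum d_R$, forces $\cX/G$ to be rational, and it strongly constrains the ramification: the tame points contribute via orbits of size $4$ under $\tau$-stabilizers, and the wild points contribute via orbits of size $6$ or $3$. Enumerating the solutions should leave a few signatures. For each signature, the failure of (***) together with (*) and Lemma~\ref{lem24072024} (every element of $\Gamma_P$ fixes points only in $\{P\}\cup\Delta_0$) should contradict the fixed-point counts from Step 2. For example, if $S_P=V$ is the stabilizer at a wild point with $\cX/V$ rational, then (*) requires a further $V$-short orbit, and the $\tau$-symmetry then produces too many fixed points for genus $3$.

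The main obstacle is Step 3: ruling out the signatures where $S_P$ has order $2$ (the hyperelliptic-type case), since there $\cX/S_P$ rational gives little leverage. My first attempt there is to use the central hyperelliptic involution argument from the proof of Lemma~\ref{lem27092025A}(iii), which shows that $\aut(\cX)/\langle\iota\rangle$ embeds in $\PGL(2,\K)$ with an orbit of size $2$. This is incompatible with an ${\rm{Alt}}_4$ acting in characteristic $2$ by Result~\ref{resdickson}(iv) when $q$ would be forced odd. Failing that, I would fall back on explicit normal forms of genus-$3$ curves in characteristic $2$.
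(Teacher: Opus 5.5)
\textbf{There is a genuine gap: Step 3 cannot produce the contradiction you hope for, and in fact the statement is false.}

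\textbf{Step 3 fails.} Your list of wild orbit sizes ($6$ or $3$) leaves out orbits of size $1$. Since ${\rm Alt}_4=V\rtimes C_3$ has the shape allowed by Result~\ref{res74}, $G$ may fix points. Run the enumeration properly: $\cX/G$ is rational and $\sum d_R=28$. The possible contributions are $11+3k\ge 14$ from a $G$-fixed point, at least $18$ from an orbit of size $3$, $12$ or $24$ from an orbit of size $6$, and $8$ from a tame orbit of size $4$. Exactly two signatures survive:
\begin{itemize}
\item[(S1)] $G$ fixes exactly two points $R_1,R_2$, with $V^{(2)}=1$ there, and is semiregular elsewhere;
\item[(S2)] two tame orbits of size $4$ and one orbit of size $6$ with $d_R=2$.
\end{itemize}
In both cases Deuring--Shafarevich for $V$ forces $\gamma(\cX)=3$. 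In (S1), $\cX/V$ is rational and $\{R_1\},\{R_2\}$ are its only short orbits. This is exactly configuration (*) with $\Delta_0=\{R_2\}$, and $\tau$ fixes $R_2$, so no surplus of fixed points arises. Moreover, nothing in the hypotheses ties $\Gamma$ or $S_P$ to $G$, and your sketch never supplies that link.

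\textbf{Counterexample.} No argument can close this gap. Let $\cX$ be the Klein quartic $x^3y+y^3z+z^3x=0$ in characteristic $2$.
\begin{itemize}
\item It is smooth, and its Hasse--Witt matrix is a $3$-cycle permutation matrix, so $\gamma(\cX)=3$.
\item $\PSL(2,7)\le\aut(\cX)$: the classical generators involve only $\zeta_7$ and $1/\sqrt{-7}$, both integral at $2$, and the reduction is faithful because $\PSL(2,7)$ is simple.
\item Since $\cX$ is ordinary, all second ramification groups vanish. Then $4=-336+\sum d_R$ has a unique solution: a tame orbit of size $24$ with stabilizer $C_7$, and an orbit $\Delta$ of size $14$ with stabilizers ${\rm Alt}_4$.
\end{itemize}
Take $\Gamma=\PSL(2,7)$ and $P\in\Delta$. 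Every involution lies in exactly one of these stabilizers, and each stabilizer fixes exactly two points. Hence $S_P\cong C_2\times C_2$ fixes only $P$ and one further point $P'$, and Deuring--Shafarevich together with Hurwitz give $\cX/S_P$ rational. So (*) and (**) hold, while $\{P\}\cup\Delta_0=\{P,P'\}\neq\Delta$ and $S_P\cap S_Q=1$ for the other twelve points; (***) fails. Yet $\aut(\cX)\supseteq{\rm Alt}_4\cong\PSL(2,3)$.

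Choosing instead $\Gamma\cong{\rm Sym}_4\cong\PGL(2,3)$ from the other conjugacy class gives $|\Delta|=6$ and $H_P=1$. That is exactly the case of Proposition~\ref{leterribile24} that invokes this statement.

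\textbf{The paper's proof.} It ignores the $\PSL(2,3)$ hypothesis and rules out two points $P,Q\in\Delta$ with $S_P\cap S_Q=1$ via Galois-point results \cite{KLT,yasin}. It breaks on the same example. There the involutions of $S_P$ are elations with common axis $PP'$, which is a bitangent. So the degree-$4$ cover $\cX\to\cX/S_P$ is not a projection of the quartic from a point, and $P,Q$ are not Galois points of $\cC$ in that sense.

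\textbf{Hyperelliptic fallback.} This is also off. The size-$2$ orbit argument in Lemma~\ref{lem27092025A}(iii) needs the hyperelliptic involution to have exactly two fixed points, i.e.\ $\gamma(\cX)=1$. Also, ${\rm Alt}_4$ does embed in $\PGL(2,\K)$ for $p=2$, as $\AGL(1,4)$ (Result~\ref{resdickson}(iv)). The case $|S_P|=2$ is immediate anyway: $S_P$ is generated by the central hyperelliptic involution, so $S_Q=S_P$ for every $Q\in\Delta$.

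\textbf{Minor point.} Lemma~\ref{lem27092025A}(i) assumes a unique short $S_P$-orbit besides $\{P\}$. Use Lemma~\ref{leterribileHW} to get $|S_P|\le 2\gamma(\cX)$ without that assumption.
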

\begin{proof}
Take two points $P,Q\in \Delta$ such that both $S_P$ and $S_Q$ satisfy (*) but $|S_P\cap S_Q|=1$. If $\cX$ is not hyperelliptic, then it has a non-singular plane model given by a quartic curve $\cC$. From (*), $P$ and $Q$ are distinct simple Galois points of $\cC$. However, since $p=2$ and $\gg(\cX)>1$, \cite[Theorem 1.1]{KLT} shows that no such curve exists. For hyperelliptic $\cX$, the non-existence follows from  \cite[Theorem]{yasin}.
\end{proof}
\begin{proposition}
\label{pro30122025} Assume that (*) holds. If $\gg(\cX)\ge 2$ and $\gamma(\cX)=1$, then $|S_P|=p=2$, the $\aut(\cX)$-orbit containing $P$ has size $2$, and $\cX$ is a hyperelliptic curve. Moreover, if $\iota$ is its hyperelliptic involution, then $\aut(\cX)/\langle \iota \rangle$ is either a dihedral group of order $2m$, or a cyclic group of order $m$ where $m$ is an odd integer.
\end{proposition}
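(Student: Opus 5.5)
The plan is to read off everything from Result \ref{resnaka} in the case $\gamma(\cX)=1$, and then use (*) to pin down the curve. The section's standing assumptions say $S_P$ is non-trivial and $\Gamma$ does not fix $P$.

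First I show $p=2$. If $p\ge 3$, Result \ref{resnaka} says that any $p$-subgroup of $\aut(\cX)$ has no fixed point. This contradicts $S_P\ne 1$ fixing $P$. So $p=2$.

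Next I show $|S_P|=2$, using the Deuring--Shafarevich formula (\ref{eq2deuring}) for $S_P$ with $\cX/S_P$ rational. It reads
$$0=\gamma(\cX)-1=-|S_P|+(|S_P|-1)+\sum_{i}(|S_P|-\ell_i),$$
the sum running over the short orbits other than $\{P\}$. Hence $\sum_i(|S_P|-\ell_i)=1$. Since $|S_P|$ and each $\ell_i$ are powers of $2$ with $\ell_i<|S_P|$, each summand is at least $\ha|S_P|$. So there is exactly one further short orbit $\Delta_0=\{Q\}$, and $|S_P|-1=1$, that is $|S_P|=2$. Then $S_P=\langle\iota\rangle$ with $\iota$ fixing exactly $P$ and $Q$, and $\cX/\langle\iota\rangle$ rational. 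Thus $\cX$ is hyperelliptic and $\iota$ is its hyperelliptic involution, which is unique since $\gg(\cX)\ge2$.

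Now I locate the orbit of $P$. The involution $\iota$ is central in $\aut(\cX)$, so $\aut(\cX)$ permutes its fixed-point set $\{P,Q\}$. Since $\Gamma\le\aut(\cX)$ does not fix $P$, the $\aut(\cX)$-orbit of $P$ is exactly $\{P,Q\}$, of size $2$. This is the same argument as in the proof of Lemma \ref{lem27092025A}(iii).

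Finally I identify $\bar G=\aut(\cX)/\langle\iota\rangle$. It acts faithfully on the rational curve $\cX/\langle\iota\rangle$, so by Result \ref{resratcurve} it is a finite subgroup of $PGL(2,\K)$. It has an orbit $\{\bar P,\bar Q\}$ of size $2$, so an index $\le2$ subgroup $\bar C$ fixes two distinct points. In characteristic $2$, a non-trivial $2$-element of $PGL(2,\K)$ fixes exactly one point. Hence $\bar C$ has odd order. By Dickson's classification, Result \ref{resdickson}, a group fixing two points has the form $E\rtimes$cyclic with $E$ a $2$-group; as $E=1$ here, $\bar C$ is cyclic of odd order $m$.

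If $\bar G\ne\bar C$, an element swapping $\bar P$ and $\bar Q$ normalizes $\bar C$, and $\bar G$ is the stabilizer of the pair $\{\bar P,\bar Q\}$. Consulting Result \ref{resdickson}, $\bar G$ must then be dihedral of order $2m$. The only alternative would be a cyclic group of order $2m$, which contains an involution commuting with $\bar C$ and swapping two points fixed by $\bar C$. This forces $\bar C=1$ (giving $\bar G\cong C_2$, dihedral with $m=1$) or is impossible, since elements of $PGL(2,\K)$ centralizing a non-trivial semisimple element preserve its fixed pair pointwise or the group is dihedral.

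The main obstacle I expect is this last classification step. It is the one that needs care with Dickson's list in characteristic $2$, in particular excluding an involution of order $2$ whose fixed point would have to be a third point preserved in a way incompatible with the structure.
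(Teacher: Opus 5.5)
Your proposal is correct and follows essentially the paper's argument. The paper applies Deuring--Shafarevich to $S_P$, then uses the hyperelliptic-involution argument from the proof of Lemma \ref{lem27092025A}(iii) together with Dickson's list. Your use of Result \ref{resnaka} to get $p=2$ is a harmless variant: the Deuring--Shafarevich identity $\sum_i(|S_P|-\ell_i)=1$ already forces $p=2$, since each summand is at least $p-1$.

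Your final step is worded loosely but is sound, and it can be made cleaner. Put $\bar P=0$ and $\bar Q=\infty$. Then every element of $\bar G\setminus\bar C$ has the form $x\mapsto a/x$, so it is an involution that inverts $\bar C$. Hence $\bar G$ is dihedral of order $2m$.
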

\begin{proof} Let $S$ be a Sylow $p$-subgroup containing $S_P$. The Deuring-Shafarevic formula applied to $S_P$ shows that $p=2$, $|S_P|=2$, and that $S_P$ fixes a unique point $Q$ other than $P$. Therefore, the claim follows from the proof of (iii) of Lemma \ref{lem27092025A}.
\end{proof}
\begin{remark}
\label{rem30122025} For $q=2$, the family of curves in Example 4 shows that no restriction on the amplitude of $m$ and of $\gg(\cX)$ is possible in Proposition \ref{pro30122025}.
\end{remark}
Take a point $Q\in \Delta\setminus \{P\}$ and let $N=S_P\cap S_Q$. Observe that if (***) is not satisfied, then $N$ is trivial for some point $Q\in \Delta$. From the first claim of Lemma \ref{lem24072024}, however, if $|N|=1$ for all points of $\cX$ other than $P$, then hypothesis (*) implies that hypothesis (**) does not hold.
Let $x,y\in \mathbb{K}(\cX)$ such that $\mathbb{K}(x)$ and $\mathbb{K}(y)$ are the subfields of $\mathbb{K}(\cX)$ fixed element-wise by $S_P$ and $S_Q$ respectively. W.l.o.g. $P$ is a zero of $x$, and $Q$ is a zero of $y$.
Actually, $P$ is the unique zero of $x$. In fact, let $\bar{P}$ be the place of $\mathbb{K}(x)$ lying under $P$ in the cover $\mathbb{K}(\cX)|\mathbb{K}(x)$. Since this cover has degree $q=|S_P|$ and it is fully ramified at $P$, we have that $v_P(x)=qv_{\bar{P}}(x)=q$. From $[\mathbb{K}(\cX):\mathbb{K}(x)]=q$, the claim follows.
\begin{lemma}
\label{lem14set23}  $\mathbb{K}(x,y)$ is the fixed field of $N=S_P \cap S_Q$ in $\mathbb{K}(\cX)$. Moreover, if $\bar{\mathfrak{g}}$ is the genus of $\mathbb{K}(x,y)$, we have $$\bar{\mathfrak{g}}\leq \left(\frac{|S_P|}{|N|}-1 \right)^2$$
\end{lemma}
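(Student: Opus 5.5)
The first claim is pure Galois theory and the second is Castelnuovo's inequality, so I would split the proof accordingly.

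\emph{Fixed field.} By the Galois correspondence for $\mathbb{K}(\cX)|\mathbb{K}(\cX)^{S_P\cap S_Q}$, the compositum of the fixed fields of $S_P$ and $S_Q$ is the fixed field of their intersection. Here $\mathbb{K}(\cX)^{S_P}=\mathbb{K}(x)$ and $\mathbb{K}(\cX)^{S_Q}=\mathbb{K}(y)$, so their compositum $\mathbb{K}(x,y)$ is the fixed field of $N=S_P\cap S_Q$. Concretely, an element of $\aut(\cX)$ fixes $\mathbb{K}(x,y)$ if and only if it fixes both $x$ and $y$, that is, if and only if it lies in $S_P\cap S_Q$. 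Therefore $[\mathbb{K}(\cX):\mathbb{K}(x,y)]=|N|$ and
\[
[\mathbb{K}(x,y):\mathbb{K}(x)]=\frac{|S_P|}{|N|}.
\]
Since $P$ and $Q$ lie in the same $\Gamma$-orbit $\Delta$, the groups $S_P$ and $S_Q$ are conjugate in $\Gamma$. Hence $|S_Q|=|S_P|$, and likewise $[\mathbb{K}(x,y):\mathbb{K}(y)]=|S_P|/|N|$.

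\emph{Genus bound.} The field $\mathbb{K}(x,y)$ contains the two rational subfields $\mathbb{K}(x)$ and $\mathbb{K}(y)$, each of index $m=|S_P|/|N|$. They generate $\mathbb{K}(x,y)$ by construction. Castelnuovo's inequality (Stichtenoth, Theorem 3.11.3) then gives
\[
\bar{\mathfrak g}\le m\cdot 0+m\cdot 0+(m-1)(m-1)=(m-1)^2,
\]
which is the claim.

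\emph{Degenerate cases.} I would equivalently view $\mathbb{K}(x,y)$ as the function field of the image of the map $\cX\to\mathbb{P}^1\times\mathbb{P}^1$ given by $(x,y)$. That image is a curve of bidegree $(m,m)$, and its arithmetic genus is $(m-1)^2$. The one point that needs care is Castelnuovo's requirement that $\mathbb{K}(x,y)$ be the compositum, which holds by construction. If $m=1$, then $\mathbb{K}(x)=\mathbb{K}(y)$ and $\bar{\mathfrak g}=0$, consistent with the bound. I do not expect any serious obstacle beyond checking the equality of degrees, which follows from conjugacy of $S_P$ and $S_Q$.
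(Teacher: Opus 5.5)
Your proof is correct and follows the same route as the paper: Galois theory identifies $\mathbb{K}(x,y)$ as the fixed field of $N=S_P\cap S_Q$, and Castelnuovo's inequality, applied to the two rational subfields $\mathbb{K}(x),\mathbb{K}(y)$ (each of index $|S_P|/|N|$ in their compositum), gives $\bar{\mathfrak{g}}\le(|S_P|/|N|-1)^2$. The only slip is cosmetic: invoke the Galois correspondence in $\mathbb{K}(\cX)|\mathbb{K}(\cX)^{\langle S_P,S_Q\rangle}$, as the paper does, or note that $\mathbb{K}(\cX)|\mathbb{K}(x,y)$ is Galois because it is the upper part of the Galois extension $\mathbb{K}(\cX)|\mathbb{K}(x)$; do not use $\mathbb{K}(\cX)|\mathbb{K}(\cX)^{N}$, whose Galois group contains neither $S_P$ nor $S_Q$.
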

\begin{proof}
Observe that the fixed field of $\langle S_P,S_Q \rangle$ in $\mathbb{K}(\cX)$ is $\mathbb{K}(x)\cap \mathbb{K}(y)$.
By definition, $N$ fixes both $x$ and $y$, and so the fixed field of $N$ contains $\mathbb{K}(x,y)$. On the other hand, from Galois theory,
there exists a subgroup $U$ of $\langle S_P,S_Q \rangle$ whose fixed field is $\mathbb{K}(x,y)$. Observe that $U$ is a $p$-subgroup as $\mathbb{K}(\cX)\supset \mathbb{K}(x,y)\supset \mathbb{K}(x)$ and $[\mathbb{K}(\cX):\mathbb{K}(x)]=|S_P|=q$, and that $U$ must be contained in $S_P\cap S_Q=N$. This proves that the fixed field of $N$ is contained in $\mathbb{K}(x,y)$ whence the first claim follows. The Castelnuovo-Riemann inequality, see \cite[Theorem III.10.4]{stichtenoth1993}, yields the second claim.
\end{proof}
\section{Automorphism groups satisfying hypotheses (*),(**) and (***)}
\label{pr***}
We delve into the case where $\Gamma$ satisfies properties (*), (**), and (***). As mentioned in Introduction, (***) implies (**). Indeed, if $\gamma(\cX)$ was zero, then Result \ref{lem11.129}  would contradict (***).

In this section, $\Sigma$ denotes the set of all points $R\in \Delta$ where the cover $\cX|\bar{\cX}$ ramifies. In other words, $\Sigma$ consists of all points $R\in \Delta$ such that a nontrivial subgroup of $S_P$ fixes $R$. If (***) holds, then $\Sigma=\Delta$ and the converse is also true.  Lemma \ref{lem25jan2022} with $p^s=1$ and $N=|\Sigma|$ provides the following bound.

\begin{lemma}
\label{leA260223} If both {\rm{(*)}} and {\rm{(***)}} hold, then
\begin{equation}
\label{eqB260223}
|\Delta|\le \frac{p^\ell}{q-p^\ell}\,\gamma(\cX)+1 \le \frac{1}{p-1}\gamma(\cX)+1,
\end{equation}
and $\gamma(\cX)\ge p-1$.
\end{lemma}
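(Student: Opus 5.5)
The plan is to apply Lemma \ref{lem25jan2022} to the $p$-group $S_p=S_P$ with $q=|S_P|$. By (*) the quotient $\bar{\cX}=\cX/S_P$ is rational, so $\gamma(\bar{\cX})=0$ and the second alternative of that lemma is the relevant one. The shortest orbit of $S_P$ is $\{P\}$, so $p^s=1$. The cover $\cX|\bar{\cX}$ ramifies at $P$, so the lemma is applicable.

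The count $N$ of points fixed by some non-trivial element of $S_P$ is exactly $|\Delta_0\cup\{P\}|$. Hypothesis (***) identifies this set with $\Delta$, so $N=|\Delta|$. Substituting into Lemma \ref{lem25jan2022} gives
\[
|\Delta|\le \frac{p^\ell}{q-p^\ell}\,(\gamma(\cX)+1-1)+1=\frac{p^\ell}{q-p^\ell}\,\gamma(\cX)+1,
\]
which is the first inequality.

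For the second inequality, $p^\ell$ is the size of a short orbit of $S_P$, so $p^\ell\le q/p$. The map $t\mapsto t/(q-t)$ is increasing on $[0,q)$, hence
\[
\frac{p^\ell}{q-p^\ell}\le \frac{q/p}{q-q/p}=\frac{1}{p-1}.
\]

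For $\gamma(\cX)\ge p-1$, I would use that $\Delta_0\neq\emptyset$, which holds since $\gamma(\cX)\ge 1$ by (***) (as noted before the section), together with the remark after Theorem \ref{themain19122025}. This gives $|\Delta|\ge 2$. Then $2\le \gamma(\cX)/(p-1)+1$, so $\gamma(\cX)\ge p-1$.

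Alternatively, the Deuring--Shafarevich formula gives $\gamma(\cX)-1=-1+\sum(q-|\Omega_i|)$ over the short orbits $\Omega_i\ne\{P\}$. Each term is divisible by $p^\ell(p-1)\ge p-1$ and there is at least one term, so again $\gamma(\cX)\ge p-1$.

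There is no real obstacle here. The only point needing care is justifying $N=|\Delta|$, that is, that every point of $\Delta$, including $P$, is fixed by a non-trivial element of $S_P$. This is precisely the content of (***) together with $S_P\neq 1$.
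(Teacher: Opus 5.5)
Your proposal is correct and matches the paper's argument: the paper applies Lemma \ref{lem25jan2022} with $p^s=1$ and $N=|\Sigma|=|\Delta|$ (using (***)), and your bound $p^\ell\le q/p$ and your derivation of $\gamma(\cX)\ge p-1$ from $|\Delta|\ge 2$ are the natural completions of that one-line proof. In your alternative Deuring--Shafarevich argument there is a small slip: a term $q-|\Omega_i|$ is divisible by $|\Omega_i|(p-1)$, not necessarily by $p^\ell(p-1)$; this still gives each term at least $p-1$, so the conclusion is unaffected.
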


\begin{lemma}
\label{lem10072025} Assume that both {\rm{(*)}} and {\rm{(***)}} hold. If   $|\Delta|>2$ and $\gamma(\cX)<\ha |\Gamma_P|$, then $|\Delta|\equiv 1 \mod{p}$, $S_P$ is a Sylow $p$-subgroup of $\Gamma$, and $\Delta$ is the unique non-tame short orbit of $\Gamma$.
\end{lemma}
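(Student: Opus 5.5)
The first step is to use $\gamma(\cX)<\ha|\Gamma_P|$ with Lemma \ref{leterribileHW}. Since (*) holds and (**) follows from (***), that lemma shows that $S_P$ has a unique short orbit other than $\{P\}$. Under (***) this orbit must be $\Delta_0=\Delta\setminus\{P\}$, which is non-empty. Lemma \ref{lem24072024} then says that no non-trivial element of $\Gamma_P$ fixes a point outside $\Delta$. Lemma \ref{lem27092025A} also applies, giving $\ha|S_P|\le\gamma(\cX)\le |S_P|-1$.

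\emph{Congruence.} The $S_P$-orbits partition $\Delta\setminus\{P\}$. Because $\Delta_0=\Delta\setminus\{P\}$ is a single $S_P$-orbit, its size is a power of $p$. It is a short orbit, and it is not a fixed point because $|\Delta|>2$. Hence $|\Delta|-1=|\Delta_0|=p^a$ with $a\ge1$, so $|\Delta|\equiv1\pmod p$.

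\emph{Sylow claim.} By (iii) of Lemma \ref{lem27092025A}, either $S_P$ is a Sylow $p$-subgroup of $\Gamma$, or $p=2$ and $S_P$ fixes a second point $Q$. In the second case $Q\in\Delta_0$, because $S_P$ is non-trivial (Lemma \ref{lem24072024}). So $\Delta_0=\{Q\}$ and $|\Delta|=2$, contradicting $|\Delta|>2$. Therefore $S_P$ is a Sylow $p$-subgroup of $\Gamma$. Alternatively, $|\Gamma|=|\Delta||\Gamma_P|$ with $p\nmid|\Delta|$ gives the same conclusion directly.

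\emph{Uniqueness of the non-tame orbit.} Suppose $\Gamma$ has another non-tame short orbit $\Delta^*$. Take $R\in\Delta^*$ and let $S_R\ne1$ be the Sylow $p$-subgroup of $\Gamma_R$. Since $S_P$ is Sylow in $\Gamma$, $S_R$ is conjugate to a subgroup of $S_P$. That is, there is $g\in\Gamma$ with $gS_Rg^{-1}\le S_P$. Then $gS_Rg^{-1}$ fixes $g(R)$, which lies in $\Delta^*$ and hence outside $\Delta$. But by (***) and Lemma \ref{lem24072024}, the non-trivial elements of $S_P$ fix only points of $\Delta$. This is a contradiction.

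The main subtlety is making sure the single short orbit produced by Lemma \ref{leterribileHW} really coincides with $\Delta\setminus\{P\}$. This comes from (***): every point of $\Delta\setminus\{P\}$ is fixed by some non-trivial element of $S_P$, so it lies in a short $S_P$-orbit, and there is only one such orbit besides $\{P\}$. Once this is settled, the three conclusions follow quickly as above.
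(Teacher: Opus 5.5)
Your proof is correct and follows the paper's argument. Lemma \ref{leterribileHW} together with (***) makes $\Delta\setminus\{P\}$ a single $S_P$-orbit, which gives $|\Delta|\equiv 1 \pmod p$. The paper proves the Sylow claim exactly by your alternative, $|\Gamma|=|\Delta||\Gamma_P|$ with $p\nmid|\Delta|$, so the detour through Lemma \ref{lem27092025A}(iii) is unnecessary.

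The one variation is the last step. You use Sylow conjugacy, whereas the paper counts: $S_P$ acts semiregularly on the other non-tame orbit $\Omega$, so $|S_P|$ divides $|\Omega|$, and $p\mid|\Gamma_Q|$ then forces $p\,|S_P|$ to divide $|\Gamma|$. Both arguments are equally short and valid.
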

\begin{proof}  Let $S_p$ be a Sylow $p$-subgroup of $\Gamma$ containing $S_P$. From Lemma \ref{leterribileHW}, 
 $S_P$ has at most two fixed points in $\Delta$. Actually, $P$ is the only fixed point of $S_P$ by $|\Delta|>2$. By the Orbit theorem, this yields that the length of every $S_P$-orbit other than $\{P\}$ is divisible by $p$. Hence, $|\Delta|\equiv 1 \pmod{p}$. Also, from the Orbit theorem, $|\Gamma|=|\Delta| |\Gamma_P|$. Thus $|S_p|$ divides $|\Gamma_P|$.  
This yields $|S_p|=|S_P|$. To show the third claim, assume on the contrary that $\Omega$ is a non-tame short $\Gamma$-orbit other than $\Delta$. Since $\Omega\cap \Delta=\emptyset$, no non-trivial element of $S_P$ fixes a point in $\Omega$. Therefore, $|S_P|$ divides $|\Omega|$. On the other hand, since $\Omega$ is a non-tame short orbit, if $Q\in\Omega$ then $|\Gamma_Q|$ is divisible by $p$. This together with the Orbit theorem applied to $\Omega$ yield   $|\Gamma|=|\Gamma_Q||\Omega|$ where $p|S_P|$ divides $|\Gamma|$, a contradiction as $S_P$ is a Sylow $p$-subgroup of $\Gamma$.
\end{proof}
\begin{proposition}
\label{pro16072025}
Assume that both {\rm{(*)}} and {\rm{(***)}} hold. If $\mathfrak{g}(\cX)\ge 2$ and $|\Delta|>2$,  then
\begin{equation}
\label{eqpr}
|\Gamma|\le
2\left(\frac{1}{p-1} \gamma(\cX)+1\right)\gamma(\cX)
\end{equation}
unless $S_P$ is transitive on $\Delta\setminus\{P\}$.
\end{proposition}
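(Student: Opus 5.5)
We aim to bound $|\Gamma|=|\Delta|\,|\Gamma_P|$ by bounding each factor separately. The bound on $|\Delta|$ is already available: under (*) and (***), Lemma \ref{leA260223} gives
$$|\Delta|\le \frac{1}{p-1}\gamma(\cX)+1 .$$
It therefore suffices to prove $|\Gamma_P|\le 2\gamma(\cX)$ whenever $S_P$ is not transitive on $\Delta\setminus\{P\}$. This follows immediately from Lemma \ref{leterribileHW} once we know that $S_P$ has at least two short orbits other than $\{P\}$. Indeed, (*) and (**) hold here, since (***) together with (*) implies (**). So Lemma \ref{leterribileHW} yields $\gamma(\cX)\ge \ha|\Gamma_P|$ unless $S_P$ has a unique short orbit other than $\{P\}$.

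The main step is to show that, if $S_P$ is not transitive on $\Delta\setminus\{P\}$, then $S_P$ has at least two short orbits different from $\{P\}$. By (***), every point $R\in\Delta\setminus\{P\}$ is fixed by a non-trivial element of $S_P$. Hence the $S_P$-orbit of $R$ has length $|S_P|/|(S_P)_R|<|S_P|$, so it is short. Consequently every $S_P$-orbit contained in $\Delta\setminus\{P\}$ is short.

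We must also ensure that $\Delta\setminus\{P\}$ contains no further fixed point of $S_P$ forming an orbit $\{P\}$-like that should not count. This is harmless: a second fixed point $Q\in\Delta$ still gives the short orbit $\{Q\}$, which is different from $\{P\}$. Thus non-transitivity of $S_P$ on $\Delta\setminus\{P\}$ means $\Delta\setminus\{P\}$ splits into at least two $S_P$-orbits, each short and each distinct from $\{P\}$. (The set $\Delta\setminus\{P\}$ is non-empty because $|\Delta|>2$.) So $S_P$ has at least two short orbits other than $\{P\}$, and Lemma \ref{leterribileHW} gives $|\Gamma_P|\le 2\gamma(\cX)$.

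Combining the two bounds gives
$$|\Gamma|=|\Delta||\Gamma_P|\le 2\Big(\frac{1}{p-1}\gamma(\cX)+1\Big)\gamma(\cX),$$
which is (\ref{eqpr}).

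The only point needing care is the application of Lemma \ref{leterribileHW}, which assumes $\Gamma$ does not fix $P$. This holds since $|\Delta|>2$. One should also check that its proof (the Deuring--Shafarevich count) does not require the short orbits to lie outside $\Delta$; it does not. I expect no serious obstacle beyond confirming that the hypotheses of Lemmas \ref{leA260223} and \ref{leterribileHW} are met.
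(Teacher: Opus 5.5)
Your proof is correct and is essentially the paper's argument: (***) together with non-transitivity gives at least two short $S_P$-orbits other than $\{P\}$, so Lemma \ref{leterribileHW} yields $|\Gamma_P|\le 2\gamma(\cX)$. The bound (\ref{eqB260223}) of Lemma \ref{leA260223} and the Orbit theorem $|\Gamma|=|\Delta||\Gamma_P|$ then finish the proof.
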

\begin{proof} If $S_P$ is not transitive on $\Delta\setminus \{P\}$, then (***) yields that $S_P$ has at least two short orbits on $\Delta$ other than $\{P\}$. From Lemma \ref{leterribileHW}, $\gamma(\cX)\ge \ha\, |\Gamma_P|$. Now, the claim follows from (\ref{eqB260223}) and the Orbit theorem.
\end{proof}
\begin{proposition}
\label{prop280623}
Assume that both (*) and (***) hold. If $\mathfrak{g}(\cX)\ge 2$, $|\Delta|>2$  and
$S_P$ is transitive on $\Delta\setminus\{P\}$, then each of following claims holds:
\begin{enumerate}
\item[\rm(I)] 
$|\Delta|=p^a+1$, $\Gamma$ is $2$-transitive on $\Delta$, and no $p$-element of $\Gamma$  fixes a point outside $\Delta$. Moreover,
\begin{equation}
\label{eqA23042023} \gamma(\cX)=|S_P|-(|\Delta|-1).
\end{equation}
\item[\rm(II)] The size of $|\Delta|$ is even or odd according as $p>2$ or $p=2$.
\item[\rm(III)] If $K$ is the kernel of the permutation representation of $\Gamma$ on $\Delta$, then $K=M\rtimes C$ where
$|M|=p^h$ $h\geq 0$ and $p\nmid |C|$. Furthermore, $\Tilde{\Gamma}=\Gamma/K$ acts on $\Delta$ faithfully as one of the groups in Proposition \ref{pro11022025} in its natural $2$-transitive permutation representations, and, apart from the sporadic case (L),
$M$ is non-trivial and consists of all $p$-elements of $\Gamma$ which fix more than one points.
\item[\rm(IV)] If the sporadic case (L) does not occur for $\Tilde{\Gamma}=\Gamma/K$, then the quotient curve $\bar{\cX}=\cX/M$ has zero $p$-rank and its genus is at most $$\left(\frac{|S_P|}{|M|}-1 \right)^2.$$ Moreover,
\begin{equation}
\label{eqA130323} \gamma(\cX)=(|\Delta|-1)(|M|-1).
\end{equation} and

\begin{equation}
\label{eqB23042023} \gamma(\cX)|M|=|S_P|(|M|-1).
\end{equation}
\item[\rm{(V)}] If the sporadic case (L) does not occur for $\Tilde{\Gamma}=\Gamma/K$, then the order of $C$ divides $|\Delta|$, with one possible exception when $|C|=\frac{3}{2}|\Delta|$ and the subgroup $\bar{\Sigma}$ of $\bar{\Gamma}=\Gamma/M$ generated by its Sylow $p$-subgroups is isomorphic to $\SU(3,n)$ with $|\Delta|=n^3+1$. Moreover, $|K|\le \frac{3}{2}|S_P|$. If $\cX/M$ is rational then $C$ is trivial, and $|H_P|\le |\Delta|-1$. If $\cX/M$ is elliptic then $|C|\in\{1,2,4,6\}$.
\end{enumerate}
\end{proposition}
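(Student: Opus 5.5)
The plan is to prove (I)--(V) in order, each from the previous ones.

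For (I), transitivity of $S_P$ on $\Delta\setminus\{P\}$ makes $|\Delta|-1$ a power $p^a$. Since $S_P\le\Gamma_P$, the stabilizer $\Gamma_P$ is also transitive on $\Delta\setminus\{P\}$, so $\Gamma$ is $2$-transitive on $\Delta$. By (***), every point fixed by a non-trivial element of $S_P$ lies in $\Delta$, and by conjugation the same holds for every $S_R$ with $R\in\Delta$. A $p$-element of $\Gamma$ fixing a point $Q\notin\Delta$ would lie in $\Gamma_Q$, so $Q$ would be in a non-tame short orbit. Lemma \ref{lem10072025} (or the Deuring--Shafarevich count) excludes this: either $\gamma(\cX)\ge\ha|\Gamma_P|$, which contradicts $S_P$ having the single short orbit $\Delta\setminus\{P\}$ by Lemma \ref{leterribileHW}, or $\Delta$ is the unique non-tame short orbit. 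Applying Deuring--Shafarevich to $S_P$, with short orbits $\{P\}$ and $\Delta_0=\Delta\setminus\{P\}$ and $\cX/S_P$ rational, gives
\[\gamma(\cX)-1=-|S_P|+(|S_P|-1)+(|S_P|-|\Delta_0|),\]
which is (\ref{eqA23042023}). Claim (II) follows at once: $|\Delta|=p^a+1$ is even for $p$ odd and odd for $p=2$.

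For (III), the kernel $K$ fixes $P$, so Result \ref{res74} gives $K=M\rtimes C$ with $M=K\cap S_P$ a $p$-group and $C$ cyclic. Then $\Gamma/K$ satisfies the hypotheses of Proposition \ref{pro020222}, and Proposition \ref{pro11022025} applies because $|\Delta|-1$ is a power of $p$.
\begin{itemize}
\item To see that $M$ is non-trivial, use the final claim of Result \ref{HC}: in the He-group cases a Sylow $p$-subgroup of $\Tilde\Gamma$ acts semiregularly on $\Delta\setminus\{P\}$. Hence any $p$-element fixing two points of $\Delta$ fixes $\Delta$ pointwise, so it lies in $M$.
\item If $M$ were trivial, $S_P$ would act semiregularly on $\Delta_0$, and the orbit count above would give $\gamma(\cX)=0$, contradicting (**). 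The exceptional case (L), where this fails, has to be checked separately.
\end{itemize}

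For (IV), every non-trivial element of $S_P\setminus M$ fixes only $P$ on $\cX$. Apply Deuring--Shafarevich to $M$: every point of $\Delta$ is an $M$-fixed point and all other $M$-orbits are long, which gives
\[\gamma(\cX)-1=|M|(\gamma(\cX/M)-1)+|\Delta|(|M|-1).\]
On $\bar\cX=\cX/M$, the group $S_P/M$ fixes $\bar P$ and acts freely elsewhere, and $\bar\cX/(S_P/M)$ is rational. Result \ref{lem11.131} therefore gives $\gamma(\bar\cX)=0$. This yields (\ref{eqA130323}), and combining it with (\ref{eqA23042023}) gives (\ref{eqB23042023}). The genus bound on $\bar\cX$ comes from Lemma \ref{lem14set23} applied to $P,Q\in\Delta$ with $N=S_P\cap S_Q=M$.

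For (V), use Result \ref{res0602205}: $C$ is centralized by the He-subgroup $\bar\Sigma$, so $C$ acts on $\bar\cX$ commuting with the $2$-transitive action on the image of $\Delta$. Since $C$ fixes $\Delta$ pointwise and is cyclic, each point is fixed. I would bound $|C|$ by letting $\bar C$ act on the quotient by the $p$-part and using that a cyclic $p'$-group with fixed points acts on the tangent space by a character. The centralizing Sylow $p$-subgroups then force $\bar C$ to act on $\bar\cX/\bar S_P$ fixing two points, one of them $\bar P$ and the other under $\Delta_0$. The divisibility $|C|\mid|\Delta|$ would come from counting $C$-orbits and Riemann--Hurwitz on $\bar\cX/\bar\Sigma$, with the $\SU(3,n)$ central-product exception giving the factor $\tfrac32$. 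If $\bar\cX$ is rational, a cyclic group fixing the $|\Delta|>2$ points must be trivial by Result \ref{resdickson}. If $\bar\cX$ is elliptic, Result \ref{res94} gives $|C|\in\{1,2,4,6\}$. The bound $|K|\le\frac32|S_P|$ follows from these facts together with $|M|\le|S_P|/(|\Delta|-1)$.

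I expect (V), in particular the divisibility statement and the $\SU(3,n)$ exception, to be the main obstacle.
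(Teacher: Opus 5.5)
Your (I)–(IV) are essentially right, and in (IV) your use of Result~\ref{lem11.131} is a valid alternative to the paper's route via $|S_P|=|M|(|\Delta|-1)$ and the two Deuring--Shafarevich equations. There you let $S_P/M$ act on $\cX/M$ with only fixed point $\bar P$, and note that the quotient is $\cX/S_P$, which is rational.

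One inference in (I) is wrong as written. Lemma~\ref{leterribileHW} says that two or more short $S_P$-orbits besides $\{P\}$ force $\gamma(\cX)\ge\ha|\Gamma_P|$; it does not say the converse. In fact $\gamma(\cX)\ge\ha|\Gamma_P|$ is compatible with a single short orbit, for example whenever $H_P=1$, since then $\gamma(\cX)=|S_P|-|\Delta_0|\ge(1-\frac1p)|S_P|$. Use instead the argument inside the proof of Lemma~\ref{lem10072025}:
\begin{itemize}
\item $p\nmid|\Delta|=p^a+1$, so $S_P$ is a Sylow $p$-subgroup of $\Gamma$;
\item hence every $p$-element of $\Gamma$ lies in some $S_R$ with $R\in\Delta$;
\item by (***) transported by conjugation, the non-trivial elements of $S_R$ fix only points of $\Delta$.
\end{itemize}

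The genuine gap is in (V): you give no proof that $|C|$ divides $|\Delta|$, and the tools you sketch cannot supply one. Since $C$ fixes $\Delta$ pointwise, counting $C$-orbits on $\Delta$ gives nothing. On $\bar\cX/\bar S_P$ the second fixed point of $\bar C$ lies under $\Delta_0\subset\Delta$, so it produces nothing off $\Delta$. The missing idea is to find a \emph{different} cyclic group of order $|C|$ that fixes a point off $\Delta$. The paper does this as follows.
\begin{itemize}
\item Pass to $\hat\cX=\bar\cX/\bar\Sigma$. It is rational, being a quotient of $\cX/S_P$, and all of $\Delta$ lies over a single point $\hat P$.
\item The cyclic $p'$-group $\hat C\cong\bar C/(\bar\Sigma\cap\bar C)$ fixes $\hat P$ and exactly one further point $\hat R$. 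So there is a $\bar\Sigma\bar C$-invariant $\bar\Sigma$-orbit $\bar\Omega$ disjoint from the image of $\Delta$.
\item Since $|\bar\Omega|$ divides $|\bar\Sigma|$, the stabilizer of $\bar R\in\bar\Omega$ in $\bar\Sigma\bar C$ has $p'$-part divisible by $|\bar C|/|\bar\Sigma\cap\bar C|$. By Result~\ref{res74} it therefore contains a cyclic $\bar D$ of that order.
\item Lifting through the regular $M$-orbit over $\bar R$ gives $U\cong\bar D$ fixing some $R\notin\Delta$.
\item Lemma~\ref{lem24072024}, applied at every point of $\Delta$, makes $U$ semiregular on $\Delta$, so $|U|$ divides $|\Delta|$.
\end{itemize}
This yields $|C|\mid|\Delta|$ when $\bar\Sigma\cap\bar C=1$. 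In the ${\rm SL}(2,n)$ and $\SU(3,n)$ central-product cases it yields $|C|\mid 2|\Delta|$ or $|C|\mid 3|\Delta|$. The paper then cuts these down with the Hurwitz formula for $\bar C$ on $\bar\cX$ (where $\bar C$ fixes the $|\Delta|$ points) together with $\mathfrak{g}(\bar\cX)\le(|\Delta|-2)^2$ from (IV), which gives $|C|-1\le 2(|\Delta|-2)$.

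Your bound on $|K|$ rests on this missing divisibility, through $|K|=|M||C|\le|S_P|\,|\Delta|/(|\Delta|-1)$. You also omit the claim $|H_P|\le|\Delta|-1$ in the rational case. Your argument that $\bar C=1$ when $\cX/M$ is rational (it fixes $|\Delta|\ge3$ points of a rational curve) is fine, and simpler than the paper's.
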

\begin{proof}
The proof is case by case.
\subsection{Proof of \rm(I)}  Since $\Gamma$ is transitive on $\Delta$, the transitivity of $S_P$ on $\Delta\setminus\{P\}$ yields that $\Gamma$ acts on $\Delta$ as a $2$-transitive permutation group of degree $p^a+1$. From the proof of Lemma \ref{lem10072025}, no $p$-element of $\Gamma$ fixes a point outside $\Delta$. Hence $\gamma(\cX)=|S_P|-p^a$ by the Deuring-Shafarevic formula applied to $S_P$. Thus Equation (\ref{eqA23042023}) holds.
\subsection{Proof of \rm(II)} From (I), $|\Delta|=p^a+1$ with $a>0$, and $|\Delta|$ is even or odd, according as $p>2$ or $p=2$. 
\subsection{Proof of \rm(III)} 
From Result \ref{res74}, either $K$ is a (cyclic) group of order prime to $p$, or $K$ has a unique Sylow $p$-subgroup $M$. In the latter case, $K=M\rtimes C$ where  $C$ is cyclic.
Let $\hat{\cX}=\cX/K$ denote the quotient curve of $\cX$ with respect to $K$, and let $\hat{\Gamma}=\Gamma/K$. There is unique point $\hat{P} \in \hat{\cX}$ lying under $P$ in the cover $\cX|\hat{\cX}$, and the points lying under $\Delta$ form a subset $\hat{\Delta}$ so that $\hat{\Gamma}$ acts on $\hat{\Delta}$ as $\Gamma$ on $\Delta$. Moreover, the action of $\hat{\Gamma}$ on $\hat{\Delta}$ is faithful.
Thus Proposition \ref{pro020222} and, since $|\hat{\Delta}|=|\Delta|=p^a+1$, also Proposition \ref{pro11022025} apply to $\hat{\cX}$, for $\hat{\Gamma}$ and $\hat{\Delta}$.
It may be noticed that (L) is the unique case where the stabilizer of two distinct points of $\hat{\Gamma}$ has a non-trivial $p$-subgroup, i.e. $\hat{\Gamma}\cong A\Gamma L(1,9)$, $p=2, h=3$ and $|\Delta|=9$. 
Next, we show that $M$ may be trivial only in this sporadic case (L). In fact, in all other cases in Proposition \ref{pro11022025}, every $p$-element in $\hat{\Gamma}$ fixes exactly one point of $\hat{\cX}$, and, by (I) Result \ref{res74}, this implies that the Sylow $p$-subgroups of $\hat{\Gamma}$ intersect pairwise trivially.
If $|M|=1$, this property holds true for $\Gamma$. From the first claim in Lemma \ref{lem24072024}, then $\gamma(\cX)=0$,
a contradiction. Hence $|M|=p^h$ with $h\ge 1$. The above argument also shows that $M$ consists of all $p$-elements of $\Gamma$ with at least two fixed points, unless  the sporadic case (L) occurs.
\subsection{Proof of \rm(IV)}
Let $\bar{\gamma}$ be $p$-rank of the quotient curve $\bar{\cX}=\cX/M$.
The Deuring-Shafarevic formula applied to  $M$  yields
$$ {\mbox{$\gamma(\cX)-1=|M|(\bar{\gamma}-1)+|\Delta|(|M|-1)$.}}
$$
Furthermore, apart from the sporadic case (L), the Orbit theorem applied to the action of $S_P$ on the set $\Delta\setminus\{P\}$ gives
$|S_P|=|M|(|\Delta|-1)$. From these two equations and (\ref{eqA23042023}), $\bar{\gamma}=0$ follows.
For $\bar{\gamma}=0$, the above mentioned equations give (\ref{eqB23042023}).  The bound on $\mathfrak{g}(\bar{\cX})$ follows from Lemma \ref{lem14set23}.

\subsection{Proof of \rm(V)}
Since $\Delta$ is fixed by $M$ pointwise, $\Delta$ may also be considered as a point-set of $\bar{\cX}$ such that $\bar{\Gamma}=\Gamma/M$, viewed as a subgroup of $\aut(\bar{\cX})$, acts on $\Delta$ as $\Gamma$ does. From the last claim in (III), the subgroup $\bar{\Sigma}$ of $\bar{\Gamma}=\Gamma/M$ generated by its Sylow $p$-subgroups is a He-group; see Result \ref{HC}. As $\bar{C}=CM/M\cong C$ fixes $\Delta$ pointwise, $\bar{\Sigma}\cap \bar{C}=\{1\}$, unless $\bar{\Sigma}={\rm{SL}}(2,n)$ with $|\Delta|=n+1, n$ odd and $|\bar{\Sigma}\cap \bar{C}|=2$, or  $\bar{\Sigma}=\SU(3,n)$ with $|\Delta|=n^3+1$, $n\equiv -1 \pmod 3$ and $|\bar{\Sigma}\cap \bar{C}|=3$. Moreover, both $\bar{\Sigma}$ and $\bar{C}$ are normal subgroups of $\bar{\Gamma}$. Therefore, $\bar{\Sigma}\bar{C}=\bar{\Sigma}\times\bar{C}$ when $\bar{\Sigma}\cap \bar{C}=\{1\}$ whereas  $\bar{\Sigma}\bar{C}=\bar{\Sigma}\odot\bar{C}$
(with $\odot$ being a central product) when $|\bar{\Sigma}\cap \bar{C}|\in \{2,3\}$. Furthermore, $\bar{S}_P=S_P/M$, as a subgroup of $\aut(\bar{\cX})$ satisfies (*). Look at the quotient curve $\hat{\cX}=\bar{\cX}/\bar{\Sigma}$. Since $\bar{S}_P$ is a subgroup of $\bar{\Sigma}$, $\hat{\cX}$ is rational.
Moreover,
$\hat{C}=\bar{\Sigma}\bar{C}/\bar{\Sigma}\cong \bar{C}/(\bar{\Sigma}\cap \bar{C})$ is a subgroup of $\aut(\hat{\cX})$. Since $\hat{C}$ is cyclic of order prime to $p$, it has exactly two fixed points, namely the point $\hat{P}$ lying under $\Delta$, and another $\hat{R}$ lying under a $\bar{\Sigma}$-orbit, say $\bar{\Omega}$. Therefore, $\bar{\Sigma}\bar{C}$ preserves $\bar{\Omega}$, and $|\bar{\Omega}|$ divides $|\bar{\Sigma}|$. For a point $\bar{R}\in \bar{\Omega}$, the Orbit theorem yields that the order of the stabilizer $(\bar{\Sigma}\bar{C})_{\bar{R}}$ of $\bar{R}$ in $\bar{\Sigma}\bar{C}$ multiplied by $|\bar{\Omega}|$ equals the order of $\bar{\Sigma}\bar{C}$. Therefore, $|(\bar{\Sigma}\bar{C})_{\bar{R}}|$ is divisible by $|\bar{C}|/|\bar{\Sigma}\cap\bar{C}|$, and hence, by Result \ref{res74}, it contains a (cyclic) subgroup $\bar{D}=D/M$  whose order is $|\bar{C}|=|C|$, with two possible exceptions namely when $|\bar{\Sigma}\cap\bar{C}|\in \{2,3\}$ with $|\bar{D}|=\ha |\bar{C}|$ and $|\bar{D}|=\frac{1}{3}|\bar{C}|$, respectively. Let $\Omega_R$ be the set of points in $\cX$ lying over $\bar{R}$ in the cover $\cX|\bar{\cX}$. Then $D$  preserves $\Omega_R$. Since $|\Omega_R|$ has length  $|M|$, a power of $p$, $D=M\rtimes U$ with $U\cong \bar{D}$ fixing a point $R\in \Omega_R$. From Lemma \ref{lem24072024}, since $R\not\in \Delta$, no non-trivial element of $U$ fixes a point in $\Delta$.
Thus $|U|$ divides $|\Delta|$. Therefore, the claim follows up to two possible exceptions, namely $|C|$ divides either $2|\Delta|$, or $3|\Delta|$ and the subgroup $\bar{\Sigma}$ of $\bar{\Gamma}=\Gamma/M$ generated by its Sylow $p$-subgroups is isomorphic to ${\rm{SL}}(2,n), |\Delta|=n+1$ and $\SU(3,n), |\Delta|=n^3+1$ respectively.
To deal with the two exceptions, apply the Hurwitz genus formula to $\bar{C}$. This together with the equations in (IV) and Lemma \ref{lem14set23} yield
$$
2(|\Delta|-2)^2-2=2\left(\frac{|S_P|}{|M|}-1 \right)^2-2\geq 2\mathfrak{g}(\bar{\cX})-2\geq |\bar{C}|(2\tilde{\mathfrak{g}}(\cX)-2)+|\Delta|(|C|-1)\geq (|\Delta|-2)(|C|-1)-2.$$
As $|\Delta|>2$, this implies $2(|\Delta|-2)\geq |C|-1.$ Comparison with the two exceptions shows that the unique possible case is $|C|=\frac{3}{2}|\Delta|$ with $\bar{\Gamma}\cong \SU(3,n)$, and
$|K|=|M||C|\leq |M|\textstyle{\frac{3}{2}}|\Delta|= \textstyle{\frac{3}{2}}|S_P|.$

Assume that $\bar{\cX}$ is rational. From Results \ref{resratcurve} and \ref{resdickson}, $\bar{\Sigma}$ is not isomorphic to either $SL(2,q)$, or $SU(3,q)$. Therefore, $\bar{S}_P \bar{C}=\bar{S}_P\times \bar{C}$. Since $\bar{S}_P$ is non-trivial,
Result \ref{resdickson} yields that $\bar{C}$ and hence $C$ are trivial. Also, $|H_P|=|\bar{H}_P||C|=|\bar{H}_P|\le |\Delta|-1$.
Assume that $\bar{\cX}$ is elliptic. From Result \ref{res94}, the order of $\bar{C}$ (and hence that of $C$) falls in $\{1,2,4,6\}$.
\end{proof}

\begin{remark}
\label{rem04012025} If the hypothesis $|\Delta|>2$ in Proposition \ref{prop280623} is dropped, that is, $|\Delta|=2$ is assumed, then some of the properties (I)\ldots (V) become meaningless or false, for instance (II) does not hold for $p=2$. Moreover, $M$ coincides with $S_P$, and hence $S_P$ is a normal subgroup of $\Gamma$.
\end{remark}

By (III) of Proposition \ref{prop280623}, we have to investigate the case where $\Gamma/K$ acts on $\Delta$ as one of the groups of Proposition \ref{pro11022025}.
\begin{proposition}
    \label{pro10072025}
Assume that both (*) and (***) hold and $|\Delta|>2$. With the notation as in Proposition \ref{prop280623}, if $\Tilde{\Gamma}=\Gamma/K$ acts on $\Delta$ as one of the groups in cases (H'), (L') and (L")
of Proposition \ref{pro11022025}, then $$|\Gamma|\le 18\gamma(\cX)^2\log_2\gamma(\cX),$$
while in Case (J'),
$$|\Gamma|\le 3\gamma(\cX)(1+\gamma(\cX))^2.$$
\end{proposition}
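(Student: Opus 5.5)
The plan is to write $|\Gamma|=|K|\,|\tilde\Gamma|$ and bound the two factors separately in terms of $\gamma(\cX)$, using the structural information of Proposition \ref{prop280623}. None of the cases (H'), (J'), (L'), (L") is the sporadic case (L), so (III)--(V) of Proposition \ref{prop280623} are available. By (V), $|K|\le \frac{3}{2}|S_P|$. By (I), $|S_P|=\gamma(\cX)+|\Delta|-1$. By Lemma \ref{leA260223}, $|\Delta|-1\le \gamma(\cX)/(p-1)\le\gamma(\cX)$. Hence $|S_P|\le 2\gamma(\cX)$ and $|K|\le 3\gamma(\cX)$. A sharper estimate comes from (IV): since $\gamma(\cX)=(|\Delta|-1)(|M|-1)$, we get $|S_P|=|M|(|\Delta|-1)\le 2\gamma(\cX)$. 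In the exceptional $\SU(3,n)$ situation of (V), $|\Delta|=n^3+1$; this is not compatible with the degrees occurring in these cases, so in fact $|C|$ divides $|\Delta|$ and $|K|\le |M|\,|\Delta|$.

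The second step is to bound $|\tilde\Gamma|=|\Delta|\,|\tilde\Gamma_P|$ case by case from the table in Proposition \ref{pro11022025}.
\begin{itemize}
\item In Case (H'), $|\Delta|=2^r$ with $p=2^r-1$. The one-point stabilizer has order $p$ or $pr$, so $|\tilde\Gamma|\le 2^r(2^r-1)r$. Since $p-1\le\gamma(\cX)$ by Lemma \ref{leA260223}, we have $2^r=p+1\le \gamma(\cX)+2$ and $r\le\log_2(\gamma(\cX)+2)$. Using $|K|\le|M||\Delta|$ together with $(|\Delta|-1)(|M|-1)=\gamma(\cX)$, this gives
$|\Gamma|\le |M||\Delta|\cdot|\Delta|(|\Delta|-1)r$, and this quantity should be bounded by $18\gamma(\cX)^2\log_2\gamma(\cX)$. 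Here $|M|(|\Delta|-1)\le 2\gamma(\cX)$, while $|\Delta|^2\le (p+1)^2$ is at most a constant times $\gamma(\cX)$ only if $|M|$ is small. So I would need to combine this carefully with $p-1\le \gamma(\cX)$.
\item In Cases (L') and (L"), $|\Delta|=9$, $p=2$ and $|\tilde\Gamma|\le 216$. Here $|K|\le \frac{3}{2}|S_P|$ and $|S_P|=8|M|$, with $\gamma(\cX)=8(|M|-1)\ge 8$. A direct numerical check then gives the bound, with $\log_2\gamma(\cX)\ge 3$ absorbing the constants.
\item In Case (J'), $p=2$, $|\Delta|=u$ and $|\tilde\Gamma|=u(u-1)$. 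Then $|\Gamma|\le \frac{3}{2}|S_P|\,u(u-1)$, with $u-1\le\gamma(\cX)$ and $|S_P|\le 2\gamma(\cX)$. This yields roughly $3\gamma(\cX)\cdot(\gamma(\cX)+1)\gamma(\cX)$, which is the form $3\gamma(\cX)(1+\gamma(\cX))^2$.
\end{itemize}

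The main obstacle is Case (H'), where the naive estimates give $|\Gamma|\sim \gamma(\cX)^3\log$ rather than $\gamma(\cX)^2\log$. The extra factor $|\Delta|$ must be absorbed. I would first try to use (V) sharply. $C$ acts semiregularly outside $\Delta$ with order dividing $|\Delta|=2^r$, and it has order prime to $p$; I would try to combine this with the genus bound on $\cX/M$ from (IV) via a Hurwitz count as in the proof of (V). The aim is to show $|C|\le 2$, or at least $|C|$ bounded. Then $|K|\le 2|M|$, and
$|\Gamma|\le 2|M|\cdot 2^r(2^r-1)r\le 4\,|M|(|\Delta|-1)\cdot|\Delta|\, r$. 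With $|M|(|\Delta|-1)\le2\gamma(\cX)$ and $|\Delta|\le\gamma(\cX)+2$, this gives the claimed $18\gamma(\cX)^2\log_2\gamma(\cX)$ after adjusting constants for small $\gamma(\cX)$.
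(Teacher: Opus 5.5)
\textbf{Case (J').} Your argument here is fine and is essentially the paper's computation.

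\textbf{Case (H') --- the genuine gap.} You flag this case as the main obstacle and try to resolve it by showing that $|C|$ is bounded by a constant via a Hurwitz count. Nothing available gives that. The count in the proof of (V) only yields $|C|\le 2|\Delta|-3$. A constant bound on $|C|$ appears only when $\cX/M$ is rational or elliptic, and in general $\cX/M$ may have genus up to $(p-1)^2$. So the step you rely on is unproved, and as written the argument for (H') does not close.

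The detour is also unnecessary. You are missing one observation, which is exactly what the paper uses: by (III), $M$ is a non-trivial $p$-group, so $|M|\ge p=2^r-1=|\Delta|-1$. Then (\ref{eqA130323}) gives $\gamma(\cX)=p(|M|-1)\ge p(p-1)$. Hence $|\Delta|^2=(p+1)^2\le \frac{8}{3}\gamma(\cX)$ for $p\ge 3$. Your remark that this holds \emph{only if $|M|$ is small} has the dependence backwards, since $\gamma(\cX)$ grows with $|M|$ and the worst case $|M|=p$ still gives $\gamma(\cX)\ge p(p-1)$.

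Now use your own inequality $|\Gamma|\le |M|(|\Delta|-1)\cdot|\Delta|^2\cdot r$. By (\ref{eqB23042023}), $|M|(|\Delta|-1)=|S_P|=\frac{|M|}{|M|-1}\gamma(\cX)\le\frac{3}{2}\gamma(\cX)$. Together with the bound on $|\Delta|^2$ this gives $|\Gamma|\le 4r\gamma(\cX)^2$. Moreover $2^r=p+1\le p(p-1)\le \gamma(\cX)$, so $r\le \log_2\gamma(\cX)$; your weaker $r\le\log_2(\gamma(\cX)+2)$ is not needed. The paper phrases the same idea as $|\Delta|\le 2|M|$, which gives $|\Gamma|<4r|S_P|^2<16r\gamma(\cX)^2$.

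\textbf{Case (L'') --- a smaller point.} Your ``direct numerical check'' does not close at $|M|=2$. There $\gamma(\cX)=8$ and $|\tilde{\Gamma}|=216$. With $|K|\le \frac{3}{2}|S_P|=24$ you only get $|\Gamma|\le 5184$. Even using $|C|\mid 9$, so $|K|\le 18$, you get $|\Gamma|\le 3888$. Both exceed $18\cdot 64\cdot 3=3456$, so you would have to rule out $|C|=9$ in that case. The paper's own chain here is equally loose for small $|M|$: its inequality $1152|M|<1316(|M|-1)$ fails for $|M|\le 8$, and that is what feeds the bound $|\Gamma|\le 54\gamma(\cX)^2$. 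So this is not a deficiency relative to the paper, but the check is not automatic.
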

\begin{proof}
 The above bounds hold whenever (\ref{eqpr}) is valid. Therefore, we may assume $$|\Gamma|>\left(\frac{2}{p-1} \gamma(\cX)+2\right)\gamma(\cX).$$ Then, Proposition \ref{pro16072025} shows that Proposition  \ref{prop280623} applies and hence claims (I),\ldots,(V) hold.

Assume first that $(H')$ occurs for $\tilde{\Gamma}=\Gamma/K$. Then both $p>2$ and $r$ are primes, and $|M|\ge p= 2^r-1=|\Delta|-1\ge \ha|\Delta|$. Moreover, either $\tilde{\Gamma}=|\Delta|(|\Delta|-1)$, or $\tilde{\Gamma}=r|\Delta|(|\Delta|-1)$ according as $\tilde{\Gamma}\cong \AGL(1,2^r)$ or $\tilde{\Gamma}\cong {\rm{A\Gamma L}}(1,2^r)$. Therefore,
(\ref{eqA130323}) and (V) give
\begin{equation}
\label{eq12022025B}|\Gamma|=|\tilde{\Gamma}||M||C|\le r|\Delta|(|\Delta|-1)|M||C|= r|S_P||\Delta||C|\leq 2r|S_P|(|\Delta|-1)|M|\frac{|C|}{|\Delta|-1}<4r|S_P|^2<16r\gamma(\cX)^2.
\end{equation}
 From  (\ref{eqB23042023}) and $|S_P|=p|M|$, an upper bound on $r$ is $\log_2\gamma(\cX)$ whence the claim follows for Case (H').

 Assume next that (J') occurs for $\tilde{\Gamma}=\Gamma/K$. Then $p=2$. Therefore, (\ref{eqB260223}), (\ref{eqA130323}), 
 and (V) yield
\begin{equation}
\label{eq12022025A}
|\Gamma|=|\tilde{\Gamma}||M||C|=|\Delta|(|\Delta|-1)|M||C|\le (\gamma(\cX)+1)|S_P||C|\le 2\gamma(\cX)(\gamma(\cX)+1)|C|\le 3\gamma(\cX)(\gamma(\cX)+1)^2.
\end{equation}
In case (L'), $M$ is nontrivial, $\Tilde{\Gamma}$ is sharply $2$-transitive on $\Delta$, $8|M|=|S_P|$, and $\gamma(\cX)\ge 8$. Again, (\ref{eqA130323}), (\ref{eqB23042023}) and (V) can be used to show that
\begin{equation}
\label{eq12022025F} |\Gamma|=|\tilde{\Gamma}||M||C|\le  8\cdot 9 \cdot 16\cdot  |M|=1152\cdot |M|< 1316 (|M|-1)\le 164\gamma(\cX)\le 18\gamma(\cX)^2.
\end{equation}
In case (L"), $\Tilde{\Gamma}$ has a subgroup of index $3$ for which case (L') occurs. Therefore, $|\Gamma|\le 54 \gamma(\cX)^2$, and $\log_2\gamma(\cX)\ge 3$ as $\gamma(\cX)\ge 8$.
\end{proof}
\begin{lemma}
\label{lem13022025}
Assume that both (*) and (***) hold and $|\Delta|>2$. With the notation as in Proposition \ref{prop280623}, if $\Gamma/K$ acts on $\Delta$ as in Case (L) of Proposition \ref{pro11022025}, then $|C|\le 17$, $\gamma(\cX)\ge 8$ and
$$|\Gamma|\le
\begin{cases}
{\mbox{$2448<38\gamma(\cX)^2\le 5 \gamma(\cX)^3$ and $\gamma(\cX)=8$.}}\\
{\mbox{$309\gamma(\cX)<6\gamma(\cX)^2\le \gamma(\cX)^3$ and $\gamma(\cX)\ge 32$.}}
\end{cases}
$$
\end{lemma}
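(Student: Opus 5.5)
In Case (L) we have $p=2$, $|\Delta|=9$ and $\Tilde{\Gamma}=\Gamma/K\cong {\rm{A\Gamma L}}(1,9)$, so $|\Tilde{\Gamma}|=144$. The Sylow $2$-subgroup of $\Tilde{\Gamma}$ is semidihedral of order $16$, and the one-point stabilizer is this group of order $16$. The proof reduces to bounding $|K|=|M||C|$ and relating $|M|$ to $\gamma(\cX)$. The difficulty is that (IV) and (V) of Proposition \ref{prop280623} are stated only outside Case (L), so they must be re-derived here.

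\textbf{Step 1: the Deuring--Shafarevich computation.} The image $\Tilde{S}_P=S_P/M$ of $S_P$ has order $16$ and acts transitively on the $8$ points of $\Delta\setminus\{P\}$. Its stabilizer of a second point $Q$ has order $2$ and is generated by an involution fixing exactly $P$ and $Q$ in $\Delta$. Hence $|S_P|=16|M|$, and each $Q\in\Delta\setminus\{P\}$ is fixed by a subgroup of $S_P$ of order $2|M|$. By (I), $\gamma(\cX)=|S_P|-8=16|M|-8$.
\begin{itemize}
\item If $M$ is trivial, then $\gamma(\cX)=8$.
\item Otherwise $|M|\ge 2$, so $\gamma(\cX)\ge 24$. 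Applying the Deuring--Shafarevich formula to $M$ as in (IV) gives $\gamma(\cX)-1=|M|(\bar\gamma-1)+9(|M|-1)$. This forces $\bar\gamma=(7|M|+2)/|M|$, which is an integer only if $|M|=2$. Then $\gamma(\cX)=24$, contradicting the claimed dichotomy $\gamma(\cX)\in\{8\}\cup[32,\infty)$.
\end{itemize}
So I need to recheck this step carefully. One possibility is to use the $p$-rank of $\cX/M$ together with a parity argument to rule out $|M|=2$, or to show $|M|\ge 4$ directly. I expect the intended outcome to be $\gamma(\cX)=16|M|-8$ with $|M|\in\{1\}\cup\{\ge 4\}$, giving $\gamma(\cX)\ge 56\ge 32$. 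Alternatively, a possibly different relation holds that still gives $\gamma(\cX)\ge 32$ when $M\neq 1$. In every case $|S_P|\le \gamma(\cX)+8$.

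\textbf{Step 2: bounding $|C|$.} I will copy the argument of (V). Pass to $\bar{\cX}=\cX/M$ and then to $\bar{\cX}/\bar{S}$, where $\bar{S}$ is generated by the $2$-elements. The cyclic group $\bar C$ fixes two points of the rational quotient, so a conjugate of a subgroup of order comparable to $|C|$ fixes a point outside $\Delta$. By Lemma \ref{lem24072024}, this subgroup acts semiregularly on $\Delta$, so $|C|$ divides $9$ up to a small factor. For a cruder but sufficient bound I will instead use Lemma \ref{lem14set23} together with the Hurwitz formula for $\bar C$: $2\mathfrak{g}(\bar{\cX})-2\ge 9(|C|-1)-2|C|$, with $\mathfrak{g}(\bar{\cX})\le (|S_P|/|M|-1)^2=15^2$. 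This gives $|C|\le 64$, so it needs sharpening. The sharper route is to pair the centralizer argument with Result \ref{res74}: $\Gamma_P=S_P\rtimes H_P$ has cyclic $H_P$ containing $C$, and the tame part of the stabilizer of $\Tilde{\Gamma}$ is trivial, so $H_P=C$. Applying Result \ref{reshomma}, or the fixed-point-free divisibility, then yields $|C|\le 17$.

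\textbf{Step 3: assembling the bound.} Write $|\Gamma|=144|M||C|\le 144\cdot 17|M|$.
\begin{itemize}
\item For $M=1$: $|\Gamma|\le 2448$ and $\gamma(\cX)=8$, so $2448<38\cdot 64\le 5\cdot 512$.
\item For $|M|\ge 2$: $|M|=(\gamma(\cX)+8)/16$ gives $|\Gamma|\le 153(\gamma(\cX)+8)\le 309\gamma(\cX)$ once $\gamma(\cX)\ge 32$. Then $309\gamma(\cX)<6\gamma(\cX)^2\le \gamma(\cX)^3$.
\end{itemize}

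The main obstacle is Step 2, the sharp bound $|C|\le 17$, together with establishing $\gamma(\cX)\ge 32$ in Step 1.
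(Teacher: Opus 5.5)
Two genuine gaps remain, and Step 1 also contains an arithmetic error.

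\emph{Step 1 for $M\neq 1$.} Your relation $\gamma(\cX)=|S_P|-8=16|M|-8$, obtained from (I), is correct. Substituting it into the Deuring--Shafarevich formula for $M$ gives $16|M|-9=|M|(\bar\gamma-1)+9(|M|-1)$, hence $\bar\gamma=8$ for every $|M|$. This is as expected, since $\cX/M$ with $\Gamma/M$ is again in Case (L) with trivial $M$. So there is no integrality constraint singling out, or excluding, $|M|=2$. Your argument therefore yields only $\gamma(\cX)\ge 24$, and the claimed $\gamma(\cX)\ge 32$ rests on an unproved hope that $|M|\ge 4$.

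You are right that (IV) and (V) of Proposition~\ref{prop280623} are stated only outside Case (L). The paper nevertheless quotes them at this point, writing $\gamma(\cX)=8(|M|-1)$ and $|C|\le 15$. The first of these is incompatible with (I) in Case (L), so a new argument is needed here, and your proposal does not supply one.

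Two inequalities you copied into Step 3 also fail. First, $309\gamma(\cX)<6\gamma(\cX)^2$ requires $\gamma(\cX)>51$. Second, $38\cdot 64=2432<2448$. What your data actually give is $|\Gamma|\le 2448\le 5\gamma(\cX)^3$ when $M=1$, and $|\Gamma|\le 153(\gamma(\cX)+8)\le\gamma(\cX)^3$ when $M\neq 1$, provided $|C|\le 17$.

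\emph{Step 2.} None of your three routes proves $|C|\le 17$.
The argument of (V) rests on the He-group structure from (III), i.e. on Sylow $2$-subgroups of $\Gamma/M$ meeting trivially. This fails in Case (L): the stabilizer of two points in ${\rm A\Gamma L}(1,9)$ is generated by an involution fixing three points of $\Delta$.
Your crude bound misapplies Lemma~\ref{lem14set23}. That lemma controls the genus of $\cX/(S_P\cap S_Q)$, and here $|S_P\cap S_Q|=2|M|$, not $|M|$.
Finally, $H_P=C$ together with Result~\ref{reshomma} only bounds the prime divisors of $|C|$ by $2\gg(\cX)+1$.

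The missing idea, used in the paper for $M=1$, runs as follows.
\begin{enumerate}
\item Put $N=S_P\cap S_Q$. Then $C$ normalizes $N$; when $M=1$ it even centralizes $N$, because $C=K\lhd\Gamma$.
\item Lemma~\ref{lem14set23} with $|S_P|/|N|=8$ gives $\gg(\cX/N)\le 49$.
\item The group $\hat C=CN/N\cong C$ fixes the six points of $\cX/N$ lying under $\Delta$; the $N$-orbits on $\Delta$ are three singletons and three pairs.
\item The curve $(\cX/N)/\hat C$ is not rational. Indeed $N_\Gamma(N)/CN$ is dihedral of order $6$ and acts on it with two orbits of length $3$, so each of its involutions fixes two points. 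In characteristic $2$ an involution of a rational curve fixes only one point.
\item The Hurwitz formula for $\hat C$ then gives $96\ge 6(|C|-1)$, i.e. $|C|\le 17$.
\end{enumerate}
The same reasoning goes through for $M\neq 1$, since $|S_P|/|N|=8$ there as well.
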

\begin{proof}
    Assume that case (L) occurs for $\tilde{\Gamma}=\Gamma/K$. A MAGMA aided computation shows that $\tilde{\Gamma}$ has a Sylow $2$-subgroup $\tilde{S}_2$ of order $16$ such that $\tilde{S}_2$ has two short orbits on $\Delta$, one is $\{P\}$ and the other has length $8$. Also, for two distinct points $P,Q\in \Delta$, the subgroup $N=S_P\cap S_Q$ has order $2|M|$. Let $\hat{\Delta}$ be the set of the $N$-orbits on $\Delta$. A MAGMA computation shows that $\hat{\Delta}$ consists of three singletons and three pairs.

First the case of trivial $M$ is investigated. Since $K=C$ in this case, $C$ is a normal subgroup of $\Gamma$ and hence of $\Gamma_P$. Therefore, $S_P\rtimes C=S_P\times C$, that is, $C$ centralizes $S_P$ for every $P\in \Delta$. Therefore, $C$ centralizes $N$. Moreover, let $U$ be the normalizer of $N$ in $\Gamma$. Since $C$ fixes both $P$ and $Q$, $U$ contains $C$. A MAGMA computation shows that $U/C$ is a dihedral group of order $12$.
Let $\hat{\cX}=\cX/N$ be the quotient curve of $\cX$ by $N$. Then the points of $\hat{\cX}$ lying under $\Delta$ are six and they form  $\hat{\Delta}$.
Moreover, both $\hat{U}=U/N$ and $\hat{C}=CN/N\cong C$ are subgroups of $\aut(\hat{\cX})$.
Also, Lemma \ref{lem14set23} yields $\mathfrak{g}(\hat{\cX})\le (8-1)^2=49$. From the Hurwitz
genus formula applied to $\hat{C}$,
$$96\ge 2\mathfrak{g}(\hat{\cX})-2\ge|\hat{C}|(2\mathfrak{g}(\hat{\cX}/\hat{C})-2)+6(|\hat{C}|-1).$$
Moreover, $\hat{U}/\hat{C}\cong U/CN$ is a dihedral group of order $6$ which is a subgroup of $\aut(\hat{\cX}/\hat{C})$ that has two orbits of length $3$.
Therefore, every involution of that dihedral subgroup of order $6$ fixes two points. Since $p=2$ this implies that $\hat{\cX}/\hat{C}$ is not rational. Hence $|C|\le 17$.
From the Deuring-Shafarevic formula applied to $S_2=\tilde{S}_2$, $\gamma(\cX)-1=-16+16-1+16-8$ whence $\gamma(\cX)=8$.
Thus
\begin{equation}
\label{eq12022025D} |\Gamma|=|\tilde{\Gamma}||C|=|S_P||\Delta||C|\le 16\cdot 9 \cdot 17< 38\gamma(\cX)^2\le 5 \gamma(\cX)^3.
\end{equation}
If $M$ is nontrivial, then (\ref{eqA130323}), (\ref{eqB23042023}) and (V) hold. Therefore, $\gamma(\cX)=8(|M|-1)$ with $|M|\ge 8$ as $\gamma(\cX)\ge 32$,  and $|C|\le 15$. Moreover,
\begin{equation}
\label{eq12022025E} |\Gamma|=|\tilde{\Gamma}||C|=|S_P||\Delta||C|\le 16\cdot9\cdot 15 \cdot |M|\le 2160 |M|\le 2469 (|M|-1)\le 309\gamma(\cX)\le 6\gamma(\cX)^2< \gamma(\cX)^3.
\end{equation}
\end{proof}
We will see that some special properties of $M$ may have a major impact on $\Gamma$.
Actually, they are even sufficient to determine the structure of $\Gamma$ completely under some reasonable technical conditions on $M$ and on the  centralizer $C_K(M)$ in $K$. In this context, two more conditions are useful, namely (****) and (*****); see Introduction.
Our goal is to determine the centralizer $G=C_\Gamma(M)$ of $M$ in $\Gamma$. Clearly (****) implies that $G\cap K=M$.
Two cases are treated separately according as $G$ is larger, or equal to $M$.

\begin{proposition} 
\label{pro0302} Assume that (*), (***), (****), (*****) hold, and $|\Delta|>2$. If $G\gvertneqq M$ and $\Gamma/K$ acts on $\Delta$ as one of the cases (A),(B),\ldots,(G) in Proposition \ref{pro020222} does, then (\ref{eq19122025A} holds.
Moreover, if
$$|\Gamma|>
2\left(\frac{1}{p-1} \gamma(\cX)+1\right)\gamma(\cX) $$
and one of the cases (A),(C),(E),(F) occurs, then
$$\Gamma=G'\times K$$
where $G'$ is the commutator group of $G$ and it is isomorphic to one of the groups $\PSL(2,q), \PSU(3,q), Sz(q)$, $\Ree(q)$, respectively,
unless either $p=|M|=3$,
and $\Gamma$ is the unique triple cover $3\cdot {\rm{Alt}}_6$ of $\PSL(2,9)\cong {\rm{Alt}}_6$, or $\Gamma\cong 3\cdot {\rm{Alt}}_6\rtimes C_2$, or
$p=2$, $|M|\in\{2,4\}$ and $\Gamma$ is one of the two central extensions of $Sz(8)$.
\end{proposition}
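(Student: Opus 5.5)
We may assume $|\Gamma|>2(\frac{1}{p-1}\gamma(\cX)+1)\gamma(\cX)$, since otherwise (\ref{eq19122025A}) follows from $\gamma(\cX)\ge p-1$ (Lemma \ref{leA260223}). Then Proposition \ref{pro16072025} makes $S_P$ transitive on $\Delta\setminus\{P\}$, and claims (I)--(V) of Proposition \ref{prop280623} are available. The plan is to describe $G=C_\Gamma(M)$ first and then count.

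\emph{Step 1: $G$ acts on $\Delta$ as a normal subgroup of $\Gamma/K$ containing the socle.} By (****), $G\cap K=M$, and $G\trianglelefteq\Gamma$ because $M\trianglelefteq\Gamma$. Hence $GK/K\cong G/M$ is a nontrivial normal subgroup of the almost simple group $\Tilde\Gamma=\Gamma/K$ of cases (A)--(G). So $GK/K$ contains the socle $W\in\{\PSL(2,q),\PSU(3,q),Sz(q),\Ree(q)\}$ (or $\PSL(2,8)$ in case (G)). In cases (A),(C),(E),(F) we have $\Tilde\Gamma=W$, so $GK=\Gamma$.

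\emph{Step 2: $G$ is a central extension.} $M$ is central in $G$, so $G/M$ is an extension whose relevant composition factor is $W$. Take $G'$ and its perfect core $G^{(\infty)}$. This is a perfect central extension of $W$ by a subgroup of $M$, a $p$-group. By (*****), $M$ is minimal normal in $\Gamma$, hence elementary abelian, and $M\cap G^{(\infty)}$ is normal in $\Gamma$, so it is either $1$ or $M$.

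Result \ref{resschur} lists the nonsplit possibilities. For $p$-groups in defining characteristic, the only ones are $q=9$ with $|M|=3$ (the triple cover $3\cdot{\rm Alt}_6$) and $Sz(8)$ with $|M|\in\{2,4\}$; note that ${\rm SL}(2,q)$ has a centre of order $2\ne p$ when $q$ is odd. In every other case $G^{(\infty)}\cap M=1$, so $G^{(\infty)}\cong W$ centralizes $M$ and meets $K$ trivially. I would then show that $G^{(\infty)}$ commutes with $C$ as well, so that $\Gamma=G'\times K$. For this, $\bar C$ is central in $\bar\Gamma=\Gamma/M$ by Result \ref{res05022025A}. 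The commutators $[G^{(\infty)},C]$ lie in $M$, and a perfect group acting trivially on both $M$ and $\Gamma/M$ in this way commutes with $C$ by a three-subgroup argument. In the exceptional cases the same reasoning produces the listed groups, with the extra $C_2$ arising from $\PGL$-type automorphisms of ${\rm Alt}_6$.

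\emph{Step 3: the bound.} Write $|\Gamma|=|\Tilde\Gamma|\,|M|\,|C|$ and use $|S_P|=|M|(|\Delta|-1)$ together with $\gamma(\cX)=(|\Delta|-1)(|M|-1)$ from (\ref{eqA130323}). This gives $|M|\le 2\gamma(\cX)/(|\Delta|-1)$.

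\emph{Bounding $|C|$.} The key extra input is that $G^{(\infty)}\cong W$ now acts on $\bar\cX=\cX/M$, which has $p$-rank $0$ and genus at most $(|\Delta|-2)^2$. Combined with $|C|\le 2(|\Delta|-2)+1$ (the Hurwitz estimate in the proof of (V)), this gives $|C|\le 2|\Delta|$.

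\emph{Bounding $|\Tilde\Gamma|$.} It is at most $|\Delta|(|\Delta|-1)\,|\Tilde H_P|\le |\Delta|(|\Delta|-1)^2$ by Proposition \ref{pro11022025}. For the unitary and Ree cases the sharper $|\Tilde H_P|\le q^2-1\le(|\Delta|-1)^{2/3}$ is needed.

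\emph{Combining.} Multiplying and using $|\Delta|-1\le\gamma(\cX)/(p-1)$ (Lemma \ref{leA260223}) should give $|\Gamma|\le\frac{4}{p-1}\gamma(\cX)^3$ after a case check. I expect this last arithmetic, especially in the $\PSU$ and $\Ree$ cases and when $|M|=p$, to be the main obstacle. There one may need $|C|=1$ via rationality of $\bar\cX/\bar\Sigma$, and the precise $|\Tilde H_P|$ values from Proposition \ref{pro11022025}.
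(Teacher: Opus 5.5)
Your Steps 1--2 agree in substance with the paper. This covers the reduction through Proposition~\ref{pro16072025} to the setting of Proposition~\ref{prop280623}, the identity $G\cap K=M$, the dichotomy $M\le G'$ versus $G'\cap M=1$ coming from (*****), and the appeal to Result~\ref{resschur}. No three-subgroup argument is needed for $\Gamma=G'\times K$: $G'$ and $K$ are both normal in $\Gamma$, $G'\cap K\le G'\cap M=1$, and $|G'||K|=|G||C|=|\Gamma|$.

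The genuine gap is Step~3. Your only control on $C$ is the Hurwitz estimate $|C|\le 2|\Delta|-3$ from (V), which is linear in $|\Delta|$. It gives
\[
|\Gamma|\le |\Delta|(|\Delta|-1)^2\cdot\frac{2\gamma(\cX)}{|\Delta|-1}\cdot 2|\Delta|,
\]
which is of order $\gamma(\cX)^4/(p-1)^3$, not $\gamma(\cX)^3$. Concretely, take case (A) with $q$ odd and $|M|=p$, so $\gamma(\cX)=q(p-1)$. Your estimates allow $|\Gamma|$ of size about $\frac12 q^3\cdot p\cdot 2q=pq^4$, whereas $\frac{4}{p-1}\gamma(\cX)^3=4(p-1)^2q^3$. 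No case check can close this for large $q$. The remedy you suggest does not help either: $\bar\cX/\bar\Sigma$ is always rational (it is a quotient of $\cX/S_P$), so its rationality cannot force $C=1$. Property (V) gives $C=1$ only when $\cX/M$ itself is rational.

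The missing idea is that (****) and (*****) bound $|C|$ by $|M|$ rather than by $|\Delta|$: $C$ acts fixed-point-freely by conjugation on $M\setminus\{1\}$. Suppose $u\in C$ of prime order centralized some $1\ne m\in M$. Then $C_M(u)$ would be normalized by $K$ (since $M$ is abelian and $C$ is cyclic) and centralized by $G'$, hence normal in $\Gamma=G'\times K$. In general, argue as in the proof of Proposition~\ref{pro0302bis}, using that $\langle u\rangle M/M$ is characteristic in $K/M$. Minimality of $M$ then gives $C_M(u)=M$, contradicting (****). Hence $|C|$ divides $|M|-1$, and with (\ref{eqA130323})
\[
|\Gamma|=|\tilde\Gamma||M||C|\le 2|\tilde\Gamma|(|M|-1)^2=\frac{2|\tilde\Gamma|}{(|\Delta|-1)^2}\,\gamma(\cX)^2 .
\]
This is the paper's (\ref{eq220723}): the factor $(|\Delta|-1)^{-2}$ absorbs two of the three powers of $|\Delta|$ in $|\tilde\Gamma|$. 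Even your crude bound $|\tilde\Gamma|\le|\Delta|(|\Delta|-1)^2$ then gives $|\Gamma|\le 2|\Delta|\gamma(\cX)^2\le\frac{4}{p-1}\gamma(\cX)^3$, by Lemma~\ref{leA260223} and $\gamma(\cX)\ge p-1$. This works uniformly over (A)--(G).

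Two smaller remarks on Step~2:
\begin{itemize}
\item When $M\le G'$, $G$ is perfect and $\bar C$ is central in $\Gamma/M$. So $C$ induces on $G$ an automorphism trivial modulo the central subgroup $M$, which is therefore trivial. By (****) this forces $C=1$, so the extra $C_2$ cannot come from an outer automorphism of ${\rm Alt}_6$.
\item Result~\ref{resschur}, which you and the paper both use, omits ${\rm SL}(2,5)$, the double cover of $\PSL(2,4)$ with centre of order $2=p$. This would be a further exception to the structural claim for $q=4$, though not to the bound, since there $|\Gamma|=120$ and $\gamma(\cX)=4$.
\end{itemize}
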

\begin{proof}
By Proposition \ref{pro16072025}, we may assume that (I)\ldots(V) in Proposition \ref{prop280623} hold for $\cX$, otherwise (\ref{eq19122025A}) is valid.
Since $G\gvertneqq M$, $GK$ is larger than $K$. Thus $GK$ is a normal subgroup of $\Gamma$ which induces on $\Delta$ a non-trivial permutation group. Let $\tilde{\cX}=\cX/K, \tilde{\Gamma}=\Gamma/K$ and $\tilde{G}=GK/K$. Then $\tilde{G}=GK/K\cong G/(G\cap K)=G/M$. Furthermore, $\tilde{G}$ is a normal subgroup of $\tilde{\Gamma}=\Gamma/K$. The various possibilities for $\tilde{\Gamma}$ occurring in Proposition \ref{pro020222} are treated separately. Let $G'$ be the commutator subgroup of $G$.
\subsubsection{Cases (A),(C),(E), and (F)}
\label{ss230723} In each of the cases, $\tilde{\Gamma}$ is a non-abelian simple group and hence $\tilde{G}=\tilde{\Gamma}$. Moreover, $\tilde{G}$ is perfect, that is, $\tilde{G}$ coincides with its commutator subgroup $\tilde{G}'$. Since $$G/M\cong\tilde{G}=\tilde{G}'\cong(G/M)'=G'M/M\cong G'/(G'\cap M),$$ two cases arise according as $M$ is contained in $G'$ or not.

If $M\le G'$ then $|G/M|=|G'/M|$ and hence $G=G'$, i.e. $G$ is also perfect. Since $M\le Z(G)$, Schur's method applies, see Result \ref{resschur}. If $\tilde{G}\cong \PSL(2,q)$ and $q\neq 9$, then  $p>2$ and $G\cong SL(2,q)$ but then $|Z(G)|=2$, a contradiction.
If $\tilde{G}\cong\PSL(2,9)\cong Alt_6$, then $p=3=|M|$, and $G\cong 3\cdot {\rm{Alt}}_6$. In this case, $|C|\le 2$ by (****) and $|M|=3$, and either $\Gamma=G$ and $|C|=1$, or $|\Gamma|=2|G|$ and $|C|=2$.
Therefore, $|\Gamma|=\{1080,2160\}$ and $\gamma(\cX)=18$ by (\ref{eqA130323}), whence (\ref{eq19122025A}) follows.
If $\tilde{G}\cong \PSU(3,q)$ then $G\cong SU(3,q)$ and $q\equiv -1  \pmod 3$, but then $|Z(G)|=3$ a contradiction. Both $Sz(q)$ for $q\neq 8$ and $\Ree(q)$ have no nontrivial central extension while $Sz(8)$ has two non-trivial central extensions. In the latter case, either $|M|=2$, or  $M$ is the Klein group of order $4$. From (****), $C$ does not centralize $M$. Therefore, $C$ is a subgroup of the symmetric group on $M\setminus \{1\}$. Thus $C$ is trivial for $|M|=2$ and $|C|=\{1,3\}$ for $|M|=4$. For $|M|=2$, (\ref{eqA130323}) gives  $\gamma(\cX)=|\Delta|-1=64$, and hence $$|\Gamma|=|\tilde{\Gamma}||K|\le 2|Sz(8)|=58240< 15\gamma(\cX)^2<2 \gamma(\cX)^{5/2}.$$
Similarly, for $|M|=4$, (\ref{eqA130323}) gives  $\gamma(\cX)=3\cdot 64=192$, and
$$|\Gamma|\le |\tilde{\Gamma}||K|\le 12 |Sz(8)|=349440< 10\gamma(\cX)^2<\gamma(\cX)^{5/2}.$$

Therefore, we may assume $M\nleqq G'$. Then $G'\cap M=\{1\}$, as $M$ is a minimal normal subgroup of $\Gamma$. Hence $G=G'\times M$. From $G'\cong \tilde{G}\cong \tilde{\Gamma}$, $G'$ contains no normal subgroup. 

Also, $|(G'\times M)\rtimes C|=|\Gamma|$. Since both  $G'$ and $K$ are normal subgroups of $\Gamma$ and $G'\cap K=\{1\}$,  we have
\begin{equation}
\label{eqA09022025}
\Gamma=(G'\times M)\rtimes C=G'\times K.
\end{equation}

We show that $|C|$ divides $|M|-1$.
Since $C$ normalizes $M$ but no non-trivial element of $C$ commutes with every element of $M$ by (****), $C$ is a permutation group acting by conjugacy on the set of non-trivial elements of $M$. By way of a contradiction, some of the $C$-orbits in this action may be supposed to be short. Therefore, there exist a non-trivial element $m\in M$ and $u\in C$ such that $umu^{-1}=m$, that is, $um=mu$. Thus, the centralizer
$C_M(u)$ of $u$ in $M$ is non-trivial. Up to a change of $u$ by a suitable power of it, we may assume that $u$ has prime order other than $p$. Let $U=\langle u \rangle$. Then $C_M(U)=C_M(u)$.
Observe that $C$ is cyclic,  by Result \ref{res74}, and hence it is abelian.
We show that $C_M(U)$ is a normal subgroup of $K$.  Take any $n\in C_M(U)$. If $v\in C$ then $uv=vu$ and $u^{-1}(v^{-1}nv)u=v^{-1}u^{-1}nuv=v^{-1}nv$, whence $u$ centralizes $v^{-1}nv$, i.e. $v^{-1}nv\in C_M(u)$.   If $w\in K\setminus C$, Result \ref{res74} ensures the existence of $m_0\in M$ such that $w=m_0v $ with $v\in C$. Then
$$
\begin{array}{ccc}
u^{-1}(w^{-1}nw)u=u^{-1}((m_0v)^{-1}nm_0v)u=u^{-1}(v^{-1}m_0^{-1}nm_0v)u=\\
u^{-1}(v^{-1}nv)u=v^{-1}nv=v^{-1}m_0^{-1}nm_0v=w^{-1}nw.
\end{array}
$$
Thus $w^{-1}nw\in C_M(u)$. From this, $C_M(U)$ is a normal subgroup of $K$.

Moreover,  $G'$ commutes with $M$ and hence with $C_M(U)$. Therefore, by (\ref{eqA09022025}),  $C_M(U)$ is a normal subgroup of $\Gamma$. Since $M$ is a minimal normal subgroup of $\Gamma$ by  (*****), $M=C_M(U)$ follows which contradicts (****). Thus, the claim that $|C|$ divides $|M|-1$ is proven.

From $\Gamma=(G'\times M)\rtimes C$,
\begin{equation}
\label{eq220723} |\Gamma|=|G'||M||C|\le |G'||M|(|M|-1)\le 2|G'|(|M|-1)^2=2 |G'| \gamma(\cX)^2/(|\Delta|-1)^2.
\end{equation}
In case (A), i.e. for $\tilde{\Gamma}\cong PSL(2,q)$, we have $G'\cong PSL(2,q), |\Delta|=q+1$ and $\gamma(\cX)=q(|M|-1)$ by  (\ref{eqA130323}). Since $\PSL(2,q)$ has order $\varepsilon q(q-1)(q+1)$ with $\varepsilon =1, \ha$ according as $q$ is even, or odd, (\ref{eq220723}) yields
\begin{equation}
\label{eqA200723}|\Gamma|\le 2\varepsilon \frac{q(q-1)(q+1)}{q^2}\gamma(\cX)^2< \frac{2\varepsilon}{|M|-1}\gamma(\cX)^3\le \frac{2\varepsilon}{p-1}\gamma(\cX)^3=
\begin{cases}
{\mbox{$\frac{2}{p-1}\gamma(\cX)^3$, $p\ge 3$}},\\
{\mbox{$4\gamma(\cX)^3$,  $p=2$}}.
\end{cases}
\end{equation}
In case (C), i.e. for $\tilde{\Gamma}\cong PSU(3,n)$, we have $G'\cong PSU(3,n), |\Delta|=n^3+1$ and $\gamma(\cX)=n^3(|M|-1)$. Since $\PSU(3,n)$ has order $\varepsilon n^3(n^3+1)(n^2-1)$ with $\varepsilon= \frac{1}{3},1$ according as $n\equiv -1 \pmod 3$  or not, (\ref{eq220723}) yields
$$|\Gamma|\le 2 \varepsilon \frac{ n^3(n^3+1)(n^2-1)}{n^6}\gamma(\cX)^2\le 2\varepsilon n^2 \gamma(\cX)^2 \le\frac{2\varepsilon\gamma(\cX)^{8/3}}{(|M|-1)^{2/3}} \le
\begin{cases}
{\mbox{$\frac{2/3}{(p-1)^{2/3}}\gamma(\cX)^{8/3}$, $n\equiv-1\,\,({\rm{mod}}\,\, 3)$}},\\
{\mbox{$\frac{2}{(p-1)^{2/3}}\gamma(\cX)^{8/3}$, otherwise}}.
\end{cases}
$$
In case (E), i.e. for $\tilde{\Gamma}\cong Sz(n)$, we have $G'\cong Sz(n), |\Delta|=n^2+1$ and $\gamma(\cX)=n^2(|M|-1)$. Since $Sz(n)$ has order $n^2(n^2+1)(n-1)$, (\ref{eq220723}) yields
$$|\Gamma|\le \frac{ n^2(n^2+1)(n-1)}{n^4}\gamma(\cX)^2\le 2(n-1) \gamma(\cX)^2 \le\frac{2}{(|M|-1)^{1/2}} \gamma(\cX)^{5/2}\le 2\gamma(\cX)^{5/2}.$$

In case (F), i.e. for $\tilde{\Gamma}\cong Ree(n)$, we have  $G'\cong Ree(n), |\Delta|=n^3+1$ and $\gamma(\cX)=n^3(|M|-1)$. Since $Ree(n)$ has order $n^3(n^3+1)(n-1)$, (\ref{eq220723}) yields
\begin{equation}
\label{eq230723}
|\Gamma|\le \frac{ n^3(n^3+1)(n-1)}{n^6}\gamma(\cX)^2\le 2(n-1) \gamma(\cX)^2 \le\frac{2}{(|M|-1)^{1/3}} \gamma(\cX)^{7/3}\le \gamma(\cX)^{7/3}.
\end{equation}
\subsubsection{Cases (B) and (D)}\label{ss230723C} In case (B), $\tilde{\Gamma}\cong \PGL(2,q)$, $p>2$, and $\tilde{\Gamma}$ has a subgroup $\tilde{\Gamma}_1\cong \PSL(2,q)$ of index $2$. Thus Case (A) occurs for $\tilde{\Gamma}_1$. Let $\Gamma_1$ be the subgroup of $\Gamma$ of index $2$ which is the counter-image of $\bar{\Gamma}_1$ in the homomorphism $\Gamma \mapsto \bar{\Gamma}$. Then (\ref{eqA200723}) applies to $\Gamma_1$ whence the bound
$$|\Gamma|=2|\Gamma_1|\le \frac{4}{p-1}\gamma(\cX)^3$$
follows.
 In case (D), $\tilde{\Gamma}\cong PGU(3,n)$, $n\equiv -1 \pmod 3$, and $\tilde{\Gamma}$ has a subgroup $\tilde{\Gamma}_1\cong \PSU(3,q)$ of index $3$. Arguing as for Case (B) gives
$$|\Gamma|=3|\Gamma_1|\le \frac{2}{(p-1)^{2/3}}\gamma(\cX)^{8/3} .$$

\subsubsection{Case (G)} In this case, $\tilde{\Gamma}\cong {\rm{P\Gamma L}}(2,8)=\Ree(3)$, $p=3$, $|\Delta|=28$ and $P\Gamma L(2,8)$ is not a simple group as it has a unique non-trivial normal subgroup. This normal subgroup coincides with the commutator group $\tilde{\Gamma}'$ which is a simple group isomorphic to $\PSL(2,8)$. Then $[\tilde{\Gamma}:\tilde{\Gamma}']=3$. Moreover, $\PSL(2,8)$ has no non-trivial central extension. Let $\Gamma_1$ be the subgroup of $\Gamma$
such that $\Gamma_1K/K=\tilde{\Gamma}'$.  
 Therefore, the arguments in Section \ref{ss230723} can be used to show $\Gamma_1=(G'\times M)\rtimes C$ and then prove (\ref{eq220723}) where $\Gamma$ is replaced by $\Gamma_1$. Computation similar to that giving (\ref{eq230723}) leads to the bound
$$|\Gamma|=3|\Gamma_1|<3\gamma(\cX)^{7/3}.$$
\end{proof}
\begin{proposition} 
\label{pro0302bis}
Assume that (*), (***), (****), (*****) hold and $|\Delta|>2$.  If $G=M$ and the action of $\Gamma/K$  on $\Delta$ is as one of the cases (A),(B),\ldots,(G) of Proposition \ref{pro020222}, then
\begin{equation}
\label{eqpr2}
|\Gamma|\le 4\gamma(\cX)^2\quad {\mbox{and}}\quad |\Gamma|<2(\gg(\cX)-1)(\gamma(\cX)-1).
\end{equation}
\end{proposition}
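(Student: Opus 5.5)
The plan is to first reduce to the situation of Proposition \ref{prop280623}, as in Proposition \ref{pro0302}. If (\ref{eqpr}) holds, then $|\Gamma|\le 2(\frac{1}{p-1}\gamma+1)\gamma\le 4\gamma(\cX)^2$ since $\gamma(\cX)\ge p-1$. The inequality $|\Gamma|<2(\gg-1)(\gamma-1)$ needs a separate, routine check via $\gg\ge\gamma$. So by Proposition \ref{pro16072025} we may assume that $S_P$ is transitive on $\Delta\setminus\{P\}$ and that (I)--(V) hold.

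The key point is $G=C_\Gamma(M)=M$. Then $\Gamma/M$ acts faithfully by conjugation on $M$, so $\Gamma/M$ embeds in $\aut(M)$. This is most useful through the action on $\Delta$. The group $\tilde\Gamma=\Gamma/K$ has a non-abelian simple socle in cases (A)--(G) ((G) via $\PSL(2,8)$). By (*****), $M$ is minimal normal in $\Gamma$, hence elementary abelian, say $M\cong\mathbb{F}_p^h$. Thus $\Gamma/M$ acts on $V(h,p)$, and the socle of $\tilde\Gamma$ lifts to a subgroup acting nontrivially. I plan to compare orders: $\Gamma/M\le GL(h,p)$ acts irreducibly with $C\cong CM/M$ normal. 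Result \ref{res05022025A} makes $\bar C$ central in $\bar\Gamma$, so by Schur's lemma $C$ acts by scalars from $\mathbb{F}_{p^k}$ and hence semiregularly on $M\setminus\{1\}$.

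By Result \ref{res0602205}, $\bar\Gamma$ contains the He-group $\bar\Sigma$ centralizing $\bar C$, and Result \ref{res05022025} then gives $(|M|-1)/|C|\le |\Delta|$. Conversely, I want a lower bound $|M|\ge |\Delta|$ up to a constant, or at least $|M|-1\ge |\Delta|-1$. This should come from $\bar\Sigma$ acting faithfully on $M$ (it has no nontrivial central part in $K$ except $\bar C\cap\bar\Sigma$) and from the minimal degree of a faithful $\mathbb{F}_p$-representation of the groups $\PSL(2,q)$, $\PSU(3,q)$, $Sz(q)$, $\Ree(q)$. The simplest route is that $S_P/M$, of order $|\Delta|-1$, acts faithfully on $M$ and fixes a nonzero vector. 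Alternatively, $p$-rank considerations give $|S_P/M|\le$ something in terms of $|M|$.

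Combining with (\ref{eqA130323}), $\gamma=(|\Delta|-1)(|M|-1)$, we get
\[
|\Gamma|=|\tilde\Gamma|\,|M|\,|C|,\qquad |\tilde\Gamma|=|\Delta|\,|\tilde\Gamma_P|,
\]
where $|\tilde\Gamma_P|=(|\Delta|-1)|\tilde H_P|$ and Proposition \ref{pro11022025} tabulates $|\tilde H_P|<|\Delta|$. The expected route to $4\gamma^2$ is to show $|\tilde\Gamma|\,|C|\,|M|\le 4(|\Delta|-1)^2(|M|-1)^2$, using $|C|\le|M|-1$ from semiregularity and $|\tilde H_P||\Delta|\lesssim (|\Delta|-1)(|M|-1)$. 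This last estimate is the main obstacle. It requires $|M|$ to be at least comparable to $|\tilde H_P|\cdot|\Delta|/(|\Delta|-1)$, i.e.\ roughly $|M|\gtrsim q$ in case (A) and $q^2$ in case (C). I would get this from the fact that the cyclic group $\tilde H_P$ (lifted) acts faithfully on $M$ normalizing $S_P$; indeed it acts faithfully on $M$ because $G=M$. Then Result \ref{le11.77} forces its order to divide $|M|-1$ times a small factor.

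For the genus inequality, I will use Proposition \ref{prop280623}(IV): $\bar\cX=\cX/M$ has genus at most $(|\Delta|-2)^2$. The Hurwitz formula for $M$ (wild at $\Delta$) gives $2\gg-2\ge |M|(2\bar\gg-2)+|\Delta|\,d$ with $d\ge 2(|M|-1)$, whence $\gg-1\ge |\Delta|(|M|-1)-|M|$. Multiplying by $\gamma-1$ and comparing with the bound on $|\Gamma|$ just obtained yields $|\Gamma|<2(\gg-1)(\gamma-1)$. The small cases ($|M|=p$ small, $q$ small) are to be checked by hand.
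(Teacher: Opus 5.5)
\paragraph{The gap.} Your central estimate does not close, and the cause is reading Result \ref{res05022025} in the wrong direction. As printed it says $(p^h-1)/|C|\le q+1$. Its proof, however, establishes the reverse, $(n+1)|C|\le p^h-1$, i.e.\ $|\bar C|\,|\Delta|\le |M|-1$. This reverse inequality is exactly what the paper's proof uses. In case (A) it gives
\[
|\Gamma|=|\tilde\Gamma||\bar C||M|=\varepsilon|S_P|(q^2-1)|\bar C|\le |S_P|(q-1)(|M|-1)<|S_P|\gamma(\cX)\le 2\gamma(\cX)^2,\qquad \varepsilon\in\{1,\ha\},
\]
and (C), (E), (F) are handled the same way. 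Cases (B), (D) pass to the subgroup of index $2$ or $3$. Case (G) follows from $|C|\le |M|-1$. The paper gets this last inequality from semiregularity of $C$ on $M\setminus\{1\}$, proved directly from (****) and (*****); your Schur-lemma argument is a valid substitute when $\Gamma/K$ is simple.

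With the printed direction you only obtain an upper bound on $|M|$. Your replacement bookkeeping then fails even if every sub-claim is granted: from $|C|\le|M|-1$ and $|\tilde H_P|\,|\Delta|\lesssim(|\Delta|-1)(|M|-1)$ you get
\[
|\Gamma|=|\Delta|(|\Delta|-1)\,|\tilde H_P|\,|C|\,|M|\lesssim (|\Delta|-1)^2(|M|-1)^2|M|=\gamma(\cX)^2|M| .
\]
This leaves an extra factor $|M|$, because $|\tilde H_P|$ and $|C|$ are each bounded by roughly $|M|$ separately. Also, Result \ref{le11.77} concerns the ramification filtration and gives no divisibility of $|\tilde H_P|$ by anything like $|M|-1$.

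\paragraph{A repair.} The fact you mention in passing, that $G=M$ makes the action on $M$ faithful, finishes the argument if you apply it to the whole complement $H_P$ rather than to $\tilde H_P$. Since $C_\Gamma(M)=M$ and $H_P\cap M=1$, the cyclic $p'$-group $H_P$ embeds in $\aut(M)\cong GL(h,p)$. Cyclic $p'$-subgroups of $GL(h,p)$ have order at most $p^h-1$, so $|H_P|=|\tilde H_P||C|\le |M|-1$. Using $|S_P|=|M|(|\Delta|-1)$ and (\ref{eqA130323}),
\[
|\Gamma|=|\Delta|\,|S_P|\,|H_P|\le |\Delta|(|\Delta|-1)|M|(|M|-1)=\frac{|\Delta|\,|M|}{(|\Delta|-1)(|M|-1)}\,\gamma(\cX)^2\le 3\gamma(\cX)^2 .
\]
This holds uniformly in (A)--(G).

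\paragraph{Smaller points.} Your preliminary reduction is unnecessary. In (A)--(G), $S_PK/K$ is the Sylow $p$-subgroup of a point stabilizer in the natural $2$-transitive action, so it is already transitive on $\Delta\setminus\{P\}$. In any case, the proposed check via $\gg\ge\gamma$ does not suffice, for instance when $p=2$.

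For the genus inequality, your estimate $\gg-1\ge |\Delta|(|M|-1)-|M|$ only bounds $2(\gg-1)(\gamma-1)$ from below by roughly $2\frac{|\Delta|}{|\Delta|-1}\gamma(\cX)^2<4\gamma(\cX)^2$. So it cannot be combined with a bound of the form $4\gamma(\cX)^2$, and this part needs a sharper argument. The paper's own proof establishes only the first inequality.
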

\begin{proof}
We begin by showing  that $C_{\Gamma}(m)\cap K=M$ for every non-trivial $m\in M$.
Assume on the contrary that there exist a non-trivial element $m\in M$ together with an element $u\in K\setminus M$ such that $mu=um$. By the argument after (\ref{eqA09022025}), $u$ may be assumed to have prime order $d$ other than $p$. Let $U=\langle u \rangle$.
Up to conjugacy in $K$, we may assume $U\le C$. In fact, $C$ contains a Sylow $d$-subgroup of $K$ as $|C|=|K|/|M|$ implies that $d$ divides $|C|$. 
Then $m\in C_M(U)=C_M(u)$, and hence $C_M(U)$ is a non-trivial subgroup of $M$. We show first that $C_M(U)$ is a non-trivial normal subgroup of $\Gamma$. 
Let $\bar{\Gamma}=\Gamma/M$, $\bar{K}=K/M\cong C$, and $\bar{U}=UM/M\cong U$. Since $C$ is a cyclic group by Result \ref{res74}, $U$ is the unique subgroup of $C$ of order $d$, and hence $\bar{U}$ is the unique subgroup of $\bar{K}$ of order $d$.
Therefore, $\bar{U}$ is a characteristic subgroup of $\bar{K}$. Hence, $\bar{U}$ is a normal subgroup of $\bar{\Gamma}$. This shows that if $g\in \Gamma$ and $\bar{g}=gM$ then $\bar{g}\bar{u}\bar{g}^{-1}=\bar{u}^k$ for some integer $k$ whence $gug^{-1}u^{-k}=s$ for some $s\in M$. Thus $gu=su^kg$ and $u^{-1}g^{-1}=g^{-1}u^{-k}s^{-1}$.
Now, take any $m\in C_M(U)$. Then $sms^{-1}=m$ as $M$ is abelian by (*****). Therefore,

$$u^{-1}(g^{-1}mg)u=u^{-1}g^{-1}mgu=g^{-1}u^{-k}s^{-1}msu^kg=g^{-1}u^{-k}mu^kg=g^{-1}mg.$$
Thus $g^{-1}mg\in C_M(U)$. Therefore, $C_M(U)$ is a normal subgroup of $\Gamma$. By (*****), this yields
$C_M(U)=M$, that is, $C_M(u)=M$. Therefore, $u$ centralizes $M$, and hence violates (****).
Thus the claim is proven.

For a non-trivial element $m\in M$, let $\Omega_m=\{k^{-1}mk\mid k\in K\}=\{u^{-1}mu\mid u\in C\}$. Then $|\Omega_m|=|C|$ as
$C_{\Gamma}(m)\cap K=M$ for every non-trivial $m\in M$. In particular, $\Omega_m=\Omega_n$ if and only if $m$ and $n$ are conjugate in $K$. Moreover, either $\Omega_m=\Omega_n$, or  $\Omega_m\cap\Omega_n=\emptyset$.
As $K$ is a normal subgroup of $\Gamma$,  $g^{-1}\Omega_m g=\Omega_{g^{-1}mg}$ for every $g\in \Gamma$.
This makes sense to define $g(\Omega_m)=g^{-1}\Omega_m g=\Omega_{g^{-1}mg}$. Then $g(\Omega_m)=\Omega_m$ if and only if $g^{-1}mg=c^{-1}mc$ for some $c\in C$ depending on $g$. Therefore, $M\setminus \{1\}$ has a $\Gamma$-invariant partition  into components of the same length $|C|$, each consisting of the conjugates of a non-trivial element of $M$ by the elements of $C$.

In each of the cases (A),(B),\ldots,(G) of Proposition \ref{pro020222}, $S_P$ is transitive on $\Delta\setminus \{P\}$.
Therefore the hypotheses of Proposition \ref{prop280623} are satisfied and hence (I)\ldots(V) hold for $\Gamma$.
Let  $\bar{\Gamma}=\Gamma/M$ and $\bar{C}=KM/M\cong C$. Then (\ref{eqB09022025}) holds.
\subsubsection{Cases (A),(C),(E), and (F)}\label{ss230723A}  In each of these cases, $\Gamma/K$ is a non-abelian simple group.  
By (*****), $M$ is an elementary abelian $p$-group and hence $M$ may be identified with a $h$-dimensional vector space over $p$ where $|M|=p^h$. Moreover, since $G=M$ is supposed, 
$\bar{\Gamma}=\Gamma/M$ may be viewed as a subgroup of $GL(h,p)$. This subgroup is irreducible as $M$ is a minimal normal subgroup of $\Gamma$.
Let $\bar{\Sigma}$ be the degree $q+1$ $He$-subgroup of $\bar{\Gamma}$. Then Result \ref{res0602205} applies, and hence $\bar{C}$ centralizes $\bar{\Sigma}$.
Therefore, Result \ref{res05022025} yields $|\bar{C}|(q+1)\le p^h-1=|M|-1.$

In Case (A), $\bar{\Sigma}\cong PSL(2,q)$ and $|\Delta|=q+1$. Since $\bar{\Gamma}=\bar{\Sigma}\times \bar{C}$, from (\ref{eqA130323}) and (\ref{eqB23042023}) with $\varepsilon=1,\ha$ according as $p=2$ or $p>2,$
$$|\Gamma|=|\bar{\Gamma}||M|=\varepsilon q(q-1)(q+1)|\bar{C}||M|=\varepsilon |S_P|(q-1)(q+1)|\bar{C}|\le |S_P|(q-1)(|M|-1)< |S_P|\gamma(\cX)<2\gamma(\cX)^2.$$

In Case (C), $\bar{\Sigma}\cong PSU(3,n)$ and $|\Delta|=n^3+1$. Arguing as for Case (A) with $\varepsilon=1, \frac{1}{3}$ according as $n\equiv -1 \pmod 3$  or not,
$$|\Gamma|=|\bar{\Gamma}||M|=\varepsilon (n^3+1)n^3(n^2-1)|\bar{C}||M|\le |S_P|(n^2-1)(n^3+1)|\bar{C}|\le |S_P|(n^2-1)(|M|-1)< |S_P|\gamma(\cX)<\gamma(\cX)^2.$$

In Case (E), $\bar{\Sigma}\cong Sz(n)$ and $|\Delta|=n^2+1$. Hence
$$|\Gamma|=|\bar{\Gamma}||M|=(n^2+1)n^2(n-1)|\bar{C}||M|=|S_P|(n-1)(n^2+1)|\bar{C}|\le |S_P|(n-1)(|M|-1)< |S_P|\gamma(\cX)<\gamma(\cX)^2.$$

In Case (F), $\bar{\Sigma}\cong Ree(n)$ and $|\Delta|=n^3+1$. Hence
$$|\Gamma|=|\bar{\Gamma}||M|=(n^3+1)n^3(n-1)|\bar{C}||M|=|S_P|(n-1)(n^3+1)|\bar{C}|\le |S_P|(n-1)(|M|-1)< |S_P|\gamma(\cX)<\gamma(\cX)^2.$$

\subsubsection{Cases (B) and (D)}\label{ss230723B} The argument in section \ref{ss230723C} can be used to show that in both cases $|\Gamma|<4\gamma(\cX)^2$.
\subsubsection{Case (G)} $|\Gamma|=|P\Gamma L(2,8)||M||C|=1512|M||C|$, $|\Delta|=28$ and $\gamma(\cX)=27(|M|-1)$. Assume on the contrary that $|\Gamma|>4\gamma(\cX)^2$.
Then $56|M||C|\ge 108(|M|-1)^2$ whence $|C|> |M|-1$, a contradiction.
\end{proof}
As a consequence of the results obtained in this section so far, we have the following bound under the assumptions (*), (***), (****), (*****).

\begin{theorem}\label{thm250225}
Assume that (*), (***), (****), (*****) hold. If $\mathfrak{g}(\cX)\geq 2$ and $|\Delta|>2$, then (\ref{eq19122025A}) holds.
\end{theorem}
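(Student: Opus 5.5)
The plan is to assemble Theorem \ref{thm250225} from the results of this section by a case split on how $S_P$ acts on $\Delta\setminus\{P\}$, and then on the structure of $\Gamma/K$. Recall that (***) implies (**), so all results of Sections \ref{gr5} and \ref{pr***} apply.

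\emph{Step 1: $S_P$ not transitive on $\Delta\setminus\{P\}$.} Proposition \ref{pro16072025} gives
$|\Gamma|\le 2(\frac{1}{p-1}\gamma(\cX)+1)\gamma(\cX)$. Lemma \ref{leA260223} gives $\gamma(\cX)\ge p-1\ge 1$. A short comparison then shows that $2(\frac{1}{p-1}\gamma+1)\gamma\le \frac{4}{p-1}\gamma^3$ whenever $\gamma\ge 1$. Indeed, after dividing by $\gamma$ this reduces to $\frac{2\gamma}{p-1}+2\le \frac{4\gamma^2}{p-1}$, which follows from $\gamma\ge p-1$ and $\gamma\ge1$. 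So (\ref{eq19122025A}) holds in this case.

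\emph{Step 2: $S_P$ transitive on $\Delta\setminus\{P\}$.} Proposition \ref{prop280623} applies, so $\Gamma$ is $2$-transitive on $\Delta$. Moreover $\tilde\Gamma=\Gamma/K$ acts faithfully as one of the groups in Proposition \ref{pro11022025}, namely (A)--(G), (H'), (J'), (L), (L'), (L''). We treat these in turn.

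\emph{Cases (A)--(G).} Split according to the centralizer $G=C_\Gamma(M)$.
\begin{itemize}
\item If $G\gvertneqq M$, Proposition \ref{pro0302} gives (\ref{eq19122025A}) directly. Its sporadic subcases ($3\cdot{\rm Alt}_6$ and the covers of $Sz(8)$) were already checked numerically there.
\item If $G=M$, Proposition \ref{pro0302bis} gives $|\Gamma|\le 4\gamma(\cX)^2$. This is at most $\frac{4}{p-1}\gamma^3$ since $\gamma\ge p-1$.
\end{itemize}

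\emph{Cases (H'), (L'), (L'').} Proposition \ref{pro10072025} gives $|\Gamma|\le 18\gamma^2\log_2\gamma$. Here we must compare this with $\frac{4}{p-1}\gamma^3$.
\begin{itemize}
\item In (L') and (L''), $p=2$ and $\gamma\ge 8$. The actual bounds obtained in that proof are $164\gamma$ and $54\gamma^2$ respectively. Both are below $4\gamma^3$ once $\gamma\ge 8$.
\item In (H'), $p=2^r-1$ and $|S_P|=p|M|$ with $|M|\ge p$. Hence $\gamma(\cX)=p(|M|-1)\ge p(p-1)$. Sharpening (\ref{eq12022025B}) we get $|\Gamma|\le r|\Delta|(|\Delta|-1)|M||C|$ with $|C|\le \frac32|\Delta|=\frac32(p+1)$ by (V). Writing everything in terms of $|M|$, $p$ and $r\le\log_2(p+1)$, the ratio $|\Gamma|(p-1)/\gamma^3$ comes out $O(r/p)$, which is small. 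A few small values of $p$, such as $p=3,7$, are handled by direct checking.
\end{itemize}

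\emph{Case (J').} Here $p=2$ and Proposition \ref{pro10072025} gives $|\Gamma|\le 3\gamma(\gamma+1)^2$, which is of the right order but with the wrong constant. We refine using (V): $|C|$ divides $|\Delta|=u$ and $\gamma=(u-1)(|M|-1)$. Then
$$|\Gamma|=u(u-1)|M||C|\le u^2(u-1)|M|.$$
Comparing with $4\gamma^3=4(u-1)^3(|M|-1)^3$ works for $|M|\ge 2$ unless $|M|=2$. In that case $\gamma=u-1$, and we need $u^2(u-1)\cdot 2\le 4(u-1)^3$, i.e.\ $u^2\le 2(u-1)^2$, which holds for $u\ge 4$. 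The remaining value $u=3$ needs a separate check.

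\emph{Case (L).} Lemma \ref{lem13022025} gives $|\Gamma|\le 5\gamma^3\le\frac{4}{p-1}\gamma^3$? Not quite: the constant $5$ exceeds $4$. However the sharper bound $38\gamma^2$ with $\gamma=8$ gives $2448<4\cdot 512$, and the other subcase gives $309\gamma<4\gamma^3$. So this case is fine too.

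\emph{Expected difficulty.} The main obstacle is not conceptual. It lies in making the constants in (H') and (J') fit under $\frac{4}{p-1}$, since the cited propositions state bounds of a slightly different shape. This will require going back into their proofs and using (IV) and (V) sharply, with small cases checked by hand.
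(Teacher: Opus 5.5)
Your case split is the one the paper intends. The paper states Theorem~\ref{thm250225} as a direct consequence of Propositions~\ref{pro16072025}, \ref{pro0302}, \ref{pro0302bis}, \ref{pro10072025} and Lemma~\ref{lem13022025}, and your Step~1, the cases (A)--(G) and (H') go through. You also correctly see that the bounds quoted for (J'), (L) and (L'') do not have the form $\frac{4}{p-1}\gamma(\cX)^3$. However, your repair fails at three concrete points, because you never use (****) or (*****) outside (A)--(G).

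\emph{Case (L).} The inequality $2448<4\cdot 512$ is false, since $4\cdot 8^3=2048$. With $M$ trivial and only $|C|\le 17$ available, $|\Gamma|=144|C|$ exceeds $2048$ when $|C|\in\{15,17\}$.

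\emph{Case (L'').} The comparison $54\gamma^2\le 4\gamma^3$ needs $\gamma\ge 14$, so it fails at $\gamma(\cX)=8$, i.e.\ $|M|=2$. There (V) still allows $|C|=9$, which gives $|\Gamma|=216\cdot 2\cdot 9=3888>2048$.

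\emph{Case (J').} You leave $u=3$ open, and it cannot be settled from (IV) and (V) alone. For $|M|=2$ one has $\gamma(\cX)=2$ and $4\gamma^3=32$, while $|\Gamma|=6\cdot 2\cdot|C|=36$ if $|C|=3$.

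The missing ingredient is (****). It says that no non-trivial element of $C$ centralizes $M$, so conjugation embeds $C$ into $\aut(M)$. Hence $M=1$ forces $K=1$; (*****) rules out $M=1$ anyway, since a minimal normal subgroup is non-trivial. Likewise, $|M|=2$ forces $C=1$. With this, the three configurations above give $|\Gamma|=144$, $432$ and $12$ respectively, all within $\frac{4}{p-1}\gamma(\cX)^3$. Your inequalities then cover all remaining parameter values.
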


We are now in position to prove the main result of this section.

\begin{theorem}
\label{teo07092023} Assume that (*) and (***)  hold. If $\mathfrak{g}(\cX)\ge 2$ and $|\Delta|>2$  then
\begin{equation}
\label{eqpr11}
|\Gamma|\le \frac{4}{p-1}\gamma(\cX)^4.
\end{equation}
\end{theorem}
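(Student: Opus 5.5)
We prove Theorem \ref{teo07092023} without assuming (****) and (*****); this costs one extra factor of $\gamma(\cX)$ compared with Theorem \ref{thm250225}. First, if $S_P$ is not transitive on $\Delta\setminus\{P\}$, Proposition \ref{pro16072025} gives $|\Gamma|\le 2(\frac{1}{p-1}\gamma+1)\gamma$, which is far below (\ref{eqpr11}). So we may assume $S_P$ is transitive on $\Delta\setminus\{P\}$. Then Proposition \ref{prop280623} applies: $\Gamma/K$ acts on $\Delta$ as one of the groups in Proposition \ref{pro11022025}, and $K=M\rtimes C$. Cases (H'), (J'), (L'), (L") are settled by Proposition \ref{pro10072025}, and Case (L) by Lemma \ref{lem13022025}. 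In each of these the bound is at most of order $\gamma^3$, hence below $\frac{4}{p-1}\gamma^4$, after a small check of constants when $p$ is small. It remains to treat cases (A)--(G).

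In cases (A)--(G) we work with the factorization
\[
|\Gamma|=|\tilde\Gamma|\,|M|\,|C|.
\]
By (\ref{eqA130323}), $\gamma=(|\Delta|-1)(|M|-1)$. By (V), $|C|\le \frac32|\Delta|$; more usefully, $|C|$ divides $|\Delta|$, with one $\SU$ exception. We compare $|\tilde\Gamma|$ with $|\Delta|$ case by case:
\begin{itemize}
\item[] $|\tilde\Gamma|\le (q+1)q(q-1)\le |\Delta|^3$ in cases (A) and (B);
\item[] $|\tilde\Gamma|\le 3(q^3+1)q^3(q^2-1)<3|\Delta|^{8/3}$ in cases (C) and (D);
\item[] $|\tilde\Gamma|<|\Delta|^{5/2}$ for $Sz$;
\item[] $|\tilde\Gamma|<|\Delta|^{7/3}$ for $\Ree$;
\item[] $|\tilde\Gamma|=1512$ with $|\Delta|=28$ in case (G).
\end{itemize}
Hence in every case $|\tilde\Gamma|\le |\Delta|(|\Delta|-1)^2\cdot c$ with $c$ a small constant. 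Combining these,
\[
|\Gamma|\le c\,|\Delta|^2(|\Delta|-1)^2\,|M|.
\]
Using $|\Delta|\le 2(|\Delta|-1)$ and $|M|\le \frac{p}{p-1}(|M|-1)$, we get $|\Gamma|\lesssim c'\frac{p}{p-1}(|\Delta|-1)^4(|M|-1)$. Since $|M|-1\ge p-1\ge 1$, the right-hand side is at most about $\gamma^4/(|M|-1)^3$ up to constants.

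This is still not sharp enough to give the constant $4/(p-1)$. To reach it, we use (V) more carefully: $|C|$ divides $|\Delta|$, and when $\cX/M$ is rational, $C$ is trivial. We also use the Hurwitz bound $2(|\Delta|-2)\ge |C|-1$ from the proof of (V). In case (A) with $p$ odd, for instance, we have $|\tilde\Gamma|=\frac12 q(q^2-1)$, $|C|\le 2q$, and $|M|\ge p$. This gives $|\Gamma|\le q^2(q^2-1)|M|$, while $\gamma^4=q^4(|M|-1)^4$. The ratio is at most $\frac{|M|}{(|M|-1)^4}\le \frac{p}{(p-1)^4}\le \frac{4}{p-1}$. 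The cases $p=2$ and $p=3$ with small $q$ need a direct check of the constant. The other simple cases are similar and easier, because the exponent of $|\Delta|$ in $|\tilde\Gamma|$ is smaller. For (B) and (D), we pass to the index-$2$ or index-$3$ subgroup as in \S\ref{ss230723C}, and the resulting extra factor is absorbed by the slack in the exponent.

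The main obstacle is the bookkeeping of constants in the small-characteristic, small-$|M|$ situations. These are $p=2$ with $|M|=2$, and the $\SU(3,n)$ exception with $|C|=\frac32|\Delta|$, where $|C|$ is comparable to $|\Delta|$. In those cases we would use the sharper inequality $2(|\Delta|-2)\ge |C|-1$ together with $\gamma\ge |\Delta|-1$. If necessary, we would also invoke Result \ref{henn}, which gives $|\Gamma|<8\gg^3$, together with the genus bound from (IV), to cover any leftover finite list of cases.
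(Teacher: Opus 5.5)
Your proposal is correct, and for cases (A)--(G) it takes a more direct route than the paper. Your initial reductions match the paper's: Proposition \ref{pro16072025} when $S_P$ is not transitive on $\Delta\setminus\{P\}$, then Proposition \ref{pro10072025} and Lemma \ref{lem13022025} for (H'), (J'), (L), (L'), (L"). The paper then argues by minimal counterexample. It quotients by $\Gamma$-normal subgroups properly inside $M$ to force (*****), and by the $p'$-part $D$ of $C_K(M)=M\times D$ to force (****). It then applies the cubic bound of Theorem \ref{thm250225} to $\cX/D$ and multiplies by $|D|\le |C|\le 2|\Delta|-3$. You instead put the same three ingredients directly into the case list: $|\Gamma|=|\tilde\Gamma||M||C|$, $\gamma(\cX)=(|\Delta|-1)(|M|-1)$ from (IV), and $|C|\le 2|\Delta|-3$ from the Hurwitz step in the proof of (V) (which uses Lemma \ref{lem14set23} with $S_P\cap S_Q=M$). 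This avoids Propositions \ref{pro0302} and \ref{pro0302bis}, with their central-extension and Result \ref{res05022025} arguments, entirely. It is also sharper at $p=2$: the paper's closing estimate $|\Gamma|<\frac{8}{(p-1)^2}\gamma(\cX)^4$ beats $\frac{4}{p-1}\gamma(\cX)^4$ only when $p\ge 3$. What the paper's route gives in return is the structural fact that the excess over the cubic bound of Theorem \ref{themain19122025A} is exactly the factor $|D|$.

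Your final paragraph of worries, and the fallback to Result \ref{henn}, are unnecessary. Put $n=|\Delta|-1$; in every case (A)--(G) one has $|\tilde\Gamma|(2|\Delta|-3)<2n^4$:
\begin{itemize}
\item in (A) and (B) this is $n(n^2-1)(2n-1)<2n^4$;
\item in (C) and (D), with $n=q^3$, it is at most $q^3(q^3+1)(q^2-1)(2q^3-1)<2q^{11}$;
\item in (E), with $n=q^2$, it is $<2q^7$;
\item in (F), with $n=q^3$, it is $<2q^{10}$;
\item in (G) it is $1512\cdot 53<2\cdot 27^4$.
\end{itemize}
Hence, uniformly,
\[
|\Gamma|<2n^4|M|=\frac{2|M|}{(|M|-1)^4}\,\gamma(\cX)^4\le\frac{2p}{(p-1)^4}\,\gamma(\cX)^4\le\frac{4}{p-1}\,\gamma(\cX)^4 .
\]
The middle step uses that $x/(x-1)^4$ is decreasing and $|M|\ge p$. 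The last inequality is an equality only for $p=2$, where the strict first inequality still suffices. This covers $p=2$ with $|M|=2$ and the $\SU(3,n)$ exception with no extra casework.
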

\begin{proof} We prove the theorem by contradiction. For this purpose, in the family of all curves violating Theorem \ref{teo07092023}, collect those with smallest $p$-rank, say $\gamma$. Then among those curves of $p$-rank $\gamma$, take one with smallest genus $\mathfrak{g}$, and name it \emph{minimal counter-example} to Theorem \ref{teo07092023}. Let $\cX$ be such a curve with $\Gamma\le \aut(\cX)$ satisfying (*) and (***). By Proposition \ref{pro16072025}, we may assume that $S_P$ is transitive on $\Delta$, and hence Proposition \ref{prop280623} holds for $\cX$ and $\Gamma$.
Moreover, by Proposition \ref{pro10072025} and Lemma \ref{lem13022025}, we may assume that one of the cases (A),(B),\ldots,(G) occurs.

Let $Z$ denote a proper normal subgroup of $\Gamma$ which is contained in $M$. Let $\hat{\cX}=\cX/Z$ be the arising quotient curve. We first compute its $p$-rank $\gamma(\hat{\cX})$. The Deuring-Shafarevic formula applied to $Z$ yields $\gamma(\cX)=|Z|\gamma(\hat{\cX})+(|\Delta|-1)(|Z|-1)$. A comparison with (\ref{eqA130323}) shows $$\gamma(\hat{\cX})= (|\Delta|-1)\left(\frac{|M|}{|Z|}-1\right).$$
 If $|M|>|Z|$, then $\gamma(\hat{\cX})\ge |\Delta|-1$, whence $\mathfrak{g}(\hat{\cX})\ge \gamma(\hat{\cX})\ge 2$.
 Since $Z$ is totally ramified at each point in $\Delta$, the points lying under $\Delta$ form a subset $\hat{\Delta}$ so that $\hat{\Gamma}=\Gamma/Z$ acts on $\hat{\Delta}$ as $\Gamma$ on $\Delta$. Since $Z$ is properly contained in $M$, (***) still holds for $\hat{\cX}$.
 Thus (*) and (***) hold for $\hat{\cX}$, and $\mathfrak{g}(\hat{\cX})\geq 2$. Moreover, $\gamma(\cX)\geq|Z|\gamma(\hat{\cX})$, and $\mathfrak{g}(\cX)>\mathfrak{g}(\hat{\cX})$. Since $\cX$ is a counter-example to Theorem \ref{teo07092023},
$$
\frac{|\Gamma|}{|Z|}>\frac{4}{p-1}\frac{\gamma(\cX)^4}{|Z|}> \frac{4}{p-1}\frac{\gamma(\cX)^4}{|Z|^4}\geq\frac{4}{p-1}\gamma(\hat{\cX)}^4.
$$
As $\gamma(\hat{\cX)}<\gamma$ holds in this case, we obtain a contradiction to the minimality of $\cX$ . Thus $|M|=|Z|$ and $M$ is a minimal normal subgroup of $\Gamma$, that is, (*****) holds for $\cX$ and $\Gamma$.

Now, observe that $C_K(M)$ must be larger than $M$, otherwise Theorem \ref{thm250225} applies contradicting the fact that $\cX$ is a counter-example to Theorem \ref{teo07092023}.

 We show that $C_K(M)$ is a normal subgroup of $\Gamma$. Take $g\in\Gamma, m\in M, d\in C_K(M)$, and let $m'=gmg^{-1}\in M$. Since $M$ is a normal subgroup of $\Gamma$, the claim follows from $$(g^{-1}dg)m(g^{-1}dg)^{-1}=g^{-1}d(gmg^{-1})d^{-1}g=g^{-1}dm'd^{-1}g=g^{-1}m'g=m.$$
Now, since $C_K(M)$ fixes $\Delta$ pointwise, Result \ref{res74} implies the existence of subgroup $D$ of $K$ such that $C_K(M)=M\rtimes D$ where $p\nmid |D|$. More precisely,  $C_K(M)=M\times D$, as $D$ centralizes $M$. Since both $M$ and $C$ are abelian, $C_K(M)$ is also abelian, and $D$ consists of all elements of order prime to $p$ in $C_K(M)$ together with the identity. Thus $D$ is a characteristic subgroup of $C_K(M)$. Therefore, $D$ is a normal subgroup of $\Gamma$.

Let $\hat{\cX}=\cX/D$, $\hat{\Gamma}=\Gamma/D$, $\hat{K}=K/D$, $\hat{M}=MD/D$, $\hat{C}=CD/D$ and the let $\hat{\Delta}$ consist of the points of $\hat{\cX}$ lying under $\Delta$ in the cover $\cX|\hat{\cX}$ is identified with $\Delta$. 
Clearly both (*) and (***) hold for $\hat{\cX}$. Since $S_P$ is transitive on $\Delta\setminus \{P\}$, so is $\hat{S}_P=S_PD/D$, and hence (\ref{eqA130323}) holds. Thus $\gamma(\cX)=\gamma(\hat{\cX})$ by $|M|=|\hat{M}|$.
We show that $C_{\hat{K}}(\hat{M})=\hat{M}$, that is, (****) holds for $\hat{\cX}$ and $\hat{\Gamma}$. Take a nontrivial element $\hat{c}\in \hat{C}$  such that $\hat{m}\hat{c}=\hat{c}\hat{m}$ holds for every $\hat{m}\in \hat{M}$. Then for every $m\in M, c\in C$
there exists $d\in D$ such that $mcm^{-1}c^{-1}=d$. Since $C$ normalizes $M$, the element on the left hand side is in $M$ whence $d=1$. Then $c\in C_K(M)$, and hence $c\in D$. But then $\hat{c}$ is the trivial element of $\hat{C}$. Therefore $\hat{\cX}$ with respect to $\hat{\Gamma}$ satisfies (****). If (*****) does not hold for $\hat{\cX}$ with respect to $\hat{\Gamma}$, choose a proper subgroup $\hat{N}$ of $\hat{M}$ that is normal in $\hat{\Gamma}$.
Let $L$ be the subgroup of $\Gamma$ such that $L/D=\hat{N}$.
Then $L$ is a normal subgroup of $\Gamma$, as $\hat{N}$ is a normal subgroup of $\hat{\Gamma}$.
Also, $L\le MD=M\times D$, since $\hat{N}\le \hat{M}$ and $MD/D=\hat{M}$. Moreover, let $V$ be a Sylow $p$-subgroup of $L$. Then $V\le M$, since $M$ is the unique Sylow $p$-subgroup of $MD$. Also, $L=V\times D$, as $L$ and $V\times D$ have the same order.
From this $g^{-1}Vg=V$ as $V$ is the unique Sylow $p$-subgroup of $V\times D$. Therefore, $V$ is a normal subgroup of $\Gamma$ which is contained in $M$. However, as we have pointed out in the first part of this proof, $M$ has no proper subgroup which is normal in $\Gamma$.
It turns out that $\hat{\cX}$ with respect to $\hat{\Gamma}$ satisfies (****) and (*****).
Therefore Theorem \ref{thm250225} applied to $\hat{\cX}$ and $\hat{\Gamma}$  together with (V) and $2|\Delta|-3<2|\Delta|-2<2/(p-1)\gamma(\cX)$ yield
$$|\Gamma|=|\hat{\Gamma}||D|\le \frac{4}{p-1}\gamma(\hat{\cX})^3|D|\le \frac{4}{p-1}\gamma(\cX)^3|C|\le \frac{4}{p-1}\gamma(\cX)^3(2|\Delta|-3)< \frac{8}{(p-1)^2}\gamma(\cX)^4.$$
This is a contradiction with the hypothesis $\cX$ being a counter-example to Theorem \ref{teo07092023}.
\end{proof}

Propositions \ref{pro15072025} and \ref{pro15072025A} show that, in some special cases, $\Tilde{\Gamma}=\Gamma/K$ is determined by the structure of $S_P$.
\begin{proposition}
\label{pro15072025} Assume that both (*) and (***) hold and that $S_P$ is cyclic. If $\mathfrak{g}(\cX)\ge 2$, $|\Delta|>2$ and $S_P$ is transitive on $\Delta\setminus \{P\}$, then $\Gamma$ is solvable, and hence either (H'), or (J') holds for $\Tilde{\Gamma}=\Gamma/K$.
\end{proposition}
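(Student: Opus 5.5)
Under the hypotheses, Proposition \ref{prop280623} applies. So $\Gamma$ is $2$-transitive on $\Delta$ with $|\Delta|=p^a+1$, and $\Tilde{\Gamma}=\Gamma/K$ is one of the groups listed in Proposition \ref{pro11022025}. Since $K=M\rtimes C$ is solvable, $\Gamma$ is solvable if and only if $\Tilde{\Gamma}$ is. The plan is therefore to exclude the non-solvable cases (A)--(G) using the cyclicity of $S_P$. Once those are gone, the only solvable cases compatible with $|\Delta|-1$ being a power of $p$ are (H'), (J'), (L), (L'), (L"), and I will then exclude (L), (L') and (L") as well.

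The key observation is that the image $\Tilde{S}_P=S_P/(S_P\cap K)$ is a quotient of a cyclic group, hence cyclic. Moreover, $\Tilde{S}_P$ is the Sylow $p$-subgroup of the one-point stabilizer of $\Tilde{\Gamma}$, and it acts regularly on $\Delta\setminus\{P\}$ in all cases except (L). Hence $\Tilde{S}_P$ is a cyclic group of order $|\Delta|-1$.

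I will check this against each case of Proposition \ref{pro020222}.
\begin{itemize}
\item[(A), (B)] The point stabilizer has Sylow $p$-subgroup elementary abelian of order $q$. Cyclicity forces $q=p$, which is not excluded by this argument alone. For $q=p\ge 5$, $\PSL(2,p)$ or $\PGL(2,p)$ acting on $p+1$ points has cyclic $\Tilde{S}_P$, so a further argument is needed.
\item[(C), (D), (F), (G)] The Sylow $p$-subgroups are non-abelian, of orders $q^3$, $q^3$ and $27$ respectively. This is impossible.
\item[(E)] Sylow $2$-subgroups of $\mathrm{Sz}(q)$ are non-abelian. This is impossible.
\end{itemize}

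The main obstacle is thus Case (A)/(B) with $q=p$, and here I would use the curve. Let $S_P=\langle\sigma\rangle$ be cyclic. The subgroup $M=S_P\cap K$ consists of the $p$-elements fixing more than one point, by (III), and it is the unique subgroup of $S_P$ of order $|M|$. By (IV), $|S_P|=p|M|$ and $\gamma(\cX)=p(|M|-1)$. Applying the Deuring-Shafarevich formula to $S_P$ with cyclic ramification, the lower ramification filtration at $P$ has jumps forced by Result \ref{le11.77}, in the setting of Hasse-Arf for cyclic groups. Moreover, $H_P$ normalizes $S_P$ and acts on $S_P/M\cong C_p$ by a character of order $\frac{p-1}{2}$ or $p-1$, as the stabilizer of $\PSL(2,p)$ dictates. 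On the other hand, $\mathrm{Aut}(S_P)$ acting on a cyclic $p$-group induces compatible actions on $M$ and on $S_P/M$. Combining this with Result \ref{le11.77}, I would compare the ramification jumps at $P$ for $S_P$ and $M$ against the Riemann-Hurwitz contribution. I expect this to yield $|M|=1$, or else to contradict $\gamma(\cX)>0$ via Lemma \ref{lem24072024}.

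Should this local argument fail, an alternative is available. For $q=p$, $\PSL(2,p)$ has order $p(p^2-1)/2$, and the group is then small relative to $\gamma(\cX)$. One could instead invoke Result \ref{resschur} and the structure $\Gamma/M$ as a central product, as in Proposition \ref{pro0302}. The aim there would be to show that a cyclic $S_P$ of order $p|M|$ with $|M|>1$ cannot sit inside a perfect extension of $\PSL(2,p)$ by a $p$-group, because the extension of $C_p$ by $M$ would have to split on restriction to the $\SL(2,p)$ part.

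Finally, I will exclude (L), (L') and (L"). In (L') and (L") the Sylow $2$-subgroup is $Q_8$, which is not cyclic. In (L) the Sylow $2$-subgroup is semidihedral of order $16$, which is also not cyclic. Therefore only (H') and (J') remain.
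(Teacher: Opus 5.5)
Your reduction to $\Tilde{\Gamma}\cong\PSL(2,p)$ or $\PGL(2,p)$ with $p\ge5$ is correct. So is your elimination of (C)--(G) and of the solvable cases (L), (L$'$), (L$''$) via the cyclicity of $\Tilde{S}_P\cong S_P/M$; the paper does not treat those solvable cases explicitly. But the residual case is the entire content of the proposition, and you do not close it.

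Your local argument is only an expectation. For cyclic $S_P$, the power-map compatibility together with Result~\ref{le11.77} gives congruences between the ramification jumps of $S_P$ and of $M$. Nothing in the local picture at $P$ visibly excludes $|M|>1$; the obstruction is global.

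Your fallback points in the right direction but is missing two things:
\begin{itemize}
\item You never show that $M$ is centralized by a subgroup covering $\PSL(2,p)$. Invoking Proposition~\ref{pro0302} here is circular, since it assumes $C_\Gamma(M)\ne M$. Result~\ref{res0602205} concerns $\bar{C}$ being central in $\Gamma/M$, not $M$ being central in $\Gamma$.
\item $\Gamma$ is neither perfect nor an extension by a $p$-group: there is $C$, and the index-$2$ part in case (B). Moreover, ``the extension would have to split'' is not by itself a contradiction.
\end{itemize}

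The missing idea is as follows. Since $M$ is a cyclic $p$-group with $p$ odd, $\aut(M)$ is cyclic. Hence $\Gamma/C_\Gamma(M)$ is abelian, so the last term $L$ of the derived series of $\Gamma$ centralizes $M$. This $L$ maps onto $\PSL(2,p)$ because $K$ is solvable.

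Suppose $L\cap M\ne1$. Since $L\cap M$ is the normal Sylow $p$-subgroup of $L\cap K$ and is central in $L$, we have $L\cap K=(L\cap M)\times D$ with $D$ a characteristic $p'$-subgroup. Factoring out $D$ gives a perfect central extension of $\PSL(2,p)$ by a nontrivial group of odd order, contradicting Result~\ref{resschur}.

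Hence $L\cap M=1$, so $LM\cong L\times M$, whose Sylow $p$-subgroup is $C_p\times M$. This is not cyclic, because $M\ne1$ by (III). Since $|\Delta|=p+1\equiv1\pmod p$, $S_P$ is a Sylow $p$-subgroup of $\Gamma$, which contradicts its cyclicity.

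The paper reaches the same contradiction by a slightly different route. It first reduces to $|M|=p$ by a minimal-counterexample argument as in Theorem~\ref{teo07092023}. It then gets $C_\Gamma(M)\ne M$ from $\aut(M)\cong C_{p-1}$ and the non-solvability of $\Gamma/M$. Finally it adapts Proposition~\ref{pro0302} to obtain $\Gamma=G'\times K$ with $G'\cong\PSL(2,p)$, so the Sylow $p$-subgroup contains $C_p\times C_p$.
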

\begin{proof} From the transitive action of $S_P$ on $\Delta\setminus\{P\}$, $\Gamma$ is $2$-transitive on $\Delta$ and hence Proposition \ref{pro11022025} applies to $\Tilde{\Gamma}$.
Assume on the contrary that $\Gamma$ is not solvable. Since $S_P$ is cyclic, Propositions \ref{pro020222} and \ref{pro11022025} yield either $\Tilde{\Gamma}\cong \PSL(2,p)$, or $\Tilde{\Gamma}\cong \PGL(2,p)$ where $p\ge 5$ and $|\Delta|=p+1$.

For a possible counter-example to Proposition \ref{pro15072025}, let $d=|\Delta|$. In the family of all curves violating Proposition \ref{pro15072025} with the same $d>2$, collect those with the smallest $p$-rank, say $\gamma\ge 2$. Then among those curves of $p$-rank $\gamma$, take one with smallest genus $\mathfrak{g}$, and name it \emph{minimal counter-example}. The property for a group of being cyclic  is invariant with respect to quotients. By this invariance property, the arguments in the proof of Theorem \ref{teo07092023} can be used to show that if $\cX$ is a minimal counter-example, then $M$ has no proper normal subgroup, and hence $|M|=p$.

Write $K=M\rtimes C$, and $C_K(M)=M\times U$ with a subgroup $U$ of $K$. From Result \ref{resdirect}, $U$ consists of all elements of $C_K(M)$ whose orders are prime to $p$. In particular, $U$ is a characteristic subgroup of $C_K(M)$. Since $C_K(M)$ is a normal subgroup of $K$, $U$ is also a normal subgroup of $\Gamma$. Now, we may assume $U$ to be trivial, i.e. $C_K(M)=M$, otherwise the curve $\hat{\cX}=\cX/U$ with $\hat{\Gamma}=\Gamma/U$ is still a counter-example to Proposition \ref{pro15072025} as $\hat{\Gamma}$ is a non-solvable group. In fact, from (\ref{eqA23042023}), $\cX$ and $\hat{\cX}$ have the same $p$-rank, whereas the set of all points of $\hat{\cX}$ which are fixed by some non-trivial element of $\hat{M}=MU/U$ consists of all points lying under those of $\Delta$ in the cover $\cX|\hat{\cX}$. Moreover, (*****) also holds for $\Gamma$, as $|M|=p$ yields that $M$ has no non-trivial subgroup. Let $G=C_\Gamma(M)$, and look at the action of $\Gamma$ on $M$.     
 Since $\aut(M)$ is a cyclic group of order $p-1$, whereas $\Gamma$ and hence $\bar{\Gamma}=\Gamma/M$ is a non-solvable group, there exists a non-trivial element $\bar{g}\in \bar{\Gamma}$ such that $\bar{g}(m)=m$ for every $m\in M$. Therefore,
there exists $g\in \Gamma$, $g\not\in M$ such that $gmg^{-1}=m$ for every $m\in M$, that is, $G\gvertneqq M$. This ensures that the proof of Proposition \ref{pro0302} can be adapted to show that (\ref{eqA09022025}) holds. Thus, $\Gamma=G'\times K$ where $G'$ is the commutator subgroup of $G$. Since $p^2$ divides $|\Gamma|$ but $|K|$, this yields that $S_p$ is not a cyclic group as being an elementary abelian group of order $p^2$. 
\end{proof}
\begin{proposition}
\label{pro15072025A}
Let $p=2$.  Assume that both (*) and (***) hold and that $S_P$ is either a dihedral, or a generalized quaternion group. If $\mathfrak{g}(\cX)\ge 2$, $|\Delta|>2$ and $S_P$ is transitive on $\Delta\setminus \{P\}$, then $\Gamma$ is solvable, and hence either (L'), or (L") holds for $\Tilde{\Gamma}=\Gamma/K$.
\end{proposition}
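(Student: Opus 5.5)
Following Proposition \ref{pro15072025}, we argue by contradiction and assume $\Gamma$ is non-solvable. Since $S_P$ is transitive on $\Delta\setminus\{P\}$, $\Gamma$ is $2$-transitive on $\Delta$, so Propositions \ref{pro020222} and \ref{pro11022025} apply to $\Tilde{\Gamma}=\Gamma/K$. Here $p=2$, and $|\Delta|=2^a+1$ is odd by (II) of Proposition \ref{prop280623}. The Sylow $2$-subgroup $\Tilde{S}_P=S_P/M$ of $\Tilde{\Gamma}$ is a quotient of a dihedral or generalized quaternion group. Any quotient of such a group by a normal subgroup is again dihedral, cyclic, or (in the quaternion case) generalized quaternion; this is the argument of Result \ref{resgq}, using that $S/Z(S)$ is dihedral.

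Next we compare with the non-solvable cases. For $p=2$ these are (A) with $\PSL(2,q)$, $q=2^a\ge4$; (C) with $\PSU(3,q)$; (E) with $Sz(q)$; and (G), which is excluded since it needs $p=3$. A Sylow $2$-subgroup of $\PSL(2,2^a)$ is elementary abelian of order $2^a$. It is dihedral only for $a=2$, where it is the Klein group and $\Tilde{\Gamma}\cong \PSL(2,4)\cong{\rm Alt}_5$ with $|\Delta|=5$. In $\PSU(3,2^a)$ and $Sz(2^a)$ the Sylow $2$-subgroups are non-abelian of order $q^3$ and $q^2$ respectively, with centre of order $q$. Hence they are neither dihedral nor generalized quaternion, since those have centre of order $2$ and at most two generators modulo the Frattini subgroup while these do not. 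The only surviving non-solvable case is therefore $\Tilde{\Gamma}\cong\PSL(2,4)$ acting on $5$ points.

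To exclude this case, we reduce as in the proofs of Theorem \ref{teo07092023} and Proposition \ref{pro15072025} to a minimal counter-example with the same $|\Delta|$. Being dihedral or generalized quaternion passes to the quotients $S_P/Z$ in the sense above, up to becoming cyclic or dihedral. This yields (*****) with $|M|=2$, and after factoring out the $2'$-part $U$ of $C_K(M)$ also (****). Since $\aut(M)$ is trivial, $G=C_\Gamma(M)=\Gamma\gvertneqq M$, so the argument of Proposition \ref{pro0302} in Case (A) gives $\Gamma=G'\times K$ with $G'\cong{\rm Alt}_5$. Then $S_P$ contains $(S_P\cap G')\times M\cong C_2^3$, which is elementary abelian of order $8$. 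This contradicts $S_P$ being dihedral or generalized quaternion, as both have $2$-rank at most $2$. One must check that the exceptional central extensions in Proposition \ref{pro0302}, namely those of $Sz(8)$ and $3\cdot{\rm Alt}_6$, do not arise; they do not here, because the Schur multiplier of $\PSL(2,4)$ has order $2$ and its double cover ${\rm SL}(2,5)$ would require $p$ odd.

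So $\Gamma$ is solvable, and Proposition \ref{pro11022025} leaves only (J'), (L), (L') and (L''). Case (L) has a semidihedral Sylow $2$-subgroup of order $16$ in $\Tilde{\Gamma}$, while (J') has $\Tilde{S}_P$ cyclic of order $|\Delta|-1=2^r$ with $2^r+1$ prime, or $u=9$. We expect the main obstacle to be excluding (J') when $S_P$ is dihedral or generalized quaternion and maps onto a cyclic group of order $2^r\ge 4$. Note that such a group has abelianization $C_2\times C_2$, so its only cyclic quotients have order at most $2$. This gives $|\Delta|\le3$, i.e.\ $u=3$, which must then be ruled out separately, for instance via (V) and the formula for $\gamma(\cX)$ in (IV). Case (L) is excluded because a semidihedral group of order $16$ is not a quotient of a dihedral or generalized quaternion group. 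This leaves (L') or (L'').
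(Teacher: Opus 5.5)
Your proof has a genuine gap at the central-extension step, and this gap cannot be filled. You are right that $\Gamma/K\cong\PSL(2,4)$ acting on $5$ points survives the Sylow comparison: its Sylow $2$-subgroup is the Klein group, i.e.\ dihedral of order $4$, and the paper's one-line proof passes over this. But you dismiss the double cover ${\rm SL}(2,5)$ because it ``would require $p$ odd'', and nothing forces that. Result~\ref{resschur}, and the line ``$p>2$ and $G\cong{\rm SL}(2,q)$'' in Proposition~\ref{pro0302}, overlook the exceptional isomorphism $\PSL(2,4)\cong\PSL(2,5)$. For $p=2$, ${\rm SL}(2,5)$ is a perfect central extension of $\PSL(2,4)$ with centre of order $2=|M|$ and Sylow $2$-subgroup $Q_8$. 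So in the branch $M\le G'$ of Proposition~\ref{pro0302} you land on $\Gamma=G'\cong{\rm SL}(2,5)$, not on $G'\times K$. There $S_P\cong Q_8$ is exactly a generalized quaternion group. Your $2$-rank contradiction only rules out the split branch ${\rm Alt}_5\times C_2$.

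The non-split case actually occurs. Take $p=2$, $\omega\in\mathbb{F}_4$ with $\omega^2+\omega=1$, and let $A=\PSL(2,4)$ act on $\K(x)$ by M\"obius maps, with $A_\infty=\{x\mapsto ax+b\}\cong{\rm Alt}_4$.
\begin{itemize}
\item \textbf{The function.} Since $(ax+b)^3-x^3=(ab^2x)^2+ab^2x+b^3$ with $b^3\in\{0,\omega^2+\omega\}$, the class of $x^3$ modulo $\{h^2+h\mid h\in\K(x)\}$ is $A_\infty$-invariant. Summing $g(x)^3$ over representatives $g$ of $A/A_\infty$ gives $F$ with $A$-invariant class and poles of order exactly $3$ at the five points of $\PG(1,\mathbb{F}_4)$.
\item \textbf{The curve and group.} The curve $\cX: y^2+y=F(x)$ has $\gg(\cX)=9$ and $\gamma(\cX)=4$. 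The group $A$ lifts to $\Gamma\le\aut(\cX)$ of order $120$, a central extension of $A$ by $M=\langle y\mapsto y+1\rangle$.
\item \textbf{The Sylow subgroup.} Let $P$ be the point over $x=\infty$. For $b\in\mathbb{F}_4^*$, any lift of $x\mapsto x+b$ is $y\mapsto y+b^2x+\omega+k$ with $k$ regular at $\infty$. Its square is $y\mapsto y+b^3+k(x)+k(x+b)=y+1$, because the constant $k(x)+k(x+b)$ vanishes at $\infty$. Hence $S_P\cong Q_8$ and $\Gamma\cong{\rm SL}(2,5)$.
\item \textbf{The hypotheses.} $\cX/S_P$ is a quotient of the rational curve $\cX/M$, so (*) holds. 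Every non-trivial element of $Q_8$ has $y\mapsto y+1$ as a power, and that map fixes exactly the five points of $\Delta$, so (***) holds. Also $S_P/M$ is regular on $\Delta\setminus\{P\}$, and $|\Delta|=5$.
\end{itemize}
Yet $\Gamma$ is not solvable, and $\Gamma/K\cong\PSL(2,4)$ falls under (A). So the statement is false as written when $S_P$ is quaternion.

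Separately, you omit case (D), which is harmless since it has the same Sylow $2$-subgroup as (C). You also leave (J') with $u=3$ unresolved.
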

\begin{proof} The quotients of dihedral groups and those of generalized quaternion groups and dihedral groups and cyclic groups of order $2$.
Since $S_PK/K\cong S_P/M$ is either a dihedral group, or it has order $2$, none of the groups (A),(B),\ldots,(G) in Propositions \ref{pro020222} may occur for $\Tilde{\Gamma}$. Thus the claim follows from Proposition \ref{pro11022025}.
\end{proof}
\begin{remark}
 In the proofs of Propositions \ref{pro020222} and \ref{pro11022025}, Hering's classification Result \ref{HC} is used. For this purpose, the extra-condition $|\Delta|>2$ is necessary. As a matter of fact, see Example 4, 
 no upper bound on $|\aut(\cX)|$ depending only on $\gamma(\cX)$ can exist when $|\Delta|=2$ even if hypotheses (*) and (**) are assumed.
\end{remark}

\section{Automorphism groups satisfying hypotheses (*),(**) but not (***)}
\label{pro***no}

Throughout this section, the curve $\cX$ is equipped with a subgroup $\Gamma$ of $\aut(\cX)$ and a point $P\in\cX$ such that both (*) and (**) hold. We investigate the case where (***) does not hold. This time the behavior of $\Delta$ changes as it contains a point $Q$ such that $S_P\cap S_Q$ is trivial.
From the first claim of Lemma \ref{lem24072024}, if $S_P$ has exactly one short orbit other than $\{P\}$, then this holds true for $\Gamma_P$.

Notation  of $\Delta_0$ is kept up so that $\Delta_0$ denotes the (non-empty) set of all points of $\cX$ other than $P$ which are fixed by some non-trivial element of $S_P$. Equivalently, for the quotient curve $\bar{\cX}=\cX/S_P$,  $\{P\}\cup \Delta_0$ is the set of all points in $\cX$ where the cover $\cX|\bar{\cX}$ ramifies.
Let $K$ be the subgroup of $\Gamma$ which fixes $\{P\}\cup \Delta_0$ pointwise.
According to Result \ref{res74}, $K=M\times C$ with $M\le S_P$ and a complement $C$.
If (***) does not hold, then $\{P\}\cup \Delta_0$ does not coincide with a non-tame $\Gamma$-orbit $\Delta$. For any point $Q\in \Delta\setminus (\{P\}\cup \Delta_0)$, $S_P$ and $S_Q$ have trivial intersection. Let $x$ and $y$ be a generator of the fixed field of $S_P$ and $S_Q$, respectively.
The following claim is a consequence of Lemma \ref{lem14set23}.
\begin{lemma}
\label{lem14dic21} Assume that both {\rm{(*)}} and {\rm{(**)}} but {\rm{(***)}} hold. Then $\mathbb{K}(\cX)=\mathbb{K}(x,y)$, $\mathfrak{g}(\cX)\le (|S_P|-1)^2$ and $|S_P|\leq \frac{p}{p-1}\gamma(\cX)$. In particular,
\begin{equation}
\label{eq09112023}
\mathfrak{g}(\cX)\le \left(\frac{p}{p-1}\right)^2\gamma(\cX)^2,
\end{equation}
and
$$|\Gamma|<8 \left(\frac{p}{p-1}\right)^6\gamma(\cX)^6. $$
\end{lemma}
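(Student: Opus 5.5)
Note first that the hypothesis in the statement should be read as ``(*) and (**) hold but (***) does not''. The plan is then to choose a point $Q\in\Delta\setminus(\{P\}\cup\Delta_0)$; such a point exists precisely because (***) fails. By the definition of $\Delta_0$, no non-trivial element of $S_P$ fixes $Q$, so $N=S_P\cap S_Q$ is trivial. Lemma \ref{lem14set23} then says that $\mathbb{K}(x,y)$ is the fixed field of the trivial group, so $\mathbb{K}(\cX)=\mathbb{K}(x,y)$. The Castelnuovo inequality in the same lemma gives $\gg(\cX)\le(|S_P|-1)^2$. Since $Q=g(P)$ for some $g\in\Gamma$, we have $S_Q=gS_Pg^{-1}$, so both fields $\mathbb{K}(x)$ and $\mathbb{K}(y)$ have index $|S_P|$. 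This is what makes the Castelnuovo bound take the symmetric form $(|S_P|-1)(|S_P|-1)$.

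Next I bound $|S_P|$ by the $p$-rank, using the Deuring--Shafarevich formula for $S_P$ on the rational curve $\cX/S_P$ provided by (*). This gives
$$\gamma(\cX)-1=-|S_P|+(|S_P|-1)+\sum_{i}(|S_P|-|\Omega_i|),$$
where the $\Omega_i$ are the short orbits of $S_P$ other than $\{P\}$. By the first claim of Lemma \ref{lem24072024} there is at least one such orbit. Each one satisfies $|\Omega_i|\le |S_P|/p$, so each term contributes at least $|S_P|(1-1/p)$. Hence $\gamma(\cX)\ge \frac{p-1}{p}|S_P|$, that is, $|S_P|\le\frac{p}{p-1}\gamma(\cX)$. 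Substituting this into the genus bound gives $\gg(\cX)<|S_P|^2\le(\frac{p}{p-1})^2\gamma(\cX)^2$, which is (\ref{eq09112023}).

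Finally, the group bound. By (**) we have $\gamma(\cX)>0$, so Result \ref{henn} forces $|\Gamma|<8\gg(\cX)^3$. Cubing (\ref{eq09112023}) then yields $|\Gamma|<8(\frac{p}{p-1})^6\gamma(\cX)^6$.

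I do not expect a real obstacle. The only delicate point is making sure that Lemma \ref{lem14set23} applies with both indices equal to $|S_P|$, which the conjugacy $S_Q=gS_Pg^{-1}$ guarantees. It is also worth checking that the short orbit of $S_P$ used in the Deuring--Shafarevich estimate really differs from $\{P\}$, which Lemma \ref{lem24072024} provides.
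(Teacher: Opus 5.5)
Your proposal is correct and follows the paper's own proof. Like the paper, you use Lemma \ref{lem14set23} with $N=S_P\cap S_Q$ trivial to get $\mathbb{K}(\cX)=\mathbb{K}(x,y)$ and the Castelnuovo bound, then the Deuring--Shafarevich estimate $\gamma(\cX)\ge (1-\frac{1}{p})|S_P|$ over the rational quotient $\cX/S_P$, and finally Result \ref{henn}. Your explicit remarks that $|S_Q|=|S_P|$ by conjugacy, and that the failure of (***) is read, as the paper does, as the existence of $Q\in\Delta\setminus(\{P\}\cup\Delta_0)$, supply details the paper leaves implicit.
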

\begin{proof}
The claims $\mathbb{K}(\cX)=\mathbb{K}(x,y)$ and $\mathfrak{g}(\cX)\le (|S_P|-1)^2$ follow from Lemma \ref{lem14set23}.
Moreover, from the Deuring-Shafarevic formula applied to $S_P$, we obtain $\gamma(\cX)\ge |S_P|-\frac{1}{p}|S_P|$ which yields $\mathfrak{g}(\cX)< (\frac{p}{p-1})^2\gamma(\cX)^2$. The bound on the order of $\Gamma$ follows from Result \ref{henn}. 
\end{proof}

We point out some particular cases in which the bound on the order of $\Gamma$ can immediately be improved by using previous results.

\begin{proposition}
\label{prop290723A}  Assume that both {\rm{(*)}} and {\rm{(**)}} but {\rm{(***)}} hold.
\begin{itemize}
    \item If $\mathfrak{g}(\cX)$ is even and $p$ is odd, then (\ref{eq18122025}) holds.
    \item If $p\equiv 1 \pmod{4}$ then (\ref{eq18122025}) holds.
    \item If the second ramification group of $S_P$ at $P$ is trivial then $|\Gamma|<48\left(\frac{p}{p-1}\right)^4\gamma(\cX)^4$.
    \item If $\Gamma$ has at least two non-tame short orbits, then
    $|\Gamma|\le 16\left(\frac{p}{p-1}\right)^4\gamma(\cX)^4$.
\end{itemize}
\end{proposition}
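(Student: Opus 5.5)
Throughout the four cases the only input about $\cX$ we need is Lemma \ref{lem14dic21}: since (***) fails we have $\mathfrak{g}(\cX)\le (|S_P|-1)^2$ and $|S_P|\le \frac{p}{p-1}\gamma(\cX)$, whence
\[
\mathfrak{g}(\cX)\le \left(\frac{p}{p-1}\right)^2\gamma(\cX)^2 .
\]
Each case then combines a known quadratic-in-genus bound on $|\Gamma|$ with this estimate. Squaring produces the exponent $4$ and the factor $\left(\frac{p}{p-1}\right)^4$.

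\emph{Case 1 ($\mathfrak{g}(\cX)$ even, $p$ odd).} Since $\gamma(\cX)>0$ by (**), the first claim of Result \ref{structure}, read contrapositively, gives $|\Gamma|<900\,\mathfrak{g}(\cX)^2$. Substituting the genus estimate yields $|\Gamma|<900\left(\frac{p}{p-1}\right)^4\gamma(\cX)^4$, which is (\ref{eq18122025}).

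\emph{Case 2 ($p\equiv 1 \pmod 4$).} Lemma \ref{leA27042025}, which needs only (*), shows that $\mathfrak{g}(\cX)$ is even. Since $p$ is odd, Case 1 applies.

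\emph{Case 3 (second ramification group of $S_P$ trivial).} I would apply Result \ref{LT}. This needs $|\Gamma_Q^{(2)}|=1$ for every point $Q$, not only for $P$. At points of the $\Gamma$-orbit $\Delta$ of $P$, conjugation transports the hypothesis, since $\Gamma_{g(P)}^{(2)}=g\Gamma_P^{(2)}g^{-1}$. At points outside the non-tame orbits, $\Gamma_Q$ is tame, so $\Gamma_Q^{(1)}$ and hence $\Gamma_Q^{(2)}$ are trivial. This is the step I expect to be delicate: one must justify that there is no other non-tame orbit where the second ramification group could be non-trivial, or else handle such orbits separately. A second non-tame orbit is exactly the situation of Case 4, which gives a stronger bound anyway, so we may assume $\Delta$ is the unique non-tame orbit. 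Then Result \ref{LT} gives
\[
|\Gamma|\le 48(\mathfrak{g}(\cX)-1)^2<48\left(\frac{p}{p-1}\right)^4\gamma(\cX)^4 .
\]

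\emph{Case 4 ($\Gamma$ has at least two non-tame short orbits).} We may suppose $|\Gamma|>84(\mathfrak{g}(\cX)-1)$, since otherwise the bound is immediate. In Result \ref{res56.116}, cases (c) and (d) have exactly one non-tame short orbit, and case (a) has exactly one non-tame orbit together with two tame ones. So only case (b) remains, which gives $|\Gamma|<16\,\mathfrak{g}(\cX)^2$. The genus estimate then gives $|\Gamma|<16\left(\frac{p}{p-1}\right)^4\gamma(\cX)^4$.
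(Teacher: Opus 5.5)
Your proposal is correct and follows the paper's proof: Result \ref{structure} with (\ref{eq09112023}) for the first claim, Lemma \ref{leA27042025} for the second, Result \ref{LT} for the third, and case (b) of Result \ref{res56.116} for the fourth. Your extra steps fill in details the paper leaves implicit. These are transporting the triviality of $\Gamma_P^{(2)}$ to every point, with a second non-tame orbit absorbed by Case 4, and disposing of $|\Gamma|\le 84(\mathfrak{g}(\cX)-1)$, which is harmless because $\mathfrak{g}(\cX)\ge 2$ forces $|S_P|\ge 3$.
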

\begin{proof}  If $\mathfrak{g}(\cX)$ is even and $p$ is odd, then $|\Gamma|< 900 \mathfrak{g}(\cX)^2$ by Result \ref{structure} which together with (\ref{eq09112023}) show the first claim. From this and Lemma \ref{leA27042025} the second claim follows.
 The third claim comes from Result \ref{LT} while the fourth one is a corollary of (\ref{eq09112023}) and (b) of Result \ref{res56.116}.
\end{proof}
\begin{lemma}
\label{lem04102025} Let $p>2$.  Assume that both {\rm{(*)}} and {\rm{(**)}} hold.
If $|\Delta_0|=1$, that is, every non-trivial element in $S_P$ fixes exactly one point $Q$ other than $P$, and some element in $\aut(\cX)$ takes $P$ to $Q$, then $\gg(\cX)$ is even.
\end{lemma}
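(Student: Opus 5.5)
The plan is to apply the Hurwitz genus formula to $S_P$ and show that the two ramified points $P$ and $Q$ contribute the same amount to the different. Parity then forces $\gg(\cX)$ to be even. The only non-formal step is to show that $S_P$ coincides with the full Sylow $p$-subgroup of the stabilisers of $P$ and of $Q$ in $\aut(\cX)$. Once that holds, the automorphism taking $P$ to $Q$ normalises $S_P$ and carries its ramification filtration at $P$ onto its filtration at $Q$.

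\emph{Step 1: $S_P$ is a Sylow $p$-subgroup of $\aut(\cX)_P$.} By hypothesis $\Delta_0=\{Q\}$, so (ii) of Lemma \ref{lem27092025A} gives $\gamma(\cX)=|S_P|-1$. Let $T$ be a Sylow $p$-subgroup of $\aut(\cX)_P$ containing $S_P$. Apply the Deuring--Shafarevich formula (\ref{eq2deuring}) to $T$. The quotient $\cX/T$ is covered by the rational curve $\cX/S_P$, hence is rational, so $\gamma(\cX/T)=0$. The fixed point $P$ contributes $|T|-1$. Since $S_P$ fixes $Q$, the $T$-orbit of $Q$ is short of length at most $|T|/|S_P|$, so it contributes at least $|T|(1-1/|S_P|)$. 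Therefore
$$|S_P|-2=\gamma(\cX)-1\ \ge\ -|T|+(|T|-1)+|T|\Big(1-\frac{1}{|S_P|}\Big),$$
which gives $|T|\le |S_P|$, that is, $T=S_P$. Since the $\aut(\cX)$-stabilisers of $P$ and $Q$ are conjugate, their Sylow $p$-subgroups have the same order $|S_P|$. The group $S_P$ is a $p$-subgroup of $\aut(\cX)_Q$ of exactly this order, so $S_P$ is also the Sylow $p$-subgroup of $\aut(\cX)_Q$, which is normal there by Result \ref{res74}.

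\emph{Step 2: $d_P=d_Q$.} Let $g\in\aut(\cX)$ with $g(P)=Q$. Then $gS_Pg^{-1}$ is the Sylow $p$-subgroup of $\aut(\cX)_Q$, so $gS_Pg^{-1}=S_P$ by Step 1. Conjugation by $g$ maps a local parameter at $P$ to one at $Q$. Hence it maps the $i$-th ramification group of $S_P$ at $P$ onto the $i$-th ramification group of $S_P$ at $Q$ for every $i$, and (\ref{eq1bis}) gives $d_P=d_Q$.

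\emph{Step 3: parity.} Every non-trivial element of $S_P$ fixes only $P$ and $Q$, and $\cX/S_P$ is rational by (*). The Hurwitz formula (\ref{eq1}) for $S_P$ therefore reads $2\gg(\cX)-2=-2|S_P|+2d_P$, that is,
$$\gg(\cX)-1=-|S_P|+d_P .$$
Since $p$ is odd, $|S_P|$ is odd. Each summand $|S_P^{(i)}|-1$ of $d_P$ is a power of $p$ minus one, hence even, so $d_P$ is even. Thus $\gg(\cX)-1$ is odd and $\gg(\cX)$ is even.

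I expect Step 1 to be the main obstacle. Without it, the ramification data of $S_P$ at $Q$ need not be related to those at $P$. The Deuring--Shafarevich count above, using only $\gamma(\cX)=|S_P|-1$, is the tool I would rely on.
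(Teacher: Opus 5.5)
Your proof is correct and follows the paper's route: show that the element $g$ with $g(P)=Q$ normalises $S_P$ (so the Sylow $p$-subgroups at $P$ and $Q$ coincide), deduce $d_P=d_Q$, and read off the parity of $\gg(\cX)$ from the Hurwitz formula $\gg(\cX)-1=d_P-|S_P|$ with $p$ odd. Your Step 1, the Deuring--Shafarevich count showing that $S_P$ is the full Sylow $p$-subgroup of $\aut(\cX)_P$ and of $\aut(\cX)_Q$, supplies a justification the paper omits. The paper simply writes ``$gS_Pg^{-1}=S_Q$, and hence $S_P=S_Q$'', which needs exactly this when $g$ lies only in $\aut(\cX)$ and not in $\Gamma$.
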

\begin{proof} 
Since \(|\Delta_0|=1\), write \(\Delta_0=\{Q\}\). Then every non-trivial element of \(S_P\) fixes both \(P\) and \(Q\). Since there is an element $g\in \aut(\cX)$ such that $g(P)=Q$, we have $gS_Pg^{-1}=S_Q$, and hence $S_P=S_Q$ as $S_P$ fixes $Q$.
Then $S_Q=S_P$. Let $d_P=\sum_{i\ge 0} (|S_P^{(i)}|-1)$ be the degree of the Hilbert different at $P$. Since $Q$ is the image of $P$ by an element from $\aut(\cX)$, the Hilbert different of $S_Q=S_P$ at $Q$ has the same degree $d_P$; i.e. $d_P=d_Q$.
The Riemann-Hurwitz genus formula applied to $S_P$ gives $2\gg(\cX)-2=-2|S_P|+d_P+d_Q=-2+2d_P-2(|S_P|-1)$ whence $\gg(\cX)=d_P-|S_P|+1$. Since $p-1$ is even, both $d_P$ and $|S_P|-1$ are even, as well, whence the claim follows.
\end{proof}

In the general case, however, to get at the same $\gamma(\cX)^4$ (up to a constant only depending on $p$) as an upper bound on $|\Gamma|$ a lot of further investigation is needed.


\begin{proposition}
\label{pro24072024} If {\rm{(*)}} and {\rm{(**)}} but {\rm{(***)}} hold and  $\Gamma$ has a unique short orbit, then
$$|\Gamma|< 4\left(\frac{p}{p-1}\right)^4\gamma(\cX)^4.$$
\end{proposition}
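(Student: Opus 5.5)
We propose to combine the genus bound of Lemma \ref{lem14dic21} with the Hurwitz genus formula applied to $\Gamma$ itself. Since (***) fails, Lemma \ref{lem14dic21} gives $\gg(\cX)\le (|S_P|-1)^2$ and $|S_P|\le \frac{p}{p-1}\gamma(\cX)$, so that
$$\gg(\cX)-1<\left(\frac{p}{p-1}\right)^2\gamma(\cX)^2 .$$
The target is therefore a bound of the form $|\Gamma|\le 4(\gg(\cX)-1)^2$, or something comparable, valid when $\Gamma$ has exactly one short orbit. That orbit is necessarily the non-tame orbit $\Delta$ containing $P$, because $\Gamma_P\supseteq S_P\neq 1$.

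First we may assume $|\Gamma|>84(\gg(\cX)-1)$; otherwise the claim is immediate from the display above. Then Result \ref{res56.116} shows that $\cX/\Gamma$ is rational, and we are in case (c). The Hurwitz genus formula for $\Gamma$ reads
$$2\gg(\cX)-2=-2|\Gamma|+|\Delta|\,d_P,\qquad d_P=\sum_{i\ge0}\big(|\Gamma_P^{(i)}|-1\big).$$
Using $|\Gamma|=|\Delta||\Gamma_P|$, this becomes
$$|\Delta|\big(d_P-2|\Gamma_P|\big)=2\gg(\cX)-2>0 ,$$
and in particular $d_P>2|\Gamma_P|$. It follows that $|\Delta|\le 2(\gg(\cX)-1)$, since $d_P-2|\Gamma_P|\ge 1$. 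The key step is then to bound $|\Gamma_P|$ linearly in $\gg(\cX)$.

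For the bound on $|\Gamma_P|$ we write $\Gamma_P=S_P\rtimes H_P$. By Result \ref{res60.79.108}(ii), $|H_P|\le 4\gg(\cX)+2$. For $S_P$, condition (**) together with Result \ref{resnaka}, or directly $|S_P|\le\frac{p}{p-1}\gamma(\cX)\le \frac{p}{p-1}\gg(\cX)$, gives a linear bound. Multiplying these would only give $|\Gamma_P|=O(\gg^2)$, which is too weak. So we should instead use the dichotomy of Lemma \ref{leterribileHW}.
\begin{itemize}
\item[(i)] Either $\gamma(\cX)\ge \ha|\Gamma_P|$. Then $|\Gamma|=|\Delta||\Gamma_P|\le 2(\gg(\cX)-1)\cdot 2\gamma(\cX)$, which is far below the claimed bound once the display above is used.
\item[(ii)] Or $S_P$ has a unique short orbit $\Delta_0$ other than $\{P\}$. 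By Lemma \ref{lem24072024}, $\Delta_0$ is then also the unique short orbit of $\Gamma_P$ other than $\{P\}$. Since $\Gamma$ has a single short orbit, $\Delta_0\subset\Delta$, and Lemma \ref{lem27092025A}(i) gives $\ha|S_P|\le\gamma(\cX)\le|S_P|-1$.
\end{itemize}
In case (ii) we must bound $|\Delta|$ better, or use a different estimate. The plan is to apply the Hurwitz formula to $\Gamma_P$ acting on $\cX$ rather than to $\Gamma$. The quotient $\cX/\Gamma_P$ is rational, being a quotient of the rational curve $\cX/S_P$. The short orbits of $\Gamma_P$ are $\{P\}$ and $\Delta_0$, and $\Gamma_P$ acts semiregularly elsewhere. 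Comparing $d_P$ computed for $\Gamma_P$ with the formula above, one gets $d_P\le 2\gg(\cX)-2+2|\Gamma_P|$ minus a non-negative contribution from $\Delta_0$. Hence
$$|\Delta|\big(d_P-2|\Gamma_P|\big)\ge \dots$$
forces $|\Delta|\le 2\gg(\cX)-2$ together with $|\Gamma_P|<d_P/2\le \gg(\cX)-1+|\Gamma_P|$. This last inequality is not yet a bound on $|\Gamma_P|$. The main obstacle is precisely here: converting the different exponent into a bound $|\Gamma_P|\le 2(\gg(\cX)-1)$ or similar.

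To get past that obstacle, I would try the following. Use $\gamma(\cX)\ge \ha|S_P|$ and $|H_P|\le |\Delta_0|$, which holds because $H_P$ acts semiregularly on $\Delta_0$ minus at most one point (as in Lemma \ref{leterribileHW}, via Dickson). From Deuring–Shafarevich, $|\Delta_0|\le |S_P|-\gamma(\cX)\le \gamma(\cX)$. Then
$$|\Gamma_P|=|S_P||H_P|\le 2\gamma(\cX)\,(\gamma(\cX)+1).$$
Combining this with $|\Delta|\le 2(\gg(\cX)-1)<2\left(\frac{p}{p-1}\right)^2\gamma(\cX)^2$ gives
$$|\Gamma|\lesssim 4\left(\frac{p}{p-1}\right)^2\gamma(\cX)^3(\gamma(\cX)+1),$$
and the final task is to absorb the $+1$ and the constants into $4\left(\frac{p}{p-1}\right)^4\gamma(\cX)^4$.
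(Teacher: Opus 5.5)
Your setup matches the paper's. With $\cX/\Gamma$ rational, the Hurwitz formula for $\Gamma$ gives $|\Delta|(d_P-2|\Gamma_P|)=2\gg(\cX)-2$, hence $|\Delta|\le 2(\gg(\cX)-1)$. Lemma \ref{lem14dic21} gives $\gg(\cX)-1<\left(\frac{p}{p-1}\right)^2\gamma(\cX)^2$. Your case (i) is also correct.

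The gap is in case (ii), at the inequality $|H_P|\le|\Delta_0|$. The Dickson argument of Lemma \ref{leterribileHW} is about $\Gamma_P/S_P$ acting on the rational curve $\cX/S_P$, that is, about how $H_P$ permutes $S_P$-orbits. But $\Delta_0$ is a single $S_P$-orbit lying over the second fixed point $\bar R$. So nothing follows about how $H_P$ acts on the points of $\Delta_0$, and nontrivial elements of $H_P$ fixing $\{P\}\cup\Delta_0$ pointwise are not excluded. Indeed, in Proposition \ref{prop190923} with $N=1$ one has $|\Gamma_P|=pq(q^2-1)$ and $|S_P|=pq$, so $|H_P|=q^2-1>q=|\Delta_0|$. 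There (***) holds, but your justification never uses its failure. So your bound on $|\Gamma_P|$ is unsupported.

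The missing idea is simpler, and it is what the paper uses: you do not need an independent bound on $|\Gamma_P|$, because $|\Gamma_P|<|\Delta|$.
\begin{itemize}
\item In case (ii), $\Delta_0\subseteq\Delta$ because $\Delta$ is the only short orbit. Since (***) fails, there is $Q\in\Delta\setminus(\{P\}\cup\Delta_0)$.
\item By the second claim of Lemma \ref{lem24072024}, no nontrivial element of $\Gamma_P$ fixes $Q$. So the $\Gamma_P$-orbit of $Q$ is regular and lies in $\Delta\setminus\{P\}$, giving $|\Gamma_P|<|\Delta|$.
\end{itemize}
Hence
\[
|\Gamma|=|\Gamma_P||\Delta|<|\Delta|^2\le 4(\gg(\cX)-1)^2<4\left(\frac{p}{p-1}\right)^4\gamma(\cX)^4 .
\]
The paper asserts $|\Gamma_P|<|\Delta|$ directly. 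Your case (i) is a useful supplement, since the regular-orbit argument relies on Lemma \ref{lem24072024}, which needs $S_P$ to have a unique short orbit besides $\{P\}$.

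Finally, drop your preliminary reduction to $|\Gamma|>84(\gg(\cX)-1)$. It is not ``immediate'' when $\left(\frac{p}{p-1}\right)^2\gamma(\cX)^2<21$. It is also unnecessary: $\K(\cX)^\Gamma\subseteq\K(\cX)^{S_P}$, which is rational by (*), so $\cX/\Gamma$ is rational by L\"uroth's theorem.
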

\begin{proof}
Let
\begin{equation}
\label{eq01092025}
\mbox{$\sigma=d_P-|S_P|=-1+|S_P^{(1)}|-1+\ldots+|S_P^{(m)}|-1$, with $|S_P^{(m+1)}|=1.$}
\end{equation}

From the Hurwitz genus formula applied to $\Gamma$, we have
$$2\mathfrak{g}(\cX)-2=-|\Gamma|+|\Delta|\sigma=|\Delta|(\sigma -|\Gamma_P|).$$
Since $2\mathfrak{g}(\cX)-2>0$, we also have $\sigma -|\Gamma_P|>0$, and hence $\sigma -|\Gamma_P|\ge 1$. Therefore, $|\Delta|\le 2\mathfrak{g}(\cX)-2$. Since $|\Gamma_P|<|\Delta|$, (\ref{eq09112023}) gives
$$|\Gamma|=|\Gamma_P||\Delta|<|\Delta|^2<4\left(\frac{p}{p-1}\right)^4\gamma(\cX)^4.$$
\end{proof}
\begin{proposition}
\label{prop290723} If {\rm{(*)}} and {\rm{(**)}} but {\rm{(***)}} hold and $S_P$ has at least two short orbits other than $\{P\}$ then $$|\Gamma|< 24\left(\frac{p}{p-1}\right)^4\gamma(\cX)^4.$$
\end{proposition}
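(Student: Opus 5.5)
The plan is to combine Lemma \ref{leterribileHW} with the genus bound (\ref{eq09112023}) of Lemma \ref{lem14dic21}, and to bound $|\Delta|$ linearly in $\gg(\cX)$ via the Hurwitz genus formula applied to $\Gamma$. Since $S_P$ has at least two short orbits other than $\{P\}$, Lemma \ref{leterribileHW} gives $|\Gamma_P|\le 2\gamma(\cX)$. As $|\Gamma|=|\Gamma_P||\Delta|$, it then suffices to show $|\Delta|<12\left(\frac{p}{p-1}\right)^4\gamma(\cX)^3$, or an equivalent estimate.

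The natural route is to obtain $|\Delta|\le c(\gg(\cX)-1)$ and then use $\gg(\cX)\le (\frac{p}{p-1})^2\gamma(\cX)^2$. I would apply the Hurwitz genus formula to $\Gamma$ itself. We may assume $|\Gamma|>84(\gg(\cX)-1)$, since otherwise the bound already follows from (\ref{eq09112023}). Then $\cX/\Gamma$ is rational, and Result \ref{res56.116} leaves cases (a)--(d).

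In case (b), Proposition \ref{prop290723A} already gives the claim. Case (c) is Proposition \ref{pro24072024}. In cases (a) and (d) the non-tame orbit is $\Delta$, and the formula reads
\[
2\gg(\cX)-2=-2|\Gamma|+|\Delta|d_P+\sum_{\text{tame}}(|\Gamma|-\ell_i).
\]
Here $d_P\ge |\Gamma_P|-1+|S_P|-1$. In case (d) this yields $2\gg-2\ge -|\Gamma|-\ell+|\Delta|(|S_P|-1)+|\Delta|(|\Gamma_P|-1)-|\Gamma|$. This is the delicate point: a crude estimate does not force $|\Delta|$ small.

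Instead, I would use the higher ramification groups. Write $\sigma=d_P-|\Gamma_P|$ as in (\ref{eq01092025}). The formula gives $2\gg-2=|\Delta|(\sigma-|\Gamma_P|)+(\text{tame contributions})-|\Gamma|$. In case (d) the tame contribution is $|\Gamma|-\ell$ with $\ell\ge 1$. Hence $2\gg-2\ge |\Delta|(\sigma-|\Gamma_P|)-\ell$, where $\ell=|\Gamma|/|\Gamma_R|$ and $|\Gamma_R|\ge 2$.

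If $\sigma-|\Gamma_P|\ge 1$, then $|\Delta|\le 2\gg-2+\ell$. This is still not immediately linear in $\gg$, so a separate argument is needed. If $\sigma\le|\Gamma_P|$, the contributions are so small that the genus formula forces $|\Gamma|\le 2(\gg-1)\cdot$ const. Case (a) is handled similarly with two tame orbits, and there Result \ref{res56.116} already gives $|\Gamma|<24\gg^2$. Combined with (\ref{eq09112023}), this yields exactly $|\Gamma|<24(\frac{p}{p-1})^4\gamma^4$, which suggests that the constant $24$ in the statement comes from case (a).

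So the main obstacle is case (d). There I would try to show $|\Gamma|<24\gg(\cX)^2$ as well. Using $|\Gamma_P|\le 2\gamma\le 2\gg$ together with $|\Delta|\le 12\gg$, obtained from the genus formula after comparing $\sigma$ with $|S_P|$, would give $|\Gamma|\le 24\gg^2$. The difficulty is bounding $\ell$ against $|\Delta|$. I would handle it by noting that the tame stabilizer $\Gamma_R$ is cyclic with $|\Gamma_R|\ge 2$, so $\ell\le\frac12|\Gamma|$. Substituting this into the genus formula, with $\sigma\ge 2(|S_P|-1)$ coming from the two short $S_P$-orbits on $\cX/S_P$, should give $|\Gamma|(\text{positive constant})\le$ const$\cdot(\gg-1)|\Gamma_P|$, and hence $|\Gamma|\le 24\gg^2$.
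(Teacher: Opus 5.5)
Your reduction matches the paper's. Cases (a) and (b) of Result \ref{res56.116} give $|\Gamma|<24\mathfrak{g}(\cX)^2$, case (c) is Proposition \ref{pro24072024}, and everything rests on case (d): one non-tame orbit $\Delta$ and one tame orbit $\Omega$. Your treatment of case (d), however, does not go through, for three reasons.

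First, the bookkeeping is off. With $\sigma=d_P-|\Gamma_P|$, the Hurwitz formula gives exactly $2\mathfrak{g}(\cX)-2=|\Delta|\sigma-|\Omega|$. Your version $|\Delta|(\sigma-|\Gamma_P|)-\ell$ discards a full $|\Gamma|$.

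Second, and more seriously, the inequality $\sigma\ge 2(|S_P|-1)$ is false. The quantity $\sigma=-1+\sum_{i\ge 1}(|S_P^{(i)}|-1)$ depends only on the ramification filtration at $P$. The number of short $S_P$-orbits has no bearing on it; that hypothesis is used up in getting $|\Gamma_P|\le 2\gamma(\cX)$ from Lemma \ref{leterribileHW}. If $S_P^{(2)}$ is trivial, then $\sigma=|S_P|-2$.

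Third, $\ell\le\frac{1}{2}|\Gamma|$ only yields $2\mathfrak{g}(\cX)-2\ge|\Delta|(\sigma-\frac{1}{2}|S_P||H_P|)$. That bracket is non-positive as soon as $|H_P|\ge 2$ and $\sigma$ is close to $|S_P|$. So neither $|\Delta|\le 12\mathfrak{g}(\cX)$ nor $|\Gamma|<24\mathfrak{g}(\cX)^2$ follows, and there is no reason to expect either in case (d).

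The missing idea is an integrality argument. By the Orbit theorem, $|\Omega||\Gamma_Q|=|\Delta||\Gamma_P|$ for $Q\in\Omega$. Hence $2\mathfrak{g}(\cX)-2=|\Delta|\,u$ with $u=\sigma-|\Gamma_P|/|\Gamma_Q|>0$. Now split into two cases.
\begin{itemize}
\item If $|\Gamma_P|\le\frac{1}{2}|\Gamma_Q|$, then $u\ge\sigma-\frac{1}{2}\ge|S_P|-\frac{5}{2}\ge\frac{1}{2}$. Here $|S_P|=2$ is impossible by Lemma \ref{lem14dic21}, since $\mathfrak{g}(\cX)\ge 2$. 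So $|\Delta|\le 4(\mathfrak{g}(\cX)-1)$.
\item If $|\Gamma_P|>\frac{1}{2}|\Gamma_Q|$, then $u|\Gamma_Q|=\sigma|\Gamma_Q|-|\Gamma_P|$ is a positive integer. Hence $u\ge 1/|\Gamma_Q|>1/(2|\Gamma_P|)\ge 1/(4\gamma(\cX))$, and $|\Delta|\le 8\gamma(\cX)(\mathfrak{g}(\cX)-1)$.
\end{itemize}
Either way $|\Gamma|=|\Gamma_P||\Delta|\le 16\gamma(\cX)^2(\mathfrak{g}(\cX)-1)$. Then (\ref{eq09112023}) gives $|\Gamma|<16\left(\frac{p}{p-1}\right)^2\gamma(\cX)^4$.

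Note that $|\Delta|$ is \emph{not} shown to be linear in $\mathfrak{g}(\cX)$. The extra factor $\gamma(\cX)$ is affordable only because the target is $\gamma(\cX)^4$, while $\mathfrak{g}(\cX)\le\left(\frac{p}{p-1}\right)^2\gamma(\cX)^2$.
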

\begin{proof}
By (\ref{eq09112023}) the claim holds when $|\Gamma|<24\mathfrak{g}(\cX)^2$. By 
Result \ref{res56.116},
$|\Gamma|\ge 24\mathfrak{g}(\cX)^2$ can only occur if $\Gamma$ has exactly one non-tame short orbit $\Delta$ and it has either one tame short orbit $\Omega$, or does not have at all.
In the latter case, Proposition \ref{pro24072024} yields our claim.
Now, the former case is investigated. From the Hurwitz genus formula applied to $\Gamma$, $2\mathfrak{g}(\cX)-2=|\Delta|\sigma-|\Omega|$
where $\sigma$ is defined by (\ref{eq01092025}).
From the Orbit theorem, $|\Delta||\Gamma_P|=|\Omega||\Gamma_Q|$ where $P\in \Delta$ and $Q\in \Omega$. Thus
\begin{equation}
\label{eq08112023}
2\mathfrak{g}(\cX)-2=|\Delta|\left(\sigma-\frac{|\Gamma_P|}{|\Gamma_Q|}\right).
\end{equation}
If $|\Gamma_P|\le \ha |\Gamma_Q|$ then (\ref{eq08112023}) yields $2\mathfrak{g}(\cX)-2\ge |\Delta|(\sigma-\frac{1}{2})\geq |\Delta|(|S_P|-\frac{5}{2})$ whence $2\mathfrak{g}(\cX)-2\ge \ha |\Delta|$ follows, unless $p=2$ and $|S_P|=2$. The latter (sporadic) case cannot actually occur by  Lemma \ref{lem14dic21} as $\mathfrak{g}(\cX)\ge 2$.
This together with (\ref{eq09112023}) and Lemma \ref{leterribileHW} imply $$|\Gamma|=|\Gamma_P||\Delta|<2\gamma(\cX)4(\mathfrak{g}(\cX)-1)<8\gamma(\cX)\left(\frac{p}{p-1}\right)^2\gamma(\cX)^2<8\left(\frac{p}{p-1}\right)^2 \gamma(\cX)^3.$$
If $|\Gamma_P|>\ha |\Gamma_Q|$, a different argument is used. Let
$$u=\sigma-\frac{|\Gamma_P|}{|\Gamma_Q|}.$$ Since $2\mathfrak{g}(\cX)-2>0$, Equation (\ref{eq08112023}) shows that $u$ is positive. Moreover, as $\sigma|\Gamma_Q|-|\Gamma_P|$ is an integer, $u|\Gamma_Q|$ is also an integer, hence a positive integer. Therefore $u|\Gamma_Q|\ge 1$. Thus,
$$u\ge \frac{1}{|\Gamma_Q|}>\frac{1}{2|\Gamma_P|}.$$
This together with Lemma \ref{leterribileHW} yield
$$u> \frac{1}{4\gamma(\cX)}$$
whence $|\Delta|\le 4\gamma(\cX)(2\mathfrak{g}(\cX)-2)$. Arguing as before, this implies
$$|\Gamma|=|\Gamma_P||\Delta|<2\gamma(\cX)4\gamma(\cX)2(\mathfrak{g}(\cX)-1)<16 \left(\frac{p}{p-1}\right)^2\gamma(\cX)^4.$$
\end{proof}
A corollary of 
Result \ref{res56.116} together with the fourth claim in Proposition \ref{prop290723A} is stated in the following lemma.
\begin{lemma}
\label{lem14012025} Assume that both {\rm{(*)}} and {\rm{(**)}} but {\rm{(***)}} hold. If $\Gamma$ has no tame short orbit 
then
 \begin{equation}
 \label{eqA24072024A}
 |\Gamma|<  16 \left(\frac{p}{p-1}\right)^4\gamma(\cX)^4.
 \end{equation}
\end{lemma}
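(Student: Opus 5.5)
The plan is to reduce to a genus bound and then feed it into Result \ref{res56.116}. By (\ref{eq09112023}) of Lemma \ref{lem14dic21}, we have $\mathfrak{g}(\cX)^2\le \left(\frac{p}{p-1}\right)^4\gamma(\cX)^4$. Hence it is enough to show that $|\Gamma|<16\,\mathfrak{g}(\cX)^2$ in every case not already covered by earlier results.

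Suppose first that $|\Gamma|\le 84(\mathfrak{g}(\cX)-1)$. Since $\mathfrak{g}(\cX)\ge 2$, we have $84(\mathfrak{g}(\cX)-1)<16\,\mathfrak{g}(\cX)^2$; this holds because $16g^2-84g+84>0$ for $g\ge 4$, and can be checked directly for $g=2,3$. So (\ref{eqA24072024A}) follows.

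Now suppose $|\Gamma|>84(\mathfrak{g}(\cX)-1)$. Result \ref{res56.116} then says $\Gamma$ falls into one of the cases (a)--(d). Cases (a) and (d) each contain a tame short orbit, so the hypothesis rules them out. In case (b), $\Gamma$ has exactly two short orbits, both non-tame, and $|\Gamma|<16\,\mathfrak{g}(\cX)^2$; alternatively, the fourth claim of Proposition \ref{prop290723A} gives the bound directly. In case (c), $\Gamma$ has a unique short orbit, and Proposition \ref{pro24072024} gives $|\Gamma|<4\left(\frac{p}{p-1}\right)^4\gamma(\cX)^4$, which is stronger than (\ref{eqA24072024A}).

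The only point needing care is that the hypotheses (*), (**) and the failure of (***) are exactly what Lemma \ref{lem14dic21} and Proposition \ref{pro24072024} require, and these are all assumed. There is also the degenerate possibility that $\Gamma$ has no short orbit at all. This cannot happen here: $\Gamma_P$ contains the non-trivial group $S_P$, so $P$ lies in a short orbit, and that orbit $\Delta$ is non-tame. So no real obstacle is expected; the proof is essentially bookkeeping across the cases of Result \ref{res56.116}.
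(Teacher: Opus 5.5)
Your overall route is the paper's: the lemma is stated there as a corollary of Result~\ref{res56.116} and the fourth claim of Proposition~\ref{prop290723A}, and you correctly add Proposition~\ref{pro24072024} for case (c). However, your treatment of the regime $|\Gamma|\le 84(\mathfrak{g}(\cX)-1)$ contains a false step. The inequality $84(g-1)<16g^2$ fails for $g=2,3$: $84>64$ and $168>144$, since $16g^2-84g+84$ equals $-20$ and $-24$ there. So the ``direct check'' for $g=2,3$ does not work, and reducing everything to $|\Gamma|<16\,\mathfrak{g}(\cX)^2$ via (\ref{eq09112023}) loses too much in small genus.

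The repair is to use $\gamma(\cX)$ directly rather than the genus. When $\mathfrak{g}(\cX)\le 3$, it suffices to show $168<16\left(\frac{p}{p-1}\right)^4\gamma(\cX)^4$. If $\gamma(\cX)\ge 2$, the right-hand side exceeds $256$. If $\gamma(\cX)=1$, the bound $p\le |S_P|\le \frac{p}{p-1}\gamma(\cX)$ from Lemma~\ref{lem14dic21} forces $p=2$, and the right-hand side is $256$. (In fact $\gamma(\cX)=1$ cannot occur: it gives $|S_P|=2$, hence $\mathfrak{g}(\cX)\le (|S_P|-1)^2=1$.)

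With this patch the rest of your argument goes through: (a) and (d) are excluded by hypothesis, (b) follows from $|\Gamma|<16\,\mathfrak{g}(\cX)^2$ together with (\ref{eq09112023}), and (c) follows from Proposition~\ref{pro24072024}. The paper's one-line justification also leaves the regime $|\Gamma|\le 84(\mathfrak{g}(\cX)-1)$ implicit, so this small-genus check is needed there too.
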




\begin{lemma}
\label{lem27092025} Assume that both {\rm{(*)}} and {\rm{(**)}} but {\rm{(***)}} hold. If $S_P$ has just one short orbit other than $\{P\}$, then
the action of $\Gamma$ on $\Delta$ is faithful.
\end{lemma}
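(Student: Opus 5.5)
Let $K^*$ denote the kernel of the action of $\Gamma$ on $\Delta$; the aim is to show $K^*=\{1\}$. Since $K^*$ fixes $P$, we have $K^*\le \Gamma_P=S_P\rtimes H_P$. By Result \ref{res74} we can write $K^*=M^*\rtimes C^*$, where $M^*=K^*\cap S_P$ is a $p$-group and $C^*$ is cyclic of order prime to $p$. The argument treats the $p$-part and the $p'$-part separately, and uses the failure of (***) in both.

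\emph{The $p$-part.} Because (***) fails, there is a point $Q\in\Delta\setminus(\{P\}\cup\Delta_0)$. Every element of $K^*$ fixes $Q$, so $M^*\le S_P\cap S_Q$. By definition of $\Delta_0$, no non-trivial element of $S_P$ fixes $Q$, hence $S_P\cap S_Q=\{1\}$. Therefore $M^*$ is trivial and $K^*=C^*$ is a $p'$-group.

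\emph{The $p'$-part.} Take any non-trivial $c\in C^*$. It fixes $P$ and $Q$, and $Q\notin\{P\}\cup\Delta_0$. By hypothesis $S_P$ has a unique short orbit $\Delta_0$ other than $\{P\}$, so Lemma \ref{lem24072024} applies: no non-trivial element of $\Gamma_P$ fixes a point outside $\{P\}\cup\Delta_0$. This contradicts the fact that $c$ fixes $Q$. Hence $C^*=\{1\}$ and the action is faithful.

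The step I expect to be the main obstacle is the choice of $Q$, since both parts rely on having a point of $\Delta$ outside $\{P\}\cup\Delta_0$. This is exactly what the negation of (***) supplies, because $\Delta_0\cup\{P\}$ is properly contained in $\Delta$. The remaining point to check is that $\Delta_0\cup\{P\}\subseteq\Delta$ is not needed for the argument; only the existence of a point of $\Delta$ outside that set is used.

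Alternatively, the argument can be phrased in a single step. A non-trivial kernel element fixes $Q$ and also lies in $\Gamma_P$. By Lemma \ref{lem24072024}, every non-trivial element of $\Gamma_P$ has all its fixed points in $\{P\}\cup\Delta_0$. Since $Q$ lies outside this set, the kernel is trivial. This covers the $p$-elements as well, so the first step is only a warm-up.
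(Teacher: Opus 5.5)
Your proof is correct, and your single-step version is exactly the paper's argument: a non-trivial kernel element lies in $\Gamma_P$, so by Lemma \ref{lem24072024} all its fixed points lie in $\{P\}\cup\Delta_0$, which is incompatible with it fixing all of $\Delta$ once (***) fails. The separate $p$-part step is redundant, and for the existence of $Q$ you need not assume $\{P\}\cup\Delta_0\subseteq\Delta$. Since $\Delta_0$ is a single $S_P$-orbit, it is either contained in $\Delta$ or disjoint from it, and in both cases the failure of (***) together with $|\Delta|\ge 2$ (as $\Gamma$ does not fix $P$) yields $Q\in\Delta\setminus(\{P\}\cup\Delta_0)$.
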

\begin{proof} Assume on the contrary that $\Gamma$ has a non-trivial element $g$ that fixes $\Delta$ pointwise. Then $g\in \Gamma_P$. From
Lemma \ref{lem24072024}, $\Delta\subseteq\{P\}\cup\Delta_0$. But then (***) holds, a contradiction.
\end{proof}
\begin{proposition}
\label{leterribile24} Assume that both {\rm{(*)}} and {\rm{(**)}} but {\rm{(***)}} hold. If
$S_P$ has just one short orbit other than $\{P\}$, and $\Gamma$ has a unique non-tame orbit $\Delta$, then
\begin{equation}
\label{eq13092025}
|\Delta|<3(\mathfrak{g}(\cX)-1)|H_P|
\end{equation}
apart from two exceptional cases, the first with a simple group for $\Gamma$, the second with a solvable group for $\Gamma$:
\begin{itemize}
\item $p=5,|S_P|=5,|H_P|=1, |\Delta|=12, \Gamma\cong PSL(2,5)$, and $\gg(\cX)=\gamma(\cX)=4$;\\
\item $p=3,|S_P|=3,|H_P|=2, |\Delta|=8, \Gamma\cong GL(2,3)$, and $\gg(\cX)=\gamma(\cX)=2$.\\
\end{itemize}
\end{proposition}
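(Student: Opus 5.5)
Since $\Delta$ is the unique non-tame short orbit and, by Lemma \ref{lem24072024}, $\Delta_0$ is the unique short orbit of $\Gamma_P$ other than $\{P\}$, the plan is to read everything off the Hurwitz genus formula for $\Gamma$. By Result \ref{res56.116}, $\Gamma$ has, besides $\Delta$, at most two tame short orbits. If there are none, the computation in Proposition \ref{pro24072024} already gives $|\Delta|\le 2\mathfrak{g}(\cX)-2$. So assume there are tame short orbits $\Omega_1$ (and possibly $\Omega_2$), with stabilizers $\Gamma_{Q_i}$, which are cyclic of order $e_i$ prime to $p$. With $\sigma=d_P-|S_P|\ge |S_P|-2$ as in (\ref{eq01092025}), the formula reads
\begin{equation*}
\frac{2\mathfrak{g}(\cX)-2}{|\Delta|}=\frac{|\Gamma_P|}{|\Gamma|}\bigl(2\mathfrak{g}(\cX/\Gamma)-2\bigr)|\Gamma|/|\Gamma_P| \cdot\frac{1}{|\Gamma|}\cdots ,
\end{equation*}
or more usefully, after dividing by $|\Delta|=|\Gamma|/|\Gamma_P|$,
\begin{equation*}
\frac{2\mathfrak{g}(\cX)-2}{|\Delta|}=-2|\Gamma_P|+d_P+|\Gamma_P|\sum_i\Bigl(1-\frac{1}{e_i}\Bigr).
\end{equation*}
Here $\cX/\Gamma$ is rational, since otherwise $|\Gamma|\le 4(\mathfrak{g}-1)$ and the bound is trivial. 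The target is to show that the right-hand side is at least $\frac{2}{3|H_P|}$, outside an explicit finite list.

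Write $|\Gamma_P|=|S_P||H_P|$ and $d_P=|\Gamma_P|-1+\sum_{i\ge1}(|S_P^{(i)}|-1)$. The quantity $|H_P|$ times the right-hand side is a rational number whose denominator divides $e_1e_2$. The key input is the local structure: $H_P$ acts on the point $\bar R$ of $\cX/S_P$ lying under $\Delta_0$. For $Q\in\Delta_0$, $\Gamma_Q$ is conjugate to $\Gamma_P$, while $\Gamma_{P,Q}$ is cyclic of order $|H_P|$ times a $p$-part. Combined with the Orbit theorem and faithfulness (Lemma \ref{lem27092025}), this gives constraints linking $e_i$ to $|H_P|$ and $|S_P|$.

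I would split into cases by the number of tame orbits, $r\in\{1,2\}$. For $r=1$ the expression is at least $-|\Gamma_P|-|\Gamma_P|/e_1-1+\sum(|S_P^{(i)}|-1)+|\Gamma_P|$, and positivity forces an integrality gap of $1/e_1$. I then need $e_1\le \tfrac{3}{2}|H_P|\cdot(\text{something})$, which I would try to obtain from Lemma \ref{leterribileHW}, Result \ref{res60.79.108} and the fact that $\Gamma_{Q_1}$ acts on $\Delta$ semiregularly, so $e_1$ divides $|\Delta|$. For $r=2$ the analysis is Nakajima-type: use $\frac{1}{e_1}+\frac{1}{e_2}$ small, and reduce the extremal situations to $(e_1,e_2)$ with $\{2,3\}$, exactly as in the classical $84(\mathfrak{g}-1)$ argument, but now weighting by $|S_P|$ and the ramification filtration (the ideas of \cite{nakajima1987,liat}).

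The main obstacle is the borderline cases, where $|S_P|=p$ is small, the filtration is short ($S_P^{(2)}=1$), $|H_P|\le 2$, and $(e_1,e_2)=(2,3)$. There the lower bound degenerates to $1/(6|H_P|)$ or similar and the inequality can fail. These cases pin down $|\Gamma_P|$, $|\Delta|$ and $\mathfrak{g}(\cX)$ to a finite list. I would then identify $\Gamma$ as a $(2,3,\cdot)$-generated group acting on $\Delta$ with point stabilizer $C_p\rtimes H_P$, using Result \ref{resdickson}, Result \ref{g=2} for genus $2$, and small-group checks. The expectation is that only $\PSL(2,5)$ with $|\Delta|=12$, $p=5$, and $GL(2,3)$ with $|\Delta|=8$, $p=3$ survive; these are the two listed exceptions.
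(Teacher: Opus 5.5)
Your setup matches the paper: the Hurwitz formula for $\Gamma$ with $\cX/\Gamma$ rational, the case with no tame short orbit, and the integrality gap $(2\mathfrak{g}(\cX)-2)/|\Delta|\ge 1/e_1$ when there is exactly one tame short orbit. The argument breaks down in that one-tame-orbit case, which is where the whole content lies.

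\textbf{The one-tame-orbit case.} The integrality gap gives the claim only when $e_1<\frac{3}{2}|H_P|$. You propose to obtain an inequality of that shape from semiregularity of $\Gamma_{Q_1}$ on $\Delta$, Lemma \ref{leterribileHW} and Result \ref{res60.79.108}. These give nothing of the kind; semiregularity only yields $e_1\mid|\Delta|$. The exceptional configurations show that $e_1$ is not controlled by $|H_P|$:
\begin{itemize}
\item for $\PSL(2,5)$, $6=-120+12\cdot 8+60(1-1/e_1)$, so $e_1=2$ with $|H_P|=1$;
\item for $GL(2,3)$, $2=-96+8\cdot 7+48(1-1/e_1)$, so $e_1=8$ with $|H_P|=2$.
\end{itemize}
The missing idea is the complementary branch. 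Write
\[
\frac{2\mathfrak{g}(\cX)-2}{|\Delta|}=|S_P|\Bigl(1-\frac{|H_P|}{e_1}\Bigr)-2+\sum_{i\ge 2}\bigl(|S_P^{(i)}|-1\bigr).
\]
The paper uses the integrality gap only when $|H_P|>\frac{2}{3}e_1$. When $|H_P|\le\frac{2}{3}e_1$, the first term is at least $\frac{1}{3}|S_P|$. This gives $|\Delta|\le 2\mathfrak{g}(\cX)-2$ for $|S_P|\ge 9$ and in some further cases depending on the ramification filtration. What remains is $|S_P|\in\{3,4,5,7,8\}$ with short filtration, and these residual cases need tools absent from your plan:
\begin{itemize}
\item Result \ref{le11.77}, to bound $|H_P|$ when $S_P^{(2)}=1$ (a non-trivial element of $H_P$ centralizing $S_P$ would force $S_P^{(2)}\neq 1$);
\item the Deuring--Shafarevich formula, to get $\mathfrak{g}(\cX)=\gamma(\cX)$ explicitly;
\item the congruences on $|\Delta|$ coming from $|\Delta|=1+|\Delta_0|+\lambda|\Gamma_P|$;
\item Result \ref{reshomma}, to exclude orbit sizes such as $22,23,26$;
\item Result \ref{henn}, Result \ref{g=2}, Proposition \ref{pro22092025}, and small-group data.
\end{itemize}

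\textbf{The case you expect to be hard is trivial.} With two or more tame short orbits, $\sum_i(1-1/e_i)\ge 1$ gives
\[
\frac{2\mathfrak{g}(\cX)-2}{|\Delta|}\ge d_P-|\Gamma_P|\ge |S_P|-2\ge 1,
\]
using $|S_P|\ge 3$, which follows from $\mathfrak{g}(\cX)\le(|S_P|-1)^2$ in Lemma \ref{lem14dic21}. So $|\Delta|\le 2\mathfrak{g}(\cX)-2$, and no Nakajima-type analysis is needed. No exception arises there. Your plan to identify $\Gamma$ as a $(2,3,\cdot)$-generated group does not fit the actual exceptional configurations, which have exactly one tame short orbit. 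Separately, Result \ref{res56.116} bounds the number of tame orbits only when $|\Gamma|>84(\mathfrak{g}(\cX)-1)$; extra tame orbits only help, so that slip is harmless.

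As written, the proposal places the difficulty in the wrong case and has no working argument for the case that actually contains it.
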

\begin{proof} 
By the second claim in Lemma \ref{lem14dic21}, if $p=2$ then $|S_P|\ge 4$.
Three cases are treated separately according as $\Gamma$ has one, more than one tame short orbits or it does not have at all.
If $\Gamma$ has at least two tame short orbits, from the Hurwitz genus formula applied to $\Gamma$,
$$2\mathfrak{g}(\cX)-2\ge-2|\Gamma|+ |\Gamma|+|\Delta|\sigma+|\Gamma|+(|\Gamma|-(|\Omega_1|+|\Omega_2))$$
with $\sigma$ defined by (\ref{eq01092025})
while $\Omega_1$ and $\Omega_2$ are tame short orbits of $\Gamma$.
Since $|\Gamma|\ge \ha |\Omega_i|$ for $i=1,2$, this gives $2\mathfrak{g}(\cX)-2\ge|\Delta|\sigma$ whence $2\mathfrak{g}(\cX)-2\ge|\Delta|.$

If $\Gamma$ has no tame short orbit,
from the Hurwitz genus formula applied to $\Gamma$,
$$2\mathfrak{g}(\cX)-2=-|\Gamma|+|\Delta|\sigma=|\Delta|(\sigma -|\Gamma_P|).$$
Since $2\mathfrak{g}(\cX)-2>0$, we also have $\sigma -|\Gamma_P|>0$, and hence $\sigma -|\Gamma_P|\ge 1$. Therefore, $|\Delta|\le 2\mathfrak{g}(\cX)-2$.

It remains to investigate the case where $\Gamma$ has exactly one tame short orbit, say $\Omega$. From the Orbit theorem, $|\Delta||\Gamma_P|=|\Omega||\Gamma_Q|$ where $P\in \Delta$ and $Q\in \Omega$. Thus

\begin{equation}
\label{eq08112023A}
2\mathfrak{g}(\cX)-2=-2|\Gamma|+|\Gamma|+|\Delta|\sigma+|\Gamma|-|\Omega|=|\Delta|\left(\sigma-\frac{|\Gamma_P|}{|\Gamma_Q|}\right).
\end{equation}

If $|\Gamma_P|\le |\Gamma_Q|$ then (\ref{eq08112023A}) yields $2\mathfrak{g}(\cX)-2\ge |\Delta|(\sigma-1)$ whence $2\mathfrak{g}(\cX)-2\ge |\Delta|$ follows, unless $p=3$, $|S_P|=3$ and $|S_P^{(2)}|=1$. The latter exceptional case cannot actually occur, since we would have $|\Gamma_P|=|\Gamma_Q|$ which is impossible as $p$ divides $|\Gamma_P|$ but $|\Gamma_Q|$. Therefore, $|\Gamma_P|>|\Gamma_Q|$ may be assumed.

Now, expand  (\ref{eq08112023A}) into
\begin{equation}
\label{eqA09112023}
2\mathfrak{g}(\cX)-2=|\Delta|\left(|S_P|\left(1-\frac{|H_P|}{|\Gamma_Q|}\right)-2+|S_P^{(2)}|-1+\ldots\right).
\end{equation}
If $|H_P|>\frac{2}{3} |\Gamma_Q|$, then write
 $$u=|S_P|\left(1-\frac{|H_P|}{|\Gamma_Q|}\right)-2+|S_P^{(2)}|-1+\ldots .$$
 Then $u|\Gamma_Q|$ is a positive integer, and hence
 $$u\ge \frac{1}{|\Gamma_Q|}>\frac{2}{3|H_P|}.$$
 From this, $3(\mathfrak{g}(\cX)-1)|H_P|>|\Delta|$ and we have the claim.

Otherwise, $|H_P|\le \frac{2}{3} |\Gamma_Q|$. We may assume $u\le \frac{2}{3}$. Then  (\ref{eqA09112023}) yields $2\mathfrak{g}(\cX)-2\ge \frac{1}{3} |\Delta||S_P|>\frac{2}{3}|\Delta|$ as long as 
either $|S_P|\ge 9$, or $|S_P|\neq 4$ and $|S_P^{(2)}|=2$. To conclude the proof, we deal with the remaining cases, from which possible exceptions to the bound (\ref{eq13092025}) may arise. 
\subsubsection{$|S_P|=8$} In this case, we obtain the slightly weaker bound  $|\Delta|\le 3(\mathfrak{g}(\cX)-1)$ where equality may only occur for $|H_P|= \frac{2}{3}|\Gamma_Q|$. Since $|\Gamma_Q|$ is odd, this implies that $|H_P|$ is even, but then $p$ is odd, a contradiction.
Thus $|S_P|=8$ is not an exception.
\subsubsection{$|S_P|=7$} For this case, the bound we obtain is $|\Delta|\le 6(\mathfrak{g}(\cX)-1)$.
If $|H_P|\ge 3$, then $6(\mathfrak{g}(\cX)-1)<3(\mathfrak{g}(\cX)-1)|H_P|,$ and we are done. Since $S_P$ has no proper subgroup, the hypothesis that $S_P$ has only one short orbit other than $\{P\}$ implies that $S_P$ has exactly two fixed points. Therefore, $\gamma(\cX)=p-1=6$ by the Deuring-Shafarevic formula. Since $|S_P^{(2)}|=1$, this yields $\gg(\cX)=\gamma(\cX)=p-1=6$.
Thus $|\Delta|\le 30$ from $|\Delta|\le 6(\mathfrak{g}(\cX)-1)$. On the other hand, for $|\Delta|<15$ we have $|\Delta|<3(\gg(\cX)-1)$. Also, for $|H_P|=2$ and $|\Delta|<30$, we have $|\Delta|<3(\gg(\cX)-1)|H_P|$ and the claim holds.
Since $S_P$ has exactly two fixed points in $\Delta$, the possible lengths of $\Delta$ are $16,23$. By Result \ref{reshomma}, $|\Delta|=23$ is dismissed. Then  $|\Delta|=16$ with $|H_P|=1$ and $|\Gamma_P|=7$, and the Orbit theorem yields $|\Gamma|=112$. On the other hand, from the database of small groups of MAGMA, there exists only one transitive group $W$ of order $112$ and degree $16$. The main properties of $W$ are: a Sylow $2$-subgroup $S_2$ has order $16$ and it is a normal subgroup of $W$, and $S_2$ acts on $\Delta$ as a sharply transitive group. From the Hurwitz genus formula applied to $S_2$,
$$10=2\gg(\cX)-2=16(2\gg(\hat{\cX})-2)+\sum_{i=1}^k (16-n_i)$$
where $\hat{\cX}=\cX/S_2$ and $n_1,\ldots,n_k$ are the lengths of the short orbits of $S_2$ with $n_i\in \{1,2,4,8\}$. This shows that $\gg(\hat{\cX})=0$ and $1\le k<7$. Since $S_2$ is a normal subgroup of $W$, $S_P$ preserves some short $S_2$-orbit, say $\Pi$. Since $|\Pi|$ is prime to $7$, this yields that $S_P$ fixes a point of $\Pi$. But this is impossible since $\Delta\cap\Pi=\emptyset$ while both fixed points of $S_P$ are in $\Delta$.
Therefore, $|S_P|=7$ is not an exception, as well.

\subsubsection{$|S_P|=5$}
\label{case5} If we use the above approach for $\ha$ in place of $\frac{2}{3}$ then we obtain the weaker bound $|\Delta|\le 4(\gg(\cX)-1)|H_P|$. Several cases according to the order of $H_P$ are treated separately. Since $H_P$ is a subgroup of the normalizer of $S_P$ in $\Gamma$, $H_P$ induces a permutation group on the set of the four non-trivial elements of $S_P$. Therefore, if $|H_P|\ge 5$ or $|H_P|=3$, some non-trivial element commutes with $S_P$. Then Result \ref{le11.77} yields $|S_P^{(2)}|>1$, a contradiction. As in case $|S_P|=7$, we have $\gg(\cX)=\gamma(\cX)=p-1=4$, and $S_P$ has exactly two fixed points. Thus $|\Delta|\equiv 2 \pmod{5}$. In particular, if $|\Delta|<9$ then $|\Delta|<3(\gg(\cX)-1)|H_P|$, and we are done. If $|H_P|=1$ then $|\Delta|\le 4(\gg(\cX)-1)|H_P|=12$ showing that the only possible length for $\Delta$ is $12$. By the Orbit theorem, $|\Gamma|=60$. On the other hand, from the database of small groups of MAGMA, the unique transitive group of order $60$ and degree $12$ is isomorphic to $PSL(2,5)\cong {\rm{Alt}}_5$.
Similarly, if $|H_P|=2$, then $18\le|\Delta|\le 24$. Since $|\Delta|\equiv 2 \pmod{10}$,
this yields $|\Delta|=22$. From the Orbit theorem, $22$ divides the order of $\Gamma$ and hence $\Gamma$ has an element of order $11$. But this is impossible by $2\gg(\cX)+1=9$ and Result \ref{reshomma}. Therefore, the only possible exception for $|S_P|=5$ is $|\Delta|=12,|H_P|=1,\Gamma\cong PSL(2,5)$.

The above arguments can be used for the case $|H_P|=4$. This time,
$24\le |\Delta|\le 48$, and $|\Delta|\equiv 2 \pmod{20}$. Since
$|\Gamma_P|=20$, the unique possible length for $\Delta$ is $42$. Also, $|\Gamma|=840$. But this is impossible since no transitive group of order $840$ and degree $42$ exists, from the database of small groups of MAGMA.

 \subsubsection{$|S_P|=4$} If $|H_P|>3$ or $H_P$ centralizes $S_P$ then the first claim of Result \ref{le11.77} yields $S_P^{(1)}=S_P^{(2)}$ whence $|S_P^{(2)}|=4$, a contradiction. Therefore, $H_P$ is either trivial or it has order $3$, and in the latter case $\Gamma_P$ is not abelian.

First the case where $|S_P^{(2)}|=2$ and $|S_P^{(3)}|=1$ is investigated. Result \ref{le11.77} yields that if $|H_P|=3$ then $S_P$ is cyclic. In fact, if $S_P\cong C_2\times C_2$, then $S_P$ has three involutions, one of them $\beta$ generates $S_P^{(2)}$. From the second claim of Result \ref{le11.77}, the conjugate $\alpha\beta\alpha^{-1}$ of $\beta$ by a non-trivial element of $\alpha\in H_P$ is also in $S_P^{(2)}$. From $\alpha\beta \ne \beta\alpha$, it follows $\beta \ne \alpha\beta\alpha^{-1}$, a contradiction as $S_P^{(2)}=\langle \beta \rangle$.

As in \ref{case5}, after replacing $\frac{2}{3}$ with $\ha$, we obtain the weaker bound $|\Delta|\le 4(\gg(\cX)-1)|H_P|$. Two cases arise according as $S_P$ fixes a point other than $P$, or it has an orbit of length $2$.

 In the former case, let $Q$ be the other fixed point of $S_P$. Since $P,Q\in \Delta$ and $\Delta$ is a $\Gamma$-orbit, we also have $S_Q^{(2)}$ and $|S_Q^{(3)}|=1$. The Hurwitz genus formula gives $\gg(\cX)=4$, and if $|\Delta|<9$, then $|\Delta|<3(\gg(\cX)-1)|H_P|$. Therefore, if $|H_P|=1$, then $9\le |\Delta|\le 12$ and $|\Delta|\equiv 2 \mod{4}$. Hence the unique possible length for  $\Delta$ is $10$ and $|\Gamma|=40$. From the database of small groups of MAGMA, $\Gamma$ is isomorphic to the unique transitive group $W$ of order $40$ and degree $10$. The main properties of $W$ are: a Sylow $5$-subgroup $S_5$ of $W$ is normal, and a Sylow 2-subgroup $S_2$ is not abelian elementary. In particular, the quotient curve $\hat{\cX}=\cX/S_5$ has a non-elementary abelian $2$-subgroup, and hence $\gg(\hat{\cX})\ge 1$.  However, the Hurwitz genus formula applied to $S_5$ yields that the $\hat{\cX}$ is rational, a contradiction. For $|H_P|=3$, the above argument gives $27\le |\Delta|\le 36$. But this is inconsistent with $|\Delta|\equiv 2 \pmod{12}$. 


 In the latter case,  $S_P$ has an orbit $\{Q,R\}$ of length $2$, and by the Hurwitz genus formula applied to $S_P$, $\cX$ has genus $2$. If $|H_P|=3$ then Lemma \ref{lem24072024} shows that the set of fixed points of $H_P$ consists of $P,Q,R$, and the Hurwitz genus formula applied to $H_P$ yields a contradiction with $\gg(\cX)=2$. If $|H_P|=1$, we have $3=3(\gg(\cX)-1)|H_P|\leq |\Delta|<4(\gg(\cX)-1)=4$ and hence $|\Delta|=3$. However, this is a contradiction with the hypothesis that (***) does not hold.

 It remains to investigate the case where $|S_P^{(2)}|=1$. As in \ref{case5}, after replacing $\frac{2}{3}$ with $\frac{1}{3}$, we obtain the weaker bound $|\Delta|\le 6(\gg(\cX)-1)|H_P|$. Two cases arise according as $S_P$ fixes a point other than $P$, or it has an orbit of length $2$. Since $\gg(\cX)=\gamma(\cX)$, the Deuring-Shafarevic formula gives $\gg(\cX)=3,2$ according as $S_P$ fixes another point $Q$, or it has an orbit of length $2$.

 In the former case $\gg(\cX)=3$, and it is enough to investigate the case $6|H_P|\le |\Delta|\le 12 |H_P|$. If $|H_P|=1$, then
 $|\Delta|\equiv 2 \pmod{4}$ whence
 $|\Delta|=6,10$. For $|\Delta|=10$, we have $|\Gamma|=40$. From the database of small groups of MAGMA, $\Gamma$ is isomorphic to the unique transitive permutation group $W$ of order $40$ and degree $10$. Also, a Sylow $5$-subgroup $S_5$ is a normal subgroup of $W$. The Hurwitz genus formula applied to $S_5$ yields that $S_5$ has a unique fixed point in $\cX$. But then that fixed point is also fixed by $\Gamma$, a contradiction. For $|H_P|=1$ and $|\Delta|=6$, either $\Gamma\cong PGL(2,3)$, or $\Gamma\cong PSL(2,3)\times C_2$. Both cases are dismissed by Proposition \ref{pro22092025}.
 For $|H_P|=3$, the case to be investigated is  $18\le |\Delta|\le 36$. Since $|\Delta|\equiv 2 \pmod{12}$, this yields $|\Delta|=26$.  On the other hand, from the Orbit theorem, $26||\Gamma|$ and hence $\Gamma$ has an element of prime order $13$. But this is impossible by $2\gg(\cX)+1=7$ and Result \ref{reshomma}.

 In the latter case, $\gg(\cX)=2$, $S_P$ has an orbit of length $2$, and it is enough to investigate the case $3|H_P|\le |\Delta|\le 6|H_P|$. If $|H_P|=1$, then $|\Delta|\equiv 3 \pmod{4}$, and hence
 $|\Delta|=3$, a contradiction with the fact that (***) does not hold. If $|H_P|=3$, then $9\le |\Delta|\le 18$. Since $|\Delta|\equiv 3 \pmod{12}$, this yields $|\Delta|=15$ and $|\Gamma|=120$. Since $8\gg(\cX)^3=64<120$, Result \ref{henn} yields $\gamma(\cX)=0$, a contradiction.
 \subsubsection{$|S_P|=3$} From the above approach for $\frac{1}{4}$ in place of $\frac{2}{3}$, the weaker bound $|\Delta|\le 8(\gg(\cX)-1)|H_P|$ is obtained. Since $|S_P^{(2)}|=1$, Result \ref{le11.77} implies $|H_P|\le 2$. Also, $\gg(\cX)=\gamma(\cX)=2$ by the Deuring-Shafarevic formula. It is enough to investigate the cases $3\le |\Delta|\le 8$ for $|H_P|=1$, and $6\le |\Delta|\le 16$ for $|H_P|=2$. If $|H_P|=1$ then $|\Delta|\equiv 2 \pmod{3}$, and hence $|\Delta|=5,8$, and $|\Gamma|=15,24$ accordingly. From Result \ref{g=2}, there is only one possibility, namely $\Gamma\cong C_3\rtimes D_4$. In this case, $C_3$ is a normal subgroup of $\Gamma$, and hence $\Gamma$ preserves the set $\Pi$ of fixed points of $C_3$. Therefore, $\Delta=\Pi$ a contradiction as $|\Pi|=2$. If $|H_P|=2$ then $|\Delta|\equiv 2 \pmod{6}$, and hence $|\Delta|=8,14$, and $|\Gamma|=48,84$. By Result \ref{g=2}, there is only one possibility, namely $\Gamma\cong GL(2,3)$, which occurs as a subgroup of $\aut(\cY)$ where $\cY$ is the curve of equation $Y^2=X(X^4-1)(X^8+2X+1)$; see Result \ref{g=2}.
  \end{proof}
    \begin{proposition}
 \label{pro04052025} Assume that {\rm{(*)}} and {\rm{(**)}} but {\rm{(***)}} hold. Let $\Gamma_P=S_P\rtimes H_P$.
 If $\Gamma$ has a unique non-tame orbit $\Delta$, $S_P$ has a unique short orbit and $|H_P|^2\le \kappa\gamma(\cX)$, then
 $$|\Gamma|<3\kappa \left(\frac{p}{p-1}\right)^3\gamma(\cX)^4.$$
In particular, if $|H_P|\le 8$ then
\begin{equation}
 \label{eq21072025C}
 |\Gamma|<192 \left(\frac{p}{p-1}\right)^3\gamma(\cX)^4 .
 \end{equation}
 
\end{proposition}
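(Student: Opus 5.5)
The plan is to combine the orbit theorem $|\Gamma|=|\Gamma_P||\Delta|=|S_P||H_P||\Delta|$ with Proposition \ref{leterribile24} and Lemma \ref{lem14dic21}. The hypotheses of Proposition \ref{leterribile24} hold here: (*) and (**) hold, (***) fails, $S_P$ has a unique short orbit other than $\{P\}$, and $\Gamma$ has a unique non-tame orbit $\Delta$. So, outside the two sporadic cases, we may use $|\Delta|<3(\gg(\cX)-1)|H_P|$. This yields
$$|\Gamma|<3|S_P||H_P|^2(\gg(\cX)-1)\le 3\kappa\,|S_P|\,\gamma(\cX)\,\gg(\cX).$$

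Next, insert the bounds from Lemma \ref{lem14dic21}, namely $|S_P|\le \frac{p}{p-1}\gamma(\cX)$ and $\gg(\cX)\le (\frac{p}{p-1})^2\gamma(\cX)^2$ from (\ref{eq09112023}). Together they give $|\Gamma|<3\kappa(\frac{p}{p-1})^3\gamma(\cX)^4$. Alternatively, Lemma \ref{lem27092025A}(i) gives $|S_P|\le 2\gamma(\cX)$, but the form from Lemma \ref{lem14dic21} matches the stated constant exactly.

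The two exceptional cases of Proposition \ref{leterribile24} are checked directly. In the first, $|\Gamma|=60$ with $\gamma(\cX)=4$; in the second, $|\Gamma|=48$ with $\gamma(\cX)=2$ and $p=3$. Both are far below $3\kappa(\frac{p}{p-1})^3\gamma^4$ for any admissible $\kappa$, since $\kappa\ge |H_P|^2/\gamma(\cX)$ with $|H_P|\ge1$, and $\gamma^4$ already dwarfs $|\Gamma|$: $3\cdot\frac{1}{4}\cdot 256>60$, and $3\cdot\frac14\cdot\frac{27}{8}\cdot16\cdot 4>48$ because there $\kappa\ge 4/2=2$.

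For the particular case $|H_P|\le 8$, we have $|H_P|^2\le 64\le 64\gamma(\cX)$ since $\gamma(\cX)\ge1$. Taking $\kappa=64$ then gives (\ref{eq21072025C}) with constant $3\cdot 64=192$.

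The main obstacle is bookkeeping: confirming that the hypotheses of Proposition \ref{leterribile24} are exactly met, and that the sporadic cases are not counterexamples. The inequality chain itself is routine.
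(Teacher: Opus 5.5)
Your argument is correct and is essentially the paper's proof: the orbit theorem combined with Proposition \ref{leterribile24}, (\ref{eq09112023}) and a linear bound on $|S_P|$, with the two sporadic cases checked directly. Your displayed arithmetic for the second sporadic case is garbled, but the conclusion holds, since with $\kappa\ge 2$ the right-hand side is at least $3\cdot 2\cdot\frac{27}{8}\cdot 16=324>48$. Your choice of $|S_P|\le\frac{p}{p-1}\gamma(\cX)$ from Lemma \ref{lem14dic21} is what actually yields the stated constant $3\kappa(\frac{p}{p-1})^3$. The paper instead cites Lemma \ref{lem27092025A}, whose bound $|S_P|\le 2\gamma(\cX)$ only gives $6\kappa(\frac{p}{p-1})^2\gamma(\cX)^4$, which is weaker for $p>2$.
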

\begin{proof} The claims hold for both exceptional cases in Proposition \ref{leterribile24}. Therefore,  Proposition \ref{leterribile24} yields $|\Delta|\le 3|H_P|(\gg(\cX)-1)$. This together with (\ref{eq09112023}) and Lemma \ref{lem27092025A} show 
 $$|\Gamma|=|\Gamma_P||\Delta|=|S_P||H_P||\Delta|< 3|S_P||H_P|^2(\mathfrak{g}(\cX)-1)< 3\kappa \left(\frac{p}{p-1}\right)^3\gamma(\cX)^4.$$

 \end{proof}
If $\Delta_0$ is the unique short $S_P$-orbit other than $\{P\}$, and $\Delta_0\subset \Delta$, then the second claim in Lemma \ref{lem24072024} shows that $\Gamma$ acts faithfully on the set of the long $S_P$-orbits as a semiregular permutation group, and hence there exists an integer $\lambda$ such that
\begin{equation}
\label{eq15052024BB} |\Delta|=1+|\Delta_0|+\lambda |\Gamma_P|=1+|\Delta_0|+\lambda|H_P||S_P|.
\end{equation}
where $\Gamma_P=S_P\rtimes H_P$. In particular,
\begin{equation}
\label{eq23092025}
\mbox{$|\Delta|\equiv 1\pmod{p}$, unless $|\Delta_0|=1$ and $|\Delta|\equiv 2 \pmod{p}$}.
\end{equation}
Observe that $\lambda\geq 1$ if and only if (***) does not hold. In this case, $|\Gamma_P|<|\Delta|$.
It should also be noticed that if $\Delta$ is the unique non-tame orbit of $\Gamma$, then $\Delta_0\cup \{P\}$ is contained in $\Delta$.  Under the hypothesis that $\Delta_0$ is the unique short $S_P$-orbit other than $\{P\}$, the converse also holds whenever $p>2$, or $p=2$ and $|\Delta_0|>1$. This follows from Lemmas \ref{lem24072024} and \ref{lem27092025A}.

\begin{proposition}
\label{prop03082024} Assume that {\rm{(*)}} and {\rm{(**)}} but {\rm{(***)}} hold.
If $|H_P|\ge 3$, $\Delta$ is the unique non-tame orbit of $\Gamma$,  and $S_P$ has just one short orbit $\Delta_0$ other than $\{P\}$, then one of the following holds:
\begin{itemize}
\item[(i)] $\Gamma$ is a primitive permutation group on $\Delta$.
\item[(iia)]$\{P\}\cup \Delta_0$ together with its images under the action of $\Gamma$ is a system of imprimitivity of $\Gamma$ on $\Delta$.
\item[(iib)]$\{P\}\cup \Delta_0$ is not a block of $\Gamma$ but it is contained in any block of $\Gamma$ through $P$. Moreover $|\Delta_0|>1$.
\end{itemize}
\end{proposition}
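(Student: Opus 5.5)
The plan is to use the correspondence between blocks through $P$ and intermediate subgroups (Results \ref{resstab} and \ref{resstab1}). Assume $\Gamma$ is imprimitive on $\Delta$, so (i) fails. Let $B$ be any nontrivial block of $\Gamma$ on $\Delta$ containing $P$. Its stabilizer $\Gamma_B$ satisfies $\Gamma_P<\Gamma_B<\Gamma$. The aim is to show that $\{P\}\cup\Delta_0\subseteq B$. Granting this, there are two possibilities. If $\{P\}\cup\Delta_0$ is itself a block, its $\Gamma$-images form a system of imprimitivity, which is (iia). Otherwise we are in (iib).

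To prove the inclusion, note first that $\Gamma_P$, and hence $S_P$, preserves $B$. So $B\setminus\{P\}$ is a union of $S_P$-orbits, and $\Gamma_P$ permutes these orbits. By Lemma \ref{lem24072024}, $\Delta_0$ is the only short $S_P$-orbit other than $\{P\}$. By Lemma \ref{leterribileHW}, since $|H_P|\ge 3$ we have $\Gamma_P\ne S_P$, so $\Gamma_P$ preserves only one $S_P$-orbit other than $\{P\}$, namely $\Delta_0$. Moreover, no nontrivial element of $H_P$ fixes a point outside $\{P\}\cup\Delta_0$. Hence $\Gamma_P$ acts semiregularly on the long $S_P$-orbits contained in $\Delta$, as in (\ref{eq15052024BB}). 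Thus
$$|B|=1+\epsilon|\Delta_0|+\mu|\Gamma_P|,$$
where $\epsilon\in\{0,1\}$ and $\mu\ge0$.

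Suppose, for a contradiction, that $\epsilon=0$. Then $|B|=1+\mu|\Gamma_P|$, and $|B|>1$ forces $\mu\ge 1$. I would pick $Q\in B$ in a long $S_P$-orbit. Since $B$ is a block, $B$ is also a block through $Q$, and $\Gamma_Q$ preserves $B$. The same orbit count applied to $Q$ gives $|B|=1+\epsilon'|\Delta_0(Q)|+\mu'|\Gamma_P|$, where $\Delta_0(Q)$ is the conjugate of $\Delta_0$. Since $S_Q$ is conjugate to $S_P$, it acts on $B$ in the same way. The group $\Gamma_B$ is transitive on $B$, and $(\Gamma_B)_P=\Gamma_P$ acts on $B\setminus\{P\}$ semiregularly with no fixed points. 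Hence every nontrivial element of $\Gamma_B$ fixes at most one point of $B$, so $\Gamma_B$ acts on $B$ as a Frobenius group with complement $\Gamma_P$ (after checking that the kernel of this action is trivial).

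The Frobenius structure is what yields the contradiction. A Frobenius complement has cyclic or generalized quaternion Sylow subgroups (Result \ref{thom}). Its $p$-part $S_P$ is therefore cyclic, or generalized quaternion when $p=2$. This Sylow condition on $S_P$ alone is not contradictory, so I would instead count fixed points. In a Frobenius group the nontrivial $p$-elements of the complement fix exactly one point of $B$. Compare this with the conjugates $S_Q$ for $Q\in B$: these fix $Q$ and move every other point of $B$. Then count how the short orbits $\Delta_0(Q)$ sit relative to $B$. Since $S_Q$ fixes the points of $\Delta_0(Q)$ only up to its stabilizers, the orbit $\Delta_0(Q)$ lies outside $B$. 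The contradiction is expected to come from the Frobenius kernel $F$, a regular normal subgroup of $\Gamma_B$ on $B$ that is nilpotent. I would use the quotient by $F$ together with (*): the rational curve $\cX/S_P$ on which $H_P$ has exactly two fixed points. This should force $F$ to preserve $\Delta_0$, contradicting $\epsilon=0$.

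This last step is the main obstacle. I am not confident that the Frobenius argument closes. A fallback is a purely numerical argument: compare $|B|\equiv 1\pmod{|\Gamma_P|}$ with the divisibility $|B|\mid|\Delta|=1+|\Delta_0|+\lambda|\Gamma_P|$, and use $|H_P|\ge3$. Finally, $|\Delta_0|>1$ in (iib) holds because if $|\Delta_0|=1$ then $\{P\}\cup\Delta_0=\{P,Q\}$. Since $S_P=S_Q$, this pair is the fixed-point set of $S_P$, and such fixed-point sets of conjugates of $S_P$ form a $\Gamma$-invariant partition. Hence $\{P,Q\}$ is a block, giving (iia).
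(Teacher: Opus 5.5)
\textbf{Gap: the target of your argument is wrong.} You try to prove that every nontrivial block $B$ through $P$ contains $\{P\}\cup\Delta_0$, i.e.\ that $\epsilon=0$ is contradictory. That is stronger than the statement, and nothing in the hypotheses rules out a block through $P$ meeting $\{P\}\cup\Delta_0$ only in $P$ when (iia) holds. The paper's proof treats exactly this situation as leading to (iia), not to a contradiction. Lemma \ref{le11040205} later analyses it as a live configuration: there an $N$-orbit through $P$ misses $\Delta_0$ and $|\Delta|=|N|(1+|\Delta_0|)$. So you should not expect a contradiction from the Frobenius kernel $F$ or from the congruence $|B|\equiv 1 \pmod{|\Gamma_P|}$, and neither step is justified as written.

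\textbf{Missing idea: close the case $\epsilon=0$ by proving (iia).} The fact you set aside as ``not contradictory'' is exactly what does this.
\begin{itemize}
\item Your reduction is correct: $\Gamma_B$ acts faithfully on $B$ as a Frobenius group with complement $\Gamma_P$. By Result \ref{thom}, $S_P$ is cyclic or generalized quaternion, so it has a unique subgroup $U$ of order $p$.
\item Since $\Delta_0$ is a short $S_P$-orbit, every point of $\Delta_0$ has a nontrivial stabilizer in $S_P$, and that stabilizer contains $U$. Hence $U$ fixes $\{P\}\cup\Delta_0$ pointwise. By the definition of $\Delta_0$, the fixed-point set of $U$ is exactly $\{P\}\cup\Delta_0$.
\item Take $R\in\Delta_0$ and $g\in\Gamma$ with $g(P)=R$. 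Then $U\le \Gamma_R$ is a $p$-group, so $U\le S_R=gS_Pg^{-1}$. Moreover $U$ is the unique subgroup of order $p$ of $S_R\cong S_P$. The same reasoning shows the fixed-point set of $U$ is $g(\{P\}\cup\Delta_0)$, so $g$ preserves $\{P\}\cup\Delta_0$.
\item Hence the setwise stabilizer $G$ of $\{P\}\cup\Delta_0$ is transitive on it, and $\Gamma_P<G<\Gamma$; here $G\neq\Gamma$ because (***) fails. By Result \ref{resstab}, $\{P\}\cup\Delta_0$ is a block, which is (iia).
\end{itemize}

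With this replacement your outline goes through. If some nontrivial block through $P$ misses $\Delta_0$, you get (iia). Otherwise every nontrivial block through $P$ meets, and hence contains, the $\Gamma_P$-orbit $\Delta_0$, and (iia) or (iib) follows as you describe. Your last paragraph correctly handles $|\Delta_0|=1$.
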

\begin{proof}
If $|\Delta_0|=1$ then $S_P$ has exactly two fixed points, namely $P$ and $R$ where $\Delta_0=\{R\}$, and $S_P=S_R$. Take $g\in \Gamma$ such that $g(P)=R$, and let $T=g(R)$. Then $gS_Pg^{-1}$ fixes both $R$ and $T$. In particular, both $S_P$ and $gS_Pg^{-1}$ fixes $R$. Since $S_P$ is the Sylow $p$-subgroup of $\Gamma_P$, Result \ref{res74} yields $S_P=gS_Pg^{-1}$ whence $S_P=S_R=S_T$ follows. Hence $T=P$ and $g$ preserves $\{P,R\}$. Now, take $h\in \Gamma_P$ and let $W=h(R)$. Then $hS_Rh^{-1}$ fixes $W$. Since $S_P=hS_Ph^{-1}=hS_Rh^{-1}$, this yields $R=W$.  It turns out that $\{P,R\}$ and its images form a system of imprimitivity under the action of $\Gamma$ on $\Delta$. Therefore, (iia) occurs. Also,
$|\Delta_0|>1$ is supposed for the rest of the proof.

Assume that $\Gamma$ acts on $\Delta$ as an imprimitive permutation group. Then $\Gamma$ has a proper subgroup $T$ containing $\Gamma_P$ properly; see Result \ref{resstab}. Moreover,
 $T_P=\Gamma_P$ and in particular $T_P$ contains $S_P$. Let $\Delta_1$ denote the orbit of $P$ in $T$. Then $\Delta_1$ together with its images under the action of $\Gamma$ on $\Delta$ form a system of imprimitivity; see Result \ref{resstab1}.

 We first investigate the case where  $\Delta_0\cap\Delta_1=\emptyset$. From Lemma \ref{lem24072024}, no non-trivial element of $\Gamma_P=T_P$ fixes a point of $\Delta_1$ other than $P$. Therefore, $T$ is a Frobenius group with Frobenius complement $\Gamma_P$. From Result \ref{thom}, $S_P$ is cyclic for $p$ odd, and cyclic or a generalized quaternion group for $p=2$. Now, look at the action of $S_P$ on $\Delta_0$. Since $|S_P|>|\Delta_0|$, the (unique) subgroup $U$ of $\Gamma_P$ of order $p$ fixes $\Delta_0$ pointwise. For $R\in \Delta_0$,
 let $g\in \Gamma$ take $P$ to $R$. Then $gS_Pg^{-1}=S_R$, and $U\le S_P\cap S_R$. Therefore, the set of fixed points of $U$ contains both $\{P\}\cup \Delta_0$ and
 $g(\{P\}\cup \Delta_0)$. On the other hand $\{P\}\cup \Delta_0$ is the set of all fixed points of $U$. Thus $g(\{P\}\cup \Delta_0)=\{P\}\cup \Delta_0$. It turns out that the subgroup $G$ of $\Gamma$ preserving  $\{P\}\cup \Delta_0$ acts on it as a transitive permutation group. Therefore $\Gamma_P<G$. Moreover, $G<\Gamma$ as $G=\Gamma$ would imply that (***) holds for $\Gamma$. From the second claim in Result \ref{resstab},
 $\{P\}\cup \Delta_0$ is a block, that is, $\{P\}\cup \Delta_0$ together with its images under the action of $\Gamma$ is a system of imprimitivity. Therefore (iia) holds when $\Delta_0\cap\Delta_1=\emptyset$.

From now on, assume that $\Delta_0$ and $\Delta_1$ have a common point. Then $\Delta_0\subseteq \Delta_1$ as  $\Delta_0$ is a $\Gamma_P$-orbit and $\Gamma_P=T_P$ whereas $\Delta_1$ is a $T$-orbit.


We may choose $T$ to be minimal so that $T$ acts on $\Delta_1$ as a primitive permutation group. Since $T_P=\Gamma_P$ and $\Delta_0$ is a $\Gamma_P$-orbit, we have that $\Delta_0$ is a $T_P$-orbit as well.

As $\Delta_0\subseteq\Delta_1$, an equation analog to (\ref{eq15052024BB}) holds for $T$:
\begin{equation}\label{eq15052024Bis}
    |\Delta_1|=1+|\Delta_0|+\mu |T_P|=1+|\Delta_0|+\mu |\Gamma_P|,
\end{equation}
with $0\le \mu<\lambda$, where $\lambda$ is as in \eqref{eq15052024BB}. Now, let
$$
k=\frac{|\Gamma|}{|T|}=\frac{|\Delta|}{|\Delta_1|}>1.
$$
Then
\begin{equation}
\label{eq11082024}
k(1+|\Delta_0|+\mu |\Gamma_P|)=1+|\Delta_0|+\lambda |\Gamma_P|,
\end{equation}
whence
$$(k-1)(|\Delta_0|+1)=\alpha|\Gamma_P|$$
with $\alpha=\lambda-k\mu$. In particular, $\lambda>k\mu$. From \eqref{eq11082024},
\begin{equation}
\label{eq03082024}
k-1=\frac{\alpha |\Gamma_P|}{|\Delta_0|+1}=|S_P|\,\frac{\alpha |H_P|}{|\Delta_0|+1}
\end{equation}
where $\alpha|H_P|/(|\Delta_0|+1)$ is an integer as $(|S_P|,|\Delta_0|+1)=1$ by (\ref{eq23092025}) and $|\Delta_0|>1$.  Therefore,
$$|\Delta|=k|\Delta_1|>k\mu |\Gamma_P|> \mu |S_P|^2 |H_P| \frac{\alpha|H_P|}{|\Delta_0|+1}$$ whence
$$\frac{1}{|H_P|}|\Delta|>\mu |S_P|^2 \frac{\alpha|H_P|}{|\Delta_0|+1}.$$
This together with Proposition \ref{leterribile24} and $|H_P|\ge 3$ yield
\begin{equation}
\label{eqC03082024} 3(\mathfrak{g}(\cX)-1)>\mu |S_P|^2 \frac{\alpha|H_P|}{|\Delta_0|+1}.
\end{equation}
From the second claim in Lemma \ref{lem14dic21}
$$3|S_P|^2>\mu |S_P|^2 \frac{\alpha|H_P|}{|\Delta_0|+1}$$
whence either $\mu=0$, or
\begin{equation}
\label{eqC03082024U}
\mu\in\{1,2\},\,k=|S_P|+1,\,\,{\mbox{and}}\,\, \alpha=\frac{|\Delta_0|+1}{|H_P|},
\,\,{\mbox{or}}\,\,
\mu=1,\, k=2|S_P|+1,\,\,{\mbox{and}}\,\, \alpha=2\,\frac{|\Delta_0|+1}{|H_P|}.
\end{equation}
Observe that $|\Delta_0|\ge p$, otherwise $|\Delta_0|=1$ and $|H_P|\le 2$, a contradiction to one of the hypothesis.
Since $\Gamma$ is assumed to be imprimitive on $\Delta$, there is a $\Gamma$-invariant partition $\Lambda=\Lambda_1\dot\cup\ldots\dot\cup\Lambda_r$ of $\Delta$ with $|\Lambda_i|=|\Lambda_j|$ for $1\le i <j \le r$. Let $\Phi_i=\Delta_1\cap \Lambda_i$
for $i=1,\ldots,r$. As $T$ acts on $\Delta_1$ as a primitive permutation group, either
$\Delta_1$ is a member of $\Lambda$, or
$|\Phi_i|\le 1$ for $i=1,\ldots,r$. In fact, if $1<|\Phi_j|<|\Delta_1|$ were true for some $1\le j \le r$, then $\Phi_j$ together with its images $t(\Phi_j)$ where $t$ ranges over $T$, would form a $T$-invariant system of imprimitivity.
Actually, $|\Phi_i|\le 1$ for every $i=1,\ldots,r$ cannot occur. Indeed, assume on the contrary that
$|\Phi_i|\le 1$ holds for $1\le i \le r$. Then, $|\Delta_1||\Lambda_1|\le |\Delta|$ whence $|\Lambda_1|\le |\Gamma|/|T|=k$.
On the other hand, if $P\in \Lambda_j$ then $\Lambda_j$ is preserved by $T_P=\Gamma_P$, and hence the same holds for $\Lambda_j\setminus \{P\}$. Since $\Lambda_j\cap \Delta_1=\{P\}$ while $\Delta_0\subseteq \Delta_1$, it turns out that $\Lambda_j$ contains a long $\Gamma_P$-orbit. Therefore, $|\Lambda_j|>|\Gamma_P|$. This together with $|\Lambda_1|\le k$ yield $|\Gamma_P|<k$. By \eqref{eq03082024}, this yields $\alpha\geq |\Delta_0|+1$, which together with (\ref{eqC03082024U}) gives that either $\mu=0$ or $|H_P|\le 2$. However, the latter case is ruled out by the hypothesis.  If $\mu=0$ then $\Delta_1=\{P\}\cup\Delta_0$. From Result \ref{resstab1}, the lattice of subgroups between $\Gamma_P$ and $\Gamma$ is isomorphic to the lattice of blocks of $\Gamma$ containing $P$.
Since $T$ is a minimal subgroup containing $P$, this yields that every block through $P$ contains $\Lambda_1$, and shows (iib).
\end{proof}

We collect some preliminary results on the structure of $S_P$; see Propositions \ref{pro96072024A}, \ref{pro06072024}, \ref{pro10072024} and  \ref{pro10072024A}.
\begin{proposition}
\label{pro96072024A} Assume that both {\rm{(*)}} and {\rm{(**)}} but {\rm{(***)}} hold. If $|S_P|=p$, $\Delta$ is the unique non-tame orbit of $\Gamma$, and $S_P$ has a unique short orbit $\Delta_0$ other than $\{P\}$, then $\Gamma$ is imprimitive on $\Delta$. 
\end{proposition}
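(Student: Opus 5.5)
The plan is to argue by contradiction: assume $\Gamma$ acts primitively on $\Delta$ and derive that (***) holds, or that $\gamma(\cX)=0$.

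First I record the structure. Since $|S_P|=p$, the group $S_P$ has no proper non-trivial subgroup, so every non-trivial element of $S_P$ has the same fixed point set $\{P\}\cup\Delta_0$. Lemma \ref{lem24072024} shows that $\Delta_0$ is the unique short orbit of $\Gamma_P$ other than $\{P\}$. Because $\Delta_0$ is a short $S_P$-orbit and $|S_P|=p$, each point of $\Delta_0$ is fixed by $S_P$, so $\Delta_0$ consists of fixed points of $S_P$. Since $\Delta$ is the unique non-tame orbit, we have $\Delta_0\subseteq\Delta$. For $R\in\Delta_0$ the group $S_P$ fixes $R$, and $S_R$ is the unique Sylow $p$-subgroup of $\Gamma_R$ of order $p$. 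Hence $S_P=S_R$, and therefore $\{P\}\cup\Delta_0=\{R\}\cup\Delta_0(R)$ is also the fixed point set of $S_R$.

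The key step is to show that $\Lambda_1=\{P\}\cup\Delta_0$ is a block of $\Gamma$ on $\Delta$. Take $g\in\Gamma$ with $g(\Lambda_1)\cap\Lambda_1\neq\emptyset$, say $g(X)=Y$ with $X,Y\in\Lambda_1$. Then $gS_Xg^{-1}=S_Y$, and both equal $S_P$, so $g$ normalizes $S_P$. Consequently $g$ preserves the fixed point set $\Lambda_1$ of $S_P$. Thus $\Lambda_1$ is a block, and its setwise stabilizer is $G=N_\Gamma(S_P)$.

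It then remains to see that this block is non-trivial. Since $\Delta_0\neq\emptyset$, we have $|\Lambda_1|\ge 2$. On the other hand $\Lambda_1\neq\Delta$, because $\Lambda_1=\Delta$ would give exactly (***), which fails by hypothesis; indeed, failure of (***) yields a point $Q\in\Delta$ with $S_P\cap S_Q=1$, so $Q\notin\Lambda_1$. Hence $1<|\Lambda_1|<|\Delta|$ and $\Gamma$ is imprimitive on $\Delta$.

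The main obstacle is the triviality case $|\Lambda_1|=|\Delta|$, which the failure of (***) handles directly. So the only real subtlety is checking that $S_P=S_R$ for every $R\in\Delta_0$, which uses $|S_P|=p$ together with the uniqueness of the Sylow $p$-subgroup of $\Gamma_R$ (Result \ref{res74}(i)).
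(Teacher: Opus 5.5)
Your proposal is correct and follows essentially the paper's route: the paper observes that $|S_P|=p$ forces the short orbit $\Delta_0$ to be a single point $R$ with $S_R=S_P$ (your ``every point of $\Delta_0$ is fixed by $S_P$'' says the same thing implicitly). It then argues, as at the start of the proof of Proposition~\ref{prop03082024}, that $\{P,R\}$ is a non-trivial block. Your normalizer argument ($g(X)=Y$ with $X,Y\in\{P\}\cup\Delta_0$ forces $gS_Pg^{-1}=S_P$, so $g$ preserves the fixed-point set of $S_P$) is a slightly cleaner packaging of that same step. Invoking the failure of (***) to exclude $\{P\}\cup\Delta_0=\Delta$ is exactly what the non-triviality of the block requires.
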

\begin{proof}  Since $\Delta_0$ is a short orbit of $S_P$,
the stabilizer of each point in $\Delta_0$ must be non-trivial. Since $\Delta_0$ is the unique short orbit of $S_P$, this may only happen when $\Delta_0$ consists of a unique point. As we have shown at the beginning of the proof of Proposition \ref{prop03082024}, this yields that $\Gamma$ is imprimitive on $\Delta$.
\end{proof}

\begin{proposition}
\label{pro06072024} If (i) in Proposition \ref{prop03082024} occurs, then order of $S_P$ is not prime.
\end{proposition}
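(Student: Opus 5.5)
The plan is to argue by contradiction, reducing to Proposition \ref{pro96072024A}. Assume that case (i) of Proposition \ref{prop03082024} occurs, so $\Gamma$ is primitive on $\Delta$. Recall the standing hypotheses of that proposition: (*) and (**) hold but (***) fails; $\Delta$ is the unique non-tame orbit of $\Gamma$; $S_P$ has exactly one short orbit $\Delta_0$ other than $\{P\}$; and $|H_P|\ge 3$. Suppose, contrary to the claim, that $|S_P|=p$ is prime. Then all the hypotheses of Proposition \ref{pro96072024A} are satisfied, and that proposition says $\Gamma$ is imprimitive on $\Delta$. This contradicts primitivity, so $|S_P|$ cannot be prime.

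For completeness, I would also recall the mechanism behind Proposition \ref{pro96072024A}, since it is short. If $|S_P|=p$, then every non-trivial element of $S_P$ generates $S_P$. Hence any point fixed by a non-trivial element is fixed by all of $S_P$, so every short orbit of $S_P$ is a fixed point. Since $\Delta_0$ is the unique short orbit besides $\{P\}$, it follows that $\Delta_0=\{R\}$ is a single point.

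The opening paragraph of the proof of Proposition \ref{prop03082024} then shows that $\{P,R\}$ is a block. Indeed, $S_P=S_R$, any $g\in\Gamma$ with $g(P)=R$ swaps $P$ and $R$, and $\Gamma_P$ fixes $R$. Because (***) fails, $\Delta$ strictly contains $\{P\}\cup\Delta_0$, so $|\Delta|>2$ and this block is non-trivial. That gives imprimitivity.

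The only step requiring care is checking that the hypotheses of Proposition \ref{pro96072024A} really are in force under case (i). They are: case (i) is stated under exactly the assumptions of Proposition \ref{prop03082024}, and these include all the hypotheses of Proposition \ref{pro96072024A}.
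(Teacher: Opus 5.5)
Your proof is correct and follows the paper's argument. Prime order of $S_P$ forces $|\Delta_0|=1$, and then Proposition \ref{pro96072024A} (via the block $\{P,R\}$ from the opening of the proof of Proposition \ref{prop03082024}) gives imprimitivity of $\Gamma$ on $\Delta$, which contradicts case (i).
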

\begin{proof} If $S_P$ has prime order then $|\Delta_0|=1$, and hence $\Gamma$ is imprimitive by Proposition \ref{pro96072024A}.
\end{proof}
\begin{proposition}
\label{pro10072024} In Proposition \ref{prop03082024}, if $S_P$ is cyclic then $\Delta_0$ together with its images $\varphi(\Delta_0)$ under the action of $\Gamma$ form a system of imprimitivity. 
\end{proposition}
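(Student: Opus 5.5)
The plan is to reduce everything to the unique subgroup of order $p$ of the cyclic group $S_P$, write $U$. Throughout, keep the standing hypotheses of Proposition \ref{prop03082024}: (***) fails, $\Delta$ is the unique non-tame orbit, $|H_P|\ge 3$, and $\Delta_0$ is the unique short $S_P$-orbit other than $\{P\}$. By Lemma \ref{lem24072024}, every point other than $P$ fixed by a non-trivial element of $\Gamma_P$ lies in $\Delta_0$. Also $\Delta_0\subseteq\Delta$, since $\Delta$ is the unique non-tame orbit.

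\emph{Step 1: $U$ fixes $\Delta_0$ pointwise.} Since $\Delta_0$ is a short $S_P$-orbit, the stabilizer in $S_P$ of any $R\in\Delta_0$ is a non-trivial subgroup of the cyclic $p$-group $S_P$. Every non-trivial subgroup of a cyclic $p$-group contains its unique subgroup of order $p$, so this stabilizer contains $U$. Combined with Lemma \ref{lem24072024}, this gives
$$\mathrm{Fix}(U)=\{P\}\cup\Delta_0 .$$

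\emph{Step 2: the fixed-point set is the same seen from any of its points.} For $R\in\Delta_0$ choose $g\in\Gamma$ with $g(P)=R$. Then $S_R=gS_Pg^{-1}$ is cyclic, and its subgroup of order $p$ is $gUg^{-1}$. Since $U$ fixes $R$, Result \ref{res74} gives $U\le S_R$, and uniqueness then forces $gUg^{-1}=U$. Applying $g$ to the equality of Step 1 yields
$$g(\{P\}\cup\Delta_0)=\mathrm{Fix}(gUg^{-1})=\mathrm{Fix}(U)=\{P\}\cup\Delta_0 .$$
Thus the setwise stabilizer $G$ of $\{P\}\cup\Delta_0$ is transitive on it. Moreover $\Gamma_P\le G$, since $\Gamma_P$ normalizes $S_P$ and hence $U$. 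Because (***) fails we have $G\neq\Gamma$, and because $|\Delta_0|\ge 1$ we have $G\neq\Gamma_P$. By Result \ref{resstab}, the $G$-orbit $\{P\}\cup\Delta_0$ is therefore a block.

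\emph{Step 3: from this block to the images of $\Delta_0$.} The images $\varphi(\Delta_0)$, $\varphi\in\Gamma$, satisfy
$$\varphi(\Delta_0)\cup\{\varphi(P)\}=\mathrm{Fix}(\varphi U\varphi^{-1}).$$
Hence two images $\varphi(\Delta_0)$ and $\psi(\Delta_0)$ lie in the same block exactly when $\varphi U\varphi^{-1}=\psi U\psi^{-1}$, and otherwise they lie in disjoint blocks. So the sets $\varphi(\Delta_0)$ distribute over a $\Gamma$-invariant partition of $\Delta$, and this partition is the system of imprimitivity in the statement. It covers $\Delta$ because $\Gamma$ is transitive on $\Delta$, and it is non-trivial since $1<|\Delta_0|+1<|\Delta|$.

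\emph{Main obstacle.} I expect the hard point to be the identification in Step 3. Inside one block $B$, the sets $B\setminus\{Q\}$ for $Q\in B$ overlap whenever $|\Delta_0|\ge 2$. Consequently the images of $\Delta_0$ themselves are pairwise disjoint only if $|\Delta_0|=1$, and in that case Step 2 is exactly the opening argument of the proof of Proposition \ref{prop03082024}. So I would first try to show that cyclicity of $S_P$ together with $|H_P|\ge 3$ forces $|\Delta_0|=1$. The tool would be the first claim of Result \ref{le11.77} applied to $\Gamma_P=S_P\rtimes H_P$, which should force $\Gamma_P$ to act on the cyclic group $S_P$ in a restricted way. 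If that fails, I would read the statement as asserting that the blocks are the sets spanned by the images $\varphi(\Delta_0)$, namely the sets $\mathrm{Fix}(\varphi U\varphi^{-1})$, which is what Steps 1 and 2 establish.
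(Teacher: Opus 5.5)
Your argument is correct and is essentially the paper's. Both rest on the fact that two subgroups of the same order in the cyclic group $S_R$ must coincide, combined with Lemma \ref{lem24072024} to pin down fixed-point sets. You use the subgroup of order $p$; the paper uses the pointwise stabilizer of $\Delta_0$ in $S_P$, of order $|S_P|/|\Delta_0|$.

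Your reading of the conclusion is the right one, so you need not try to force $|\Delta_0|=1$. The paper's proof writes $\Delta_0=\varphi(\Delta_0)$, but its argument actually yields $\{P\}\cup\Delta_0=\{\varphi(P)\}\cup\varphi(\Delta_0)$, which is exactly your block $\mathrm{Fix}(U)$.
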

\begin{proof} 
Take $\varphi\in \Gamma$ such that $\Delta_0\cap \varphi(\Delta_0)$ is non-trivial. Let $R\in \Delta_0\cap \varphi(\Delta_0)$.  Since $R\in \Delta_0$, the stabilizer of $R$ in $S_P$ is a non-trivial subgroup $S$ of $S_R$ of order $|S_P|/|\Delta_0|$. Since $S_P$ is abelian this implies that $S$ is the subgroup of $S_P$ which fixes $\Delta_0$ pointwise.
Let $Q=\varphi(P)$. Then 
the stabilizer of $R$ in $S_Q$ is a non-trivial subgroup $D$ of $S_R$ of order $|S_Q|/|\varphi(\Delta_0)|$. It turns out that $S$ and $D$ have the same order and both are subgroups of $S_R$. Since $S_R\cong S_P$, $S_R$ is also cyclic. Therefore, $S=D$ and hence $D$ fixes $\Delta_0$ pointwise. Therefore, $\Delta_0=\varphi(\Delta_0)$. Thus $\Gamma$ is imprimitive, a contradiction.
\end{proof}
\begin{proposition}
\label{pro10072024A} If (i) in Proposition \ref{prop03082024} occurs, then the center $Z(S_P)$ of $S_P$
has an elementary abelian subgroup $E$ such that $E$ is a normal subgroup of $\Gamma_P$, and that no non-trivial element of $E$ fixes a point other than $\{P\}$.
\end{proposition}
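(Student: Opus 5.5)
The natural candidate for $E$ is the elementary abelian subgroup $E=\Omega_1(Z(S_P))$ of $Z(S_P)$, consisting of the identity together with all elements of order $p$ in $Z(S_P)$. Since $Z(S_P)$ is characteristic in $S_P$, and $E$ is characteristic in $Z(S_P)$, $E$ is characteristic in $S_P$. Because $S_P$ is normal in $\Gamma_P$ by Result \ref{res74}, $E$ is normal in $\Gamma_P$. This settles everything except the fixed-point claim, which is where the hypothesis that Case (i) of Proposition \ref{prop03082024} occurs must enter.

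For the fixed-point claim, argue by contradiction. Suppose some non-trivial $e\in E$ fixes a point $R\ne P$. By Lemma \ref{lem24072024}, $R\in\Delta_0$, and $\Delta_0$ is a single $S_P$-orbit, hence a single $\Gamma_P$-orbit. Since $e$ is central in $S_P$, for every $s\in S_P$ the element $e=ses^{-1}$ fixes $s(R)$. Therefore $e$ fixes $\Delta_0$ pointwise. Define $F=\{x\in E : x \mbox{ fixes } \Delta_0 \mbox{ pointwise}\}$; this is a non-trivial subgroup of $E$. It is normal in $\Gamma_P$, because $\Gamma_P$ preserves both $E$ and $\Delta_0$. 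The set of points fixed by $F$ contains $\{P\}\cup\Delta_0$. On the other hand, every non-trivial element of $S_P$ fixes only points of $\{P\}\cup\Delta_0$, so $\mathrm{Fix}(F)=\{P\}\cup\Delta_0$ exactly.

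Next, show that $\Lambda_1=\{P\}\cup\Delta_0$ is a block, mimicking the Frobenius-case argument in the proof of Proposition \ref{prop03082024}. Take $R\in\Delta_0$ and $g\in\Gamma$ with $g(P)=R$. Then $gFg^{-1}\le S_R$ fixes $g(\Lambda_1)$ pointwise, and $F\le S_R$ as well. The aim is to prove $gFg^{-1}=F$, or at least that these two subgroups meet non-trivially, so that their common fixed set forces $g(\Lambda_1)=\Lambda_1$. I expect this to be the main obstacle. A plain order count does not suffice unless $|F|$ is large compared with $|S_R|$.

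The route I would try first is as follows. $F$ is the pointwise stabiliser of $\Lambda_1$ inside the elementary abelian group $E$. Intrinsically, $gFg^{-1}$ is the group of central elements of order $p$ in $S_R$ that fix $g(\Lambda_1)=\{R\}\cup\Delta_0(R)$ pointwise. Since $F\le S_R$ fixes $R$, $F$ normalises $S_R$ and hence permutes $\Delta_0(R)$. One then needs to show that $F\cap Z(S_R)\ne 1$, for instance by a fixed-point count of the $p$-group $F$ acting by conjugation on the non-trivial normal subgroup $Z(S_R)\cap$ (pointwise stabiliser of $\Lambda_1\cap\Delta_0(R)$). Once the two subgroups share a non-trivial element $x$, we get $\Lambda_1\cup g(\Lambda_1)\subseteq \mathrm{Fix}(x)$. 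But $\mathrm{Fix}(x)$ equals $\{P\}\cup\Delta_0$ and also $\{R\}\cup\Delta_0(R)$, so $g(\Lambda_1)=\Lambda_1$.

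Once $\Lambda_1$ is shown to be a block, finish as follows. The setwise stabiliser $G$ of $\Lambda_1$ satisfies $\Gamma_P<G$, because $|\Delta_0|\ge1$ and $G$ is transitive on $\Lambda_1$. Also $G<\Gamma$, because $G=\Gamma$ would give $\Delta=\Lambda_1$, which is (***). Result \ref{resstab} then shows that $\Gamma$ is imprimitive on $\Delta$, contradicting Case (i). Hence no non-trivial element of $E$ fixes a point other than $P$.
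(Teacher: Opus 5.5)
Your setup is sound. $E=\Omega_1(Z(S_P))$ is characteristic in $S_P\trianglelefteq\Gamma_P$. A non-trivial $e\in E$ fixing some $R\neq P$ must fix $\Delta_0$ pointwise. Your closing appeal to Result \ref{resstab} is also correct, provided you know $g(\Lambda_1)=\Lambda_1$ for some $g$ with $g(P)=R$, for each $R\in\Delta_0$.

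The gap is exactly the step you flag. You try to produce a common non-trivial element of $F$ and $gFg^{-1}$ by showing $F\cap Z(S_R)\neq 1$, and the suggested fixed-point count cannot give this. Since $F\le S_R$, $F$ centralizes $Z(S_R)$. Its conjugation action on any subgroup of $Z(S_R)$ is therefore trivial and says nothing about $F\cap Z(S_R)$. As written, the proof is incomplete at its crux.

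The missing observation is that no common element is needed, and you already have both ingredients. Take any $1\neq x\in F$.
\begin{itemize}
\item Since $x\in F$, you showed $\mathrm{Fix}(x)=\Lambda_1$.
\item Since also $x\in S_R=gS_Pg^{-1}$, the element $g^{-1}xg$ is a non-trivial element of $S_P$. By Lemma \ref{lem24072024} it fixes only points of $\Lambda_1$, so $x$ fixes only points of $g(\Lambda_1)$.
\end{itemize}
Hence $\Lambda_1\subseteq g(\Lambda_1)$, and since $|g(\Lambda_1)|=|\Lambda_1|$, we get $g(\Lambda_1)=\Lambda_1$. Your final paragraph then goes through unchanged.

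Completed this way, your route proves more than the paper does: it gives $F=1$, so $E=\Omega_1(Z(S_P))$ itself works. The paper's route is different in two respects.
\begin{itemize}
\item It derives a contradiction only from the assumption that all of $M(S_P)=\Omega_1(Z(S_P))$ fixes $\Delta_0$ pointwise, arguing as in Proposition \ref{pro10072024}.
\item It then takes $E$ to be an $H_P$-invariant complement of $F$ in $M(S_P)$, via Maschke's theorem (Result \ref{resmaschke}).
\end{itemize}
Your completed argument makes that complement step unnecessary. It also supplies a clean justification of the paper's contradiction step. There the equality $M(S_P)=M(S_R)$ is inferred from equality of orders inside $S_R$, which by itself is conclusive only when $S_R$ has a unique subgroup of that order, as in the cyclic case.
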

\begin{proof}  We show first that the center $Z(S_P)$ of $S_P$ has an element of order $p$ fixing no point other than $P$. Let $M(S_P)$ be the unique largest elementary abelian subgroup of the center $Z(S_P)$ of $S_P$. Assume on the contrary that every element in $M(S_P)$ fixes a point in $\Delta_0$. Since $Z(S_P)$ is a subgroup of $S_P$ and $S_P$ is transitive on $\Delta_0$, every element in $M(S_P)$ fixes $\Delta_0$ pointwise. Now, we may argue as in the proof of Proposition \ref{pro10072024}.
Take $\varphi\in \Gamma$ such that $\Delta_0\cap \varphi(\Delta_0)$ is non-trivial. Let $R\in \Delta_0\cap \varphi(\Delta_0)$. Then the arguments used in the proof of Proposition \ref{pro10072024} shows that $M(S_P)$ and $M(S_Q)$ have the same order and both are subgroups of $S_R$. Since $S_R$ has a unique subgroup $M(S_R)$ and $|M(S_R)|=|M(S_P)|=|M(S_R)|$, it turns out that $M(S_P)=M(S_R)$ whence $\Delta_0=\varphi(\Delta_0)$ follows. This shows that $\Gamma$ is imprimitive, a contradiction. Therefore the subgroup $F(S_P)$ of $M(S_P)$ consisting of all elements fixing $\Delta_0$ pointwise is properly contained in $M(S_P)$. If $F(S_P)$ is trivial, then $M(S_P)$ may be taken for $E$. Assume that $F(S_P)$ is non-trivial.
Write $\Gamma_P=S_P\rtimes H_P$ again. Since $M(S_P)$ is normalized by $H_P$ where $\Gamma_P=S_P\rtimes H_P$ and $M(S_P)$ may be viewed as a $\mathbb{F}_p$-vector space $V$,
$H_P$ has a representation $\rho$ on $V$. Then $\rho(H_P)$ leaves the subspace $W$ of $V$ invariant whose vectors are the elements of $F(S_P)$.  From Result \ref{resmaschke},  $W$ has a $\rho(H_P)$-invariant complement $W'$. The subgroup $E$ of $S_P$ whose elements are the vectors in $W'$ is a normal subgroup of $H_P$ and hence of $\Gamma_P$.
\end{proof}


\subsection{Case (i) of Proposition \ref{prop03082024}}
\begin{theorem}
\label{the03072024}
 Assume that {\rm{(*)}} and {\rm{(**)}} but {\rm{(***)}} hold, and that $S_P$ has only one short orbit $\Delta_0$ other than $\{P\}$.
 If $\Gamma$ is a primitive permutation group on $\Delta$
then either
 \begin{equation}
 \label{eqA24072024}
 |\Gamma|\le  24\left(\frac{p}{p-1}\right)^4\gamma(\cX)^4,
 \end{equation}
 or $\Gamma$ has a simple non-abelian normal subgroup $N$ containing $S_P$.
\end{theorem}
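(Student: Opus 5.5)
We apply the O'Nan--Scott analysis to the primitive group $\Gamma$ on $\Delta$, which is faithful by Lemma \ref{lem27092025}. First we reduce to the situation where the bound (\ref{eqA24072024}) fails. Then Proposition \ref{prop290723}, Lemma \ref{lem14012025} and Proposition \ref{pro24072024} let us assume that $\Delta$ is the unique non-tame orbit of $\Gamma$, that $\Gamma$ has exactly one tame short orbit, and that $S_P$ has the single short orbit $\Delta_0$ besides $\{P\}$. By Lemma \ref{lem27092025A}(iii), $S_P$ is a Sylow $p$-subgroup of $\Gamma$: the exceptional case $p=2$ with $S_P$ fixing a second point gives $|\Delta_0|=1$, which is imprimitive by the start of the proof of Proposition \ref{prop03082024}. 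Proposition \ref{pro06072024} shows that $|S_P|$ is not prime. Proposition \ref{pro10072024A} supplies an elementary abelian $E\le Z(S_P)$, normal in $\Gamma_P$, whose non-trivial elements fix only $P$. Since $\Gamma_P$ is solvable by Result \ref{res74}, the point stabilizer of the primitive group is solvable.

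Now let $N$ be a minimal normal subgroup of $\Gamma$; it is transitive on $\Delta$ by Result \ref{isprimitive}. \emph{Case 1: $N$ is elementary abelian.} Then $N$ is regular, $|\Delta|=d^r$, and $\Gamma=N\rtimes\Gamma_P$ is an affine group. If $d\neq p$, then $|\Delta|\equiv 1$ or $2 \pmod p$ by (\ref{eq23092025}). Also $\Gamma_P$ acts irreducibly on $N$, and its normal $p$-subgroup $E$ acts fixed-point-freely on $N\setminus\{1\}$, since elements of $E$ fix only $P$. If $d=p$, then $S_P$ would not be Sylow in $\Gamma$, a contradiction. So $d\ne p$, and we use Clifford theory together with Maschke (Result \ref{resmaschke}) to force a cyclic or semilinear structure. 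That structure bounds $|\Delta|$ polynomially in $|\Gamma_P|$, and hence in $|S_P||H_P|$. Combined with Proposition \ref{leterribile24}, Lemma \ref{lem14dic21} and Lemma \ref{leterribileHW}, this gives $|\Gamma|=|\Gamma_P||\Delta|\le 24(\frac{p}{p-1})^4\gamma^4$. Alternatively, if $N$ is small, we apply Lemma \ref{terribile042025} to a subgroup $N\rtimes U$ with $U$ abelian. \emph{Case 2: $N=T^k$ is non-abelian with $k\ge 2$.} Primitive groups with solvable point stabilizer and non-simple socle must be of product type. In product type $\Gamma_P$ permutes the $k$ factors, and a normal $p$-subgroup acting semiregularly on the non-$P$ points is incompatible with that. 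For $k\ge 2$ we would also get $|S_P|^2\mid|N|$ with $S_P$ a Sylow subgroup. In both situations we get a contradiction or the bound. \emph{Case 3: $N$ is non-abelian simple.} Here it remains to show $S_P\le N$. Since $N$ is normal in $\Gamma$ and transitive, $N_P$ is normal in $\Gamma_P$. If $p\nmid|N_P|$, then $N_P$ is a normal $p'$-subgroup of $\Gamma_P=S_P\rtimes H_P$, so it centralizes $S_P$ and is cyclic. Then $p\nmid |N|$, because $|N|=|N_P||\Delta|$ and $|\Delta|\equiv 1,2 \pmod p$. Consequently $S_P$ acts on $N$ by conjugation with the normal $p'$-group $N$. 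By (\ref{eq23092025}) $S_P$ fixes at most two points of $\Delta$, and a coprime action argument (Glauberman-type fixed points) then forces $N$ to be small. We treat this via Results \ref{reshomma}, \ref{res60.79.108} and the bound $|N|\le 84(\mathfrak g-1)$ for tame groups, which is $O(\gamma^2)$. Finally, since $S_P$ is Sylow in $\Gamma$ and $N\trianglelefteq\Gamma$, the subgroup $S_P\cap N$ is Sylow in $N$. Once $p\mid|N_P|$, Frattini-type arguments together with $\Gamma/N$ being small (outer automorphisms) should show $S_P\le N$. Otherwise $S_PN/N$ is a non-trivial $p$-subgroup of $\mathrm{Out}(N)$, and we bound $|\Gamma|$ using (\ref{eq09112023}).

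The hardest step is the affine Case 1: it needs a quantitative bound on $|\Delta|$ versus $|\Gamma_P|$ sharp enough to land exactly under the constant $24$. I would try first to show that $\Gamma_P$ embeds in $\Gamma L(1,d^r)$ using $E$, so that $|\Delta|-1\le |\Gamma_P|\cdot$(small factor), and then combine this with $|\Gamma_P|\le 2\gamma$-type estimates.
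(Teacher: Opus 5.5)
There is a genuine gap: the proposal never reaches the constant $24$, and it asserts the containment $S_P\le N$ rather than proving it.

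In your affine Case 1 the suggested route runs the wrong way. Embedding $\Gamma_P$ in $\Gamma L(1,d^r)$ gives $|\Gamma_P|\le r(d^r-1)$, which bounds $|\Gamma_P|$ in terms of $|\Delta|$, not conversely. Irreducible solvable subgroups of $GL(r,d)$ can be tiny compared with $d^r$, so the structure you extract from $E$ does not bound $|\Delta|$ polynomially in $|\Gamma_P|$. A ``$|\Gamma_P|\le 2\gamma$'' estimate is not available either:
\begin{itemize}
\item Lemma \ref{leterribileHW} says nothing precisely when $S_P$ has a unique short orbit besides $\{P\}$;
\item only $|S_P|\le 2\gamma(\cX)$ holds, by Lemma \ref{lem27092025A}(i);
\item $|H_P|$ is only $O(\gg(\cX))$.
\end{itemize}
So Proposition \ref{leterribile24} gives $|\Gamma|<3|S_P||H_P|^2(\gg(\cX)-1)$, which is not $O(\gamma^4)$.

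Case 3 has two problems. The tame subcase gives at best $|\Gamma|<|\Delta|^2\le 84^2(\gg(\cX)-1)^2$, far above the constant $24$. When $p\mid |N_P|$, ``Frattini-type arguments should show $S_P\le N$'' is not an argument: $\mathrm{Out}(N)$ may contain $p$-elements (field automorphisms), and you give no bound for that situation. In Case 2, the divisibility $|S_P|^2\mid |N|$ and the claimed incompatibility with product action are unjustified.

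You are missing two ideas.

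First, since (***) fails, $\lambda\ge 1$ in \eqref{eq15052024BB}. Hence $|\Gamma_P|<|\Delta|\le |N|$ and $|\Gamma|<|N|^2$, so it suffices to bound $|N|$ linearly in $\gg(\cX)$ and then apply \eqref{eq09112023}. This alone settles your affine case: an abelian regular $N$ satisfies $|N|\le 4\gg(\cX)+4$ by Result \ref{res60.79.108}, and with $\gg(\cX)\le(|S_P|-1)^2$ this gives $|\Gamma|<16|S_P|^4$.

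Second, the paper does not use O'Nan--Scott; it studies the quotient curve $\cX/N$.
\begin{itemize}
\item If $\cX/N$ has genus at least $2$, then $|N|\le\gg(\cX)-1$.
\item If $\cX/N$ is elliptic, then $[\Gamma:N]$ is bounded by Result \ref{silv}.
\item If $\cX/N$ is rational, then after the reductions via Result \ref{res56.116}, $\Gamma$ has a unique tame short orbit $\Omega$. If $N$ splits $\Omega$ into $s\ge 2$ orbits, the Hurwitz formula for $N$ bounds $|\Gamma|=s|N|$ linearly in $\gg(\cX)$. If $N$ is transitive on $\Omega$, then for $R\in\Omega$ we have $[\Gamma:N]=|\Gamma_R|/|N_R|=|\Gamma_P|/|N_P|$, which is prime to $p$. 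This is exactly what forces $S_P\le N$.
\end{itemize}
Finally, $k=1$ follows by an elementary argument. The fixed-point set of a Sylow $p$-subgroup of one simple factor contains $P$, lies in $\{P\}\cup\Delta_0$, and is preserved by the other factors. This eventually forces $N$ to preserve $\{P\}\cup\Delta_0$, contradicting the failure of (***).
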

\begin{proof} By the fourth claim in Proposition \ref{prop290723A}, we may assume that $\Delta$ is the unique non-tame orbit of $\Gamma$.
    Let $N$ be a minimal normal subgroup of $\Gamma$. Then $N$ is the direct product of $k\ge 1$ pairwise isomorphic simple groups $N_1,\ldots, N_k$.
    From Result \ref{isprimitive}, $N$ is transitive on $\Delta$. Now, let $\bar{\cX}$ be the quotient curve $\cX/N$ of genus $\bar{\gg}$ and $\bar{\Gamma}$ be the quotient group $\Gamma/N$.

Assume first that $\bar{\gg}\geq 2$. In this case, the Hurwitz genus formula applied to $N$ together with (\ref{eq09112023}) yield 
    $$
|N|\leq \gg(\cX)-1\leq  4\gamma(\cX)^2.
    $$
Since $N$ is transitive on $\Delta$, we have $|\Delta|\leq |N|\leq  4\gamma(\cX)^2$. On the other hand, Equation \eqref{eq15052024BB} gives $|\Delta|\geq |\Gamma_P|$.
Therefore,
$$
|\Gamma|=|\Gamma_P| |\Delta|\leq  16\gamma(\cX)^4.
$$

Next, let $\bar{\gg}=1$. The Hurwitz genus formula applied to $N$ together with the Orbit theorem yield $$2\gg(\cX)-2\ge |\Delta|(|N_P|-1)\ge \ha |\Delta||N_P|\ge \ha |N|.$$ Since $\Delta$ is an orbit of both $\Gamma$ and $N$, the factor group $\Gamma/N$ is a subgroup of $\aut(\bar{\cX})$ of the elliptic curve $\bar{\cX}$ fixing a point. Result \ref{silv} yields $|\Gamma/N|\le 12$, unless $p=2$ and $|\Gamma/N|$=24 as $\Gamma/N\cong SL(2,3)$. Apart from the latter case, $|\Gamma|\le 12 |N|$, and hence  $24(\gg(\cX)-1)\ge  |\Gamma|$. Then (\ref{eq09112023}) implies
$$|\Gamma|<48\left(\frac{p}{p-1}\right)^2\gamma(\cX)^2<24\left(\frac{p}{p-1}\right)^4\gamma(\cX)^4.$$
If $\Gamma/N\cong SL(2,3)$, then $p=2$, and 
the above computation gives
$$|\Gamma|<96\left(\frac{p}{p-1}\right)^2\gamma(\cX)^2<24\cdot 4\left(\frac{p}{p-1}\right)^2\gamma(\cX)^2<24\left(\frac{p}{p-1}\right)^4\gamma(\cX)^4.$$

Therefore $\bar{\gg}=0$ is assumed. Then $\Gamma$ has a unique non-tame short orbit, namely $\Delta$, and at least one short tame orbit $\Omega$, otherwise (\ref{eqA24072024}) follows from Propositions \ref{pro24072024} and \ref{prop290723}. Actually, there is a unique such tame orbit, otherwise

$$|\Gamma|\le 24\gg(\cX)^2<24\left(\frac{p}{p-1}\right)^4\gamma(\cX)^4$$
by Result \ref{res56.116} and (\ref{eq09112023}).

Let $\Omega_i$, $i=1,\ldots,s$ denote the $N$-orbits contained in $\Omega$. As $N$ is normal in $\Gamma$, $|\Omega_i|=\omega$ for $i=1,\ldots,s$, and $|\Omega|=s\omega$.

First, the case $s\ge 2$ is investigated. Let $R$ be a point in $\Omega_i$ with $1\le i \le s$. If $\Gamma_R$ is not contained in $N$, then
$\Gamma_RN/N$ is a non-trivial automorphism group of the rational curve $\bar{\cX}$ fixing the points $\bar{P},\bar{P}_i$ of $\bar{\cX}$ lying under $\Delta,\Omega_i$ respectively. Since $\Gamma_RN/N$ is a subgroup of $\Gamma/N$, Result \ref{resdickson} yields that the points $\bar{P},\bar{P}_i$ are also fixed by $\Gamma/N$.
Since this holds for any $i\in \{1,\ldots,s\}$, it turns out that $\Gamma/N$ fixes at least $s+1$ points of $\bar{\cX}$. Since $s\ge 2$, this yields that $\Gamma/N$ has at least three fixed points, a contradiction with Results \ref{resratcurve} and \ref{resdickson}. 
Thus, $\Gamma_R$ is contained in $N$ and hence $\Gamma_R=N_R$ for every $R\in \Omega$. In particular, the $N$-orbits $\Omega_i$ are short orbits. Moreover, from the Orbit theorem, $|\Gamma|=|\Omega||\Gamma_R|=|\Omega||N_R|=s\omega |N_R|=s|N|$.
From Proposition \ref{pro96072024A}, $p^2$ divides $|\Gamma|$. If $p$ does not divide $|N|$, then $s\ge p^2$, and hence either $s\ge 9$, or $s=4$ and $p=2$. 

Assume that $N$ is tame. Then
$|N|\le 84(\gg(\cX)-1)$ and the Hurwitz genus formula gives
$$2\gg(\cX)-2=
\begin{cases}
-2|N|+s(|N|-\omega)=s|N|\left(1-\frac{2}{s}-\frac{1}{|\Gamma_R|}\right),\,\,R\in\Omega, \\
-|N|-|\Delta|+s(|N|-\omega)=s|N|\left(1-\frac{1}{s}-\frac{1}{s|N_P|}-\frac{1}{|\Gamma_R|}\right),\,\,R\in\Omega,\,P\in \Delta.
\end{cases}
$$
according as $\Delta$ is a long or a short orbit of $N$. From this, $|\Gamma|\le \frac{18}{5}(2\gg(\cX)-2)$ as far as either $p\ge 3$, or $p=2$ and $s\ne 4$. For these cases,
by (\ref{eq09112023})
$$ |\Gamma|<\textstyle{\frac{36}{5}}(\gg(\cX)-1)<8\left(\frac{p}{p-1}\right)^2\gamma(\cX)^2<24\left(\frac{p}{p-1}\right)^4\gamma(\cX)^4.$$
In the remaining case, $p=2$ and hence $|\Gamma_R|\ge 3$. Therefore, $2\gg(\cX)-2 \ge \frac{1}{6}|N|s$. Since $|\Gamma|=|N|s$, this yields by (\ref{eq09112023})
$$ |\Gamma|\le 12(\gg(\cX)-1)<12\left(\frac{p}{p-1}\right)^2\gamma(\cX)^2.$$
We may assume that $\Delta$ is a non-tame short orbit of $N$. Then the Hurwitz genus formula applied to $N$ yields
$2\gg(\cX)-2> -|N|+ s(|N|-\omega)$. Suppose $|N_R|\ge 3$. Then $|N|-\omega\ge \frac{2}{3} |N|$ which implies $\mathfrak{g}(\cX)-1> s|N|(\frac{2}{3}-\frac{1}{s})$ whence $s|N|< 6(\mathfrak{g}(\cX)-1)$
by $s\ge 2$. This together with (\ref{eq09112023}) show

$$|\Gamma|=s|N|< 6(\mathfrak{g}(\cX)-1)< 6 \left(\frac{p}{p-1}\right)^2\gamma(\cX)^2<24\left(\frac{p}{p-1}\right)^4\gamma(\cX)^4.$$
Suppose $|N_R|=2$. Then $p\ne 2$ as $\Omega$ is a tame-orbit of $\Gamma$. If $s\ge 3$ the above argument still works since $2\mathfrak{g}-2> s|N|(\frac{1}{2}-\frac{1}{s})\ge \frac{1}{6}s|N|$. We may assume $|N_R|=s=2$. Then $|\Gamma|=2|N|$, and $\bar{\Gamma}=\Gamma/N$ is subgroup of $\aut(\bar{\cX})$ of order $2$ which fixes $\bar{P}$. Since $p\ne 2$,  $\bar{\Gamma}$ fixes another point $\bar{P}^*$. Look at the $N$-orbit $\Omega^*$ in $\cX$ lying over $\bar{P}^*$. Observe that $\Omega^*$ is not contained in $\Omega$, because otherwise $\bar{\Gamma}$ would fix the point lying below the other $N$-orbit contained in $\Omega$; consequently, it would fix three points, a contradiction with Results \ref{resratcurve} and \ref{resdickson}. On the other hand, $\Gamma$ preserves $\Omega^*$ and $|\Gamma|>|\Omega^*|$. Therefore, a point of $\Omega^*$ is fixed by some non-trivial element in $\Gamma$, and hence $\Omega^*$ is another tame short orbit of $\Gamma$. This case has already been settled.


We are left with case $s=1$, that is, when $\bar{\gg}=0$, and  $\Gamma$ and $N$ share two orbits, namely $\Delta$ and $\Omega$ which may be long for $N$.
So, given $P\in\Delta$ and $R\in\Omega$, we have
$$\frac{|\Gamma|}{|N|}=\frac{|\Gamma_P|}{|N_P|}=\frac{|\Gamma_R|}{|N_R|},$$
which implies that $S_P$ is contained in $N$. We show now that in this case $k=1$ holds and hence $N$ is a simple group. Assume on the contrary that $k\ge 2$, and take a Sylow $p$-subgroup $S_i$ from each $N_i$ for $i=1,2,\ldots k$.
Then their direct product $S_p=S_1\times S_2\times \cdots \times S_k$ is a Sylow $p$-subgroup of $N$ and hence of $\Gamma$.
Since $\Gamma$ is primitive, Proposition \ref{pro96072024A} together with (\ref{eq23092025})
imply $|\Delta| \equiv 1 \pmod{p}$. Therefore, there exists $Q\in \Delta$ such that $S_p=S_Q$.
We may assume $Q=P$. In fact, if $Q$ is that point then there exists $u\in \Gamma$ such that $u(P)=Q$. Replace  $N_i$ with $u^{-1}N_iu$. Then the product $N_1\times\cdots \times N_k$ becomes $u^{-1}(N_1\times \cdots \times N_k)u=N$, and also $S_p=S_1\times\cdots \times S_k$ becomes $u^{-1}S_pu$. Thus $S_p(Q)=Q$ yields $(u^{-1}S_pu)(P)=P$. Therefore, we may assume $S_p=S_P$.
Let $\Sigma_1$ be the set of all fixed points of $S_1$. Then $\Sigma_1\ne \emptyset$ as $P\in \Sigma_1$. For $2\le i \le k$, $N_i$ preserves $\Sigma_1$ as $S_1$ is centralized by $N_i$. Therefore, $S_2\times\cdots \times S_k$ preserves $\Sigma_1$, and hence $S_p$ itself preserves $\Sigma_1$. Since $S_P$ acts on $\Sigma_1$  as $S_2\times \cdots \times S_k$ does, this yields that either $\Sigma_1=\{P\}$, or $\Sigma_1=\{P\}\cup \Delta_0$. Therefore, $N_2\times \cdots \times N_k$ either fixes $P$ or preserves
$\{P\}\cup \Delta_0$. In the former case, $N$ is elementary abelian by Result \ref{res74}, and hence $|N|=d^k$ for a prime $d$, and  the fact that $N$ contains $S_P$  implies $d=p$. On the other hand, 
$|\Delta|\equiv 1 \mod{p}$. But this is impossible as $\Delta$ is an orbit of $N$. Thus $N_2\times \cdots \times N_k$ preserves $\{P\}\cup \Delta_0$. Replacing $S_1$ with $S_2$ in the above argument shows that $N_1$ also preserves $\{P\}\cup \Delta_0$. Therefore, $N$ itself preserves $\{P\}\cup \Delta_0$. But then $\Delta=\{P\}\cup \Delta_0$, and hence (***) holds, a contradiction. Thus, $N=N_1$, that is, $N$ is a non-abelian simple group.
\end{proof}
\begin{proposition}
\label{pro27092025A} The normal simple non-abelian  subgroup $N$ of $\Gamma$  in Theorem \ref{the03072024} has trivial centralizer in $\Gamma$.
\end{proposition}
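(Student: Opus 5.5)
Let $Z=C_\Gamma(N)$ denote the centralizer. Since $N$ is normal in $\Gamma$, so is $Z$. Since $N$ is non-abelian simple, $Z\cap N=Z(N)=\{1\}$, and $ZN=Z\times N$. I will assume $Z\neq\{1\}$ and derive a contradiction from the action of $Z$ on $\Delta$.

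Because $\Gamma$ is primitive on $\Delta$ and $Z\lhd\Gamma$ is non-trivial, Result \ref{isprimitive} shows that $Z$ is transitive on $\Delta$. This also uses Lemma \ref{lem27092025}: $\Gamma$ acts faithfully on $\Delta$, so $Z$ acts non-trivially.

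Next I look at the stabilizers. $Z$ commutes with $N$, and $N$ is transitive on $\Delta$, so $Z$ acts on $\Delta$ as a subgroup of the centralizer of a transitive group in $\mathrm{Sym}(\Delta)$. Such a centralizer is semiregular. Hence $Z$ is regular on $\Delta$, and in particular $|Z|=|\Delta|$ and $Z_P=\{1\}$. Symmetrically, the centralizer in $\mathrm{Sym}(\Delta)$ of the transitive group $Z$ is semiregular. Since $N$ lies in it and is transitive, $N$ is also regular on $\Delta$, so $N_P=\{1\}$. But Theorem \ref{the03072024} gives $S_P\le N$, and $S_P\le N_P$ because $S_P$ fixes $P$. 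This contradicts $S_P\neq\{1\}$ (we are in the non-tame situation). So $Z=\{1\}$.

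The main point to check carefully is the standard permutation-group fact used twice: the centralizer in $\mathrm{Sym}(\Delta)$ of a transitive group is semiregular. Suppose $c$ centralizes a transitive group $T$ and fixes $Q$. For any $t\in T$ we have $c(t(Q))=t(c(Q))=t(Q)$. Since $T$ is transitive, $c$ fixes every point, so $c$ is trivial on $\Delta$. Faithfulness then makes $c=1$ as an element of $\Gamma$.

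One more check is that $Z_P$, viewed inside $\Gamma\le\aut(\cX)$, is really trivial and not merely trivial on $\Delta$. Lemma \ref{lem27092025} supplies this, because its hypotheses (a unique short $S_P$-orbit other than $\{P\}$, and (***) failing) are exactly those of Theorem \ref{the03072024}. With both checks in place the argument is short. No curve-theoretic input beyond $S_P\le N$ and faithfulness is needed.
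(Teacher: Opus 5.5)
Your proof is correct and is essentially the paper's argument: $C_\Gamma(N)$ is transitive on $\Delta$ by primitivity, and since it commutes with $S_P\le N$, a non-trivial $s\in S_P$ is forced to fix more points than it can. The paper takes $Q\in\Delta\setminus(\{P\}\cup\Delta_0)$ and $h\in C_\Gamma(N)$ with $h(P)=Q$, so $s=hsh^{-1}$ fixes $Q$; you instead use semiregularity of the centralizer of a transitive group together with faithfulness (Lemma \ref{lem27092025}), and your first step showing $C_\Gamma(N)$ is regular is not actually needed for the contradiction.
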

\begin{proof} Assume on the contrary that the centralizer $C_\Gamma(N)$ of $N$ in $\Gamma$ is not trivial. Since $C_\Gamma(N)$ is a normal subgroup of $\Gamma$, Theorem \ref{isprimitive} implies that $C_\Gamma(N)$ is transitive on $\Delta$. Now, take a point $Q\in \Delta$ other than $P$ and the points in $\Delta_0$. There exists $h\in C_\Gamma(N)$ such that $h(P)=Q$. If $s$
is a non-trivial element of $S_P$ then $hsh^{-1}$ fixes $Q$. On the other hand,  $s\in N$ by Theorem \ref{the03072024}, and hence $h\in C_\Gamma(N)$ implies $hsh^{-1}=s$. But then $s$ also fixes $Q$ a contradiction as $S_P\cap S_Q=\{1\}$.
\end{proof}
Proposition \ref{pro27092025A} yields that $N\le \Gamma \le \aut(N)$ up to an isomorphism. Moreover, since the socle of $\Gamma$ is a direct product of minimal normal subgroups, $|C_\Gamma(N)|=1$ implies that $N$ is the socle of $\Gamma$.
\begin{proposition}
\label{pro16082024}
The normal simple non-abelian  subgroup $N$ of $\Gamma$  in Theorem \ref{the03072024}
is primitive on $\Delta$, unless
\begin{equation}
 \label{eq27092025A} |\Gamma|\le 25 \left(\frac{p}{p-1}\right)^4\gamma(\cX)^4.
\end{equation}
\end{proposition}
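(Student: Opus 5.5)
Assume that $N$ is imprimitive on $\Delta$; the goal is to derive (\ref{eq27092025A}). Since $N$ is a transitive normal subgroup of $\Gamma$ containing $S_P$, it is natural to use Result \ref{resstab}. Imprimitivity gives a subgroup $N_P<T<N$, and the $T$-orbit $\Delta_1$ of $P$ is a nontrivial block of $N$.

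\textbf{Reduction to a single block.} I first show that $\Delta_0\subseteq\Delta_1$. The stabilizer $\Gamma_P$ normalizes $N_P$ and permutes the $N$-blocks through $P$. The set $\{P\}\cup\Delta_0$ is the fixed-point set of suitable subgroups of $S_P\le N_P$. Arguing as in the proof of Proposition \ref{prop03082024}, there are two possibilities.
\begin{itemize}
\item[(a)] $\Delta_0\cap\Delta_1=\emptyset$. Then $T$ acts on $\Delta_1$ as a Frobenius group with complement $N_P\supseteq S_P$. By Result \ref{thom}, $S_P$ is cyclic or generalized quaternion. Proposition \ref{pro10072024} then makes $\Delta_0$ (or $\{P\}\cup\Delta_0$) a block of $\Gamma$, contradicting the primitivity of $\Gamma$.
\item[(b)] $\Delta_0\subseteq\Delta_1$.
\end{itemize}
So case (b) holds. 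Taking $T$ minimal, $T$ is primitive on $\Delta_1$, and the analogue of (\ref{eq15052024Bis}) gives
\[|\Delta_1|=1+|\Delta_0|+\mu|N_P|.\]

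\textbf{Counting with the system of blocks.} Let $k=|N:T|=|\Delta|/|\Delta_1|>1$. Comparing with (\ref{eq15052024BB}) gives $(k-1)(|\Delta_0|+1)=\alpha|N_P|$ for a positive integer $\alpha$. Since $(|S_P|,|\Delta_0|+1)=1$ by (\ref{eq23092025}) and $S_P\le N_P$, this forces $|S_P|\mid k-1$, so $k>|S_P|$. Two further inputs are then needed. First, $|\Delta|\le 3(\mathfrak{g}(\cX)-1)|H_P|$ by Proposition \ref{leterribile24}, with the two sporadic cases checked directly. Second, $\mathfrak{g}(\cX)\le(|S_P|-1)^2$ by Lemma \ref{lem14dic21}. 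When $\mu\ge1$ these bound $k$ from above, as in (\ref{eqC03082024U}). The upper bound on $k$ then confines $\mu$ and $\alpha$ to a few small values. In those configurations, $|\Gamma|=|\Gamma_P||\Delta|$, combined with $|S_P|\le\frac{p}{p-1}\gamma(\cX)$ and the bound on $|H_P|$ coming from $|\Delta|$, gives (\ref{eq27092025A}).

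\textbf{The case $\mu=0$ (expected main obstacle).} Here $\Delta_1=\{P\}\cup\Delta_0$ is a block of $N$, and the $N$-translates of $\Delta_1$ form a partition of $\Delta$. The point is that these translates are exactly the sets $g(\{P\}\cup\Delta_0)$ with $g\in\Gamma$, because $N$ is transitive. This partition is therefore $\Gamma$-invariant. Indeed, for $\gamma\in\Gamma$, $\gamma(\{P\}\cup\Delta_0)=\{\gamma P\}\cup\Delta_0(\gamma P)$ is intrinsically defined as the fixed-point structure of $S_{\gamma P}$. So $\Delta_1$ is a nontrivial block of $\Gamma$, contradicting primitivity. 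The delicate step is to verify that $\{Q\}\cup\Delta_0(Q)$ depends only on $Q$ and coincides with the $N$-block through $Q$. I would try to get this from the fact that $N$ preserves the set $\{(Q,\Delta_0(Q))\}$, being normal and containing all $S_Q$.

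If instead minimality only yields the weaker configuration (iib) of Proposition \ref{prop03082024}, I would use the extra slack in the constant (25 instead of 24) to cover it. In that case I would bound $|H_P|^2\lesssim\gamma(\cX)$ via Proposition \ref{pro04052025}.
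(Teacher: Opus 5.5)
The case $\mu=0$ is fine and matches the paper's argument. For $g\in\Gamma$, $g(\{P\}\cup\Delta_0)=\{gP\}\cup\Delta_0(gP)$, and this equals $n(\Delta_1)$ for any $n\in N$ with $nP=gP$, so $\Delta_1$ is a nontrivial block of $\Gamma$. It is not the main obstacle.

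\textbf{The gap is the case $\mu\ge 1$.} You claim that the restrictions on $k,\mu,\alpha$, together with $|S_P|\le\frac{p}{p-1}\gamma(\cX)$ and ``the bound on $|H_P|$ coming from $|\Delta|$'', give (\ref{eq27092025A}). No such bound on $|H_P|$ is available.
\begin{itemize}
\item Writing $N_P=S_P\rtimes U_P$, the relations only give $\alpha|U_P|\in\{|\Delta_0|+1,\,2(|\Delta_0|+1)\}$. So $\alpha=1$ with $|U_P|=|\Delta_0|+1$, which can be as large as about $|S_P|/p$, is allowed.
\item Then $|N|=|N_P|\,k\,|\Delta_1|\ge |S_P|^3|U_P|^2$ is of order $|S_P|^5/p^2$. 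Nothing in your counting (Proposition \ref{leterribile24}, $\mathfrak{g}(\cX)\le(|S_P|-1)^2$, (\ref{eq15052024BB})) excludes this.
\item Yet this exceeds $25(\frac{p}{p-1})^4\gamma(\cX)^4$ once $|S_P|$ is large.
\end{itemize}
The paper gets a numerical bound in this case only in the degenerate subcase where every block meets $\Delta_1$ in at most one point. That subcase forces $|U_P|\le 2$ and $|\Delta|<5|S_P|^2$. The main subcase has to be eliminated group-theoretically.

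\textbf{The missing idea} is to let $N$ act on the block system $\Lambda$ with $\Delta_1\subseteq\Lambda_1$.
\begin{itemize}
\item Since $|\Delta|\equiv1\pmod p$, blocks have size prime to $p$. A nontrivial element of $S_P$ stabilizing some $\Lambda_i\neq\Lambda_1$ would then fix a point of $\Lambda_i$, contradicting Lemma \ref{lem24072024}.
\item Hence $S_P$ is semiregular on $\Lambda\setminus\{\Lambda_1\}$, and $|\Lambda|=\tau|S_P|+1$ with $\tau\in\{1,2\}$ by (\ref{eqC03082024A}).
\item So $N$ is $2$-transitive on $\Lambda$, using Result \ref{res3/2} when $\tau=2$, of degree $p^h+1$ or $2p^h+1$ with $h\ge2$.
\item Result \ref{cam} then lists the candidates, each excluded in turn. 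The alternating case gives only $\Gamma\cong{\rm{Sym}}_5$, which satisfies the bound. $\PSL(2,q)$, $\Sz(q)$ and $\Ree(q)$ fail because the stabilizer of $\Lambda_1\supsetneq\{P\}\cup\Delta_0$ contains two distinct Sylow $p$-subgroups, whereas a point stabilizer in the natural action has a unique one. ${\rm{PSp}}(2d,2)$ fails since $\aut(N)=N$ would force $\Gamma=N$.
\end{itemize}
Your fallback via Proposition \ref{pro04052025} does not replace this, because that proposition assumes a bound on $|H_P|$ rather than providing one.

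Two smaller repairs are also needed.
\begin{itemize}
\item In case (a) you must also exclude a generalized quaternion $S_P$. The paper does this with Result \ref{resbs}, since $N$ is simple.
\item The upper bound on $k$ requires Proposition \ref{leterribile24} applied to $N$, i.e.\ $|\Delta|<3(\mathfrak{g}(\cX)-1)|U_P|$. With $H_P$ instead, the factor $|H_P|/|U_P|=|\Gamma:N|$ spoils the estimate. Applying it to $N$ needs $|\Delta_0|>1$ and $S_P$ Sylow in $N$, which you should establish first.
\end{itemize}
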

\begin{proof} By the fourth claim in Proposition \ref{prop290723A}, we may assume that $\Delta$ is the unique non-tame orbit of $\Gamma$. Moreover, $S_P\le N$ by Theorem \ref{the03072024}. 
 By the first part of the proof of Proposition \ref{prop03082024}, we may assume that $|\Delta_0|>1$. Hence $|\Delta|\equiv 1 \pmod{p}$ by (\ref{eq23092025}). Therefore, $S_P$ is a Sylow $p$-subgroup of $\Gamma$. From $S_P\le N$, $S_P$ is also a Sylow $p$-subgroup of $N$.
 Then (*) holds for $N$, and $N$ is transitive on $\Delta$. Hence (***) does not hold for $N$, and $\Delta$ is the unique non-tame orbit of $N$. In particular, Proposition \ref{leterribile24} applies to $N$.
Since $N$ is a subgroup of $\Gamma$, $N_P$ also
 preserves $\Delta_0$ and acts faithfully on the set of the other $S_P$-orbits as a semiregular permutation group. Thus there exists an integer $\lambda$ such that an equation analog to \eqref{eq15052024BB} holds for $N$ where $\lambda\geq 1$, as $\Delta$ is a $N$-orbit, but $\{P\}\cup\Delta_0$ is a proper subset in $\Delta$.

Now, assume on the contrary that $N$ acts on $\Delta$ as an imprimitive permutation group. Then $N$ is a proper subgroup of $\Gamma$, and there exists an intermediate subgroup $T$ of $N$ such that $N_P\subsetneqq T \subsetneqq N$. Here $T_P=N_P$ and in particular $T_P$ contains $S_P$. Let $\Delta_1$ denote the orbit of $P$ in $T$. We may choose $T$ to be minimal so that $T_P$ acts on the $T$-orbit $\Delta_1$ containing $P$ as a primitive permutation group. Observe that $\Delta_0$ is an $N_P$-orbit as it is an $S_P$-orbit and $S_P\le N_P$. Since $T_P=N_P$, the Orbit theorem shows that $\Delta_0$ is a $T_P$-orbit as well. We may essentially argue as in the proof of Theorem \ref{the03072024}. 

Two cases occur according as 
 either $\Delta_0\cap \Delta_1=\emptyset$, or $\{P\}\cup\Delta_0\subset\Delta_1$. In fact, for a point $R\in \Delta_0\cap \Delta_1$, the $S_P$-orbit of $R$ is $\Delta_0$ which is contained in $\Delta_1$ as $S_P$ is a subgroup of $T$. We rule out the former possibility $\Delta_0\cap \Delta_1=\emptyset$. If this case occurs, then $T$ acts faithfully on $\Delta_1$ as Frobenius group. In fact, take a point $R\in \Delta_1$ other than $P$, and let $V$ be the stabilizer of $P$ and $R$ in T. Since $P$ is fixed by $V$, the subgroup $T_P$ of $T$ contains $V$. By $N_P=T_P$ this implies that $V\le N_P$. But then from Lemma \ref{lem24072024}, the fixed points of $V$ are in $\{P\}\cup \Delta_0$ whereas
 $R\not\in \Delta_0$. Thus, $V$ is trivial, and hence $T$ is a Frobenius group. From Result \ref{thom}, the Sylow subgroups of $T_P$ are cyclic or generalized quaternion groups. Since $N_P=T_P$, the Sylow $p$-subgroup $S_P$ of $N$ is either cyclic or a generalized quaternion group. The latter case is dismissed by Result \ref{resbs}. Hence, $S_P$ is cyclic, but this is impossible in our case by Proposition \ref{pro10072024}.

Thus, we may assume $\Delta_0\subseteq\Delta_1$ for the rest of the proof. Then an equation analog to \eqref{eq15052024BB} holds for $T$:
\begin{equation}\label{eq15052024BisB}
    |\Delta_1|=1+|\Delta_0|+\mu |T_P|=1+|\Delta_0|+\mu |N_P|,
\end{equation}
with $0\le \mu<\lambda$. Now, let
$$
k=\frac{|N|}{|T|}=\frac{|\Delta|}{|\Delta_1|}>1.
$$
Then
\begin{equation}
\label{eq11082024U}
k(1+|\Delta_0|+\mu |N_P|)=1+|\Delta_0|+\lambda |N_P|,
\end{equation}
whence
$$(k-1)(|\Delta_0|+1)=\alpha|N_P|$$
with $\alpha=\lambda-k\mu$. Therefore,
\begin{equation}
\label{eq03082024U}
(k-1)=\frac{\alpha |N_P|}{|\Delta_0|+1}=|S_P|\,\frac{\alpha |U_P|}{|\Delta_0|+1}
\end{equation}
where $T_P=S_P\rtimes U_P$, $\alpha|U_P|/(|\Delta_0|+1)$ is an integer as $(|S_P|,|\Delta_0|+1)=1$.  Therefore,
$$|\Delta|>k\mu |N_P|> \mu |S_P|^2 |U_P| \frac{\alpha|U_P|}{|\Delta_0|+1}$$ whence
$$\frac{1}{|U_P|}|\Delta|>\mu |S_P|^2 \frac{\alpha|U_P|}{|\Delta_0|+1}.$$
Note that if $p=5$ then $|\Delta|>25$. Thus Proposition \ref{leterribile24} 
yields
$$3(\mathfrak{g}(\cX)-1)>\mu |S_P|^2 \frac{\alpha|U_P|}{|\Delta_0|+1}.$$
From the second claim in Lemma \ref{lem14dic21}
$$3|S_P|^2>\mu |S_P|^2 \frac{\alpha|U_P|}{|\Delta_0|+1}$$
whence either $\mu=0$, or
\begin{equation}
\label{eqC03082024A}
\mu\in\{1,2\},\,k=|S_P|+1,\,\,{\mbox{and}}\,\, \alpha=\frac{|\Delta_0|+1}{|U_P|},
\,\,{\mbox{or}}\,\,
\mu=1,\, k=2|S_P|+1,\,\,{\mbox{and}}\,\, \alpha=2\,\frac{|\Delta_0|+1}{|U_P|}.
\end{equation}





First the case $\mu\ne 0$ is investigated.

Since $N$ is assumed to be imprimitive on $\Delta$, there is a non-trivial $N$-invariant partition $\Lambda=\Lambda_1\dot\cup\ldots,\dot\cup\Lambda_r$ of $\Delta$ with $|\Lambda_i|=|\Lambda_j|$ for $1\le i <j \le r$. Let $\Phi_i=\Delta_1\cap \Lambda_i$
for $i=1,\ldots,r$. As $T$ acts on $\Delta_1$ as a primitive permutation group, either
$\Delta_1$  is contained in a member of $\Lambda$ (and (iia) holds), or
$|\Phi_i|\le 1$  for $i=1,\ldots,r$. In fact, if $1<|\Phi_j|<|\Delta_1|$ were true for some $1\le j \le r$, then $\Phi_j$ together with its images $t(\Phi_j)$ where $t$ ranges over $T$, would form a system of $T$-invariant system of imprimitivity. Therefore, we may assume that 
$|\Phi_i|\le 1$ holds for $1\le i \le r$. Then, $|\Delta_1||\Lambda_1|\le |\Delta|$ whence $|\Lambda_1|\le |N|/|T|=k$.
On the other hand, if $P\in \Lambda_j$ then $\Lambda_j$ is preserved by $T_P=N_P$, and hence the same holds for $\Lambda_j\setminus \{P\}$. Since $\Lambda_j\cap \Delta_1=\{P\}$ while $\Delta_0\subseteqq \Delta_1$, it turns out that $\Lambda_j$ contains a long $N_P$-orbit. Therefore, $|\Lambda_j|>|N_P|$. This together with $|\Lambda_1|\le k$ yield $|N_P|\le k-1$.
By (\ref{eq03082024U}), this yields $\alpha\ge |\Delta_0|+1$ which together with (\ref{eqC03082024A}) show that either $\mu=0$, or $\alpha=|\Delta_0|+1$ and $|U_P|\le 2$, or $\alpha=2(|\Delta_0|+1)$ and $|U_P|=1$. From (\ref{eq11082024U}) and (\ref{eq03082024U}),
$$|\Delta|=k|\Delta_1|\le (2|S_P|+1)(1+|\Delta_0|+2|S_P|)\le (2|S_P|+1)(1+\frac{1}{p}|S_P|+2|S_P|)<5|S_P|^2$$
whence, by (\ref{eq15052024BB}) and the third claim of Lemma \ref{lem14dic21},
$$|\Gamma|=|\Gamma_P||\Delta|<|\Delta|^2<25 \left(\frac{p}{p-1}\right)^2\gamma(\cX)^4.$$

Therefore, we may assume that the $T$-orbit $\Delta_1$ is
contained in a member of the partition $\Lambda$, say $\Lambda_1$. Then $S_P$ preserves $\Lambda_1$. Moreover,
$$k=\frac{|\Delta|}{\,|\Delta_1|}\ge \frac{|\Delta|}{\,|\Lambda_1|}=r,$$
and equality holds when $\Delta_1=\Lambda_1$. We show that $S_P$ acts faithfully on $\Lambda \setminus \Lambda_1=\{\Lambda_2,\ldots,\Lambda_r\}$ as a semi-regular permutation group. Assume on the contrary that some non-trivial element  $g\in S_P$ preserves $\Lambda_i$ for some $2\le i\le r$. 
As $r|\Lambda_i|=|\Delta|$ together with $|\Delta|\equiv 1 \pmod{p}$ imply $|\Lambda_i|\not\equiv 0 \pmod{p}$, $g$ fixes a point of $\Lambda_i$. But this cannot occur in our case, since $\{P\}\cup \Delta_0\subset \Lambda_1$ and $\Lambda_1\cap \Lambda_i=\emptyset$. Therefore, $r=\tau|S_P|+1$ with a positive integer $\tau$.
Comparison to (\ref{eqC03082024A}) shows that two cases may occur according as $\tau=1$, or $\tau=2$. More precisely, if $\tau=1$, i.e. $S_P$ is regular on $\Lambda\setminus \Lambda_1$, either $k=|S_P|+1$, or $k=2|S_P|+1$; if $\tau=2$, i.e. $\Lambda\setminus \Lambda_1$ splits into two $S_P$-orbits of size $|S_P|$, and $k=r=2|S_P|+1$.

Look at $N$ as a group acting on $\Lambda$. Since $N$ is simple, its action on $\Lambda$ is faithful. Moreover, if $\tau=1$, then $N$ is $2$-transitive on $\Lambda$, and we show that this holds true for $\tau=2$. In fact, in the latter case, the stabilizer of $\Lambda_1$ in the action of $N$ on $\Lambda$ has exactly two orbits two both of length $|S_P|$, i.e. $N$ is
a $\frac{3}{2}$-transitive group on $\Lambda$, and hence of degree $2|S_P|+1$. Therefore, Result \ref{res3/2} applies. Since Cases (ii) and (iii) in Result \ref{res3/2} cannot occur in our case as $2|S_P|+1$ is odd,
it turns out that $N$ is a  doubly transitive (permutation) group on $\Lambda$ whose degree $n$ equals either $|S_P|+1=p^h+1$, or $2|S_P|+1=2p^h+1$ where $h\ge 2$ by Proposition \ref{pro96072024A}.

Now, Result \ref{cam} shows the possibilities for $N$.
For $N\cong {\rm{Alt}}_r$, $r\ge 5$,
since $S_P$ is a Sylow $p$-subgroup of $N$ by Theorem \ref{the03072024},  
$p|S_P|$ does not divide $\ha r!$ and hence $\ha (r-1)!$. On the other hand, either $r-1=p^h$ or $r-1=2p^h$. Therefore, $p\nmid  \ha(r-2)!$, and hence
$p>\ha(r-2)$. Since $h\ge 2$ by Proposition \ref{pro96072024A}, this implies
$r=5,p=2, h=2$ with $N\cong {\rm{Alt}}_5$ and  $\Gamma\cong {\rm{Sym}}_5$. Since $|{\rm{Sym}}_5|=120$ and $\gamma(\cX)\ge 2$, the bound (\ref{eq27092025A}) holds.
Now, assume that $N$ acts (faithfully) on $\Lambda$ as one of the groups $\PSL(2,q)$, $Sz(q)$, $\Ree(q)$ in its natural $2$-transitive permutation representation. Then $q$ is a power of $p$, and $1$-point stabilizer has a unique Sylow $p$-subgroup.
On the other hand, since $\Lambda_1$ is larger than $\{P\}\cup \Delta_0$, $T$ contain a subgroup $S_R$ other than $S_P$, in particular $T$ and $N$ contains at least two distinct Sylow $p$-subgroups preserving $\Lambda_1$. Therefore, none of these cases can actually occur. In the remaining case, $N\cong {\rm{PSp}}(2d,2)$ and hence $\aut(N)=N$. From Proposition \ref{pro27092025A}, $N=\Gamma$, a contradiction.


If $\mu=0$ then $\Delta_1=\{P\}\cup\Delta_0$. This together with the second claim in Result \ref{resstab} yields that $\{P\}\cup\Delta_0$ is a block of $N$. To show that $\{P\}\cup\Delta_0$ is also a block of $\Gamma$, take $g\in \Gamma$
such that $P'=g(P)$ belongs to $\{P\}\cup\Delta_0$. Look at the set $\{P'\}\cup \Delta_0'$ where
$\Delta_0'$ is the set of all fixed points of the non-trivial elements in $S_{P'}$ other than $P'$. Then the image of $\{P\}\cup\Delta_0$ by $g$ is $\{P'\}\cup\Delta_0'$. Since $T$ is transitive on $\Delta_1$, there is an element $t\in T$ which takes $P$ to $P'$. Then $\{P\}\cup\Delta_0=\{P'\}\cup\Delta_0'$ as $t\in N$ and $\{P\}\cup\Delta_0$ is block of $N$ whence the claim follows.

\end{proof}
Theorem \ref{the03072024} together with Propositions \ref{pro27092025A} and \ref{pro16082024} have the following corollary.
\begin{theorem}
\label{th06072024} Assume that both {\rm{(*)}} and {\rm{(**)}} but {\rm{(***)}} hold, and that $S_P$ has only one short orbit $\Delta_0$ other than $\{P\}$. If $\Gamma$ is a primitive group on $\Delta$, then either
 \begin{equation}
 \label{eqA24072025B}
 |\Gamma|\le  25\left(\frac{p}{p-1}\right)^4\gamma(\cX)^4,
 \end{equation}
 or $\Gamma$ is an almost simple group whose socle $N$ is a non-abelian, primitive permutation group on $\Delta$ containing the Sylow $p$-subgroups of $\Gamma$.
\end{theorem}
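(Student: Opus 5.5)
The theorem is stated as a corollary, and the plan is to chain Theorem \ref{the03072024}, Proposition \ref{pro27092025A} and Proposition \ref{pro16082024}, checking at each step that the exceptional bound which may arise is at most the one in (\ref{eqA24072025B}).

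\emph{Step 1.} Apply Theorem \ref{the03072024}. Its hypotheses are exactly ours: (*) and (**) hold, (***) fails, $S_P$ has a unique short orbit $\Delta_0$ other than $\{P\}$, and $\Gamma$ is primitive on $\Delta$. So either (\ref{eqA24072024}) holds, or $\Gamma$ has a non-abelian simple normal subgroup $N$ with $S_P\le N$. In the first case, since $24\le 25$, bound (\ref{eqA24072025B}) follows at once. So assume $N$ exists.

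\emph{Step 2.} By Proposition \ref{pro27092025A}, $C_\Gamma(N)=1$. Conjugation therefore embeds $\Gamma$ into $\aut(N)$ with $N\le\Gamma$. Any other minimal normal subgroup $L$ of $\Gamma$ satisfies $L\cap N=1$, hence $[L,N]=1$ and $L\le C_\Gamma(N)=1$. Thus $N$ is the unique minimal normal subgroup, i.e.\ the socle, and $\Gamma$ is almost simple with socle $N$.

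\emph{Step 3.} Apply Proposition \ref{pro16082024}. Either (\ref{eq27092025A}) holds, which is literally (\ref{eqA24072025B}), or $N$ is primitive on $\Delta$.

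\emph{Step 4.} It remains to see that $N$ contains the Sylow $p$-subgroups of $\Gamma$. As at the start of the proof of Proposition \ref{pro16082024}, we may assume $|\Delta_0|>1$. Otherwise, by the first part of the proof of Proposition \ref{prop03082024}, $\{P\}\cup\Delta_0=\{P,R\}$ together with its images is a system of imprimitivity; since (***) fails, $|\Delta|>2$ and this block is proper, contradicting primitivity. Hence (\ref{eq23092025}) gives $|\Delta|\equiv 1\pmod p$. From $|\Gamma|=|\Gamma_P||\Delta|$, $S_P$ is then a Sylow $p$-subgroup of $\Gamma$. Since $S_P\le N$ and all Sylow $p$-subgroups are conjugate while $N\lhd\Gamma$, every Sylow $p$-subgroup of $\Gamma$ lies in $N$.

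The only delicate point is Step 4, namely ruling out $|\Delta_0|=1$ under primitivity and then using the congruence modulo $p$. Everything else is bookkeeping of the constants $24,25\le 25$.
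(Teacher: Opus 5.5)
Your proposal is correct and is essentially the paper's own argument: the theorem is stated as a corollary of Theorem \ref{the03072024}, Proposition \ref{pro27092025A} (together with the socle remark that follows it) and Proposition \ref{pro16082024}, and your Sylow argument is what the opening lines of the proof of Proposition \ref{pro16082024} establish. One small point needs tightening in Step 4: both your appeal to the first part of the proof of Proposition \ref{prop03082024} (which needs some $g\in\Gamma$ with $g(P)=R$) and your use of (\ref{eq23092025}) presuppose $\Delta_0\subseteq\Delta$, which the paper secures by first reducing, via the fourth claim of Proposition \ref{prop290723A}, to the case where $\Delta$ is the unique non-tame orbit; alternatively, if $\Delta_0\cap\Delta=\emptyset$ then $S_P$ is semiregular on $\Delta\setminus\{P\}$, so $|\Delta|\equiv 1 \pmod{|S_P|}$ and the Sylow conclusion follows directly.
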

For $p=2$, the above results combine well with Result \ref{HeringShult} allowing us to obtain our goal when case (i) in Proposition \ref{prop03082024} occurs.

\begin{theorem}
\label{th11072024} Let $p=2$. Assume that both {\rm{(*)}} and {\rm{(**)}} but {\rm{(***)}} hold, and that $S_P$ has only one short orbit $\Delta_0$ other than $\{P\}$. If $\Gamma$ is a primitive group on $\Delta$, then $|\Gamma|\le 400 \gamma(\cX)^4$.
\end{theorem}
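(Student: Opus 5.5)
We reduce to the case described by Theorem \ref{th06072024} and then use Result \ref{HeringShult} to force double transitivity, which in turn forces (***).

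By Theorem \ref{th06072024} with $p=2$, either $|\Gamma|\le 25\cdot 2^4\gamma(\cX)^4=400\gamma(\cX)^4$ and we are done, or $\Gamma$ is almost simple with socle $N$. In the second case $N$ is a non-abelian simple group acting primitively, hence transitively, on $\Delta$, and $N$ contains $S_P$, which is a Sylow $2$-subgroup of $\Gamma$. By the fourth claim of Proposition \ref{prop290723A}, we may also assume that $\Delta$ is the unique non-tame orbit. By the first part of the proof of Proposition \ref{prop03082024}, we may assume $|\Delta_0|>1$, so $|\Delta|$ is odd by (\ref{eq23092025}).

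Next we produce a normal subgroup of $N_P$ of even order that is semiregular on $\Delta$, so that Result \ref{HeringShult} applies. The candidate is the subgroup $E\le Z(S_P)$ of Proposition \ref{pro10072024A}. It is elementary abelian, normal in $\Gamma_P$ (hence in $N_P=N\cap\Gamma_P$), and no non-trivial element of $E$ fixes a point other than $P$. For semiregularity on $\Delta\setminus\{P\}$ one needs that no non-trivial element of $E$ fixes a point of $\Delta\setminus\{P\}$. This holds for points of $\Delta_0$ by the choice of $E$. For the remaining points it follows from Lemma \ref{lem24072024}, since the fixed points of non-trivial elements of $\Gamma_P$ lie in $\{P\}\cup\Delta_0$.

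Result \ref{HeringShult} requires $U_P$ to be semiregular on the whole of $\Delta$, whereas $E$ fixes $P$. I would handle this as in Hering--Shult, where the hypothesis is understood as semiregularity on $\Delta\setminus\{P\}$. If that reading is not acceptable, I would apply Result \ref{heringshu} directly: since $N$ is simple, the alternative there (a transitive normal subgroup of odd order) is impossible. Either way $N$ is $2$-transitive on $\Delta$, and in fact $N$ is one of $\PSL(2,2^e)$, $Sz(2^e)$, $\PSU(3,2^e)$ in its natural representation.

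Double transitivity now gives the contradiction. The point stabilizer $N_P$ is transitive on $\Delta\setminus\{P\}$. In these natural representations its Sylow $2$-subgroup, which is $S_P$ by Sylow order considerations, is already transitive on $\Delta\setminus\{P\}$. Since $\Delta_0$ is an $S_P$-orbit, it follows that $\Delta_0=\Delta\setminus\{P\}$, so (***) holds, contradicting the hypothesis.

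I expect the main obstacle to be the semiregularity step: matching precisely the hypothesis of Result \ref{HeringShult} and confirming that $E$ has even order and is normal in $N_P$ rather than only in $\Gamma_P$. The remaining steps are direct invocations of earlier results.
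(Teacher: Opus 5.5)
Your proposal is correct and follows the paper's proof. You reduce via Theorem \ref{th06072024} to an almost simple $\Gamma$ whose socle $N$ contains $S_P$, take $E$ from Proposition \ref{pro10072024A}, and invoke the Hering--Shult result to make $N$ $2$-transitive with Sylow $2$-subgroup transitive on $\Delta\setminus\{P\}$; this forces $\Delta=\{P\}\cup\Delta_0$, contradicting the failure of (***). Applying Result \ref{HeringShult} to $N$ with $U_P=E\trianglelefteq N_P$, rather than to $\Gamma$ as the paper loosely phrases it, matches that result's hypotheses more cleanly.
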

\begin{proof} From Theorem \ref{th06072024}, if $|\Gamma|> 400 \gamma(\cX)^4$ then $N$ is a non-abelian simple, primitive permutation group on $\Delta$ containing the Sylow $2$-subgroups of $\Gamma$.
From Proposition \ref{pro10072024A}, there exists a normal subgroup $E\le S_P$ of $\Gamma_P$ whose non-trivial elements fix no point other than $P$. Since $p=2$,  $\Gamma$ together with $E$ satisfy the hypotheses of Result \ref{HeringShult}. Since $N$ is non-abelian simple and it is the normal closure of $E$ in $\Gamma$, this yields that  $N$ is a $2$-transitive permutation group on $\Delta$ where the Sylow $2$-subgroup of $N$ fixes a point and acts transitively on the remaining points of $\Delta$. But then $\{P\}\cup \Delta_0=\Delta$, a contradiction as (***) does not hold.
\end{proof}
Unfortunately, the case $p>2$ is more involved. Theorem \ref{th06072024} offers the chance to exploit the deep classification of primitive groups whose $1$-point stabilizer is solvable. For this purpose, we collect some further preliminary results.
\begin{proposition}
\label{pro08072024} Let $p>2$. Under the assumptions of Theorem \ref{the03072024},  either (\ref{eqA24072025B}) holds, or
$\Gamma$ has a non-abelian simple subgroup $N$ that has the following properties
\begin{itemize}
\item[(i)] $N$ is a primitive permutation group on $\Delta$.
\item[(ii)] $N$ is not $2$-transitive on $\Delta$.
\item[(iii)] the stabilizer $N_P$ of $P\in\Delta$ in $N$ is a semidirect product of a Sylow $p$-subgroup $S_P$ with a (non-trivial) cyclic component $H_P$.
\item[(iv)] $S_P$ is not a cyclic group.
\item[(v)] Let $N_P=H_1....H_k$. Then  $H_i$, for $1\le i \le k$, has a cyclic Sylow 2-subgroup; in particular, it is neither an elementary abelian $2$-group of order $\ge 4$ nor isomorphic to any of the following groups ${\rm{Alt}}_4,\rm{\Sym}_4,Q_8,C_2\times C_2, S_3\times S_3,GL(2,3),SL(2,3), SU(3,2), PSL(2,3), PSU(3,2)$, and the central products $C_4\odot GL(2,3)$ and $SL(2,3)\odot SL(2,3)$.
\item[(vi)] If $N_P=N_0.N_1$ with a cyclic subgroup $N_0$ then $p$ divides $|N_1|$.
\item[(vii)] Assume that there exists a group $U$ with a central involution $u$ such that $H_P=\bar{U}=U/\langle u \rangle$. Let  $U=U_1.\cdots. U_k$. Then the Sylow $2$-subgroups of $U_i$ are abelian.
\end{itemize}
\end{proposition}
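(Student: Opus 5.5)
We begin with Theorem \ref{th06072024}. Either (\ref{eqA24072025B}) holds, or $\Gamma$ is almost simple with socle $N$. In the second case $N$ is non-abelian simple, primitive on $\Delta$, and contains the Sylow $p$-subgroups of $\Gamma$; in particular $S_P\le N$. This gives (i). We may also assume, as in the proof of Proposition \ref{pro16082024}, that $\Delta$ is the unique non-tame orbit and that $|\Delta_0|>1$. Then $|\Delta|\equiv 1\pmod p$, so $S_P$ is a Sylow $p$-subgroup of $N$.

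For (ii), suppose $N$ were $2$-transitive on $\Delta$. Since $N_P\le\Gamma_P$ is solvable, and $|\Delta|$ is odd or even, Result \ref{holtre} or Result \ref{onanre} applies, arguing as in Proposition \ref{pro020222}. It gives $N$ as one of $\PSL(2,q)$, $\PSU(3,q)$, $Sz(q)$, $\Ree(q)$ in its natural action. In each of these, a Sylow $p$-subgroup of the point stabilizer is transitive on the remaining points. Hence $S_P$ would be transitive on $\Delta\setminus\{P\}$, giving $\Delta=\{P\}\cup\Delta_0$, which contradicts the failure of (***). The exceptional case $\rm{P\Gamma L}(2,8)$ of degree $28$ has to be checked separately, but its socle $\PSL(2,8)$ falls into the same pattern.

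For (iii), Result \ref{res74} gives $N_P=S_P\rtimes H_P$ with $H_P$ cyclic; here $H_P$ denotes $N_P\cap H_P$ with the original complement. We show $H_P$ is non-trivial as follows. If $N_P=S_P$, then every non-trivial element of $N_P$ fixes only points of $\{P\}\cup\Delta_0$, by Lemma \ref{lem24072024}. Combined with primitivity, this points towards $N$ being close to a Frobenius-type action. We would try to exclude it either with Gleason's Result \ref{resgle} and Hering's Result \ref{HC}, or by using Proposition \ref{prop03082024}: when $|H_P|\le 2$, Proposition \ref{pro04052025} already yields a bound of type (\ref{eqA24072025B}).

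For (iv), we need $S_P$ non-cyclic. If $S_P$ is cyclic, Proposition \ref{pro10072024} makes $\Gamma$ imprimitive, which contradicts primitivity.

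Items (v)–(vii) all come from one fact: the whole of $\Gamma_P$, and therefore every subgroup and section of $N_P$ relevant here, has a cyclic Sylow $2$-subgroup, because $p>2$. Indeed, $N_P\le\Gamma_P=S_P\rtimes H_P$ with $|S_P|$ odd and $H_P$ cyclic.
\begin{itemize}
\item For (v), Result \ref{res14072024} applied to any series $N_P=H_1.\cdots.H_k$ shows that each factor has cyclic Sylow $2$-subgroups. Result \ref{resB14072024} then excludes the listed groups.
\item For (vi), if $N_P=N_0.N_1$ with $N_0$ cyclic and $p\nmid|N_1|$, then Result \ref{resA14072024} applied to the normal $p$-subgroup $S_P$ gives $S_P\le N_0$. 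So $S_P$ would be cyclic, contradicting (iv).
\item For (vii), Result \ref{res18072024} applies directly, because the quotient $\bar U=H_P$ is cyclic.
\end{itemize}

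The main obstacle is (ii), together with the non-triviality of $H_P$ in (iii). The rest is bookkeeping with the earlier results. For (ii) we would first try to reduce to Proposition \ref{pro020222} applied to $N$ acting on $\Delta$, which needs $N$ doubly transitive with trivial kernel. That is exactly the hypothesis of the contradiction argument, so the reduction should go through, using that the transitive $S_P$-action forces (***).
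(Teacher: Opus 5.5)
Your proofs of (i), (iv), (v), (vi), (vii) and of the decomposition in (iii) follow the paper. They use Theorem \ref{th06072024}, Proposition \ref{pro10072024} and Results \ref{res74}, \ref{res14072024}, \ref{resB14072024}, \ref{resA14072024}, \ref{res18072024}. Your remark that $\Gamma_P=S_P\rtimes H_P$ has cyclic Sylow $2$-subgroups when $p>2$ is exactly the reason these results apply, which the paper leaves unstated.

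The genuine difference is (ii). You pass $N$ through Proposition \ref{pro020222}, whose hypotheses do hold for $N$ on $\Delta$, and hence through the Holt and O'Nan classifications. This gives a natural action of $\PSL(2,q)$, $\PSU(3,q)$, $Sz(q)$ or $\Ree(q)$, and you then use that a Sylow $p$-subgroup of a point stabilizer is regular on the remaining points. That is valid, but the paper needs no classification:
\begin{itemize}
\item If $N$ is $2$-transitive, then $N_P$ is transitive on $\Delta\setminus\{P\}$.
\item Since $S_P\trianglelefteq N_P$, the $S_P$-orbits in $\Delta\setminus\{P\}$ are permuted transitively by $N_P$, so all have length $|\Delta_0|<|S_P|$.
\item Being short, they all coincide with $\Delta_0$. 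Hence $\Delta=\{P\}\cup\Delta_0$, and (***) holds, a contradiction.
\end{itemize}
This uses only that $\Delta_0$ is the unique short $S_P$-orbit besides $\{P\}$. Separately, your remark that the socle $\PSL(2,8)$ of ${\rm{P\Gamma L}}(2,8)$ ``falls into the same pattern'' is wrong. $\PSL(2,8)$ is not $2$-transitive on $28$ points, since $28\cdot 27\nmid 504$; that case is excluded simply because $N$ is simple.

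On the non-triviality of the complement in (iii), the paper takes (iii) straight from Result \ref{res74} and gives no argument for non-triviality. So here you attempt more than the paper does, but your fallback does not work, and the Gleason--Hering suggestion is too vague to assess. Proposition \ref{pro04052025} concerns the complement of $S_P$ in $\Gamma_P$, not in $N_P$. Since $\Gamma=N\Gamma_P$ and $S_P\le N$, that complement has order $[N_P:S_P]\cdot[\Gamma:N]$, so $N_P=S_P$ does not make it at most $2$.

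A clean argument goes as follows. By (i), $N_P$ is a maximal subgroup of the non-abelian simple group $N$. If $N_P=S_P$, it would be a nilpotent maximal subgroup of odd order, because $p>2$. Thompson's theorem would then force $N$ to be solvable, a contradiction.
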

\begin{proof} (i) comes from Theorem \ref{th06072024}. If $N$ was $2$-transitive then $N_P$ would be transitive on $\Delta\setminus \Delta_0$. Since $S_P$ is a normal subgroup of $N_P$ this would yield that every point $Q\in\Delta$ is fixed by some non-trivial element in $S_P$. But this contradicts our hypothesis that (***) does not hold.
Therefore, (ii) holds. Moreover, (iii) comes from Result \ref{res74} while (iv)  from Proposition \ref{pro10072024}. Finally, (v), (vii) are direct consequences of Results \ref{resB14072024}, \ref{res18072024} while (vi) follows  from (iv) and Result \ref{resA14072024} applied to $M=S_P$.
\end{proof}

\begin{proposition}
\label{pro14072024} Let $p>2$. There exist  three non-abelian simple group $N$ satisfying all the properties in Proposition \ref{pro08072024}. They only occur for $p=3$, namely in the following cases:
\begin{itemize}
\item[(I)] $N=\rm{Alt}_6$ and $N_P=(C_3\times C_3)\rtimes C_4$,
\item[(II)] $N=\PSU(3,8)$ and $N_P=W\rtimes C_2$ with a group $W$ of order $81$.
\item[(III)] $N=J_3$ and $N_P=(C_3\times C_3).(C_3\times C_3 \times C_3)\rtimes C_8$.
\end{itemize}
\end{proposition}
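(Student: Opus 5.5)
The plan is to run through the Li--Zhang classification \cite{LiZhang} of pairs $(N,N_P)$ in which $N$ is a non-abelian simple primitive permutation group with solvable point stabilizer $N_P$. By (i) and (iii) of Proposition \ref{pro08072024}, $N_P$ is solvable and maximal in $N$, so every candidate appears in that list. We discard the entries that are $2$-transitive, using (ii). The remaining entries are tested one at a time against the structural conditions (iii)--(vii). These conditions are stated in terms of the shape $N_P=H_1.\cdots.H_k$ that the classification records, so they can be read off directly.

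The first step handles the infinite families of Lie type. For parabolic-type stabilizers (Borel subgroups of $\PSL(2,q)$, $\PSL(3,q)$, $\PSU(3,q)$, $Sz(q)$, $\Ree(q)$, $G_2(q)$, and so on), condition (iii) forces $q$ to be a power of $p$. The resulting actions are then either $2$-transitive, which violates (ii), or the Levi complement is not cyclic, or it contains a factor with non-cyclic Sylow $2$-subgroup, which violates (v). The torus normalizers and the other non-parabolic maximal solvable subgroups, such as $C_{(q\pm1)/d}\rtimes C_2$, $(C_{q^2-q+1}/d)\rtimes C_3$ and the imprimitive monomial subgroups, have Sylow $p$-subgroup either cyclic or trivial, or of the form $N_0.N_1$ with $N_0$ cyclic and $p\nmid|N_1|$. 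The first case is excluded by (iv) and the second by (vi). Stabilizers of the type $2^{1+2}.\ldots$ or $3^{1+2}.Q_8$ are excluded by (v) and (vii) through Results \ref{resB14072024} and \ref{res18072024}. Divisibility side conditions that are needed here, for instance whether $p$ can divide $(u^r\pm1)/((u\pm1)(r,u\pm1))$, will be settled with Result \ref{resD14072024}.

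The second step treats the alternating groups and the sporadic groups, where the classification gives a finite list. For $N\cong{\rm{Alt}}_n$ the solvable maximal subgroups are $\AGL(1,p')$, ${\rm{Sym}}_2\wr{\rm{Sym}}_4$, ${\rm{Sym}}_3\wr{\rm{Sym}}_2$, small affine groups, and so on. Result \ref{bersym} removes the cases in which the normalizer of a Sylow subgroup has cyclic complement and $n\ne d$. Direct inspection then leaves only ${\rm{Alt}}_6$ with $N_P=(C_3\times C_3)\rtimes C_4$, the stabilizer of the action on $10$ points, which in $\PSL(2,9)$ becomes the imprimitive-type action. For the sporadic groups, a table check of their solvable maximal subgroups against (iii)--(vii) leaves only $J_3$ with $N_P=3^2.3^{1+2}\rtimes C_8$, written in the statement as $(C_3\times C_3).(C_3\times C_3\times C_3)\rtimes C_8$. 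In every surviving case $p=3$, because a non-cyclic normal Sylow $p$-subgroup with cyclic complement, together with (v), appears only for the prime $3$ in the lists.

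The step I expect to be the main obstacle is the exceptional low-rank unitary case that yields $\PSU(3,8)$ with $N_P$ of order $162$. Here the stabilizer $3^{1+2}.Q_8.\ldots$ or $(C_3\times C_3).\ldots$ arising from $q=8\equiv -1\pmod 3$ is not parabolic for $p=3$. Its Sylow $3$-subgroup of order $81$ is normal only because of the extra factor $(3,q+1)$, so the generic argument of the first step does not apply. I would isolate the families $\PSU(3,q)$ and $\PSL(3,q)$ with $q\equiv\mp1\pmod 3$ and compute their solvable maximal subgroups explicitly. Result \ref{resD14072024}(i) shows that $3$ divides the order of the normal subgroup only to bounded power, and this bound should single out $q=8$. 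I would also check carefully that (vii) does not eliminate this case, since the complement there is just $C_2$.
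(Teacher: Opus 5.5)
Your overall strategy is the paper's: run the Li--Zhang list of simple groups with a solvable maximal subgroup through conditions (ii)--(vii). However, the step you identify as the crux uses the wrong tool, and the alternating-group step rests on a false claim.

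\textbf{The unitary case.} Result~\ref{resD14072024}(i) controls the $r$-part of $(u^r\pm1)/((u\pm1)(r,u\pm1))$. It is what eliminates the Singer-type subgroups $C_{(u^r\pm1)/((u\pm1)(r,u\pm1))}\rtimes C_r$, by forcing $S_P\cong C_r$ against (iv). It says nothing about the torus normalizers $[(u\pm1)^2/(3,u\pm1)].{\rm{Sym}}_3$ of $\PSL(3,u)$ and $\PSU(3,u)$, and that is exactly where $\PSU(3,8)$ comes from. So your plan has no mechanism that singles out $q=8$. Your first-step claim is also false for this family: torus normalizers and monomial subgroups do not always have cyclic or trivial Sylow $p$-subgroups, nor do they always fall under (vi).

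The missing argument is short. Let $R$ be the abelian torus part, so $N_P/R\cong{\rm{Sym}}_3$.
\begin{itemize}
\item By (iii), $N_P/S_P\cong H_P$ is cyclic. Hence $S_P\not\le R$, since otherwise the cyclic group $N_P/S_P$ would map onto ${\rm{Sym}}_3$. So $p\mid 6$, i.e. $p=3$.
\item The $3'$-part of $R$ is isomorphic to $C_m\times C_m$, where $m$ is the $3'$-part of $u\pm1$. It embeds in $H_P$ via $R/(R\cap S_P)\cong RS_P/S_P\le N_P/S_P$. Since $H_P$ is cyclic, $m=1$.
\item Therefore $u\pm1=3^a$, so $u$ is even and $u=2^k$.
\item Result~\ref{lecatalan} applied to $2^k\pm1=3^a$ leaves only two possibilities. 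In the unitary case $u=8$: here $|N_P|=162$, the Sylow $3$-subgroup of order $81$ is normal of index $2$, and $H_P\cong C_2$. In the linear case $u=4$, which is excluded from that row of the table.
\end{itemize}

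\textbf{The alternating case.} $3^2{:}4\le{\rm{Alt}}_6\cong\PSL(2,9)$ is the Borel subgroup. The degree-$10$ action is the natural action on $\PG(1,9)$: the subgroup $C_3\times C_3$ acts regularly on the nine partitions other than the fixed one. So this action is $2$-transitive, not ``imprimitive-type'', and your own first filter (ii) removes it.

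Consequently the existence half of (I) cannot be proved. What your argument establishes is the ``only'' direction, with (I) as a harmless superfluous case. The paper's proof has the same defect, since it checks only (vi) for this entry, and (I) is disposed of anyway in Theorem~\ref{th08072024}.

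Two further points in this step:
\begin{itemize}
\item Result~\ref{bersym} concerns ${\rm{Sym}}_n$ and presupposes that $S_P$ is a Sylow subgroup of $N$, which is not among (i)--(vii). You do not need it: $C_d\rtimes C_{(d-1)/2}$ has only cyclic Sylow subgroups and is removed by (iv) directly.
\item Your sentence explaining why $p=3$ in every surviving case is an assertion, not an argument. For each surviving entry it should be derived, as the paper does, from (vi), or from the ${\rm{Sym}}_3$ quotient as above.
\end{itemize}
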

\begin{proof} We rely on the classification of all pairs $(G,H)$ where $G$ is non-abelian simple group and $H$ is a soluble maximal subgroup of $G$; see \cite{LiZhang}. Since such pairs $(G,H)$ are exactly the primitive permutation groups $G$ with soluble 1-point stabilizer $H$
we have to show that no pair $(G,H)$ given in \cite{LiZhang} satisfies all conditions (ii), (iii), (iv) and (v) in Proposition \ref{pro08072024}.

We proceed by a careful inspection of the pairs $(G,H)$ listed in seven tables, named Table $14,\ldots,20$ in \cite{LiZhang}. First we create a shortlist by discarding  all pairs $(G,H)$ that do not satisfy the rather shrinking conditions (iii) and (v). Then we verify whether some pairs $(G,H)$ in  the shortlist may satisfy the other conditions (ii),(iv) and (vi).

\subsection{Table 14 in \cite{LiZhang}} The case $G={\rm{Alt}}_n$ is considered. Then the shortlist comprises $({\rm{Alt}}_6,(C_3\times C_3)\rtimes C_4)$ and, for a prime $d$,  $({\rm{Alt}}_d,C_d\rtimes C_{(d-1)/2})$. In the former case $p=3$ by (vi) and (I) holds.  To rule out the latter case, let $V$ be the normal subgroup of $H$ which is isomorphic to $S_P$. Since $d$ is prime, condition (iv) implies $V\ne  C_d$.
Now look at the natural permutation representation of $\rm{Alt}_d$ on a set $\Lambda$ of size $d$. Since $p\ne d$ and $V$ is a $p$-group, some element $L\in \Lambda$ is fixed by $V$. On the other hand $V$ is a normal subgroup of $H$ acting transitively on $\Lambda$. But then $V$ fixes $\Lambda$ element-wise, a contradiction.

\subsection{Table 15 in \cite{LiZhang}} The $27$ sporadic simple groups are considered. Some cases entering  the shortlist arise from the Janko groups: They are $(J_3,((C_3\times C_3).(C_3\times (C_3\times C_3))\rtimes C_8)$, and $(J_4,C_{29}\rtimes C_{28})$, and $(J_4,C_{43}\rtimes C_{14})$, and $(J_4,C_{37}\rtimes C_{12})$. The first case is (III). The latter three cases do not satisfy either condition (vi) or condition (iv).  Others arise from the Lyons group, and from the Held group, and from the Thompson group, from the  Fisher group $Fi_{24}'$, from the McLaughlin group, and from the Baby Monster group $B$, and from Fisher-Griess Monster Group $M$.
They are precisely, $(Ly, C_{67}\rtimes C_{22})$, and $(Ly, C_{37}\rtimes C_{18})$, and $(He,V\rtimes(S_3\times C_3))$ where $V$ is an extraspecial  group of order $7^3$, and $(Th, C_{31}\rtimes C_{15})$, and $(Fi_{24}',C_{29}\rtimes C_{14})$, and $(McL,(U\rtimes C_3)\rtimes C_8)$ where $U$ is an extraspecial group of order $5^3$, and $(B,C_{31}\rtimes C_{15})$, and $(B,C_{47}\rtimes C_{23})$, and $(M,C_{41}\rtimes C_{40})$, and $(M,C_{59}\rtimes C_{29})$, and $(M,C_{71}\rtimes C_{35})$. None of them but $(McL,(U\rtimes C_3)\rtimes C_8)$ satisfies both conditions (vi) and (iv). A MAGMA aided computation shows that neither the subgroup  $(C_3\times C_3).  (C_3\times (C_3\times C_3))\rtimes C_8)$ of $J_3$, nor the subgroup and $(U\rtimes C_3)\rtimes C_8$ of $McL$, nor the subgroup $V\rtimes(S_3\times C_3)$ of $He$ satisfy condition (iii).
\subsection{Table 16 in \cite{LiZhang}} The  projective linear groups are considered. Let $u$ denote of a power of a prime $d$. The first case in Table 16 is $G=\PSL(2,u)$ with three possibilities for $H$ which might be inserted  in the shortlist, namely two dihedral groups $D_{2(u+1)/(2,u-1)}$ and $D_{2(u-1)/(2,u-1)}$, and the semidirect product of the elementary abelian group of order $u$ by a cyclic group of order $(u-1)/(2,u-1)$. Actually, the subgroup of the dihedral group whose order are odd are cyclic. Thus both dihedral groups are discarded by condition (iv).
The second case in Table 16 where $G$ is a simple group is $\PSL(3,2)$ only occurs when $H\cong C_7\times C_3$ as $H\cong \rm{Sym}_4$ is inconsistent with condition (v). Also, $H\cong C_7\times C_3$ is ruled out by condition (iv). For the rest of Table 16, just two cases enter the shortlist. One of them is $(\PSL(3,u), C_{(u-1)^2/(3,u-1)}.S_3)$ for $u\ne 2,4$ where $C_{(u-1)^2/(3,u-1)}.S_3)$ is the subgroup of $\PSL(3,u)$ preserving a triangle $F$ in $\PG(2,u)$. Thus $C_{(u-1)^2}\cong C_{u-1}\times C_{u-1}$ is the subgroup $T$ of $\PGL(3,u)$ which fixes each vertex of the triangle, and $C_{(u-1)^2/(3,u-1)}$ is isomorphic to a subgroup $R$ of $T$ of index $(3,u-1)$. Moreover $T\rtimes \rm{Sym}_3$ is the subgroup of $\PGU(3,u)$ which preserves the triangle $F$ and acts on the set of its vertices as $\rm{Sym}_3$.
By condition (v), $u$ is even, and hence it is a power $2^k\ge 8$ of $2$. We show that $u-1$ is a power of $3$. Assume that $u-1$ has two distinct prime divisors $d$ and $e$. Then, one of them is distinct from $p$, say $d$. Since $T=C_{q-1}\times C_{q-1}$, $R$ has an abelian non-cyclic subgroup of order of a power $d$. This subgroup is tame, as $d\ne p$, and hence condition (iii) is not satisfied. Thus $u-1=p^h$ for some $h\ge 1$. Therefore $|R|=p^{2h}$. We show that $p=3$. Assume on the contrary that $p\ne 3$. Then $|R|=|S_P|$ yields $R=S_P$ whence $H/R\cong S_3$ follows. Since $S_3$ is not cyclic, condition (iii) is not satisfied. Thus $p=3$ and
and $u-1$ is power of $3$, say $3^m$. Therefore, $2^k-1=3^m$ whence $k=2$, a contradiction. 
The other pair arises from the Singer group, i.e.
$\PSL(r,u),C_{(u^r-1)/((u-1)(r,u-1))}\rtimes C_r$. Actually this case is to be discarded. In fact, (i) of Result \ref{resD14072024} together with condition (vi) imply that $S_P=C_r$. But then condition (iv) is not satisfied.
\subsection{Table 17 in \cite{LiZhang}} The projective special symplectic groups are considered. No pair enters the shortlist.
\subsection{Table 18 in \cite{LiZhang}} The projective special unitary groups are considered. Let $u$ be a power of a prime $d$. The initial  case in the list is $\PSU(3,u)$ with four possibilities for $H$. The first case only, i.e. $H$ is the semidirect product of group of order $u^3$ by $C_{(u^2-1)(3,(u+1))}$, enters into the shortlist but it does not satisfy condition (iv) as it corresponds to the natural doubly transitive permutation representation of $\PSU(3,u)$. Another entry to the shortlist is  $(\PSU(3,u), [(u+1)^2/(3,u+1)].S_3)$ for $q\ne 5$ where $[(u+1)^2/(3,u+1)].S_3)$ is the subgroup of $\PSL(3,u)$ preserving a triangle in $PG(2,u^2)$, and $[(u+1)^2]\cong C_{(u+1)}\times C_{(u+1)}$. Arguing as for Table 16 shows that $u=2^k, p=3$ where
$2^k+1=3^m$. This only happens when $k=3,m=2$, and leaves only one possibility, namely $(G,H)=(PSU(3,8), W\rtimes C_2)$ with $|W|=81$. Thus (II) occurs.
Just another pair $(G,H)$ might enter the shortlist, namely $\PSU(r,u),C_{(u^r+1)/((u+1)(r,u+1))}\rtimes C_r$, but this possibility can be ruled out by using the final argument for Table 16.
\subsection{Table 19 in \cite{LiZhang}} The projective special orthogonal  groups are considered. The only two pairs $G,H)$ which might enter the shortlist are for $G=P\Omega_8^+(u)$ where $u$ is an odd  prime-power, and
$$
H=
\begin{cases}
\left(C_{(u-1)/2}\times C_{(u-1)/2} \times C_{(u-1)/2} \times C_{(u-1)/2}. C_2\times C_2\times C_2. C_2\times C_2\times C_2. \rm{Sym}_4\right)/C_2,\quad u\ge 7\\
\left(C_{(u+1)/2}\times C_{(u+1)/2} \times C_{(u+1)/2} \times C_{(u-1)/2}. C_2\times C_2\times C_2. C_2\times C_2\times C_2. \rm{Sym}_4\right)/C_2,\quad u\ge 5.
\end{cases}
$$
Actually, since the Sylow $2$-subgroups of $\rm{\Sym}_4$ are not abelian, none of these cases satisfies (vii) by Result \ref{res18072024}.
\subsection{Table 20 in \cite{LiZhang}} The exceptional Lie groups are considered. Table 20 begins with the Suzuki group $G=Sz(u)$, with a power $u$ of $2$, which enters the shortlist for $D_{2(u-1)}$ and $C_{u\pm \sqrt{2u}+1}\rtimes C_4$. However, in the former  case  condition (iv) and in the latter one condition (vi) are not satisfied. The next group in Table 20 is the Ree group $G=Ree(u)$ where $u$ is a power of $3$. In the first case $G$ acts as its natural $2$-transitive permutation representation, but then condition (ii) is not satisfied. In the second and fourth cases, $H=C_{u\pm \sqrt{3u}+1}\rtimes C_6$ respectively, condition (vi) yields $p\mid 6$, hence $p=3$ and $p\nmid u\pm \sqrt{3u}+1$. But then $S_P\cong C_3$  and condition (iv) is not satisfied. The same argument also works for the third case, $H=C_{u+1}\rtimes C_6$.
The successive simple groups in Table 20 are $G_2(2)'=\PSU(3,3)$ and $G_2(3)$. Just one case enters the shortlist, namely $(G,H)=(G_2(2)',W\rtimes C_8)$, with an extraspecial group $W$ of order $27$, which gives rise to (II). Among the simple groups of type $F_4$ arising from Albert algebras, two cases enter the shortlist: $(G,H)=(^2F_4(2),C_{13}\rtimes C_{12})$ and
\begin{equation}
    \label{22072024}
(G,H)=(^2F_4(u),C_{u^2\pm\sqrt{2u^3}\pm \sqrt{2u}+1}\rtimes C_{12}), \quad u=2^{2m+1}\ge 8.
\end{equation}
The first one does not satisfy (iv). Moreover, if (\ref{22072024}) holds then condition (vi) yields $p=3$, and  hence (iii) of Result \ref{resD14072024} implies $|S_P|=3$. But then condition (iv) is not satisfied.
The remaining simple groups in Table 20 are groups of type $^2E_6$, $E_6$ and $E_8$. The only case which might enter the shortlist is
$$(G,H)=(E_8(u),C_{u^8\pm u^7 \mp u^5 - u^4 \mp u^3 \pm u +1}\rtimes C_{30}, \quad {\mbox{$u$ is a prime power}}.$$
From condition (vi), either $p=3$ or $p=5$. Then, as before, (ii) of Result \ref{resD14072024} implies either $S_P\cong C_3$, or $S_P\cong C_5$, and hence both cases are to be discarded by condition (iv).
\end{proof}
\begin{theorem}
\label{th08072024} Assume that both  {\rm{(*)}} and {\rm{(**)}} but {\rm{(***)}} hold for a subgroup $\Gamma$ of $\aut(\cX)$, and that $S_P$ has only one  short orbit $\Delta_0$ other than $\{P\}$. If  $\Gamma$ is a primitive permutation group on $\Delta$ then $$|\Gamma|\le  25\left(\frac{p}{p-1}\right)^4\gamma(\cX)^4.$$ .
\end{theorem}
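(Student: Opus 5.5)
The plan is to combine Theorem \ref{th06072024} with the case lists of Theorem \ref{th11072024} and Proposition \ref{pro14072024}. Suppose, to argue by contradiction, that $|\Gamma|>25\left(\frac{p}{p-1}\right)^4\gamma(\cX)^4$. First, by the fourth claim of Proposition \ref{prop290723A}, I may assume that $\Delta$ is the unique non-tame orbit of $\Gamma$. Theorem \ref{th06072024} then says that $\Gamma$ is almost simple. Its socle $N$ is a non-abelian simple group, primitive on $\Delta$, and contains the Sylow $p$-subgroups of $\Gamma$; in particular $S_P\le N$.

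For $p=2$ the result is Theorem \ref{th11072024}. Its bound $400\gamma(\cX)^4$ is exactly $25\left(\frac{p}{p-1}\right)^4\gamma(\cX)^4$ at $p=2$, since $25\cdot 2^4=400$. So that case is already done.

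For $p>2$, Proposition \ref{pro08072024} shows that $N$ satisfies conditions (i)--(vii), and Proposition \ref{pro14072024} then leaves only three possibilities, all with $p=3$:
\begin{itemize}
\item[(I)] $N\cong{\rm Alt}_6$ with $N_P=(C_3\times C_3)\rtimes C_4$;
\item[(II)] $N\cong\PSU(3,8)$ with $N_P=W\rtimes C_2$ and $|W|=81$;
\item[(III)] $N\cong J_3$.
\end{itemize}
In each case the degree $|\Delta|=[N:N_P]$ is known, and so is $|S_P|$: it is $9$, $81$ and $3^5$ respectively. Lemma \ref{lem27092025A} gives $\gamma(\cX)\ge \ha|S_P|$. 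Moreover $\Gamma\le \aut(N)$ by Proposition \ref{pro27092025A}. This gives $|\Gamma|\le|\aut(N)|$ together with an explicit lower bound on $\gamma(\cX)$.

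\begin{itemize}
\item[(I)] Here $|\aut({\rm Alt}_6)|=1440$ and $\gamma\ge 5$. Since $25(3/2)^4\cdot 5^4>1440$, the bound holds outright.
\item[(II)] Here $|\aut(\PSU(3,8))|=|\PSU(3,8)|\cdot 18$ is roughly $10^8$, and $\gamma\ge 41$. The right-hand side is about $25\cdot 5\cdot 41^4\approx 3.5\cdot 10^8$, so this case also fits, though it should be checked carefully. If the crude estimate fails, I would sharpen the lower bound on $\gamma$ using $\gg(\cX)\le(|S_P|-1)^2$ from Lemma \ref{lem14dic21} and Deuring--Shafarevich applied to $S_P$ with its orbit $\Delta_0$.
\item[(III)] For $J_3$, $|\aut(J_3)|\approx 10^8$ and $\gamma\ge 122$, so the right-hand side of order $10^{10}$ dominates.
\end{itemize}

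The main obstacle is that the purely numerical comparison may be tight, especially in case (II). If it is, I would rule out these configurations directly instead. The approach would be to apply the Hurwitz formula to $\Gamma$, using $|\Delta|<3(\gg(\cX)-1)|H_P|$ from Proposition \ref{leterribile24} together with the bound $\gg(\cX)\le\left(\frac{p}{p-1}\right)^2\gamma(\cX)^2$. With $|H_P|$ small ($4$, $2$ and $8$ in the three cases), Proposition \ref{pro04052025} already gives $|\Gamma|<3\kappa\left(\frac{3}{2}\right)^3\gamma^4$ with $\kappa=|H_P|^2/\gamma$. This is at most the required bound as soon as $\gamma$ is not tiny, which holds by the lower bounds above. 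That would settle every surviving case and give the claimed inequality $|\Gamma|\le 25\left(\frac{p}{p-1}\right)^4\gamma(\cX)^4$.
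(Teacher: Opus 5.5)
Your proposal is correct and is essentially the paper's argument: Theorem \ref{th06072024} reduces to the almost simple case, Theorem \ref{th11072024} gives exactly $400\gamma(\cX)^4$ for $p=2$, and Propositions \ref{pro08072024} and \ref{pro14072024} leave only the three $p=3$ cases, where the paper compares $|\Gamma|\le|\aut(N)|$ with a Deuring--Shafarevich lower bound on $\gamma(\cX)$ case by case. Your cruder bound $\gamma(\cX)\ge\frac12|S_P|$ already suffices in every case (e.g.\ in case (II), $|\aut(\PSU(3,8))|=18\cdot 5515776<10^8<\frac{2025}{16}\cdot 41^4$), so the fallback via Proposition \ref{pro04052025} in your last paragraph is unnecessary; as written it would also be flawed, since it uses $|H_P|$ taken from $N_P$ rather than from $\Gamma_P$.
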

\begin{proof} It is enough to verify that the three cases in Proposition \ref{pro14072024} do not produce counterexamples. We assume $p=3$.

Case: $N\cong \rm{Alt}_6$  with $N_P=(C_3\times C_3)\rtimes C_4$. Since $N\le \Gamma \le \rm{\aut}(N)$ and $[\Gamma:N]=1,2,4$, a Sylow $3$-subgroup $S_3$ of $G$ has order $9$. Hence $|S_P|=9$. Since $|\Delta_0|<|S_P|$, the Deuring-Shafarevic formula applied to $S_P$ yields $\gamma(\cX)=6$. Thus
$|\Gamma|\le 1440<2\cdot \gamma(\cX)^4$.


Case $N=PSU(3,8)$ where $N_P$ is a semidirect product of subgroup of order $81$ by a cyclic group of order $2$. In this case, $[\Gamma:N]$ is a divisor of $18$. Therefore, a Sylow $3$-subgroup of $\Gamma$ has order either $81$, or $243$, or $729$. From the Deuring-Shafarevic formula applied to $S_P$, $\gamma(\cX)\ge \frac{2}{3}|S_P|\ge 54$ for $G=N$, and $\gamma(\cX)\ge \frac{2}{3}|S_P|\ge 162$ for $G>N$. In the former case $|\Gamma|=|N|=2^9\cdot 3^4\cdot 7 \cdot 19<2^4\cdot 3^{12}=\gamma(\cX)^4$, in the latter one $|\Gamma|<2^{10}\cdot 3^6\cdot 7\cdot 19<2^4\cdot 3^{16}\le \gamma(\cX)^4$.

Case $N=J_3$ where $N_P$  is a semidirect product of a subgroup of order  $3^5$ by a cyclic group of order $8$, and $[\Gamma:N]\le 2$. Therefore, $|\Gamma|\le 2^8\cdot 3^5\cdot 5\cdot 17\cdot 19$ and $\gamma(\cX)\ge \ha |S_P|=3^5$. Then $|\Gamma|\le \gamma(\cX)^4$.
\end{proof}

\subsection{Case (ii) in Proposition \ref{prop03082024}}
From now on, Case (ii) in Proposition \ref{prop03082024} is worked out. If (iia) holds, then   
$\Lambda=\Lambda_1\dot{\cup}\cdots\dot{\cup} \Lambda_r$
is a system of imprimitivity of $\Gamma$ on $\Delta$, where $\Lambda_1$ is the set $\{P\}\cup \Delta_0$ of all points fixed by some non-trivial element of $S_P$, the subgroup $G$ of $\Gamma$ preserving $\Lambda_1$ is doubly transitive on $\Lambda_1$,
and  the pair $(\cX,G)$ satisfies (*) and (***) so that the claims in Proposition \ref{prop280623} hold when referred to $G$ and $\{P\}\cup \Delta_0$ in place of $\Gamma$ and $\Delta$, respectively.
  \begin{proposition}
 \label{pro27042025} 
Assume that (iia) occurs in Proposition \ref{prop03082024}. If $|S_P|\ge |\Delta_0|^4$ and $|\Delta_0|>1$, then
 $|\Gamma|< 145 \left(\frac{p}{p-1}\right)^2 \gamma(\cX)^4.$
 If $|H_P|^2<8|S_P|$, then
 $|\Gamma|< 96 \left(\frac{p}{p-1}\right)^2 \gamma(\cX)^4.$

\end{proposition}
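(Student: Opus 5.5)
The plan is to combine three earlier bounds: the orbit bound of Proposition \ref{leterribile24}, the Castelnuovo-type genus bound of Lemma \ref{lem14dic21}, and the lower bound $\gamma(\cX)\ge \frac{1}{2}|S_P|$ of Lemma \ref{lem27092025A}(i). By the Orbit theorem, $|\Gamma|=|S_P||H_P||\Delta|$. We are in the setting of Proposition \ref{prop03082024}, so $\Delta$ is the unique non-tame orbit and $S_P$ has the unique short orbit $\Delta_0$ besides $\{P\}$. Hence Proposition \ref{leterribile24} gives $|\Delta|<3(\gg(\cX)-1)|H_P|$. The two sporadic exceptions there have $\gamma(\cX)\in\{2,4\}$ and $|\Gamma|\le 60$, so both claims can be checked directly for them. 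Lemma \ref{lem14dic21} gives $\gg(\cX)-1<(|S_P|-1)^2\le (\frac{p}{p-1})^2\gamma(\cX)^2$. Altogether this yields the master inequality
$$|\Gamma|<3\,|S_P|\,|H_P|^2\Big(\frac{p}{p-1}\Big)^2\gamma(\cX)^2 .$$

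For the second claim, substitute $|H_P|^2<8|S_P|$ into the master inequality. This gives $|\Gamma|<24|S_P|^2(\frac{p}{p-1})^2\gamma(\cX)^2$. Then $|S_P|\le 2\gamma(\cX)$ from Lemma \ref{lem27092025A}(i) yields $|\Gamma|<96(\frac{p}{p-1})^2\gamma(\cX)^4$, as required.

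For the first claim, the aim is to show $|H_P|^2\le c\,|\Delta_0|^4$ for a small absolute constant $c$. Once this holds, $|S_P|\ge|\Delta_0|^4$ gives $|H_P|^2\le c|S_P|$, and the computation above gives $|\Gamma|<12c(\frac{p}{p-1})^2\gamma(\cX)^4$. It therefore suffices to reach $c\le 12$; with the constants below one actually obtains $c=9$, which gives $108<145$.

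To bound $|H_P|$, use (iia). Since $\Gamma_P$ preserves $\Lambda_1=\{P\}\cup\Delta_0$, we have $H_P\le G_P$, where $G$ is the stabilizer of $\Lambda_1$. The pair $(\cX,G)$ satisfies (*) and (***), and $|\Lambda_1|=|\Delta_0|+1>2$ because $|\Delta_0|>1$. We may assume $S_P$ is transitive on $\Delta_0$, since otherwise Proposition \ref{pro16072025} already gives a much smaller bound. Then Proposition \ref{prop280623} applies to $G$ and $\Lambda_1$. Write $K_G=M\rtimes C$ for the kernel of the action of $G$ on $\Lambda_1$. The image of $H_P$ in $G/K_G$ has order at most $|\tilde H_P|$, and by the table in Proposition \ref{pro11022025} we have $|\tilde H_P|\le |\Delta_0|$ in every case. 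Examples are case (B), where $|\tilde H_P|=q=|\Delta_0|$, and case (D), where $|\tilde H_P|=q^2-1<q^3$. The part of $H_P$ lying in $K_G$ embeds in $C$. By (V) of Proposition \ref{prop280623}, $|C|\le \frac{3}{2}|\Lambda_1|$. Hence $|H_P|\le \frac{3}{2}|\Delta_0|(|\Delta_0|+1)\le 3|\Delta_0|^2$, so $c=9$.

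The main obstacle is the sporadic case (L) of Proposition \ref{pro11022025}, where (V) is not available. There $|\Lambda_1|=9$ and $p=2$, and Lemma \ref{lem13022025} gives $|C|\le 17$ and $|\tilde H_P|=1$. So $|H_P|\le 17$, and $|H_P|^2\le 289\le 9\cdot 8^4\le 9|S_P|$ still holds; I would double-check this estimate against the structure given in Lemma \ref{lem13022025}. A second point to verify is the containment $H_P\le G$, which needs the fact from Lemma \ref{lem24072024} that $\Gamma_P$ preserves $\Delta_0$.
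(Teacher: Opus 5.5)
Your proposal is correct and is essentially the paper's proof: the same master inequality $|\Gamma|<3|S_P||H_P|^2(\mathfrak{g}(\cX)-1)$ from Proposition~\ref{leterribile24} and Lemma~\ref{lem14dic21}, the same factorization $|H_P|=|\tilde H_P||C|$ bounded via Proposition~\ref{pro11022025} and (V) of Proposition~\ref{prop280623}, and Lemma~\ref{lem13022025} for case (L). The differences are cosmetic. You get the constant $108$ instead of the paper's $36$, and you absorb case (L) through $|S_P|\ge 8^4$, which is cleaner than the paper's separate $578\gamma(\cX)^4$ estimate. Transitivity of $S_P$ on $\Delta_0$ also needs no appeal to Proposition~\ref{pro16072025}, which would bound $|G|$ rather than $|\Gamma|$ anyway: $\Delta_0$ is a single $S_P$-orbit by hypothesis.
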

\begin{proof} 
To show the first claim, consider the subgroup $K$ of $G_P$ fixing $\Delta_0$ pointwise. Since $S_P$ has a unique short orbit other than $\{P\}$,
$\tilde{G}=G/K$ is one of the $2$-transitive permutation groups in Proposition \ref{pro11022025}.
From the second claim in Proposition \ref{pro11022025},
$|\tilde{H}_P|\leq |\Delta_0|-1$ holds. If (L) does not occur for $\tilde{G}$, then
(V) of Proposition \ref{prop280623} gives
 $|H_P|=|\tilde{H}_P||C|<\frac{3}{2}(|\Delta_0|-1)(|\Delta_0|+1)<\frac{3}{2}|\Delta_0|^2$, and hence
 $|H_P|^2<3|\Delta_0|^4<3|S_P|$.  Thus, Proposition \ref{leterribile24} together with (\ref{eq09112023}) and (\ref{eqA23042023}) yield
 $$|\Gamma|=|\Gamma_P||\Delta|=|S_P||H_P||\Delta|< 3 |S_P||H_P|^2(\mathfrak{g}(\cX)-1)< 9|S_P|^2 \left(\frac{p}{p-1}\right)^2\gamma(\cX)^2\le 36 \left(\frac{p}{p-1}\right)^2\gamma(\cX)^4.$$
 If (L) occurs for $\tilde{G}$, then  $|\tilde{H}_P|=1$ and $|C|\le 17$ by Lemma \ref{lem13022025}.  Therefore, $|H_P|\le 17$, and hence $|\Delta|<51 (\mathfrak{g}(\cX)-1)$ by Proposition \ref{leterribile24}. Thus by (\ref{eq09112023}) and Lemma \ref{lem14dic21},
 $$|\Gamma|=|\Gamma_P||\Delta|<2\gamma(\cX)\cdot 17^2 (\mathfrak{g}(\cX)-1)<
 2\cdot 2312 \gamma(\cX)^3<578 \gamma(\cX)^4. $$

To show the second claim a similar computation is carried out.
$$|\Gamma|=|\Gamma_P||\Delta|=|S_P||H_P||\Delta|< 3 |S_P||H_P|^2(\mathfrak{g}(\cX)-1)< 96\left(\frac{p}{p-1}\right)^2\gamma(\cX)^4.$$
\end{proof}
\begin{proposition}
 \label{pro04102025A} Let $p>2$.    Assume that both {\rm{(*)}} and {\rm{(**)}} but {\rm{(***)}} hold.
If $|\Delta_0|=1$, that is, $S_P$ fixes exactly two points and no non-trivial element in $S_P$ fixes a further point, then (\ref{eq18122025}) holds.
\end{proposition}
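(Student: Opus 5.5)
The plan is to reduce to the situation where $\mathfrak{g}(\cX)$ is even, or where one of the earlier special bounds applies. Write $\Delta_0=\{Q\}$. Then $S_P$ fixes exactly $P$ and $Q$, and since $Q$ is fixed by $S_P$ we have $S_P\le S_Q$.

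\textbf{Case 1: $Q\in\Delta$.} Then some element of $\Gamma\le\aut(\cX)$ takes $P$ to $Q$. Lemma \ref{lem04102025} (which needs $p>2$) shows that $\mathfrak{g}(\cX)$ is even. The first claim of Proposition \ref{prop290723A} then gives (\ref{eq18122025}) directly. Concretely, Result \ref{structure} gives $|\Gamma|<900\,\mathfrak{g}(\cX)^2$, and combining this with (\ref{eq09112023}) yields $|\Gamma|<900(p/(p-1))^4\gamma(\cX)^4$.

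\textbf{Case 2: $Q\notin\Delta$.} Then $Q$ lies in a different $\Gamma$-orbit, and this orbit is non-tame because $S_P$ fixes $Q$. So $\Gamma$ has at least two non-tame short orbits: $\Delta$ is short since $\Gamma_P\ne 1$, and $\Gamma_Q\supseteq S_P\ne 1$. The fourth claim of Proposition \ref{prop290723A} then gives $|\Gamma|\le 16(p/(p-1))^4\gamma(\cX)^4$, which is stronger than (\ref{eq18122025}).

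The main point to check is that the hypotheses of the invoked results hold without (***). Lemma \ref{lem04102025} only needs (*), (**), $p>2$, $|\Delta_0|=1$ and an automorphism moving $P$ to $Q$. Proposition \ref{prop290723A} is stated exactly under (*), (**) and the failure of (***), so both apply. One further check is that Result \ref{structure} needs $\mathfrak{g}(\cX)\ge 2$ even, and that the curve has positive $p$-rank. Both hold here, so the bound $|\Gamma|<900\,\mathfrak{g}(\cX)^2$ is available. The genus estimate (\ref{eq09112023}) comes from Lemma \ref{lem14dic21}, which is valid whenever (***) fails.
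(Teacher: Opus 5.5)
Your proposal is correct and is essentially the paper's proof. The paper uses the fourth claim of Proposition \ref{prop290723A} to reduce to $\Gamma$ having a unique non-tame orbit, which forces $Q\in\Delta$, and then concludes via Lemma \ref{lem04102025} together with Result \ref{structure} and (\ref{eq09112023}); your case split on whether $Q\in\Delta$ is the same dichotomy, just written out explicitly.
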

\begin{proof} By the fourth claim in Proposition \ref{prop290723A}, we may assume that $\Gamma$ has a unique non-tame orbit. Then there exists an element in $\Gamma$ which takes $P$ to $Q$. Therefore
the  claim follows from Lemma \ref{lem04102025} and (\ref{eq09112023}).
\end{proof}


\begin{lemma}
\label{le11040205} Assume that both {\rm{(*)}} and {\rm{(**)}} but {\rm{(***)}} hold, and that $S_P$ has only one short orbit $\Delta_0$ other than $\{P\}$. If $\Gamma$ is a imprimitive group on $\Delta$, then (\ref{eq18122025}) holds 
unless
$\Gamma$ is a non abelian simple group, and (iia) occurs.
\end{lemma}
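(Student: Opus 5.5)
We reduce to the case where $\Gamma$ is large and then extract structure. By the fourth claim of Proposition \ref{prop290723A}, Lemma \ref{lem14012025} and Proposition \ref{pro24072024}, we may assume that $\Delta$ is the unique non-tame orbit of $\Gamma$ and that $\Gamma$ has exactly one tame short orbit $\Omega$. By Proposition \ref{pro04102025A}, Lemma \ref{lem04102025} and Proposition \ref{prop290723A}, we may also assume $|\Delta_0|>1$ when $p>2$. Proposition \ref{pro04052025} disposes of the case $|H_P|\le 8$, so $|H_P|\ge 9\ge 3$ and Proposition \ref{prop03082024} applies. Since $\Gamma$ is imprimitive, we are in (iia) or (iib). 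The task is twofold: rule out (iib) up to the bound, and show that in (iia) either the bound holds or $\Gamma$ is simple.

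For (iib), we reuse the computation in the proof of Proposition \ref{prop03082024}. Let $T$ be a minimal overgroup of $\Gamma_P$, with $T$-orbit $\Delta_1\supsetneq \{P\}\cup\Delta_0$, so $\mu\ge 1$. Then (\ref{eqC03082024U}) forces $k\in\{|S_P|+1,\,2|S_P|+1\}$ and $\mu\le 2$. This gives $|\Delta|=k|\Delta_1|\le (2|S_P|+1)(1+|\Delta_0|+2|\Gamma_P|)$, which still contains $|H_P|$. To remove it, we will use $\alpha|H_P|=|\Delta_0|+1$ (or twice that) together with $|\Delta_0|<|S_P|$, so that $|H_P|\le 2(|\Delta_0|+1)\le 2|S_P|$. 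Then $|\Gamma|=|\Gamma_P||\Delta|$ is bounded by a constant times $|S_P|^4$, and $|S_P|\le \frac{p}{p-1}\gamma(\cX)$ by Lemma \ref{lem14dic21}. I expect the constant to stay below $900$.

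For (iia), let $\Lambda$ be the block system with $\Lambda_1=\{P\}\cup\Delta_0$, and let $G$ be the stabilizer of $\Lambda_1$. Then Proposition \ref{prop280623} applies to $(G,\Lambda_1)$. In particular $G_P=\Gamma_P$ and $|H_P|\le \frac32(|\Delta_0|^2-1)$ by (V) and Proposition \ref{pro11022025}. Now let $L\ne 1$ be the kernel of $\Gamma$ on $\Lambda$. If $\Gamma$ is not simple, take a minimal normal subgroup $N$ and argue as in Theorem \ref{the03072024}, splitting by the genus of $\cX/N$. If $\mathfrak{g}(\cX/N)\ge1$, the Hurwitz formula and (\ref{eq09112023}) give $|N|\ll\gamma^2$ and the bound follows. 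If $\cX/N$ is rational, then either $N$ is intransitive on $\Lambda$, or $N$ contains $S_P$. In the intransitive case, $N$-orbits give a coarser block system, contradicting the fact that every block through $P$ contains $\Lambda_1$ with minimality, or producing $\Gamma/N$ cyclic or dihedral acting on a rational curve and hence small. In the case $N\ge S_P$, $N$ is transitive on $\Delta$, and the direct-product argument of Theorem \ref{the03072024} forces $N$ simple. Proposition \ref{pro27092025A} then shows $C_\Gamma(N)=1$. Finally, $[\Gamma:N]$ divides $|\Gamma_P|/|N_P|$, which is prime to $p$ and cyclic. A further Frattini/Dickson argument on the rational curve $\cX/N$, using that $\Gamma/N$ fixes the two points under $\Delta$ and $\Omega$, would give either $\Gamma=N$ or a bound $[\Gamma:N]\le |H_P|$ that combines with Proposition \ref{pro27042025}.

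The main obstacle is this last step: excluding a proper almost simple $\Gamma>N$, and excluding the case where the kernel of the action on $\Lambda$ is nontrivial. I would first try to show that the kernel fixes each block setwise and hence normalizes each $S_R$. That would force the kernel inside $\bigcap G_{\Lambda_i}$ and give an action on $\Lambda_1$ contained in $K$. Then (V) of Proposition \ref{prop280623} bounds it by $\frac32|S_P|$, and the estimate from Proposition \ref{leterribile24} closes the case.
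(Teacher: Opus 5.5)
\textbf{Gap in case (iib).} Your count there does not give a degree-four bound. With $\mu\ge 1$ and $k\ge |S_P|+1$ you have $|\Gamma|=|\Gamma_P||\Delta|\ge k\mu|\Gamma_P|^2>|S_P|^3|H_P|^2$, so you would need $|H_P|^2=O(|S_P|)$. Your only control is $|H_P|\le 2(|\Delta_0|+1)$, and nothing in your argument keeps $|\Delta_0|$ below its trivial bound $|S_P|/p$. So your estimate is of order $|S_P|^5$, i.e.\ a $\gamma(\cX)^5$ bound; using Proposition \ref{leterribile24} instead gives the same exponent. Nor can you import (V) of Proposition \ref{prop280623} in (iib). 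There the setwise stabilizer of $\{P\}\cup\Delta_0$ is $\Gamma_P$ itself, since otherwise $\{P\}\cup\Delta_0$ would be a block by Result \ref{resstab}. The paper does not bound (iib) by a direct count of this kind. For non-simple $\Gamma$ it is absorbed into the minimal-normal-subgroup analysis below. For simple $\Gamma$ it is excluded outright: $\Gamma$ acts faithfully on the block system $\Lambda$ of size $k\in\{|S_P|+1,2|S_P|+1\}$, with $S_P$ semiregular on $\Lambda\setminus\{\Lambda_1\}$ by Lemma \ref{lem24072024}. Hence the action is $2$-transitive or $\frac{3}{2}$-transitive. Results \ref{res3/2} and \ref{cam} then leave only ${\rm{PSp}}(2d,2)$, which is ruled out by Result \ref{res03102025} and the Sylow $3$-subgroup of $\PSU(4,2)$.

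\textbf{Gap in the reduction to simple $\Gamma$.} Every step you import from Theorem \ref{the03072024} and Proposition \ref{pro27092025A} needs the minimal normal subgroup $N$ to be transitive on $\Delta$ (Result \ref{isprimitive}). That is exactly what fails for imprimitive $\Gamma$. Without it:
\begin{itemize}
\item the bound $|\Delta|\le|N|$ is lost when $\mathfrak{g}(\cX/N)\ge 2$;
\item $\Gamma/N$ need not fix a point of an elliptic $\cX/N$, so its order is not bounded, since it may contain many translations;
\item your claim that $N\ge S_P$ makes $N$ transitive is unproved;
\item $C_\Gamma(N)=1$ is not available.
\end{itemize}
You also use ``every block through $P$ contains $\Lambda_1$'', which is the (iib) property, inside case (iia).

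\textbf{The missing idea.} The paper splits according to whether $\{P\}\cup\Delta_0$ lies in the $N$-orbit $\Delta_1$ of $P$.
\begin{itemize}
\item If it does not, $N\Gamma_P$ acts on $\Delta_1$ as a Frobenius group with complement $\Gamma_P$ and kernel $N$. So $N$ is an elementary abelian $d$-group with $d\ne p$. One then concludes either through Proposition \ref{leterribile24}, which forces $|H_P|\le 2$, or through Proposition \ref{prop280623} applied to $GN/N$ acting on $\cX/N$.
\item If it does and $\mu\ge 1$, then $N$ is regular on $\Delta_1$. The quotient $\cX/N$ yields $|H_P|^2<8|S_P|$ or $|H_P|\le 6$, and Propositions \ref{pro27042025} and \ref{pro04052025} finish.
\item If $\mu=0$, then $\{P\}\cup\Delta_0$ is an $N$-orbit and $p\nmid|N|$. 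This forces $N$ to be solvable, hence elementary abelian. So $G/K$ is one of (H'), (J'), (L), (L'), (L''), each treated separately.
\end{itemize}
Only after this case analysis is $\Gamma$ reduced to a simple group.
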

 \begin{proof}
By Proposition \ref{pro04052025}, $|H_P|>8$ may be assumed. Furthermore, from the fourth claim in Proposition \ref{prop290723A}, $\Delta$ may be assumed to be the unique non-tame orbit of $\Gamma$.

 Assume  $\Gamma$ be not simple, and take a minimal normal subgroup $N$ of $\Gamma$. Either, $N$ is simple, or a direct product of simple groups.

Since $\Gamma$ is transitive on $\Delta$, the $N$-orbits in $\Delta$ are blocks. Let $\Delta_1$ be the $N$-orbit through $P$.

First the case where $\{P\}\cup \Delta_0$ is not contained in $\Delta_1$ is investigated. Then (iia) of Proposition \ref{prop03082024} holds.
Let $G$ be the subgroup of $\Gamma$ preserving $\{P\}\cup \Delta_0.$ Then $\Delta_1$ is also a $T$-orbit with $T=NG_P=N\Gamma_P.$
Moreover, $\Delta_0\cap \Delta_1=\emptyset$ as either $|\Delta_0|=1$, or $\Gamma_P$ preserves $\Delta_1$ and acts transitively on $\Delta_0$.
From  Proposition \ref{lem24072024}, no non-trivial element of $\Gamma_P=T_P$ fixes a point of $\Delta_1$ other than $P$. From the proof of Proposition \ref{prop03082024}, $T$ is a Frobenius group whose Frobenius complement is $\Gamma_P$.
We show that its Frobenius kernel $F$ is $N$. If $N\cap \Gamma_P$ is trivial then $N=F$. Otherwise, $N$ itself is a Frobenius group (with non-trivial complement) and $F$ is its Frobenius kernel. From Result \ref{thom}, $F$ is a characteristic subgroup of $N$, and hence $F$ is a normal subgroup of $\Gamma$. Since $N$ is a minimal normal subgroup, $F=N$ follows. This shows that no non-trivial element of $N$ fixes a point in $\Delta_1$. Moreover, $N$ is nilpotent by Result \ref{thom}, and hence $N$ is an elementary abelian group of order $d^r$ with $d$ and $r\ge 1$. Here $p\neq d$, as $\Gamma_P$ has some non-trivial element of order $p$, and hence $|\Delta_1|\equiv 1 \pmod{p}$. Furthermore, $\Gamma_P$ contains no element of order $d$. In fact, such an element $u$ fixing $P$ would have another fixed point on $\Delta_1$, since $|\Delta_1|=d^r$. This implies $d\nmid |\Gamma_P|$ and $T=N\rtimes G_P$. Moreover, $N$ and $G$ have trivial intersection, since $g\in N\cap G$ implies $g\in N\cap G_P$ whence the claim follows.
Let $\Delta_2$ be the orbit of $P$ under the action of $NG$. Since $(NG)_P\le \Gamma_P=G_P$ implies $(NG)_P=G_P$, the Orbit theorem yields $|\Delta_2|=|N|(1+|\Delta_0|)$, and two cases arise according as $\Delta_2\subsetneqq \Delta$, or $\Delta_2=\Delta$.

In the former case, take a $N$-orbit $\Delta_3$ disjoint from $\{P\}\cup \Delta_0$. Since $N$ is a normal subgroup and $\Delta_1$ is a long orbit of $N$, each $N$-orbit in $\Delta$ is long. Therefore,
$|N|=\Delta_3$. We show that no point in $\Delta_3$ is fixed by a non-trivial element of $T=N\rtimes \Gamma_P$. If the contrary holds, a subgroup $U$ of $T$ of prime order $u$ fixes a point $R\in\Delta_3$. Since $|T|=|N||\Gamma_P|$ with $|N|$ and $|\Gamma_P|$ coprime, $u$ divides either $|N|$ or $|\Gamma_P|$. If $u$ divides $|N|$,
then $u=d$ and $N$ is the unique Sylow $d$-subgroup of $T$. Therefore, $U\le N$, but this is impossible as $N$ has only long orbits in $\Delta$.
If $u$ divides $|\Gamma_P|$, we find a Sylow $u$-subgroup $S_u$ in $\Gamma_P$ so that $S_u$ has a subgroup $V$ of order $u$ conjugate to $U$ in $T$. Any fixed point of $V$ is in $\{P\}\cup \Delta_0\subseteq \Delta_2$ by Lemma \ref{lem24072024}. Therefore, any fixed point $R$ of $U$ is the image of a point $R'\in \Delta_2$ by an element $g\in T$. But $g$ preserves $\Delta_2$ whereas $R\not\in \Delta_2$. This shows, indeed,  that no point in $\Delta_3$ is fixed by a non-trivial element of $T$.
Therefore, $T$ has an orbit of length $|N||G_P|$ contained in $\Delta$. Now, Proposition \ref{leterribile24} applies. In the first exceptional case $\Gamma\cong \PSL(2,5)$ is simple. In the second,
$p=3$, and $|\Gamma|=48$, and hence (\ref{eq18122025}) holds. Therefore,
Proposition \ref{leterribile24} yields $|N||\Gamma_P|<3|H_P|(\mathfrak{g}(\cX)-1)$ whence
$|N||S_P|<3(\mathfrak{g}(\cX)-1)$. This together with (\ref{eq09112023}) give
$$|N|<3 \frac{(|S_P|-1)^2}{|S_P|}<3|S_P|.$$
On the other hand, $|N|>|\Gamma_P|$ since $T=NG_P$ is a Frobenius group on $\Delta_1$ whose kernel has size $|N|$ and whose complement is $\Gamma_P$. Thus $|H_P|\le 2$. From Proposition \ref{pro04052025}, $$|\Gamma|< 192\left(\frac{p}{p-1}\right)^3\gamma(\cX)^4$$
whence the claim follows.

In the latter case, that is $\Delta_2=\Delta$, we have $|\Delta|=|N|(1+|\Delta_0|)$, and
\begin{equation}
\label{eq18072025}
    |\Gamma|=|N||H_P||S_P|(1+|\Delta_0|)
\end{equation}
We show that the bound (\ref{eq18122025}) holds for $|\Delta_0|=1$. In fact, in this case, we have $|\Delta|=2|N|$. Since $N$ is abelian, this together with Result \ref{res60.79.108}  yield $|\Delta|\le 2 (4\gg(\cX)+4)=8(\gg(\cX)+1)$ whence $|\Delta|\le 16|S_P|^2$ by (\ref{eq09112023}). Therefore, by Lemma \ref{lem14dic21},
$$|\Gamma|=|\Gamma_P||\Delta|<|\Delta|^2\le 256 |S_P|^4\le 256 \left(\frac{p}{p-1}\right)^4\gamma(\cX)^4.$$
Therefore, $|\Delta_0|>1$ is assumed. Thus $|\Delta_0|=p^h$ with $h\ge 1$, and $|S_P|\ge p^2$.
Since $N$ is a normal subgroup, $T/N\cong \Gamma_P$ is a subgroup of $\aut(\hat{\cX})$ where $\hat{\cX}=\cX/N$. We have already shown that $\Gamma_P$ is a Frobenius complement. Therefore, Result \ref{thom} yields that  $S_P$ is cyclic for $p>2$, and cyclic or a generalized quaternion group for $p=2$. Since $|S_P|\ge p^2$, and $\hat{S}_P=S_PN/N\cong S_P$, this implies that $\hat{\cX}$ is not rational by Results \ref{resratcurve} and  \ref{resdickson}. Also, $\hat{\cX}$ is neither elliptic. In fact, if this case occurs, then
$|H_P|<|\Gamma_P|\le 6$ as $T/N\cong \Gamma_P$ fixes the point $\hat{P}$ lying under $\Delta_1$, and Proposition  \ref{pro04052025} yields the bound.
Therefore,
\begin{equation}
\label{eq18072025A}
\mathfrak{g}(\cX)-1\ge |N| (\mathfrak{g}(\hat{\cX})-1).
\end{equation}
Moreover, $\hat{\cX}$ with $\hat{G}=GN/N\cong G$ and $\hat{S}_P=S_PN/N\cong S_P$ satisfies (*) and (***). Also,  since $S_P$ preserves $\Delta_1$ while it acts transitively on the set of the other $|\Delta_0|$ $N$-orbits contained in $\Delta$, $\hat{S}_P$ preserves the point $\hat{P}$ lying under $\Delta_1$
and acts transitively on the set $\hat{\Delta}$ of size $1+|\Delta_0|$ consisting of all the remaining points lying under the $N$-orbits. Since $|\hat{\Delta}|=1+|\Delta_0|>2$,
Proposition \ref{prop280623} applies to $\hat{G}$. Thus $\hat{K}$, $\hat{M}$, $\hat{H}_P$ and $\hat{C}$ can be defined accordingly. Here $\hat{H}_P=H_PN/N\cong H_P$ as $|\Gamma_P\cap N|=1$, and $\hat{H}_P\hat{M}/\hat{M}\cong \hat{H}_P$.
Suppose that case (L) does not occur for $\hat{G}$. Then $\hat{M}$ is non-trivial.
If $\tilde{\cX}$ $=\hat{\cX}/\hat{M}$ is rational, then $\hat{S}_P/\hat{M}$ is an elementary abelian $p$-group.
If $S_P$ and hence $\hat{S}_P/\hat{M}$ is cyclic, then $|\hat{S}_P/\hat{M}|=p$ whence $|\hat{\Delta}|=p+1$. If $S_P$ is a generalized quaternion group, then $p=2$ and Result \ref{resgq} shows that $|\hat{S}/\hat{M}|=2,4$ whence and $|\hat{\Delta}|$=$3$ or $5$, respectively.  From the final claims in (V) of Proposition \ref{prop280623}, $|\hat{C}|=1$, and hence $|H_P|=|\hat{H}_P|\le p-1$
when $S_P$ is cyclic, and $|H_P|\le 3$ when $S_P$ is a generalized quaternion group. In the latter case, Proposition \ref{pro04052025} gives (\ref{eq18122025}). Thus $S_P$ is may be assumed to be cyclic.  Since $|S_P|\ge p^2$, this implies $|H_P|^2<|S_P|$, and the bound in the second claim of Proposition \ref{pro27042025} gives
(\ref{eq18122025}). If $\tilde{\cX}$ is elliptic, Result \ref{silv} shows $|H_P|=|\tilde{H}_P|\le 6,4,3$ for $p\neq 2,3$, $p=3$, and $p=2$, respectively.
Hence $|H_P|\le p^2$, and, again, (\ref{eq18122025}) follows from the second claim of Proposition \ref{pro27042025}.
Therefore, $\gg(\hat{\cX})\ge 2$ is assumed. From the Hurwitz genus formula applied to $\hat{M}$,
\begin{equation}
\label{eq18072025B}
\mathfrak{g}(\hat{\cX})-1\ge (\mathfrak{g}(\tilde{\cX})-1)|\hat{M}|
\end{equation}
Since $\hat{M}$ is a normal subgroup, $\hat{G}/\hat{M}$ is a subgroup of $\aut(\tilde{\cX})$. Moreover, $\tilde{H}_P=\hat{H}_P\hat{M}/\hat{M}\cong H_P$ is
a cyclic subgroup whose order is prime to $p$. From Result \ref{res60.79.108}, $|\tilde{H}_P|\le 4(\mathfrak{g}(\tilde{\cX})+1)$ whence
\begin{equation}
\label{eq18072025C}
|H_P|\le 4(\mathfrak{g}(\tilde{\cX})+1)<8(\mathfrak{g}(\tilde{\cX})-1).
\end{equation}
From (\ref{eq18072025A}), (\ref{eq18072025B}) and (\ref{eq18072025C}), $|N||M||H_P|\le 8(\mathfrak{g}(\cX)-1)$. Now, (\ref{eq18072025}) yields
$$ |\Gamma|\le 8(\mathfrak{g}(\cX)-1)|S_P|(1+|\Delta_0|).$$
Since $|S_P|\le 2\gamma(\cX)$ by (\ref{eqB23042023}) and $|S_P|\ge p^2$, $$1+|\Delta_0|\le (1+\frac{|S_P|}{|M|})<1+\frac{|S_P|}{p}<\frac{|S_P|}{p-1}.$$ This together with Result \ref{eq09112023} show
$$|\Gamma|\le 64 \frac{p^2}{(p-1)^3}\gamma(\cX)^4.$$
If $L$ occurs for $\hat{G}$ then $A\Gamma L(1,9)\cong\hat{G}\cong G$, and hence $|G|=144$. Moreover, $\mathfrak{g}(\cX)> |N|$ by (\ref{eq18072025B}). From (\ref{eq18072025}) and (\ref{eq09112023}),
$$|\Gamma|=|N||H_P||S_P|(1+|\Delta_0|)=|N||G|<144  \left(\frac{p}{p-1}\right)^2\gamma(\cX)^2.$$

Now, the case  $\{P\}\cup\Delta_0\subseteq \Delta_1$ is investigated.
We use some arguments from the proof of Proposition \ref{prop03082024}. By Lemma \ref{lem24072024}, there are integers $\lambda\ge 1, \mu\ge 0$ such that
$|\Delta|=1+|\Delta_0|+\lambda |\Gamma_P|$ and $|\Delta_1|=1+|\Delta_0|+\mu |\Gamma_P|$.

Assume first $\mu\ge 1$. Let $M_P=S_P\cap N_P$. Then $N_P=M_P\rtimes U_P$ with $p\nmid |U_P|$. Let $k=|\Delta|/|\Delta_1|$ and $\alpha=\lambda-k\mu$. From the computation carried out in the Proposition \ref{prop03082024}, either $\mu=1,2$ and
$k=|S_P|+1$, or $\mu=1$ and $k=2|S_P|+1$. Moreover, $|\Delta_0|>1$ as $|H_P|\ge 5$. Let $\Lambda$ be the set of all $N$-orbits. Then $|\Lambda|=k$, and $\Gamma$ induces a permutation group on $\Lambda$ whose kernel $V$ is a normal subgroup of $\Gamma$ containing $N$. Assume that $p\mid |V|$. Since $p$ divides $|\Delta_0|$, we have $|\Delta_1|\equiv 1 \pmod{p}$, and hence the length of every $N$-orbit in $\Delta$ is not divisible by $p$. Therefore, some non-trivial element of $V$ of order $p$ fixes a point in every $N$-orbit in $\Delta$. On the other hand, all the fixed points of non-trivial elements of $S_P$ are in $\Delta_1$ by Lemma \ref{lem24072024}. Since $k\ge 2$, this yields $p\nmid |V|$. Assume that $|V\cap \Gamma_P|>1$. Then $\Delta_1$ is a short $V$-orbit, and hence $V$ has $k$ short orbits in $\Delta$. From the Hurwitz genus formula applied to $V$, $2(\gg(\cX)-1)\ge -2|V|+k(|V|-|\Delta_1|)$. This together with Lemma \ref{leterribileHW} and $|V|>|\Delta_1|>|\Gamma_P|$,
give
$$|S_P|^2>\left(\frac{k}{4}-1\right)|V|\ge \frac{k}{8}|V|>\frac{|S_P|+1}{8}|V|>\frac{|S_P|}{8}|S_P||H_P|,$$
whence $|H_P|\le 7$. By (\ref{eq21072025C}),  
 we may assume that $|V\cap \Gamma_P|=1$, that is, $V$ is regular on $\Delta_1$. Hence $V=N$, and $N$ is regular on $\Delta_1$.
Let $\hat{\cX}=\cX/N$ be the quotient curve of $\cX$ by $N$. 

If $\gg(\hat{\cX})\ge 2$, then the Hurwitz genus formula applied to $\hat{\cX}$ gives $\gg(\cX)-1\ge (\gg(\hat{\cX})-1)|N|$. Since $N$ is a normal subgroup of $\Gamma$ and $H_P\cap N$ is trivial, $\hat{H}_P=H_PN/N\cong H_P$ is a subgroup of $\aut(\hat{\cX})$. From Result \ref{res60.79.108}, $|\hat{H}_P|\le 4\gg(\hat{\cX})+4$. Therefore, $8(\gg(\cX)-1)\ge |N||H_P|$ whence
$$|S_P|^2>\gg(\cX)>\textstyle{\frac{1}{8}}|H_P||N|>\textstyle{\frac{1}{8}}|H_P|^2|S_P|.$$
From this $|H_P|^2<8|S_P|$, and  from the second claim of Proposition \ref{pro27042025},
$$|\Gamma|< 96 \frac{p^2}{(p-1)^3}\gamma(\cX)^4.$$
This together with Lemma \ref{leterribileHW} and $|N|>|\Delta_1|>|\Gamma_P|$,
give
$$|S_P|^2>\left(\frac{k}{4}-1\right)|V|\ge \frac{k}{8}|V|>\frac{|S_P|+1}{8}|V|>\frac{|S_P|}{8}|S_P||H_P|.$$
Therefore, the claim holds.
If $\hat{\cX}$ is rational, then $N$ has at least three short orbits, and they are all outside $\Delta$.  Any non-trivial element in $H_P$ fixes the point $\hat{P}$ lying under $\Delta_1$ and a further point $\hat{R}$ depending only on $H_P$. Let $\Delta_R$ be the $N$-orbit lying over $\hat{R}$. Take a short $N$-orbit $\Omega$ other than $\Delta_R$. Let $\hat{Q}\in \hat{\cX}$ the point lying under $\Omega$. Then $\tilde{H}_P$-orbit has size $|\tilde{H}_P|=|H_P|$. Therefore, $N$ has at least as many as $|H_P|$ short orbits on $\cX$. From the Hurwitz genus formula applied to $N$,
$$ 2(\gg(\cX)-1)\ge -2|N|+|H_P|(|N|-\frac{|N|}{|N_P|})\ge |N|(\ha |H_P|-2)\ge \textstyle{\frac{1}{4}} |H_P| |N|>\textstyle{\frac{1}{4}} |H_P|^2|S_P|$$
whence $|H_P|^2<8|S_P|$. As before, this yields the claim.

If $\hat{\cX}$ is elliptic, then $\hat{H}_P$ and hence $H_P$ has order at most $6$ by Result \ref{silv}. Therefore, the claim follows from Proposition \ref{pro04052025}. 

Assume $\mu=0$. Then $\{P\}\cup \Delta_0$ is an $N$-orbit, and case (iia) occurs. Let $G$ be the subgroup of $\Gamma$ preserving $\{P\}\cup \Delta_0.$ Then $N\le G$ and $\Gamma_P\le G$ so that $\Gamma_P=G_P$. If  $|\Delta_0|=1$, then $N$ has a subgroup $V$ of index $2$ that fixes $P$. From Result \ref{res74}, $V$ is solvable, and hence so is $N$. Therefore, $N$ is an elementary abelian $2$-group. If $N$ has more at least three orbits, then $N$ has three subgroups of index $2$, and hence $N$ has a non-trivial element that fixes four points belonging two distinct $N$-orbits. But this contradicts Lemma \ref{lem24072024}. We may assume that $N$ has only two orbits. Then $|\Delta|=2|N|$. Since $N$ is abelian,
$$|\Delta|\le 2(4\gg(\cX)+4)<16\gg(\cX)<16 \Big(\frac{p}{p-1}\Big)^2\gamma(\cX)^2$$ by Result \ref{res60.79.108} and (\ref{eq09112023}). Thus the claim holds.

Therefore, Proposition \ref{prop280623} applies to $G$. Let $\tilde{G}=G/K$.
Since $|\Delta_0|$ is a power of $p$, either
$p\nmid |N|$, or each $p$-element in $N$ fixes a point in $\{P\}\cup \Delta_0$. By Lemma \ref{lem24072024}, this implies that no $p$-element in $N$ fixes a point off $\{P\}\cup \Delta_0$. On the other hand, $N$ has some more orbit $\Omega$ in $\Delta$, otherwise (***) holds, and the action of $N$ on $\Omega$ is the same as on $\{P\}\cup \Delta_0$. Hence the $p$-elements in $N$
have a fixed point in $\Omega$, a contradiction.  From Proposition \ref{prop280623}, $G$ acts on $\{P\}\cup \Delta_0$ as a $2$-transitive permutation group. Let $K$ be subgroup of $G$ fixing $\{P\}\cup \Delta_0$ pointwise. Then Proposition \ref{pro11022025} applies to $\tilde{G}$. If $N$ is not solvable, then so is $\tilde{N}=NK/K$ and hence one of the cases (A),(B),(C),(D),(E),(F) holds. Since $\tilde{N}$ is a normal subgroup of $\tilde{G}$, this yields that $p$ divides $|N|$, a contradiction. Therefore, $N$ is solvable, and so is $\tilde{N}$. In cases  (A),(B),(C),(D),(E),(F), $\tilde{G}$ has no solvable normal subgroup. Therefore, one of the cases (H'),(J'),(L),(L') and (L'') occurs. In particular, $N$ is an elementary abelian group of order $d^k$. 

In case (H'), we have $d=2$,  and $|\Delta_0|=2^r-1=p$. Hence $\tilde{S}_P=S_PK/K\cong S_P/M$ has order $p$ where $K=M\rtimes C$ and $M=S_P\cap K$. From (III) of Proposition \ref{prop280623}, $M$ is non-trivial. Hence $|S_P|\ge p^2$. On the other hand, in case (H'), either $\tilde{G}\cong \AGL(1,2^r)$, or $\tilde{G}\cong {\rm{A\Gamma L}}(1,2^r)$. In the former case, $\tilde{H}_P=H_PK/K$ is trivial, and hence $H_P$ fixes $\{P\}\cup \Delta_0$ pointwise, i.e. $H_P=C$ up to conjugacy in $K$. From (V) of Proposition \ref{prop280623}, $|C|$ divides $1+|\Delta_0|$, and hence $|H_P|\le 1+|\Delta_0|=p+1$. Thus, $|H_P|^2<2 |S_P|$.  From Proposition \ref{pro27042025},    
$$|\Gamma|< 96 \left(\frac{p}{p-1}\right)^2 \gamma(\cX)^4$$ whence the claim follows. This argument can also be used for the other case, $\tilde{G}\cong {\rm{A\Gamma L}}(1,2^r)$, and $|\tilde{G}|=p 2^r r$ with $r<p$ odd prime. However, the result obtained is a slightly weaker bound. Therefore, a different proof is need. Observe that $r\nmid |K|$ as $|K|=p^a 2^m$ since $K=M\rtimes C$ and $|C|$ divides $1+|\Delta_0|=2^r$ by (V) of Proposition \ref{prop280623}. Therefore $r||G|$ and $G$ has a Sylow $r$-subgroup $S_r$ of order $r$. Since $r$ is odd while  $1+|\Delta_0|$ is a power of $2$, $S_r$ has some fixed point on $\{P\}\cup \Delta_0$. Since $G$ is transitive on $\{P\}\cup \Delta_0$, we may assume that $S_r$ fixes $P$.
Let $U=NS_P=N\rtimes S_P$. From (*), the quotient curve $\breve{\cX}=\cX/U$ is rational. Since $S_r$ normalizes $U$, $\breve{S}_r= S_rU/U\cong S_r$ is a subgroup of $\aut(\breve{\cX})$. Moreover, $\breve{S}_r$ fixes the point $\breve{P}$  lying under $\{P\}\cup \Delta_0$. Since $p\nmid r$, $\breve{S}_r$ has another fixed point $\breve{Q}$. Let $\Omega$ be the $U$-orbit lying over $\breve{Q}$. Here $r\nmid |\Omega|$ by the Orbit theorem since $|U|=|N||S_P|=2^k p^a$. But then $S_r$ fixes a point in $\Omega$ that is off $\{P\}\cup \Delta_0$, a contradiction to Lemma \ref{lem24072024}.

In case (J'), $p=2$ and $d$ is odd.  Since $N$ is a normal subgroup of $\Gamma$, the factor group $\hat{\Gamma}=\Gamma/N$ acts on the set $\Lambda$ of all $N$-orbits in $\Delta$ as a transitive permutation group. We may assume  $\Lambda_1=\{P\}\cup \Delta_0$. Let $\hat{S}_P=S_PN/N\cong S_P$. Since $\{P\}\cup \Delta_0$ is an $N$-orbit, we have $\hat{S}_P=S_QN/N$ for any $Q\in (\{P\}\cup \Delta_0)$. Therefore, $\hat{S}_P$ is a normal subgroup of the stabilizer of $\Lambda_1$ in $\hat{\Gamma}$. We show that no non-trivial element of $\hat{S}_P$ fixes an $N$-orbit other than $\Lambda_1$. Assume on the contrary that a non-trivial $\hat{s}\in \hat{S}_P$ fixes a $N$-orbit $\Lambda_2$ with $\Lambda_1\ne \Lambda_2$. Take $s\in S_P$ such that $sN/N=\hat{s}$. Then $s$ preserves $\Lambda_2$. Since $|\Lambda_1|=|\Lambda_2|=|N|$, and $|N|$ is odd, $s$ fixes a point in both $\Lambda_1$ and $\Lambda_2$. But this contradicts Lemma \ref{lem24072024}. Result \ref{heringshu} applies to $(Q,\Omega,G)=(\hat{S}_P,\Lambda,\hat{\Gamma})$. Let $\hat{\Sigma}$ be the normal closure of $\hat{S}_P$.
If $\hat{\Sigma}\cong PSL(2,2^e)$ with $|\hat{S}_P|=2^e$, then $\hat{S}_P$ and hence $S_P$ is elementary abelian. On the other hand $S_PK/K\cong S_P/M$ has an element of order $4$ in case (J'), a contradiction.
If $\hat{\Sigma}\cong Sz(2^e)$, or $\hat{\Sigma}\cong PSU(3,2^e)$, then $\hat{S}_P$ and hence $S_P$ has no element of order $>4$. Therefore, neither $S_P/M$ does. Comparison with case (J') shows that $u=1+|\Delta_0|=5$. Moreover,  $H_PK/K$ is trivial, that is, $|H_P|=|C|$. From (V) of Proposition \ref{prop280623}, $|C|\le 5$, and  Proposition \ref{pro04052025} gives (\ref{eq21072025C})
whence the claim follows when $\hat{\Sigma}$ is simple. Otherwise $\hat{S}_P= S_PN/N\cong S_P$ is a Frobenius complement. From Result \ref{thom}, $S_P$ is cyclic. Then (iii) of Lemma \ref{lem27092025A} yields that $\Gamma$ has a cyclic Sylow $2$-subgroup. Thus Result \ref{reszass} applies, and hence $\Gamma=U\rtimes S_P$ where the normal subgroup $U$ has odd order. Therefore, $|\Gamma|=|S_P||U|$, and hence from Result \ref{res56.116} applied to $U$ and Lemma \ref{lem14dic21},
$$|\Gamma|\le |S_P|\, 84(\gg(\cX)-1) \le 84 \left(\frac{p}{p-1}\right)^3 \gamma(\cX)^3 $$
whence the claim follows.

In cases (L), (L') and (L''), $p=2$ and $1+|\Delta_0|=9$. From (i) and (ii) of Lemma \ref{lem27092025A}, $\gamma(\cX)\ge 2$.  Moreover, $H_PK/K$ is trivial in cases (L) and (L') whereas $|H_PK/K|=3$ in case (L'').
From (V) of Proposition \ref{prop280623}, $|C|\le 9$ when (L') or (L'') occurs while from Lemma \ref{lem13022025} $|C|\le 17$ when (L) holds.  Thus the bound $|H_P|\le 27$ holds in all these cases. Therefore,
Proposition \ref{pro04052025} applies to $\kappa=\ha (27^2+1)=365$ and (\ref{eq21072025C}) gives
$$|\Gamma|\le 3\cdot 365 \left(\frac{p}{p-1}\right)^3\gamma(\cX)^4< 1095\left(\frac{p}{p-1}\right)^3\gamma(\cX)^4<548\left(\frac{p}{p-1}\right)^4\gamma(\cX)^4$$
whence the claim follows.

From what we have shown so far, we may assume that $\Gamma$ is simple group. More precisely, $\Gamma$ is a non-abelian, as $|\Gamma|=p$ is impossible since groups of prime order are primitive.

To show that (iib) does not occur, assume on the contrary the existence of a block $\Delta_1$ of $\Gamma$ through $P$ that contains $\{P\}\cup \Delta_0$ properly.
Let $\Lambda=\Lambda_1\dot\cup,\ldots \dot\cup \Lambda_k$ with $\Lambda_1=\Delta_1$ denote the arising $\Gamma$-invariant partition of $\Delta$. Also, let $T$ denote the subgroup of $\Gamma$ preserving $\Delta_1$. Then (\ref{eq15052024Bis}) holds with a positive integer $\mu>0$. Starting with (\ref{eq15052024Bis}), the arguments involving (\ref{eq11082024}), (\ref{eq03082024}), (\ref{eqC03082024}) in the proof of Proposition \ref{prop03082024} can be used to prove (\ref{eqC03082024U}). Therefore, either $k=|S_P|+1$, or $k=2|S_P|+1$. Since $|\Delta_0|\ge p$, we have $|S_P|\ge p^2$. 

Look at the (faithful) action of $\Gamma$ on $\Lambda$. As in the proof of Proposition \ref{pro16082024}, Lemma \ref{lem24072024} shows that no non-trivial element of $S_P$ fixes a further member $\Lambda_i$ in $\Lambda$. Thus, the same two possibilities in the proof of Proposition \ref{pro16082024} arise, namely $\Gamma$ is either doubly transitive on $\Lambda$, or $\frac{3}{2}$-transitive (but not doubly transitive) on $\Lambda$. Again, the latter case cannot actually occur by Result \ref{res3/2} . As in the proof of Proposition \ref{pro16082024},  Result \ref{cam} applies, this time to $\Gamma$. The arguments in the proof of Proposition \ref{pro16082024} can be used to prove that the only possible case is $\Gamma\cong {\rm{PSp}}(2d,2)$ with $2^{2d-1}\pm 2^{d-1}=p^h+1$. Here $d\ge 3$ as $\Gamma$ is non-abelian simple. From Result \ref{res03102025}, $d=3$, $p=3$, and $|\Lambda|=28$. On the other hand, the $1$-point stabilizer of ${\rm{PSp}}(6,2)$ in its permutation representation of degree $28$  contains an index $2$  subgroup isomorphic to $\PSU(4,2)$, and hence a Sylow $3$-subgroup of order $3^4$. Since $3^4>|\Lambda|-1=27$, some non-trivial element $s\in S_P$ preserves two distinct $N$-orbits. Since $p=3$ and $p\nmid |N$, this yields that $s$ fixes a point off $\{P\}\cup \Delta_0$. But this contradicts Lemma  \ref{lem24072024}.
\end{proof}

\begin{theorem}
\label{teo03042025} Let $p=2$. Assume that (*) and (**) hold, but (***) does not hold, and that $S_P$ has only one short orbit $\Delta_0$ other than $\{P\}$. If $\Gamma$ is an imprimitive group on $\Delta$, then (\ref{eq18122025})
holds.
\end{theorem}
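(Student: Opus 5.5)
By Lemma \ref{le11040205}, we may assume that $\Gamma$ is a non-abelian simple group and that case (iia) of Proposition \ref{prop03082024} occurs. Otherwise (\ref{eq18122025}) already holds. Thus $\Lambda_1=\{P\}\cup\Delta_0$ is a block. Its stabilizer $G$ acts $2$-transitively on $\Lambda_1$. The pair $(\cX,G)$ satisfies (*) and (***), so Proposition \ref{prop280623} applies to $G$ and $\Lambda_1$. The plan is to show that this configuration is impossible for $p=2$, using the Hering–Shult Result \ref{heringshu} on the faithful action of $\Gamma$ on the block system $\Lambda=\Lambda_1\dot\cup\cdots\dot\cup\Lambda_r$. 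By Proposition \ref{pro04052025} we may also assume $|H_P|>8$, and by Proposition \ref{pro04102025A}-type arguments (or directly via Lemma \ref{lem27092025A}) we may assume $|\Delta_0|>1$. Hence $|\Lambda_1|=1+|\Delta_0|$ is odd.

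The first step is to produce a normal subgroup of the block stabilizer $G$ that is semiregular on $\Lambda\setminus\{\Lambda_1\}$. The candidate is $M$, the Sylow $2$-subgroup of the kernel $K$ of $G$ on $\Lambda_1$. By (III) of Proposition \ref{prop280623}, $M$ is non-trivial apart from case (L), and it is normal in $G$. Suppose a non-trivial $s\in M$ preserves a block $\Lambda_i\neq\Lambda_1$. Since $|\Lambda_i|=|\Lambda_1|$ is odd, $s$ has a fixed point in $\Lambda_i$. That point lies off $\{P\}\cup\Delta_0$, which contradicts Lemma \ref{lem24072024}. So $M$, viewed in $\Gamma$ acting on $\Lambda$, is a normal subgroup of even order of the one-point stabilizer $G$ that is semiregular on $\Lambda$ minus that point. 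The action on $\Lambda$ is faithful because $\Gamma$ is simple. In case (L), I would instead take a suitable normal $2$-subgroup of $G$ read off from $A\Gamma L(1,9)$, or bound $|\Gamma|$ directly via $|H_P|\le 17$ and Proposition \ref{pro04052025}.

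Next, apply Result \ref{HeringShult} to the simple group $\Gamma$ on $\Lambda$. It follows that $\Gamma$ is $2$-transitive on $\Lambda$ and is one of $\PSL(2,2^e)$, $Sz(2^e)$, $\PSU(3,2^e)$ in its natural representation, with $M$ a full Sylow $2$-subgroup of the stabilizer $G$. Hence $|M|=2^e,2^{2e},2^{3e}$ equals the $2$-part of $|G|$. But $S_P\le G$ and $|S_P|=|M|(|\Delta_0|)>|M|$, since $\Lambda_1$ is an $S_P$-orbit closure with $S_P$ transitive on $\Delta_0$ and $|\Delta_0|>1$. This gives a contradiction. Equivalently, in these natural actions the block stabilizer's Sylow $2$-subgroup fixes only that block, whereas $S_P/M\neq 1$ acts non-trivially.

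The main obstacle I expect is that argument. If it is not clean, I would fall back on Serre's Result \ref{le11.77}. Here $G=G_{\Lambda_1}$ is known explicitly: it is the Borel subgroup $E\rtimes C$ with $C$ cyclic of odd order $2^e-1$, etc. Its action on $\Lambda_1$ would have to be $2$-transitive of degree $1+|\Delta_0|$. By Proposition \ref{pro11022025}, that forces $\tilde G$ to have a non-solvable or $p$-power-degree structure incompatible with a Borel subgroup, except in small cases such as $|\Lambda_1|=3,5,9$. Those small cases I would dispose of by bounding $|H_P|$ and invoking Proposition \ref{pro04052025} or Proposition \ref{pro27042025}.
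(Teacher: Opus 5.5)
\textbf{Main gap: the case $|\Delta_0|=1$.} You may not assume $|\Delta_0|>1$, and this case is where most of the paper's proof lies. Proposition \ref{pro04102025A} is stated and proved only for $p>2$. It rests on Lemma \ref{lem04102025}, whose parity argument needs $p-1$ even to conclude that $\mathfrak{g}(\cX)$ is even, and then on Result \ref{structure}, which is also an odd-characteristic result. Lemma \ref{lem27092025A}(iii) does not exclude $|\Delta_0|=1$ for $p=2$ either. It only describes the situation: $S_P$ fixes $P$ and $Q$ and has index $2$ in a Sylow $2$-subgroup $S$ of $\Gamma$.

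Your Hering--Shult step cannot reach this case. Proposition \ref{prop280623} does not apply to $G$ when $|\Lambda_1|=2$. Moreover, the blocks now have even size $2$, so an element of $S_P$ can swap the two points of another block without fixing either, and semiregularity on $\Lambda\setminus\{\Lambda_1\}$ is lost.

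The paper handles this case separately:
\begin{itemize}
\item If $|S|=16$, then $S$ is semidihedral, $|S_P|=8$ and $\gamma(\cX)=7$. Henn's bound (Result \ref{henn}) combined with Result \ref{resabg} then gives (\ref{eq18122025}).
\item If $|S|>16$, Lemma \ref{lem27092025A}(iii) gives $H_P\le C_\Gamma(S_P)$, and Serre's Result \ref{le11.77} yields $S_P=S_P^{(1)}=\cdots=S_P^{(v)}$ with $v=|H_P|$.
\item The Hurwitz formula with a tame orbit then gives $2\mathfrak{g}(\cX)-2\ge|\Delta|$, hence $|\Gamma|<|\Delta|^2<4(\mathfrak{g}(\cX)-1)^2$. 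The subcases $v=1$ (Proposition \ref{pro04052025}) and no tame orbit are handled separately.
\end{itemize}
Your proposal contains nothing that replaces this argument.

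\textbf{Secondary issue: the step ``$M$ is a full Sylow $2$-subgroup of $G$'' is unjustified.} For $|\Delta_0|>1$ your conclusion that the configuration is impossible is correct, but this step does not follow from Result \ref{heringshu}. That result identifies $\Gamma$ and the block stabilizer $G=E\rtimes C$ (a Borel subgroup). It says nothing about which normal subgroup $M$ is. In the $Sz(2^e)$ and $\PSU(3,2^e)$ cases, $Z(E)$ is another normal subgroup of $G$ that is semiregular on the remaining blocks.

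Your ``equivalently'' sentence is not a contradiction either. It compares the action of $E$ on $\Lambda$ with the action of $S_P/M$ inside $\Lambda_1$, which are different actions.

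The repair is short:
\begin{itemize}
\item Since $|\Delta_0|>1$, Lemma \ref{lem27092025A}(iii) makes $S_P$ a Sylow $2$-subgroup of $\Gamma$.
\item Hence $S_P=E$, which is normal in $G$.
\item Because $G$ is transitive on $\Lambda_1$, $S_P$ then fixes $\Lambda_1$ pointwise. This contradicts the transitivity of $S_P$ on $\Delta_0$.
\end{itemize}
The paper does not derive a contradiction here. It only uses $S_P=E$ to obtain $|\Delta|<4|S_P|^2$ and hence $|\Gamma|<256\gamma(\cX)^4$.
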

\begin{proof}
By the fourth claim in Proposition \ref{prop290723A}, we may assume that $\Delta$ is the unique non-tame orbit of $\Gamma$.

Let $\Lambda=\Lambda_1\dot\cup,\ldots \dot\cup \Lambda_k$ be a system of imprimitivity of $\Gamma$ in $\Delta$. By
Lemma \ref{le11040205} we may assume that case (iia) occurs in Proposition \ref{prop03082024}.
Set $\Lambda_1=\{P\}\cup \Delta_0$.
Let $G$ be the subgroup of $\Gamma$ which preserves $\Lambda_1=\{P\}\cup \Delta_0$. Then $G$ acts transitively on $\Lambda_1$. 

If $|\Delta_0|=1$, that is, $\{P\}\cup \Delta_0=\{P,Q\}$, then $S_P$ fixes both $P$ and $Q$. Thus $|G|=2|S_P||H_P|$, and hence $S_P$ is not a Sylow $2$-subgroup. Therefore, Lemma \ref{lem27092025A} applies.

If $|S|=16$ then $|S_P|=8$, and (ii) of Lemma \ref{lem27092025A} shows $\gamma(\cX)=7$. Hence the bound (\ref{eq18122025}) equals $34574400$. Also, from (\ref{eq09112023}),  $\gg(\cX)\le 4\cdot49$. Thus Result \ref{henn} yields
$|\Gamma|<8\gg(\cX)^3=8\cdot 4^3\cdot 49^3=60236288$. On the other hand, since $\Gamma$ is non-abelian simple by Lemma \ref{le11040205} and $S$ is a semidihedral group of order $16$ by (iii) of Lemma \ref{lem27092025A}, Result \ref{resabg} applies. It turns out that the bound (\ref{eq18122025}) holds up to one exception, namely when $\Gamma\cong \PSU(3,9)$. But in this case the Sylow $2$-subgroups of $\Gamma$ have order $32$.

From now on, $|S|>16$ is assumed. Therefore, (iii) of Lemma \ref{lem27092025A} shows that the centralizer $C_\Gamma(S_P)$ contains $H_P$.


Write $|G|=2|S_P|v$ with $|S_P|=2^u$, $|H_P|=v$ and $v$ odd. If $v=1$, then $|H_P|=1$ and the bound (\ref{eq18122025}) follows from the second claim in Proposition \ref{pro04052025}. Therefore, $v\ge 3$ may be assumed.  If $\Gamma$ has no tame orbit, then, as we have already observed in the proof of Proposition \ref{leterribile24},
$|\Delta|\le 2(\gg(\cX)-1)$ whence $|\Delta|<2|S_P|^2$ by (\ref{eq09112023}). This together with Lemma \ref{lem14dic21} give
$$|\Gamma|<|\Delta|^2<4|S_P|^4 \le 4\left(\frac{p}{p-1}\right)^4\gamma(\cX)^4.$$
Therefore, the existence of a point $Q$ in a tame orbit of $\Gamma$ may be assumed. Then $|\Gamma_Q|\ge 3$. Furthermore, since $H_P\le C_\Gamma(S_P)$, Lemma \ref{lem27092025A} together with Result \ref{le11.77} yield $S_P=S_P^{(1)}=\cdots = S_P^{(v)}$. With $\sigma$ as defined in  (\ref{eq01092025}), we have
$\sigma\ge -1 +v (|S_P|-1)$. This together with
 $|\Gamma_P|=|G_P|=|S_P|v$ give 

$$2\mathfrak{g}(\cX)-2\ge|\Delta|(\sigma-\frac{|\Gamma_P|}{|\Gamma_Q|})\ge |\Delta|\left(-1+v(|S_P|-1)-\textstyle{\frac{1}{3}}|S_P|v\right)\ge |\Delta|\left(-1+\textstyle{\frac{1}{2}}|S_P|v-\textstyle{\frac{1}{3}}|S_P|v\right)\ge |\Delta|\left(-1+\textstyle{\frac{1}{6}}|S_P|v\right).$$
Moreover, $|S_P|>2$, otherwise $\gg(\cX)=1$ by Lemma \ref{lem14dic21}. Thus $|S_P|v\ge 12$.
From this, $2\mathfrak{g}(\cX)-2\ge|\Delta|$ and then by (\ref{eq09112023}),
$$|\Gamma|<|\Delta|^2<4(\gg(\cX)-1)^2< 4\left(\frac{p}{p-1}\right)^4\gamma(\cX)^4,$$
whence (\ref{eq18122025})  follows for $|\Delta_0|=1$.


From now, $|\Delta_0|>1$ is assumed. Then Proposition \ref{prop280623} applies to $G$ where $\Lambda_1$ plays the role of $\Delta$. From (II) of Proposition \ref{prop280623}, $|\Delta|$ is odd.

We show that $G$ has a normal $2$-subgroup which acts semiregularly on $\Lambda$. Look at the action of the normal $2$-subgroup $M$ of $G$, as defined in Proposition \ref{prop280623}.
Each point of $\{P\}\cup \Delta_0$ is fixed by any element in $M$. Assume that $M$ preserves $\Lambda_i$ with $1<i\le r$. Since $|\Lambda_i|=|\Lambda_1|\equiv 1 \pmod{2}$, some point $R\in \Lambda_i$ is fixed by $M$. But this contradicts Lemma \ref{lem24072024}.
As $G$ is the stabilizer of $\Lambda_1=\{P\}\cup \Delta_0$ in the action of $\Gamma$ on $\Lambda$, Result \ref{heringshu} applies to $(Q,\Omega,G)=(M,\Lambda,\Gamma)$. Since $\Gamma$ is a simple group, $\Gamma$ is the normal closure of $M$ in $\Gamma$. Then, three possibilities arise, namely either $\Gamma\cong \PSL(2,2^u)$, or $\Gamma\cong Sz(2^u)$, or $\Gamma\cong \PSU(3,2^u)$ where $|\Lambda|$ is equal to $2^u+1$, $2^{2u}+1$ with $u\ge 3$, and $2^{3u}+1$ with $u\ge 2$, respectively.


If $\Gamma\cong \PSL(2,2^u)$ then $G$ is isomorphic to the $1$-point stabilizer of $\PSL(2,2^u)$. From Result \ref{res74},  $G=E\rtimes C$ where $E$ is an elementary abelian $2$-group of order $2^u$ with a cyclic complement $C$ of order $2^u-1$. Since $1+|\Delta_0|$ is odd, $E$ fixes a point $P$, and $E=S_P$ may be assumed. Hence $|S_P|=2^{u}$.
Moreover, $|\Delta|=|\Lambda_1|(2^u+1)$. Since $\Delta_0$ is an $S_P$-orbit, the Orbit theorem shows that $|\Lambda_1|=1+|\Delta_0|\le 1+|S_P|\le 2 |S_P|$. Therefore, $|\Delta|< 2|S_P|(2^u+1)< 2|S_P|2^{u+1}\le 4|S_P|^2$ whence $|\Delta|<16\gamma(\cX)^2$ by (\ref{eq09112023}).
Thus,
$|\Gamma|=|\Gamma_P||\Delta|<|\Delta|^2<256 \gamma(\cX)^4$.

If $\Gamma\cong Sz(2^u)$ then $G=E\rtimes C$ where $E$ is a $2$-subgroup of order $2^{2u}$ with a cyclic component $C$ of order $2^u-1$. As in the previous case, $E=S_P$ may be assumed, and 
then $|\Delta|<4|S_P|^2\le  16\gamma(\cX)^2$ holds, whence $|\Gamma|<256 \gamma(\cX)^4$ follows.

If $\Gamma\cong \PSU(3,2^u)$ then $G=E\rtimes C$ where $E$ is a $2$-subgroup of order $2^{3u}$ with a cyclic component $C$ of order $2^{2u}-1$.
Again, the previous arguments may be used to show that $|\Gamma|<256 \gamma(\cX)^4$.
\end{proof}
From now on $p>2$ and Case (iia) in Proposition \ref{prop03082024} is investigated. As before, $G$ stands for the subgroup of $\Gamma$ preserving $\{P\}\cup \Delta_0$.

\begin{proposition}
\label{pro09102025} Let $p>2$. Assume that (*) and (**) hold, but (***) does not hold, and that $S_P$ has only one short orbit $\Delta_0$ other than $\{P\}$. If $\Gamma$ is imprimitive on $\Delta$, and $G$ is solvable then (\ref{eq18122025}) holds provided that at least one of the following conditions is satisfied.
 \begin{itemize}
 \item[(i)] $1+|\Delta_0|$ is not a power of $2$.
 \item[(ii)] The Sylow $2$-subgroups of $\Gamma$ are dihedral.
 \item[(iii)] The Sylow $2$-subgroup of $\Gamma$ are semidihedral.
 \item[(iv)] $\Gamma$ has no element of order $8$.
 \end{itemize}
 \end{proposition}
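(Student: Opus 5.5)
By Lemma \ref{le11040205} and the fourth claim of Proposition \ref{prop290723A}, I will assume throughout that $\Gamma$ is non-abelian simple, $\Delta$ is its unique non-tame orbit, and case (iia) of Proposition \ref{prop03082024} occurs. Then $\Lambda_1=\{P\}\cup\Delta_0$ is a block and $G$ is its stabilizer. By Proposition \ref{pro04102025A} I may take $|\Delta_0|>1$. Proposition \ref{prop280623} then applies to $(G,\Lambda_1)$, and since $p>2$, (II) gives that $1+|\Delta_0|=p^h+1$ is even. Moreover $G/K$ is a solvable group on the list of Proposition \ref{pro11022025}, hence one of (H'), (J'), (L), (L'), (L''). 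Because $p>2$, only (H') survives. So $G/K$ is $\AGL(1,2^r)$ or ${\rm{A\Gamma L}}(1,2^r)$ with $p=2^r-1$ a Mersenne prime, $|\Delta_0|=p$ and $1+|\Delta_0|=2^r$.

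Condition (i) is then immediate, since $1+|\Delta_0|$ is a power of $2$ in case (H'). Under (i) this contradicts the setup, so (\ref{eq18122025}) holds vacuously. For the remaining conditions I will exploit the fact that the regular normal subgroup $E\cong C_2^r$ of $G/K$ lifts to a $2$-subgroup of $G$. Indeed, by (V) of Proposition \ref{prop280623} the order $|C|$ divides $2^r$, so $K=M\rtimes C$ contributes only a cyclic $2$-part. Hence a Sylow $2$-subgroup of $G$, and so of $\Gamma$, contains a section isomorphic to $C_2^r$, and it is an extension of a cyclic group $C$ by $C_2^r$.

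If the Sylow $2$-subgroups of $\Gamma$ are dihedral or semidihedral, they have $2$-rank $2$, so $r\le 2$ up to the cyclic factor. I expect this to force $r=2$, hence $p=3$, $|\Delta_0|=3$ and $|S_P/M|=3$. In that small case $|H_P|\le |\tilde{H}_P||C|\le 2\cdot 4$. Then Proposition \ref{pro04052025}, with the bound $|H_P|\le 8$ giving (\ref{eq21072025C}), or alternatively the second claim of Proposition \ref{pro27042025} using $|S_P|\ge p^2$ from $M\neq 1$, yields (\ref{eq18122025}).

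In the dihedral case I would also invoke Result \ref{resgor} for a non-abelian simple $\Gamma$, so $\Gamma\cong\PSL(2,q)$ or ${\rm{Alt}}_7$, and compare with the order of $G$. In the semidihedral case I would invoke Result \ref{resabg}, so $\Gamma$ is $\PSL(3,q)$, $\PSU(3,q)$ or $M_{11}$. In each of these I will check that the subgroup $G$, which has index $|\Lambda|$ and contains $S_P\rtimes H_P$, gives $|H_P|$ small compared with $|S_P|$, so that Proposition \ref{pro27042025} applies.

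For condition (iv), no element of order $8$ forces the cyclic part $C$ to satisfy $|C|\le 4$. For ${\rm{A\Gamma L}}(1,2^r)$ it also bounds $|\tilde{H}_P|\le r$, and from $p=2^r-1\le \gamma$ we get $r\le\log_2(\gamma+1)$. Hence $|H_P|^2\le 16r^2<8|S_P|$, because $|S_P|\ge p|M|\ge p^2$. The second claim of Proposition \ref{pro27042025} then gives (\ref{eq18122025}).

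The main obstacle is the dihedral and semidihedral cases. There I must make sure that the lift of $C_2^r$ really sits inside the Sylow $2$-subgroup of $\Gamma$ rather than being absorbed into $K$, and then pin down $r$. I would first try using the fact that $M$ is a $p$-group with $p$ odd, so that $G/M$ has the same Sylow $2$-subgroups as $G$.
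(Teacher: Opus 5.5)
Your argument is correct in substance but organised differently from the paper's. The common part is the same. By Proposition \ref{pro04102025A}, $|\Delta_0|>1$. Solvability of $G$ together with $p>2$ leaves only case (H') of Proposition \ref{pro11022025}, so $1+|\Delta_0|=2^r$ and condition (i) is vacuous. Condition (iv) is handled essentially as in the paper: $|C|\le 4$, hence $|H_P|\le 4r$ and $|H_P|^2<8p^2\le 8|S_P|$, and the second claim of Proposition \ref{pro27042025} applies.

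For (ii) and (iii) the paper takes a longer route:
\begin{itemize}
\item It first settles $\tilde G\cong\AGL(1,2^r)$ using $|H_P|=|C|\le p+1$ from the proof of Lemma \ref{le11040205}.
\item It transfers this bound through the index $r$, reducing ${\rm{A\Gamma L}}(1,2^r)$ to $p\ge 8191$.
\item It treats $C=1$ separately.
\item For $C\ne 1$ it takes a central involution $s$ of a Sylow $2$-subgroup $S_2$ of $\Gamma$, shows $s\in\Sigma_2$, and uses that $S_2/\langle s\rangle$ is dihedral or cyclic to get $r\le 3$.
\end{itemize}

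Your section argument gives $r=2$, $p=3$ and $|H_P|=|\tilde H_P||C|\le 8$ at once. Proposition \ref{pro04052025} then finishes, or Proposition \ref{pro27042025}, since $64<72\le 8|S_P|$. You need no case split, no small-$p$ reduction and no index-$r$ transfer, which is the least transparent step in the paper. In exchange, the paper's route proves the bound in the $\AGL$ case and for $C=1$ regardless of the Sylow $2$-structure, which the statement does not need. Your appeal to simplicity of $\Gamma$ and the paragraph invoking Results \ref{resgor} and \ref{resabg} are superfluous.

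What you should repair is the justification of $r\le 2$. Having $2$-rank $2$ does not control sections: $Q_8$ has $2$-rank $1$ but a quotient $C_2\times C_2$. The right reason also disposes of your ``main obstacle''.
\begin{itemize}
\item Let $S$ be a Sylow $2$-subgroup of $G$. Then $S\cap K$ meets the odd-order group $M$ trivially, so it embeds in $K/M\cong C$ and is cyclic; it is also normal in $S$.
\item $S/(S\cap K)\cong SK/K$ is a Sylow $2$-subgroup of $G/K$, so it contains the normal translation group $C_2^r$.
\item $S$ lies in a dihedral or semidihedral group, so it has a cyclic subgroup of index at most $2$.
\item This property passes to $S/(S\cap K)$ and to its subgroup $C_2^r$, which forces $r\le 2$.
\end{itemize}
One small inaccuracy: for $r=2$ and $\tilde G\cong{\rm{A\Gamma L}}(1,4)\cong{\rm{Sym}}_4$, the quotient $S/(S\cap K)$ is dihedral of order $8$, not $C_2^2$. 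So your description of $S$ as cyclic-by-$C_2^r$ is not literally right there, but only the section $C_2^r$ is used.
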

\begin{proof}
By the fourth claim in Proposition \ref{prop290723A}, we may assume that $\Delta$ is the unique non-tame orbit of $\Gamma$. 
From Proposition \ref{pro04102025A}, $|\Delta_0|>1$, and from Proposition \ref{pro11022025}, either $\tilde{G}=G/K\cong {\rm{AGL}}(1,2^r)$ or $\tilde{G}=G/K\cong {\rm{A \Gamma L}}(1,2^r)$ where $p=2^r-1$. In particular, $1+|\Delta_0|$ is a power of $2$. In the former case,
\begin{equation}
\label{eq30042026}
|\Gamma|< 96 \left(\frac{p}{p-1}\right)^2 \gamma(\cX)^4
\end{equation}
holds as we have already shown in the proof of Lemma \ref{le11040205} concerning  case $\mu=0$ with $|\Delta_0|>1$ and $p>2$. 
This yields (\ref{eq18122025}) for $\tilde{G}\cong \AGL(1,2^r)$. Assume that $\tilde{G}={\rm{A\Gamma L}}(1,2^r)$. Since ${\rm{AGL}}(1,2^r)$ is a subgroup of ${\rm{A\Gamma L}}(1,2^r)$ of index $r$, (\ref{eq30042026}) also gives a bound for this case, namely  
\begin{equation}
 \label{eqA24072024E}
 |\Gamma|<  96\log_2(p+1) \left(\frac{p-1}{p}\right)^2 \left(\frac{p}{p-1}\right)^4\gamma(\cX)^4.
 \end{equation}
Unfortunately, (\ref{eqA24072024E}) is stronger than (\ref{eq18122025}) only for smallest values of $p$ as  
$$ 96\log_2(p+1) \left(\frac{p-1}{p}\right)^2<900$$
only holds for $p=3,7,31,127$. Nevertheless, we may assume $p\ge 2^{13}-1=8191$. 

Let $K=M\rtimes C$. From (V) of Proposition \ref{prop280623}, $|C|$ is a $2$-group whose order divides $2^r$. Two cases are treated separately according as $C$ is trivial or not. 

If $C$ is trivial, then $|H_P|=r$ with $r=\log_2(p+1)$. On the other hand, $p^2\le |S_P|$ from (III) of Proposition \ref{prop280623}. Since $|H_P|^2=r^2<p^2=|S_P|$, the second claim of Proposition \ref{pro27042025} gives a bound stronger than (\ref{eq18122025}).  

If $C$ is non-trivial, then 
$C$ is a contained in a Sylow $2$-subgroup $\Sigma_2$ of $G$. More precisely, $C$ is a normal subgroup of $\Sigma_2$ as $C$ is the nucleus of the permutation representation of $\Sigma_2$ on $\{P\}\cup \Delta_0$. In particular, the factor group $\Sigma_2/C$ is isomorphic to the elementary abelian $2$-group $C_2^r$ of order $2^r$.  

Moreover, $\Sigma_2$ is contained in a Sylow $2$-subgroup $S_2$ of $\Gamma$. Now, 
take a central involution $s$ of $S_2$. Since $s$ centralizes $C$ and $\{P\}\cup \Delta_0$ is the set of all fixed points of $C$, $s$ preserves  $\{P\}\cup \Delta_0$ and hence $s\in G$. Since $s$ centralizes $\Sigma_2$ and $\Sigma_2$ is a Sylow $2$-subgroup of $G$, this yields $s\in \Sigma_2$.  Two cases arise according as $C$ contains $s$ or does not, namely either  $s\in C$ where       
 $$C_2^r\cong \frac{\Sigma_2}{C}\cong \frac{\Sigma_2/\langle s \rangle}{C/\langle s \rangle},$$
or $s\not\in C$ where 
$$ 
\frac{\Sigma_2}{(C\times \langle s \rangle}\cong \frac{\Sigma_2/\langle s \rangle}{(C\times \langle s \rangle)/\langle s \rangle}\cong  \frac{\Sigma_2/\langle s 
\rangle}{D},\,\, \mbox{with}\,\,D\cong C,$$
and 
$$
\frac{\Sigma_2}{C\times \langle s \rangle}\cong \frac{\Sigma_2/C}{(C\times \langle s \rangle)/C}\cong \frac{\Sigma_2/C}{E}\cong F\,\,
\mbox{with}\,\, E\cong \langle s \rangle, F\cong C_2\times\cdots \times C_2, |F|=2^{r-1}. 
$$
 Suppose that $S_2$ is a either a dihedral group, or semi-dihedral group. Then $S_2/\langle s \rangle$ is either a dihedral or a cyclic group. Since the factor groups of dihedral and cyclic groups are still dihedral and cyclic, it turns out that either $\Sigma_2/C\cong C_2^r$ or $\Sigma_2/E\cong F$ is either dihedral or cyclic. This shows that $r\le 3$ whence $p=3,7$. Therefore, the bound (\ref{eq18122025}) holds.     
 
If no element of $\Gamma$ has order $8$, then $C$ has order either $2$ or $4$. Hence $|H_P|\le 4 r=4 \log_2(p+1)$. Then $|H_P|^2<8p^2$. This together with Proposition \ref{pro27042025} yield the stronger bound (\ref{eq30042026}). 
\end{proof}

Unfortunately, the approach in the proof of Theorem \ref{teo03042025} does not apply when $p>2$ because Result \ref{heringshu}  does not hold for normal subgroups of odd order. Therefore, we go back to Lemma \ref{le11040205} and use the classification of simple groups to find the possibilities for $\Gamma$. This will be done in the proofs of the following Propositions \ref{pro06052025}, \ref{pro19042025B}, \ref{pro19042025A} and \ref{pro19042025C}.

From the rest of the present section, $p>2$ is assumed.

\begin{proposition}
\label{pro06052025} If $\Gamma\cong {\rm{Alt}}_n$, then (\ref{eq18122025}) holds.
\end{proposition}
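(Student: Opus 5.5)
Working context: $p>2$, (*) and (**) hold but (***) fails, $S_P$ has a unique short orbit $\Delta_0$ other than $\{P\}$, $\Gamma$ is imprimitive on $\Delta$, and by Lemma \ref{le11040205} case (iia) occurs with $\Gamma$ non-abelian simple. Let $G$ be the stabilizer of the block $\Lambda_1=\{P\}\cup\Delta_0$. By Proposition \ref{pro04102025A} we may take $|\Delta_0|>1$. Then $G$ acts $2$-transitively on $\Lambda_1$ with $|\Lambda_1|=p^h+1$, and Proposition \ref{prop280623} applies to $G$. The plan is to show that $\Gamma\cong{\rm{Alt}}_n$ forces $n$ to be small compared with $p$, so that $|\Gamma|$ is dominated by a fixed power of $|S_P|\le \frac{p}{p-1}\gamma(\cX)$.

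\textbf{Step 1: structure of $G$.} The group $G$ is a proper subgroup of ${\rm{Alt}}_n$ containing $\Gamma_P$, and $G/K$ is one of the $2$-transitive groups of Proposition \ref{pro11022025}. By Lemma \ref{lem27092025A}, $S_P$ is a Sylow $p$-subgroup of $\Gamma={\rm{Alt}}_n$. I would use its explicit structure: it is a direct product of iterated wreath products $C_p\wr\cdots\wr C_p$, one factor for each $p$-adic digit of $n$. The normalizer data enter through $\Gamma_P=S_P\rtimes H_P$ with $H_P$ cyclic. This is where Result \ref{bersym} is used: in ${\rm{Sym}}_n$, if the $p'$-part of $N(S_p)/S_p$ is cyclic then $n=p$. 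Here $N_\Gamma(S_P)\ge\Gamma_P$, but it need not equal $\Gamma_P$, so I would argue directly. For $n\ge 2p$, the normalizer of a Sylow $p$-subgroup of ${\rm{Alt}}_n$ contains non-cyclic $p'$-subgroups, for instance from permuting and twisting equal blocks. Such a subgroup must fix $P$, because it normalizes $S_P$ and $P$ is the unique fixed point of $S_P$ in $\Delta$ (as $|\Delta|\equiv1\pmod p$). This contradicts Result \ref{res74}. Hence $p\le n<2p$ and $|S_P|=p$. Alternatively, $|S_P|\ge p^2$ is forced, since $|\Delta_0|\ge p$ and $M\neq1$ by Proposition \ref{prop280623}(III).

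\textbf{Step 2: contradiction or bound.} If $|S_P|=p$ then $|\Delta_0|=1$ by Proposition \ref{pro96072024A}, contradicting $|\Delta_0|>1$. So $n\ge 2p$, and Step 1 fails unless the small-block exceptions occur. In those exceptions I would bound $n$ explicitly: $|S_P|$ is the $p$-part of $n!/2$, which is at least $p^{\lfloor n/p\rfloor}$. Together with $|{\rm{Alt}}_n|\le n^n$, this gives $|\Gamma|\le 900(\frac{p}{p-1})^4\gamma(\cX)^4$ as soon as $n$ is bounded by a small multiple of $p$. For residual small cases, such as $n\le 3p$ with $p\le 7$, I would check directly using Lemma \ref{lem14dic21} and Result \ref{henn}.

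\textbf{Main obstacle.} The hardest step is Step 1: ruling out non-cyclic $p'$-elements in $N_\Gamma(S_P)$ that fix $P$, uniformly in $n$. The inference "normalizes $S_P$ $\Rightarrow$ fixes $P$" must be justified. First try: use that $P$ is the unique fixed point of $S_P$ on $\Delta$. Failing that, use the block $\Lambda_1$ and Lemma \ref{lem24072024}, which says that nontrivial elements of $\Gamma_P$ fix only points in $\Lambda_1$.
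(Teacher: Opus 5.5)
Your route is the paper's. $S_P$ is a Sylow $p$-subgroup of ${\rm{Alt}}_n$ (Lemma \ref{lem27092025A}), and its normalizer has a cyclic $p'$-complement (Result \ref{res74}). The Sylow-normalizer structure of symmetric and alternating groups then forces $|S_P|=p$, hence $|\Delta_0|=1$, and Proposition \ref{pro04102025A} (or the contradiction with $|\Delta_0|>1$) finishes. The paper does this in two lines by citing Result \ref{bersym}. Your worry that $N_\Gamma(S_P)$ might be larger than $\Gamma_P$ is unfounded. When $|\Delta_0|>1$, the only short $S_P$-orbits are $\{P\}$ and $\Delta_0$, so $P$ is the unique fixed point of $S_P$ on all of $\cX$. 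Hence $N_\Gamma(S_P)=\Gamma_P$, and every $p'$-subgroup of it embeds in $\Gamma_P/S_P\cong H_P$, so it is cyclic.

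Two of your claims are wrong as written, though.

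\textbf{The blanket claim for $n\ge 2p$.} You assert that the $p'$-part of $N_{{\rm{Alt}}_n}(S_P)$ is non-cyclic for all $n\ge 2p$. This fails for $p=3$ and $n\in\{6,7,9,10\}$, where $N_{{\rm{Alt}}_n}(S_P)/S_P$ is $C_4,C_4,C_2,C_2$ respectively.

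\textbf{The fallback for exceptions.} You claim that $|S_P|\ge p^{\lfloor n/p\rfloor}$ and $|{\rm{Alt}}_n|\le n^n$ yield (\ref{eq18122025}) whenever $n$ is a bounded multiple of $p$. This is false. For $p=7$, $n=14$ one has $|S_P|=49$, so $\gamma(\cX)\le 48$. Then $14!/2\approx 4.4\cdot 10^{10}>900(\frac{7}{6})^4\cdot 48^4\approx 8.9\cdot 10^{9}$, and Result \ref{henn} with Lemma \ref{lem14dic21} does not exclude this case either. So the structural step must do all the work outside an explicit finite list of exceptions.

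It does. For $n\ge 2p$, some $S_P$-orbit structure contributes a $p'$-subgroup $C_{p-1}\times C_{p-1}$ to the complement in $N_{\mathrm{Sym}_n}(S_P)$. It comes either from two orbits of $S_P$ of the same size $p^i$ with $i\ge 1$, or from the factor $C_{p-1}^{\,i}$, $i\ge 2$, in the Sylow normalizer of $\mathrm{Sym}_{p^i}$. Its intersection with ${\rm{Alt}}_n$ has index at most $2$. It therefore contains the subgroup of squares $C_{(p-1)/2}\times C_{(p-1)/2}$, which is non-cyclic for $p\ge 5$.

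Hence the only exceptions with $|S_P|\ge p^2$ are the four groups above, all with $p=3$. For them $|\Gamma|\le 10!/2$, while $\gamma(\cX)\ge \frac12|S_P|$ gives $\gamma(\cX)\ge 5$. Then $900(\frac32)^4 5^4\approx 2.8\cdot 10^6>10!/2$, so (\ref{eq18122025}) holds trivially. Note that the paper's appeal to Result \ref{bersym}, which is formulated for $\mathrm{Sym}_n$, passes over exactly these alternating-group exceptions.
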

\begin{proof} Since $S_P$ is a Sylow $p$-subgroup of $\Gamma$ by Lemma \ref{lem27092025A}, Result \ref{res74} together with Result \ref{bersym} yield $n=p$. Therefore, $|S_P|=p$ but this is implies $|\Delta_0|=1$. The claim follows from Proposition \ref{pro04102025A}.
\end{proof}
To work out the case where $\Gamma$ is of Lie type in cross characteristic $u\ne p>2$ (including the classical groups)  we will only use Lemma \ref{terribile042025} and the first bound in  Lemma \ref{lem14dic21}. By Proposition \ref{pro09102025}, we may assume under some extra-conditions that $G$, the subgroup of $\Gamma$ preserving $\Lambda_1$, is not solvable. On the other hand, $G$ is not simple as $K$ is a (non-trivial) normal subgroup of $G$ by  Proposition \ref{prop280623}.

\begin{proposition}
\label{pro19042025B} If $\Gamma$ is a simple group of Lie type in characteristic $u\ne p>2$, then  (\ref{eq18122025}) holds.
\end{proposition}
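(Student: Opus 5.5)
The setting is the one inherited from Lemma \ref{le11040205}. Here $\Gamma$ is non-abelian simple, $p>2$ and case (iia) occurs. $S_P$ is a Sylow $p$-subgroup of $\Gamma$ by Lemma \ref{lem27092025A}, and $G$, the stabilizer of $\Lambda_1=\{P\}\cup\Delta_0$, acts $2$-transitively on $\Lambda_1$ with kernel $K=M\rtimes C$ as in Proposition \ref{prop280623}. By Proposition \ref{pro04102025A} we may take $|\Delta_0|>1$, hence $|S_P|\ge p^2$ and $S_P$ is non-cyclic by (III) of Proposition \ref{prop280623}.

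The plan is to exploit the fact that $p\ne u$ is a cross-characteristic prime for the Lie type group $\Gamma$. In that situation a Sylow $p$-subgroup $S_P$ has a well-known shape: it lies in the normalizer of a maximal torus $T$, with $S_P\cap T$ abelian (homocyclic, of rank at least $2$ since $S_P$ is non-cyclic) and normal in $S_P$, and the quotient is a $p$-subgroup of the Weyl group. I would choose a non-cyclic abelian subgroup $A\le S_P\cap T$ of exponent $p$ and order $p^2$. Its normalizer in $\Gamma$ contains a Sylow $d$-subgroup structure suitable for Lemma \ref{terribile042025}. Concretely, the torus normalizer $N_\Gamma(T)$ contains a normal abelian $d$-subgroup $Q$ for some prime $d\ne p$ dividing $|T|$, together with the non-cyclic abelian $p'$-complement... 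Since the roles must be swapped, the subgroup I actually want is $H=Q\rtimes U$ with $U$ abelian non-cyclic, $Q$ a normal Sylow $d$-subgroup, and $(d,|U|)=1$. Lemma \ref{terribile042025} then gives $|H|\le 15(\mathfrak{g}(\cX)-1)$. Comparing this with the size of such local subgroups, which grows like $|T|\cdot|W_T|$, together with $\mathfrak{g}(\cX)\le(|S_P|-1)^2$ from Lemma \ref{lem14dic21}, yields a polynomial upper bound on the Lie rank and on $u$ in terms of $|S_P|$.

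The second step converts this into a bound on $|\Gamma|$. Since $|S_P|$ is the full $p$-part of $|\Gamma|$, the cyclotomic factor $\Phi_e(u)$ containing $p$ (with $e$ the order of $u$ mod $p$) satisfies $\Phi_e(u)^{m}\le |S_P|\cdot(\text{small})$, where $m$ is the multiplicity. Together with the constraint coming from Lemma \ref{terribile042025}, this forces $|\Gamma|\le c\,|S_P|^4$ with an absolute $c$. The final step applies $|S_P|\le \frac{p}{p-1}\gamma(\cX)$ (Lemma \ref{lem14dic21}) to get (\ref{eq18122025}). Small exceptional cases, such as rank $1$ groups $\PSL(2,u)$ and $Sz(u)$ or small $u$, are handled directly. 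For those I would use Dickson's list (Result \ref{resdickson}) and the fact that non-cyclic Sylow $p$-subgroups in cross characteristic then force $\Gamma$ to contain a Frobenius-type subgroup $C_p^2\rtimes U$ inside $G$, whose order is bounded through (V) of Proposition \ref{prop280623}.

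The main obstacle is the uniform comparison of $|\Gamma|$ with $|S_P|^4$ across all families. When $p$ divides $\Phi_e(u)$ only to the first power and $e$ is large (a Singer-type situation), $|S_P|$ can be tiny relative to $|\Gamma|$. I would attack this by showing that $S_P$ cyclic is then forced, contradicting non-cyclicity. If $S_P$ is non-cyclic, its rank $m$ is at least $2$, so at least two copies of $\Phi_e(u)$ occur in the torus. That gives $|S_P|\ge p^2$ and $|T|\approx \Phi_e(u)^m$, and the local subgroup bound from Lemma \ref{terribile042025} then ties $|\Gamma|$ to a fixed power of $|T|$, hence of $|S_P|$. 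I expect this Lie-theoretic bookkeeping, checked family by family, to be the lengthy part.
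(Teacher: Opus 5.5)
Your proposal has a genuine gap. The subgroups you plan to feed into Lemma \ref{terribile042025} are too small, and the step that converts the outcome into a bound on $|\Gamma|$ is invalid.

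Local subgroups built inside a torus normaliser $N_\Gamma(T)$ have order about $|T|\,|W_T|$. So Lemma \ref{terribile042025} can only give something like $|T|\le 15(\mathfrak{g}(\cX)-1)$. For $\Gamma=\PSL(n,u)$ and the split torus this reads $(u-1)^{n-1}\lesssim\mathfrak{g}(\cX)$, while $|\Gamma|\approx u^{n^2-1}$. The best you can extract is therefore $|\Gamma|\lesssim\mathfrak{g}(\cX)^{n+1}$, whose exponent grows with the rank. A bound on the rank in terms of $|S_P|$ does not restore a fixed exponent.

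The $p$-part cannot rescue this. The inequality $\Phi_e(u)^m\le|S_P|\cdot(\text{small})$ is false, since $|S_P|$ only controls the $p$-part of $\Phi_e(u)^m$. That $p$-part can stay bounded while $u\to\infty$: for $p=3$, $\PSL(3,u)$ with $u\equiv 4\pmod 9$ has Sylow $3$-subgroups of order $9$ for every such $u$. So in cross characteristic all control of $|\Gamma|$ must come from the genus, through one subgroup of order exceeding $|\Gamma|^{1/2}$.

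Two smaller points are also wrong. First, (III) of Proposition \ref{prop280623} only gives $M\ne 1$, not that $S_P$ is non-cyclic; cyclic $S_P$ is not excluded, cf. Proposition \ref{pro15072025}. Second, in rank one ($\PSL(2,u)$, $Sz(u)$) the odd-order Sylow subgroups are cyclic, so the subgroup $C_p^2\rtimes U$ you invoke there does not exist.

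The missing idea is to let the defining characteristic $u$ play the role of $d$ in Lemma \ref{terribile042025}. A Borel subgroup $B=U_u\rtimes H$ has its unipotent radical $U_u$ as a normal Sylow $u$-subgroup with $u\ne p$. Its torus complement $H$ is abelian, and non-cyclic apart from small cases treated separately (e.g.\ $\PSL(n,2)$ via Proposition \ref{pro17052025}). Hence $|B|\le 15(\mathfrak{g}(\cX)-1)$. The order formulas give $|B|^2>|\Gamma|$, with a few small exceptions handled separately. Therefore $|\Gamma|\le 225(\mathfrak{g}(\cX)-1)^2\le 225\left(\frac{p}{p-1}\right)^4\gamma(\cX)^4$ by Lemma \ref{lem14dic21}.

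This step needs no information about the structure of $S_P$. The remaining small-rank and twisted families are handled by other means:
\begin{itemize}
\item $\PSL(2,q)$ and $\PSU(3,q)$ by Proposition \ref{pro09102025}, through their Sylow $2$-subgroups;
\item $Sz(q)$ because its odd Sylow subgroups are cyclic and TI, whereas $S_P\cap S_Q=M\ne 1$;
\item $\Ree(q)$ through its maximal subgroups.
\end{itemize}
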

\begin{proof} Let $L$ be a simple 
group of Lie type in characteristic $u$ with $u\ne p$ such that $L\cong \Gamma$.  According to the classification of such simple groups, several cases are treated separately.

We begin with classical linear groups.
\subsubsection{$L=\PSL(2,q)$ with $q=u^d\ge 4$}\label{2q} By Proposition \ref{pro09102025}, the bound (\ref{eq18122025}) holds as the Sylow $2$-subgroups of $\PSL(2,u^d)$ are dihedral for $u$ odd, and elementary abelian for $u=2$. 
\subsubsection{$L=\PSU(3,q)$, with $q=u^d\ge 3$}\label{2u}  By Proposition \ref{pro09102025}, the bound (\ref{eq18122025}) holds as the Sylow $2$-subgroups of $\PSU(3,u^d)$ are either dihedral or semidihedral for $u$ odd, while $\PSU(3,2^d)$ have no element of order $8$.
\subsubsection{$L=\PSL(n,q)$ with $q=u^d$ and $u$ prime, $n\ge 3$}
\label{subsubpsl}
By Lemma \ref{lem14dic21} and Proposition \ref{pro17052025}, $q>2$ is assumed. Let
$$b=\textstyle{\frac{1}{(n,q-1)}}\,q^{\frac{1}{2} n(n-1)}(q-1)^{n-1},\qquad c=(q+1)(q^2+q+1)\cdots (q^{n-1}+q^{n-2}+\ldots +q+1).$$
Then the order of $\PSL(n,q)$ equals $bc$.
We show that if $q>2$, then $b>(q-1)c$ with two exceptions, namely $(n,q)=(3,3),(4,3)$. Consider the real function
defined in $(1,\infty)$:
$$
f(X)=\frac{1}{(X-1)^2}\,\frac{X^{\frac{1}{2} n(n-1)}(X-1)^{n-1}}{(X+1)(X^2+X+1)\cdots (X^{n-1}+X^{n-2}+\ldots+X+1)}=\frac{X^{\frac{1}{2} n(n-1)}(X-1)^{2n-3}}{(X-1)(X^2-1)\cdots (X^n-1)}
$$
Since $(n,q-1)\le q-1$, we have  $b/((q-1)c)\ge f(q)$. Moreover,
$$f(X)>\frac{X^{\frac{1}{2} n(n-1)}(X-1)^{2n-3}}{X^{\frac{1}{2} n(n+1)}-X^{\frac{1}{2}n(n+1)-1}}=
\frac{X^{\frac{1}{2} n(n-1)}(X-1)^{2n-3}}{X^{\frac{1}{2} n(n+1)-1}(X-1)}=
\frac{(X-1)^{2n-4}}{X^{n-1}}=
g(X)$$
where $g(X)$ is strictly monotone function. If $x\ge 4$ then $g(x)\ge \frac{81}{64}>1$, and the claim holds for $q\ge 4$. Also, 
if $x=3$ and $n\ge 6$, then $g(x)>\frac{256}{243}>1$, and claim holds for $q=3, n\ge 6$. Furthermore, for $q=3,n=5$, the bound $b>(q-1)c=2c$ holds as in this case $b=3^{10}\cdot 2^4=944784$ and $c=2^5\cdot 3\cdot 5\cdot7\cdot11^2=406560$.
We rule out the possibilities $L=\PSL(3,3), \PSL(4,3)$. In both cases, 
$u=3$ and hence $p\neq 3$. Since $p\neq 2$, we have $p\ge 5$. However,
$\PSL(3,3)$, as well as $\PSL(4,3)$, have no prime power divisor, other than $2$ and $3$, whose exponent is at least two. Thus, $|S_P|=p$, and hence $|\Delta_0|=1$. Therefore, the claim follows from Proposition \ref{pro04102025A}. Thus
$b>(q-1)c$ and
\begin{equation}
\label{eq21042025} b^2>b(q-1)c=(q-1)|\PSL(n,q)|.
\end{equation}

The Borel subgroup of ${\rm{GL}}(n,q)$ is $B=T\rtimes U$ where $T$ is a Sylow $u$-subgroup and $U$ its complement in ${\rm{GL}}(n,q)$  where $U$ is the direct product of $n-1$ copies of a cyclic group of order $q-1$; see \cite[Section 3.3.3]{wil}.
Therefore, $\PSL(n,q)$ has a subgroup $B_1=T\rtimes H$ with $T$ as before and $|B_1|=b$ where $H$ is a quotient of $U$ by a cyclic group of order $(n,q-1)$. For $n\ge 3$, $H$ is a non-cyclic abelian group. Therefore, Lemma \ref{terribile042025} yields $|B_1|\le 15(\gg(\cX)-1)$. Therefore,  (\ref{eq21042025}) and (\ref{eq09112023}) yield
\begin{equation}
\label{eqA21042025} |L|=|\PSL(n,q)|\le \frac{15^2}{q-1} (\mathfrak{g}(\cX)-1)^2\le 113 \left(\frac{p}{p-1}\right)^4 \gamma(\cX)^4.
\end{equation}
\subsubsection{$L\cong {\rm{PSp}}(2m,q)$ with $q=u^d$ and $u$ prime} First, since ${\rm{PSp}}(2,q)\cong \PSL(2,q)$ the case $m=1$ is dismissed by \ref{2q}. So, we assume $m>1$. Let
$$b=\textstyle{\frac{1}{(2,q-1)}}\,q^{m^2}(q-1)^{m},\qquad c=(q+1)(q^3+q^2+q+1)\cdots (q^{2m-1}+q^{2m-2}+\ldots +q+1).$$
Then the order of ${\rm{PSp}}(2m,q)$ equals $bc$. Arguing as in \ref{subsubpsl}, it is possible to prove that $b>(q-1)c$ holds, unless $q=3$ and $m\in \{2,3,4\}$.  
In the latter cases $|H_P|=2,4,8$ respectively, and hence they can be ruled out by the second claim in Proposition \ref{pro27042025}.
Thus $b>(q-1)c$ and
\begin{equation}
\label{eqD21042025} b^2>b(q-1)c=(q-1)|{\rm{PSp}}(2m,q)|.
\end{equation}
The Borel subgroup of ${\rm{Sp}}(2m,q)$ is $B=T\rtimes U$ where $T$ is a Sylow $u$-subgroup and $U$ its complement in ${\rm{Sp}}(2m,q)$ where $U$ is the direct product of $m$ copies of a cyclic group of order $q-1$; see \cite[Section 3.5.3]{wil},
 Therefore, ${\rm{PSp}}(2m,q)$ has a subgroup $B_1=T\rtimes H$ with $T$ as before and $|B_1|=b$ where $H$ is a quotient of $U$ by a cyclic group of order $(2,q-1)$.  Since $H$ is an abelian group, Lemma \ref{terribile042025} yields $b=|B_1|\le 15(\gg(\cX)-1)$ unless $H$ is cyclic, in which case $m=1$ holds, a contradiction. Therefore,  (\ref{eqD21042025}) and (\ref{eq09112023}) yield
\begin{equation}
\label{eqC21042025} |L|=|{\rm{PSp}}(2m,q)|\le \frac{15^2}{q-1} (\mathfrak{g}(\cX)-1)^2\le 113 \left(\frac{p}{p-1}\right)^4 \gamma(\cX)^4.
\end{equation}
\subsubsection{$L\cong \PSU(n,q)$ with $q=u^d$ and $u$ prime} Define $m$ to be $\ha n$ for even $n$, and $\ha (n-1)$ for odd $n$. The case $m=1$ is dismissed by $\PSU(2,q)\cong \PSL(2,q)$ and \ref{2q} for $n=2$, and by \ref{2u} for $n=3$. Therefore, $m>1$ may be assumed. Let
$$b=\textstyle{\frac{1}{(n,q+1)}}\,q^{\ha n(n-1)}(q^2-1)^{m-1}(q-1),\qquad
c=
\begin{cases}
\mbox{$(q^3+1)(q^5+1)\cdots (q^n+1)$, for odd $n$},\\
\mbox{$(q^3+1)(q^5+1)\cdots (q^{n-1}+1)$, for even $n$}.
\end{cases}
$$

$$
d=
\begin{cases}
\mbox{$(q^2+1)(q^4+q^2+1)\cdots (q^{n-3}+q^{n-5}+\ldots+q^2+1)$, for odd $n$},\\
\mbox{$(q^2+1)(q^4+q^2+1)\cdots (q^{n-2}+q^{n-4}+\ldots+q^2+1)$, for even $n$}.
\end{cases}
$$

Then $\PSU(n,q)$ has order $bcd$, and $b>cd$. The Borel subgroup of $\SU(n,q)$ is $T\rtimes U$ where $T$ is a Sylow $u$-subgroup and $U$ its complement in $\SU(n,q)$ and $U$ is the direct product of $m$ copies of a cyclic group of order $q^2-1$; see \cite[Section 3.6.1]{wil}.  
Therefore, $\PSU(n,q)$ has a subgroup $B_1=T\rtimes H$ with $T$ as before and $|B_1|=b$ where $H$ is a quotient of $U$ by a cyclic group of order $(n,q+1)$. Since $H$ is an abelian group,  Lemma \ref{terribile042025} yields $b=|B_1|\le 15(\gg(\cX)-1)$ unless $H$ is cyclic, and hence $m=1$. Therefore,  (\ref{eq21042025}) and (\ref{eq09112023}) give
\begin{equation}
\label{eqB21042025} |L|=|\PSU(n,q)|\le 15^2 (\mathfrak{g}(\cX)-1)^2\le 225 \left(\frac{p}{p-1}\right)^4 \gamma(\cX)^4.
\end{equation}
\subsubsection{${\rm{P}}\Omega_{2m+1}(q)$, $q=u^d$ and $u$ odd prime} Let
$$b=\textstyle{\frac{1}{2}}\,q^{m^2}(q-1)^m,\qquad c=(q+1)(q^3+q^2+q+1)\cdots (q^{2m-1}+q^{2m-2}+\ldots +q+1).$$
Then the order of ${\rm{P}}\Omega_{2m+1}(q)$ equals $bc$. Since $c<b$,
\begin{equation}
\label{eqD21042025U} b^2>|{\rm{P}}\Omega_{2m+1}(q)|.
\end{equation}
The Borel subgroup of $\Omega_{2m+1}(q)$ is $B=T\rtimes U$ where $T$ is a Sylow $u$-subgroup and $U$ its complement in $\Omega_{2m+1}(q)$ where $U$ is the direct product of $m$ copies of a cyclic group of order $q-1$; see \cite[Section 3.7.4]{wil}. Therefore, ${\rm{P}}\Omega_{2m+1}(q)$ has a subgroup $B_1=T\rtimes H$ with $T$ as before and $|B_1|=b$ where $H$ is a quotient of $U$ by a cyclic group of order $2$. Since $U$ is an abelian group,  Lemma \ref{terribile042025} yields $|B_1|\le 15(\gg(\cX)-1)$ unless $H$ is cyclic, and hence $m=1$. Since ${\rm{P}}\Omega_{3}(q)\cong \PSL(2,q)$, the latter case can be dismissed by  \ref{2q}.  Therefore,  (\ref{eqD21042025U}) and (\ref{eq09112023}) yield
\begin{equation}
\label{eqC22042025} |L|=|{\rm{P}}\Omega_{2m+1}(q)|\le 15^2 (\mathfrak{g}(\cX)-1)^2\le 225 \left(\frac{p}{p-1}\right)^4 \gamma(\cX)^4.
\end{equation}
\subsubsection{$P\Omega_{2m}(q)^+$, $q=u^d$ and $u$ prime, $ m\ge 3$} Let
$$b=\textstyle{\frac{1}{(4,q^m-1)}}\,q^{m(m-1)}(q-1)^m, c=(q^{m-1}+q^{m-2}+\ldots+q+1)(q+1)(q^3+q^2+q+1)\cdots (q^{2m-3}+q^{2m-4}+\ldots +q+1).$$
Then the order of $P\Omega_{2m}(q)^+$ equals $bc$. Since $m\ge 3$, we have $c<b$ and hence
\begin{equation}
\label{eqE22042025} b^2>|P\Omega_{2m}(q)^+|.
\end{equation}
The Borel subgroup of $\Omega_{2m}(q)$ is $B=T\rtimes U$ where $T$ is a Sylow $u$-subgroup and $U$ its complement in $\Omega_{2m}(q)$ and $U$ is the direct product of $m$ copies of a cyclic group of order $q-1$; see
\cite[Section 3.7.4]{wil}. Therefore, $P\Omega_{2m}(q)$ has a subgroup $B_1=T\rtimes H$  with $T$ as before and $|B_1|=b$ where $H$ is a quotient of $U$ by a cyclic group of order $(4,q^m-1)$. Since $U$ is a non-cyclic  abelian group and $m\ge 3$,  Lemma \ref{terribile042025} yields $b=|B|\le 15(\gg(\cX)-1)$. Therefore,  (\ref{eqE22042025}) and (\ref{eq09112023}) yield
\begin{equation}
\label{eqE21042025} |L|=|P\Omega_{2m}(q)^+|\le 15^2 (\mathfrak{g}(\cX)-1)^2\le 225 \left(\frac{p}{p-1}\right)^4 \gamma(\cX)^4.
\end{equation}
\subsubsection{$P\Omega_{2m}(q)^-$, $q=u^d$ and $u$ prime, $ m\ge 2$} Let
$$b=\textstyle{\frac{1}{(4,q^m+1)}}\,q^{m(m-1)}(q-1)^{m-1}(q+1),\quad c=(q^{m}+1)(q^3+q^2+q+1)\cdots (q^{2m-3}+q^{2m-4}+\ldots +q+1).$$
Then the order of $P\Omega_{2m}(q)^-$ equals $bc$. Since $m\ge 2$, we have $c<b$ and hence
\begin{equation}
\label{eqF22042025} b^2>|P\Omega_{2m}(q)^-|.
\end{equation}
The Borel subgroup of $\Omega_{2m}^-$ is $B=T\rtimes U$ where $T$ is a Sylow $u$-subgroup and $U$ its complement in $\Omega_{2m}(q)^+$ and $U$ is the direct product of $m-1$ copies of a cyclic group of order $q-1$ and a cyclic group of order $q+1$; see \cite[Section 3.7.4]{wil}. Therefore, $P\Omega_{2m}^-$ has a subgroup $B_1=T\rtimes H$
where $H$ is a quotient of $U$ by a cyclic group of order $(4,q^m+1)$.
Since $U$ is a non-cyclic abelian group and $m\ge 2$,  Lemma \ref{terribile042025} yields $b=|B_1|\le 15(\gg(\cX)-1)$. Therefore,  (\ref{eqF22042025}) and (\ref{eq09112023}) yield
\begin{equation}
\label{eqF21042025} |L|=|P\Omega_{2m}(q)^-|\le 15^2 (\mathfrak{g}(\cX)-1)^2\le 225 \left(\frac{p}{p-1}\right)^4 \gamma(\cX)^4.
\end{equation}

\subsubsection{$L\cong Sz(2^d)$ with odd $d\ge 3$ and $u=2$} The Sylow subgroups of $Sz(2^d)$ of odd order are cyclic, see \cite{suz}. From \cite{bl}, they form a TI-sets, i.e. any two distinct $d$-subgroups have trivial intersection.
On the other hand, if $Q\in \Delta_0$, then $M=S_P\cap S_Q$ is a non-trivial subgroup of $S_P$, by (III) of Proposition \ref{prop280623}. Thus, $\Gamma\cong Sz(2^d)$ cannot actually occur.
\subsubsection{$L=Ree(3^d)$ with odd $d\ge 3$ and $u=3$} From Proposition \ref{pro09102025} and Result \ref{2rank}, $G$ may be assumed to be non-solvable. Looking at the list of maximal subgroups of $Ree(3^d)$ in \cite[Theorem 4.2]{wil}, the cases where such a maximal subgroup is a non-solvable group are two, namely $C_2\times \PSL(2,3^d)$ and $Ree(3^{d_0})$ with $d_0|d$. 
Therefore, either $G\cong C_2\times \PSL(2,3^d)$, or $G$ is a subgroup of $Ree(3^{d_0})$ with $d_0|d$.
In the former case, $C_2$ is the only proper normal subgroup of $G$, and hence $p=2$ as $M\cong C_2$ from (III) of Proposition \ref{prop280623} which is a contradiction.  In the latter case, $\Gamma\le Ree(3^{d_0})$, and hence the above argument may be repeated, and we end up with $Ree(3)\cong P\Gamma L(2,8)$. Actually, the case $G\le P\Gamma L(2,8)$ cannot occur, as $P\Gamma L(2,8)$ has no solvable normal subgroup while $G$ does by (III) of Proposition \ref{prop280623}.
\subsubsection{$L=G_2(q)$ with $q=u^d$, $q>2$} Let
$$b=q^6(q-1)^2,\quad c=(q+1)(q^5+q^4+q^3+q^2+q+1).$$
Then the order of $G_2(q)$ equals $bc$. Since $c<b$,
\begin{equation}
\label{eqG22042025} b^2>|G_2(q)|.
\end{equation}
From \cite[Section 8.6]{car},
 $G_2(q)$ has a subgroup $B=T\rtimes H$ where $T$ is a subgroup of order $q^6$ and $H$ is non-cyclic abelian group of order $(q-1)^2$. Lemma \ref{terribile042025} yields $b=|B|\le 15(\gg(\cX)-1)$. Therefore,  (\ref{eqG22042025}) and (\ref{eq09112023}) give
\begin{equation}
\label{eqG21042025} |L|=|G_2(q)|\le 15^2 (\mathfrak{g}(\cX)-1)^2\le 225 \left(\frac{p}{p-1}\right)^4 \gamma(\cX)^4.
\end{equation}
\subsubsection{$L=F_4(q)$ with $q=u^d$} Let
$$b=q^{24}(q-1)^3,\quad c=(q+1)(q^{12}-1)(q^7+q^6+q^5+q^4+q^3+q^2+q+1)(q^5+q^4+q^3+q^2+q+1).$$
Then the order of $F_4(q)$ equals $bc$. Since $c<b$,
\begin{equation}
\label{eqH22042025} b^2>|F_4(q)|.
\end{equation}
From \cite[Section 8.6]{car},
$F_4(q)$ has a subgroup $B=T\rtimes H$ where $T$ is a subgroup of order $q^{24}$ and $H$ is non-cyclic abelian group of order $(q-1)^3$. Lemma \ref{terribile042025} yields $b=|B|\le 12(\gg(\cX)-1)$. Therefore,  (\ref{eqH22042025}) and (\ref{eq09112023}) give
\begin{equation}
\label{eqH21042025} |L|=|F_4(q)|\le 15^2 (\mathfrak{g}(\cX)-1)^2\le 225 \left(\frac{p}{p-1}\right)^4 \gamma(\cX)^4.
\end{equation}
\subsubsection{$L=E_6(q)$ with $q=u^d$} Define the integers $b$ andd $c$ as follows.
$$b=q^{36}\frac{(q-1)^6}{(3,q-1)},\,\, c=\frac{(q^{12}-1)(q^9-1)(q^8-1)(q^6-1)(q^5-1)(q^2-1)}{(q-1)^6}.$$
Then the order of $E_6(q)$ equals $bc$. Since $c<b$,
\begin{equation}
\label{eqI22042025} b^2>|E_6(q)|.
\end{equation}
From \cite[Section 8.6]{car},
$E_6(q)$ has a subgroup $B=T\rtimes H$ where $T$ is a subgroup of order $q^{36}$ and $H$ is non-cyclic abelian group of order $(q-1)^6/(3,q-1)$, an index $(3,q-1)$ subgroup of the direct product of six copies of a cyclic group of order $q-1$. Lemma \ref{terribile042025} yields $b=|B|\le 15(\gg(\cX)-1)$. Therefore,  (\ref{eqI22042025}) and (\ref{eq09112023}) give
\begin{equation}
\label{eqI21042025} |L|=|E_6(q)|\le 15^2 (\mathfrak{g}(\cX)-1)^2\le 225 \left(\frac{p}{p-1}\right)^4 \gamma(\cX)^4.
\end{equation}
\subsubsection{$L=E_7(q)$ with $q=u^d$} Define the integers $b$ and $c$ as follows.
$$b=q^{120}\frac{(q-1)^7}{(2,q-1)},\,\, c=\frac{(q^{18}-1)(q^{14}-1)(q^{12}-1)(q^8-1)(q^6-1)(q^2-1)}{(q-1)^7}.$$
Then the order of $E_7(q)$ equals $bc$. Since $c<b$,
\begin{equation}
\label{eqK22042025} b^2>|E_7(q)|.
\end{equation}
From \cite[Section 8.6]{car}, $E_7(q)$ has a subgroup $B=T\rtimes H$ where $T$ is a subgroup of order $q^{63}$ and $H$ is non-cyclic abelian group of order $(q-1)^7/(2,q-1)$, an index $(2,q-1)$ subgroup of the direct product of seven copies of a cyclic group of order $(q-1)$. Lemma \ref{terribile042025} yields $b=|B|\le 15(\gg(\cX)-1)$. Therefore,  (\ref{eqK22042025}) and (\ref{eq09112023}) give
\begin{equation}
\label{eqK21042025} |L|=|E_7(q)|\le 15^2 (\mathfrak{g}(\cX)-1)^2\le 225 \left(\frac{p}{p-1}\right)^4 \gamma(\cX)^4.
\end{equation}
\subsubsection{$L=E_8(q)$ with $q=u^d$} Define the integers $b$ and $c$ as follows.
$$b=q^{120}(q-1)^8,\,\, c=\frac{(q^{30}-1)(q^{24}-1)(q^{20}-1)(q^{18}-1)(q^{14}-1)(q^{12}-1)(q^8-1)(q^2-1)}{(q-1)^8}.$$
Then the order of $E_8(q)$ equals $bc$. Since $c<b$,
\begin{equation}
\label{eqM22042025} b^2>|E_8(q)|.
\end{equation}
From \cite[Section 8.6]{car},
$E_8(q)$ has a subgroup $B=T\rtimes H$ where $T$ is a subgroup of order $q^{120}$ and $H$ is non-cyclic abelian group of order $(q-1)^8$, the direct product of eight copies of a cyclic group of order $(q-1)$. Lemma \ref{terribile042025} yields $b=|B|\le 15(\gg(\cX)-1)$. Therefore,  (\ref{eqM22042025}) and (\ref{eq09112023}) give
\begin{equation}
\label{eqM21042025} |L|=|E_8(q)|\le 15^2 (\mathfrak{g}(\cX)-1)^2\le 225 \left(\frac{p}{p-1}\right)^4 \gamma(\cX)^4.
\end{equation}
\subsubsection{$L=^2\hspace{-0.15cm}F_4(q)$, $q=2^{2m+1}$}
 Let
$$b=q^{12}(q-1)^2,\quad c=(q^{6}+1)(q^3+1)(q^3+q^2+q+1).$$
Then the order of $^2\hspace{-0.05cm}F_4(q)$ equals $bc$. Since $c<b$,
\begin{equation}
\label{eqN22042025} b^2>|^2\hspace{-0.05cm}F_4(q)|.
\end{equation}
From \cite [Section 4.9.3]{wil}, the Borel subgroup of $^2\hspace{-0.05cm}F_4(q)$ is $B=T\rtimes U$ where $T$ is a Sylow $u$-subgroup and $U$ its complement in $^2\hspace{-0.05cm}F_4(q)$ and $U$ is the direct product of two copies of a cyclic group of order $q-1$. Since $U$ is a non-cyclic abelian group, Lemma \ref{terribile042025} yields $b=|B|\le 15(\gg(\cX)-1)$. Therefore,  (\ref{eqN22042025}) and (\ref{eq09112023}) give
\begin{equation}
\label{eqN21042025} |L|=|^2\hspace{-0.05cm}F_4(q)|\le 15^2 (\mathfrak{g}(\cX)-1)^2\le 225 \left(\frac{p}{p-1}\right)^4 \gamma(\cX)^4.
\end{equation}
\subsubsection{$L=^3\hspace{-0.15cm}D_4(q)$}
 Let
$$b=q^{12}(q^3-1)(q-1),\quad c=(q^{8}+q^4+1)(q^3+1)(q+1).$$
Then the order of $^3\hspace{-0.05cm}D_4(q)$ equals $bc$. Since $c<b$,
\begin{equation}
\label{eqO22042025} b^2>|^3\hspace{-0.05cm}D_4(q)|.
\end{equation}
From  \cite[Section 4.6.2]{wil}, the Borel subgroup of
$^3\hspace{-0.05cm}D_4(q)$ is $B=T\rtimes U$ where $T$ is a Sylow $u$-subgroup and $U=C_{q^3-1}\times C_{q-1})$. Therefore,
$U$ has a non-cyclic abelian subgroup of order $(q^3-1)(q-1)$.
Since $U$ is a non-cyclic abelian group, Lemma \ref{terribile042025} yields $b=|B|\le 12(\gg(\cX)-1)$. Therefore,  (\ref{eqO22042025}) and (\ref{eq09112023}) give
\begin{equation}
\label{eqO21042025} |L|=|^3\hspace{-0.05cm}D_4(q)|\le 12^2 (\mathfrak{g}(\cX)-1)^2\le 144 \left(\frac{p}{p-1}\right)^4 \gamma(\cX)^4.
\end{equation}
\subsubsection{$L=^2\hspace{-0.15cm}E_6(q)$} Define the integers $b$ and $c$ as follows.
 Let
$$b=q^{36}(q+1)^6,\quad c=\frac{(q^{12}-1)(q^{9}+1)(q^{8}-1)(q^{6}-1)(q^{5}+1)(q^2-1)}{(3,q+1)(q-1)^6}$$
Then the order of $^2\hspace{-0.05cm}E_6(q)$ equals $bc$. Since $c<b$,
\begin{equation}
\label{eqP22042025} b^2>|^2\hspace{-0.05cm}E_6(q)|.
\end{equation}
From  \cite[Section 4.6.2]{wil}, the Borel subgroup of
$^2\hspace{-0.05cm}E_6(q)$ is $B=T\rtimes U$ where $T$ is a Sylow $u$-subgroup and $U=C_{q^3-1}\times C_{q-1})$.
Since $U$ is a non-cyclic abelian group, Lemma \ref{terribile042025} yields $b=|B|\le 15(\gg(\cX)-1)$. Therefore,  (\ref{eqP22042025}) and (\ref{eq09112023}) give
\begin{equation}
\label{eqP21042025} |L|=|^2\hspace{-0.05cm}E_6(q)|\le 15^2 (\mathfrak{g}(\cX)-1)^2\le 225 \left(\frac{p}{p-1}\right)^4 \gamma(\cX)^4.
\end{equation}\end{proof}
For the case where $u=p$, we also need the first bound in Lemma \ref{lem27092025A}, that is, $|S_P|\le 2\gamma(\cX)$.
\begin{proposition}
\label{pro19042025A}  If $\Gamma$ is a simple group of Lie type in  characteristic $p$, then
$$|\Gamma|\le 16\gamma(\cX)^3.$$
\end{proposition}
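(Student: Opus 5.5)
We work in the setting of the preceding propositions: $p>2$, (*) and (**) hold but (***) fails, $S_P$ has a unique short orbit $\Delta_0$ other than $\{P\}$, $\Gamma$ is imprimitive on $\Delta$, and by Lemma \ref{le11040205} $\Gamma$ is non-abelian simple and case (iia) occurs. By Lemma \ref{lem27092025A}, $S_P$ is a Sylow $p$-subgroup of $\Gamma$, and $|S_P|\le 2\gamma(\cX)$. The plan is to compare $|\Gamma|$ with the order of a Sylow $p$-subgroup, now that $\Gamma$ is of Lie type in its defining characteristic $p$. Write $\Gamma\cong L(q)$ with $q=p^d$, and let $U$ be a Sylow $p$-subgroup (the unipotent radical of a Borel subgroup), so $|U|=q^N$, where $N$ is the number of positive roots. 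For every group of Lie type, $|L(q)|<q^{\dim}$, where $\dim=2N+\mathrm{rank}$. The aim is therefore a bound of the form $|\Gamma|\le |S_P|^3$, up to a small constant.

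\textbf{Step 1.} Handle the cases where $|\Gamma|$ is large relative to $|S_P|^3$, namely those where the rank $\ell$ exceeds roughly $N/2$. These are $\PSL(2,q)$, $\PSU(3,q)$, $Sz(q)$ and $\Ree(q)$, together with low-rank groups such as $\PSL(3,q)$. Here I would not use the crude estimate. Instead I would use the structure of the Borel subgroup $B=U\rtimes H$: it is the stabilizer of $P$, because $S_P=U$ fixes $P$ and $N_\Gamma(U)=B$. So $\Gamma_P=B$ and $H_P=H$. For the rank-one groups, $B$ is the one-point stabilizer of the natural 2-transitive action, and $H$ is cyclic of order dividing $q-1$ or $q^2-1$.
\begin{itemize}
\item For $\PSL(2,q)$, $\PSU(3,q)$ and $Sz(q)$, distinct Sylow $p$-subgroups intersect trivially by Result \ref{HC}. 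This contradicts (III) of Proposition \ref{prop280623} applied to $G$, which gives $M=S_P\cap S_Q\ne 1$ for $Q\in\Delta_0$. So these cases, like the Suzuki case in Proposition \ref{pro19042025B}, do not occur.
\item For $\Ree(q)$ the same TI argument applies.
\item For higher rank, the requirement in Result \ref{res74} that $\Gamma_P=U\rtimes(\text{cyclic})$ forces $H$ to be cyclic. But the torus of a group of rank at least $2$ is non-cyclic abelian once $q>3$, which is a contradiction.
\item The small-$q$ exceptions, with $q=3$ or $q=p$ small, are settled by the second claim of Proposition \ref{pro27042025} ($|H_P|^2<8|S_P|$).
\end{itemize}

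\textbf{Step 2.} When $H_P$ is cyclic and $B$ is not all of $\Gamma_P$, apply the counting bound directly. The group order is $|L(q)|\le q^{N}\cdot q^{N+\ell}$, and $N+\ell\le 2N$ always holds for rank at least $2$. Hence $|\Gamma|\le |U|^2\cdot q^{\ell}\le |S_P|^3$, which gives $|\Gamma|\le 8\gamma(\cX)^3\le 16\gamma(\cX)^3$. The constant $16$ leaves room for the exact factor $|L(q)|/q^{\dim}<1$ and for the $\gcd$ factors in the order formulas.

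\textbf{Main obstacle.} The delicate point is Step 1: checking group by group that the one-point stabilizer really is a Borel subgroup, and that its torus is non-cyclic or small. Twisted groups with small rank, such as ${}^2F_4$ and ${}^3D_4$, may need separate inspection. I would handle these by listing the order formula from \cite{wil} and verifying $|L(q)|\le 2|U|^3$ explicitly.
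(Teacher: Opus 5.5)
Your Step 2 is the paper's proof. In defining characteristic one has $|\Gamma|<|S_P|^3$; the paper quotes this as $|S_P|>|\Gamma|^{1/3}$. Lemma \ref{lem27092025A} shows that $S_P$ is a Sylow $p$-subgroup and $|S_P|\le 2\gamma(\cX)$, so $|\Gamma|<8\gamma(\cX)^3$.

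The count works for \emph{every} simple group of Lie type in characteristic $p$, so Step 1 rests on a miscalculation. The inequality $q^{2N+\ell}\le q^{3N}$ needs only $\ell\le N$, and that holds for every root system, rank one included. No family is ``large relative to $|S_P|^3$'':
\begin{itemize}
\item $|\PSL(2,q)|<q^3$, $|\PSU(3,q)|<q^8$ and $|\Ree(q)|<q^7$, against $|S_P|^3=q^3,q^9,q^9$ respectively.
\item $|{}^3D_4(q)|<q^{28}$, against $q^{36}$.
\end{itemize}
So the twisted groups are not an obstacle either. Also, $\Sz(q)$ and ${}^2F_4(q)$ do not arise at all, since $p>2$ in this part of the paper.

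Step 1 is not merely superfluous: as organized, it does not deliver the stated cubic bound in the cases routed through it.
\begin{itemize}
\item The fallback to the second claim of Proposition \ref{pro27042025} gives only $|\Gamma|<96\left(\frac{p}{p-1}\right)^2\gamma(\cX)^4$, which is quartic.
\item The TI exclusion of the rank-one groups needs $S_Q\neq S_P$ for $Q\in\Delta_0$, that is, $|\Delta_0|>1$. The identification $\Gamma_P=B$ needs the same condition.
\item When $|\Delta_0|=1$ one has $S_Q=S_P$, and $B$ may interchange $P$ and $Q$. The paper handles that case only through Proposition \ref{pro04102025A}, which again gives a quartic bound.
\item The hypothesis ``$B$ is not all of $\Gamma_P$'' in Step 2 is backwards, since $\Gamma_P\le N_\Gamma(S_P)=B$ always.
\end{itemize}

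Drop Step 1 and the case conditions in Step 2, and apply the order count uniformly, checking the twisted types directly from their order formulas. Your proposal then coincides with the paper's proof.
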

\begin{proof} Since $S_P$ is a Sylow $p$-subgroup of $\Gamma$, it is straightforward to verify upon inspecting the order formulae for the finite simple groups $L$ of Lie type in characteristic $p$ that $S_P$ is always ``large", in the sense that $|S_P|>|\Gamma|^c$ for some constant $c$. Indeed, $c = \frac{1}{3}$ can be taken; see \cite[Proposition 3.5]{kim} and \cite{gl}. Since $|S_P|\le 2 \gamma(\cX)$, the claim follows.
\end{proof}
To explore the sporadic simple groups, we will also use some of the previous lemmas and propositions.
\begin{proposition}
\label{pro19042025C} If $\Gamma$ is a sporadic simple group then $p=3$, and  either $\Gamma\cong J_2$, or $\Gamma\cong J_3$ and
\begin{equation}
\label{eq03052026}
|\Gamma|< 192 \left(\frac{p}{p-1}\right)^3\gamma(\cX)^4 .
\end{equation}
\end{proposition}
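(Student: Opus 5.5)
Throughout, we are in the setting left over by Lemma \ref{le11040205}: $p>2$, $\Gamma$ is non-abelian simple, (iia) holds, and $S_P$ is a Sylow $p$-subgroup of $\Gamma$ (Lemma \ref{lem27092025A}). The subgroup $G$ preserving $\Lambda_1=\{P\}\cup\Delta_0$ is doubly transitive on $\Lambda_1$ and has the structure described in Proposition \ref{prop280623}. The plan is a case-by-case sieve over the $26$ sporadic groups, using three filters in turn.

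The first filter is on Sylow $p$-subgroups. Proposition \ref{pro04102025A} disposes of $|\Delta_0|=1$. So we need $|\Delta_0|=p^a>1$ and $|S_P|\ge p^2$, and by (III) of Proposition \ref{prop280623} we need $|S_P|=|M|\,p^a$ with $M$ non-trivial. From the order formulae, this leaves only the primes $p$ with $p^2$ dividing $|\Gamma|$, namely $p\in\{3,5,7,11,13\}$ for the relevant groups.

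The second filter applies Proposition \ref{pro11022025} to $G/K$. Here $|\Lambda_1|=p^a+1$ must be the degree of one of the listed doubly transitive groups, and the stabilizer $\Gamma_P=S_P\rtimes H_P$ must have a normal Sylow $p$-subgroup with cyclic complement. This means $N_\Gamma(S_P)$ must contain a subgroup of the form $S_P\rtimes C$ which is the full stabilizer of $P$, so $S_P$ must be normalized by a cyclic group realizing the required $|\tilde H_P|$.

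For this step I would read off the Sylow normalizers $N_\Gamma(S_p)$ from the ATLAS. For $p\ge5$ the Sylow subgroups are small, mostly of order $p^2$ or extraspecial $p^{1+2}$. Then $\Lambda_1$ would have size $p+1$ or $p^2+1$, and $G$ would have to be non-solvable with $G/K\cong\PSL(2,p)$ or solvable of type (H$'$)/(J$'$). Type (H$'$) forces $p=2^r-1\in\{3,7,31,\dots\}$. I expect a mix of divisibility arguments and Result \ref{reshomma} to eliminate all these, in the same spirit as the proof of Proposition \ref{pro14072024}.

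For $p=3$ the surviving candidates should be those whose $3$-local structure admits $S_P\rtimes C$ with the needed properties. I expect only $J_2$ and $J_3$ to survive; in $J_3$, for instance, the normalizer of a Sylow $3$-subgroup is $3^{2}.3^{1+2}\!:\!8$, already seen in Proposition \ref{pro14072024}. For each survivor I bound $|H_P|$. From the ATLAS, $|H_P|\le 8$ in these cases, since the cyclic complement in the Sylow-$3$ normalizer has order at most $8$. Proposition \ref{pro04052025} with $\kappa=64/\gamma(\cX)\le\dots$, that is in the form (\ref{eq21072025C}), then yields (\ref{eq03052026}).

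The main obstacle is the second filter: systematically verifying, for all sporadic groups and all $p\ge 3$ with $p^2\mid|\Gamma|$, that no subgroup $S_P\rtimes C$ with $C$ cyclic can serve as a point stabilizer compatible with the doubly transitive action on $\Lambda_1$. I would first use the third filter as a quick pre-check, since it kills most cases at once. That filter is the condition from Proposition \ref{prop290723A}: if $p\equiv1\pmod 4$ the bound already holds, so only $p\equiv 3\pmod 4$ remains, which removes $p=5,13$. After that I would do a MAGMA-assisted inspection of the remaining Sylow normalizers for $p=3,7,11$.
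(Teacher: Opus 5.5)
Your outline matches the paper's: sieve by $p$, use known Sylow normalizers, let $J_2$ and $J_3$ survive with $|H_P|=8$, and close with (\ref{eq21072025C}). But the criterion that actually drives the sieve is missing, and the one you state is vacuous. You say that $N_\Gamma(S_P)$ ``must contain a subgroup $S_P\rtimes C$ which is the full stabilizer of $P$''. This excludes nothing, since $\Gamma_P$ normalizes $S_P$ in any case. Without more information, $\Gamma_P$ could be a small subgroup of a large normalizer, e.g.\ $7^{1+2}\rtimes C_6$ inside the Sylow $7$-normalizer $7^{1+2}\rtimes(S_3\times C_3)$ of $He$. That is why your second filter drifts into a degree and double-transitivity analysis of $G/K$ via Proposition \ref{pro11022025}. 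You then leave that analysis as the ``main obstacle'', delegated to MAGMA with ``I expect'' conclusions.

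The missing step is elementary. Once $|\Delta_0|>1$, $\Delta_0$ is the only short $S_P$-orbit besides $\{P\}$, so $P$ is the unique fixed point of $S_P$ on $\cX$. Hence $N_\Gamma(S_P)$ fixes $P$, and $N_\Gamma(S_P)=\Gamma_P=S_P\rtimes H_P$. Since $S_P$ is a Sylow $p$-subgroup of $\Gamma$, Result \ref{res74} then forces $N_\Gamma(S)/S$ to be cyclic for every Sylow $p$-subgroup $S$ of $\Gamma$.

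This is the test the paper applies group by group. Each sporadic group is discarded by exhibiting a non-cyclic $p'$-group normalizing a Sylow $p$-subgroup, such as $C_2\times C_2$, $Q_8$, $SD_{16}$, $2S_4$ or $S_3\times C_3$, read off from known maximal subgroups. No appeal to Proposition \ref{pro11022025} is needed, and for $J_2$ and $J_3$ the test gives $H_P\cong C_8$ exactly. The same test, not Result \ref{reshomma} or unspecified divisibility arguments, is what disposes of $p=11$ ($J_4$, $F_1$) and $p=7$ ($He$, $O'N$, $Fi_{24}'$, $Co_1$, $Th$, $BM$, $F_1$).

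Two further points are not covered by your plan.

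First, for $F_1$, $BM$, $Fi_{23}$ and $Fi_{24}'$ at $p=3$, the paper does not inspect normalizers at all. Proposition \ref{pro27042025} lets one assume $|S_P|<|\Delta_0|^4$, so $|\Delta_0|=3^m$ with $4\le m\le 20$. Since $G$ is transitive on $\Lambda_1$, $3^m+1$ divides $|\Gamma|$. But each such $3^m+1$ has a prime divisor from an explicit list ($37$, $41$, $61$, $73,\dots$) that divides the order of no sporadic group. Your proposal has nothing in place of this.

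Second, for $p\equiv 1\pmod 4$ you only recover the bound (\ref{eq18122025}) via Proposition \ref{prop290723A}, not the statement's assertion that $p=3$. The paper obtains the exclusion from Lemma \ref{leA27042025} (even genus) together with the second part of Result \ref{structure}. This leaves $M_{11}$ as the only sporadic possibility, which is impossible because $25\nmid |M_{11}|$ while $|S_P|\ge p^2$.
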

\begin{proof} From Lemma \ref{lem27092025A} $S_P$ is a Sylow subgroup of $\Gamma$ whose order is at least $p^2$ by (III) of Proposition \ref{prop280623}. Therefore the classification of sporadic simple groups implies that $p\in \{2,3,5,7,11,13\}$. Moreover, from Result \ref{res74}, the normalizer of a Sylow subgroup $S_P$ of $\Gamma$ is as semidirect product $S_P\rtimes C$ with a cyclic group $C$.
For $p\equiv 1 \pmod{4}$, Lemma \ref{leA27042025} together with Result \ref{structure} leave  only one possibility for $\Gamma$, namely is $M_{11}$. This possibility cannot actually occur, as $M_{11}$ has a unique prime divisor $d$ such that $d\equiv 1 \mod{4}$, namely $5$, so that $p=5$. But $5^2$ does not divide $|M_{11}|$, a contradiction as $S_P$ has order at least $p^2$. Since  $p \ne 2$, we are left with a few possibilities, namely $p\in\{3,7,11\}$.
\subsubsection{$p=11$} From the classification of simple sporadic groups, the possibilities for $\Gamma$ are $\Gamma\cong J_{4}$ and  $\Gamma\cong F_1$. Let $\Gamma\cong J_4$. Then $|S_P|=11^3$. From \cite[Section 5.9, Table 5.11]{wil}, the complement $C$ of the normalizer of $S_P$ is a non-cyclic group, as being $C_5\times 2S_4$. Let $\Gamma\cong F_1$. Then the Sylow $11$-subgroup has order $11^2$, and the component of its normalizer is a non-cyclic group, as being $C_5\times 2A_5$; see \cite[Section 5.8.5, Table 5.6]{wil}.
\subsubsection{$p=7$} From the classification of simple sporadic groups, the possibilities for $\Gamma(p)$ with $|S_P|=7^3$ are $He,O'N,Fi'_{24}$. If $\Gamma=He$ then the complement in the normalizer of a Sylow $7$-subgroup is a non-cyclic group as being $(C_3\rtimes C_2)\times C_3$; see \cite[Section 5.8.7, Table 5.8]{wil}. If $\Gamma(p)=O'N$ then $\Gamma$ contains a subgroup isomorphic to $PSL(3,7)$. Take a Sylow $7-$subgroup of $PSL(3,7)$, which is also a Sylow $7-$subgroup of $\Gamma$. Then the complement in the normalizer of this group is a non-cyclic group as being  $C_2\times C_2\times C_3$;
see \cite[Section 5.9, Table 5.11]{wil}. Neither $\Gamma=Fi'_{24}$ occurs since, in $Fi'_{24}$, a Sylow $7$-subgroup is contained in a subgroup isomorphic to $He$; see \cite[Section 5.7.6]{wil}  so that a component of a normalizer of a Sylow $7$- subgroup of $Fi'_{24}$ is not a cyclic group. 
 From the classification of simple sporadic groups, the possibilities for $\Gamma(p)$ with $|S_P|=7^2$ are $Co_1,Th=F_3,BM$. If $\Gamma=Co_1$ then the component in the normalizer of a Sylow $7$-subgroup is a non-cyclic group as being $C_3\rtimes 2\cdot A_4$; see \cite[Section 5.5.1, Table 5.3]{wil}.  If $\Gamma=Th$ then the component in the normalizer of a Sylow $7$-subgroup is a non-cyclic group as being $C_3\times 2S_4$; see \cite[Section 5.8.7, Table 5.8]{wil}. Neither $\Gamma=BM$ occurs since $Th$ is a subgroup of $BM$; see \cite[Section 5.8.7]{wil}. From the classification of simple sporadic groups, the Monster is the only possibility for $\Gamma$ with $|S_P|>7^3$. We are left with the Monster $F_1$ whose Sylow $7$-subgroups have order $7^6$. From \cite[Section 5.8.5, Table 5.6]{wil}, $F_1$ has a subgroup $T=U\rtimes V$ where $|U|=7^5$, and $V\cong GL(2,7)$. Since $GL(2,7)$ has a subgroup $W\cong C_7\rtimes (C_6\times C_6)$,  $T$ contains a subgroup $S\rtimes (C_6\times C_6)$ where $|S|=7^6$, and hence $S\cong S_P$. Therefore,
a complement of the normalizer of $S_P$ is a non-cyclic group, a contradiction to Result \ref{res74}.
\subsubsection{$p=3$} First the cases $\Gamma\cong F_1,BM,Fi_{23},Fi_{24}'$ are investigated. In each of these cases, the order of $|\Gamma|$ is divisible by $3^{13}$. From Proposition \ref{pro27042025}, $|S_P|<|\Delta_0|^4$ may be assumed. Then $3^4\le |\Delta_0|=3^m$. Since $G$ is transitive on $\{P\}+\Delta_0$,
then $3^m+1$ divides $|\Gamma|$.
On the other hand, for each integer $m$ such that $4\le m\le 20$, the set $\Sigma$ of primes $\{37,41,73,61,67,103,193,547,661,1181,2851,16493,398581,21523361,42521761\}$ contains a divisor of $3^m+1$, and hence $|\Gamma|$, a contradiction as none of the sporadic simple groups has a divisor from the set $\Sigma$.

If $\Gamma\cong Th$, then a Sylow $3$-subgroup $U$ of $\Gamma(p)$ has order $3^{10}$.  From Proposition \ref{pro27042025}, $|S_P|<|\Delta_0|^4$ may be assumed. This leaves just one possibility for $|\Delta|$, namely $|\Delta_0|=3^3$. On the other hand, from \cite[Section 5.8.7 Table 5.8]{wil}, $\Gamma$ has a maximal subgroup $V$ of order $3^{10}\cdot 2^4$ such that $V=W\rtimes S$ where $|W|=3^9$ and $S\cong 2S_4$ where $2S_4$ is the binary octahedral group of order $48$. Since the normalizer of a Sylow $3$-subgroup of $2S_4$ has a (cyclic) component of order $4$, the maximality of $V$ in $\Gamma$ together with the Orbit theorem applied to $V$ on the action on $\{P\}\cup \Delta_0$ imply $|\Delta_0|=3$, a contradiction.

If $\Gamma\cong HN$, then a Sylow $3$-subgroup of $\Gamma$ has order $3^{6}$. From \cite[Section 5.8.7 Table 5.8]{wil}, $\Gamma$ has a maximal subgroup $V$ of order $3^{5}\cdot 240$ such that $V=W\rtimes S$ where $|W|=3^5$ and $S\cong 4\cdot A_5$. Since the normalizer of a Sylow $3$-subgroup of $4\cdot A_5$ has a component $C_4\times C_2$, it turns out that a Sylow $3$-subgroup of $HN$ has a non-cyclic component, a contradiction.

If $\Gamma\cong Fi_{22}$, then a Sylow $3$-subgroup of $\Gamma$ has order $3^{9}$. From \cite[Section 5.8.7 Table 5.8]{wil} and \cite{kw}, $\Gamma$ has a maximal subgroup $V$ of order $3^{9}\cdot 2^9$ such that $V=W\rtimes (S\rtimes Z)$ where $|W|=3^7$, $|S|=2^7$ and $Z\cong S_3\times S_3$. Therefore, $W\rtimes T$ is a subgroup of $\Gamma$ where $|W|=3^9$ and  $T\cong (C_2\times C_2)$, a contradiction.

If $\Gamma\cong Co_1$, then a Sylow $3$-subgroup of $\Gamma$ has order $3^{9}$. From \cite[Section 5.4.2 Table 5.3]{wil}, $\Gamma$ has a maximal subgroup $V$ of order $3^{9}\cdot 2^7$ such that $V=W\rtimes (C_2\cdot Z)$ where $|W|=3^7$, and $Z\cong S_4\times S_4$. The complement of a normalizer of a Sylow $3$-subgroup of $C_2\cdot Z$ is isomorphic to $C_2\times C_2$. Therefore, there is a subgroup $W\rtimes T$ in $\Gamma$ such that $|W|=3^9$ and  $T\cong (C_2\times C_2)$, a contradiction.

If $\Gamma\cong Co_3$, then a Sylow $3$-subgroup of $\Gamma$ has order $3^{7}$. From \cite[Section 5.4.2 Table 5.3]{wil}, $\Gamma$ has a maximal subgroup $V$ of order $3^{7}\cdot 15840$ such that $V=W\rtimes (C_2\times Z)$ where $|W|=3^7$, and $Z\cong M_{11}$. The complement of a normalizer of a Sylow $3$-subgroup of $M_{11}$ is a semidihedral group $SD_{8}$ of order $16$. Therefore, there is a subgroup $W\rtimes T$ in $\Gamma$ such that $|W|=3^7$ and  $T\cong SD_{8}$, a contradiction.

If $\Gamma\cong Suz$ or $\Gamma\cong Ly$, then a Sylow $3$-subgroup of $\Gamma$ has order $3^{7}$. From \cite[Section 5.6.11 Table 5.4]{wil} and \cite[Section 5.9.5]{wil} respectively, $\Gamma$ has a subgroup $V$ of order $3^{7}\cdot 7920$ such that $V=W\rtimes S$ where $|W|=3^7$, and $S=M_{11}$. But this leads to a contradiction as for $\Gamma\cong Co_3$.

If $\Gamma\cong McL$, then a Sylow $3$-subgroup of $\Gamma$ has order $3^{6}$. From \cite[Section 5.4.2 Table 5.3]{wil}, $\Gamma$ has a maximal subgroup $V$ of order $3^{6}\cdot 720$ such that $V=W\rtimes (S\cdot C_2)$ where $|W|=3^6$, and $S\cong Alt_{6}$. The complement of a normalizer of a Sylow $3$-subgroup of $Alt_6\cdot C_2$ is isomorphic to the quaternion group $Q_8$ of order $16$. Therefore, there is a subgroup $W\rtimes T$ in $\Gamma$ such that $|W|=3^6$ and  $T\cong Q_{8}$, a contradiction.

If $\Gamma\cong Co_2$, then a Sylow $3$-subgroup of $\Gamma$ has order $3^{7}$. From \cite[Section 5.4.2 Table 5.3]{wil}, $Co_2$ has a subgroup $V\cong McL$. Since a Sylow $3$-subgroup of $V$ is also a Sylow $3$-subgroup of $Co_2$, the arguments used for $McL$ can be adapted to rule out $Co_2$.

If  $\Gamma\cong J_2$ or $\Gamma\cong J_3$, then a Sylow $3$-subgroup of $\Gamma$ has order $3^4$ and $3^{5}$, respectively. From \cite[Section 5.9.2 Table 5.11]{wil}, $\Gamma$ has a maximal subgroup $V=W\rtimes C_8$ with $|W|=3^3$, and $|W|=3^5$, respectively. Therefore, $\Gamma_P\cong V$, and hence $|H_P|=8$. From the second claim in Proposition \ref{pro04052025}, the bound (\ref{eq03052026}) follows.


If $\Gamma\cong O'N$, then a Sylow $3$-subgroup of $\Gamma$ has order $3^{4}$. From \cite[Section 5.9.4 Table 5.11]{wil}, $\Gamma$ has a subgroup $V=W\rtimes S$ where $|W|=3^4$ and $S=Alt_{6}$, a contradiction.


If $\Gamma\cong M_{12}$ then a Sylow $3$-subgroup of $\Gamma$ has order $3^{3}$. From \cite[Section 5.9 Table 5.11]{wil}, $\Gamma$ has a maximal subgroup $V$ of order $3^{3}\cdot 2^4$ such that $V=W\rtimes (C_2\cdot S)$ where $|W|=3^2$, and $S\cong Sym_{4}$. The complement of a normalizer of a Sylow $3$-subgroup of $C_2\cdot Sym_4$ is isomorphic to the Klein group $K_4$ of $4$. Therefore, there is a subgroup $W\rtimes T$ in $\Gamma$ such that $|W|=3^2$ and  $T\cong K_4$, a contradiction.

If $\Gamma\cong M_{24}$ then a Sylow $3$-subgroup of $\Gamma$ has order $3^{3}$. From \cite[Section 5.9 Table 5.11]{wil}, $\Gamma$ has a subgroup isomorphic to $M_{12}$.  Since a Sylow $3$-subgroup of $M_{11}$ is also a Sylow $3$-subgroup of $M_{24}$, the arguments used for $M_{12}$ can be adapted to rule out $M_{24}$.

If $\Gamma\cong J_4$ then a Sylow $3$-subgroup of $\Gamma$ has order $3^{3}$. From \cite[Section 5.9.6]{wil}, $\Gamma$ has a subgroup isomorphic to $M_{12}$.  Since a Sylow $3$-subgroup of $M_{11}$ is also a Sylow $3$-subgroup of $J_4$, the arguments used for $M_{12}$ can be adapted to rule out $J_{4}$.

If $\Gamma\cong M_{11}$ then a Sylow $3$-subgroup of $\Gamma$ has order $3^{2}$. From \cite[Section 5.3.8 Table 5.11]{wil}, $\Gamma$ has a maximal subgroup $V$ of order $3^{2}\cdot 2^4$ such that $V=W\rtimes S$ where $|W|=3^2$, and $S$ is a semidihedral group $SD_8$ of order $16$, a contradiction.

If $\Gamma\cong M_{22}$ then a Sylow $3$-subgroup of $\Gamma$ has order $3^{2}$. From \cite[Section 5.3.8 Table 5.1]{wil}, $\Gamma$ has a maximal subgroup isomorphic to $PSL(3,4)$.  The complement of a normalizer of a Sylow $3$-subgroup of $PSL(3,4)$ is isomorphic to the Klein group $K_4$ of $4$. Therefore, there is a subgroup $W\rtimes T$ in $\Gamma$ such that $|W|=3^2$ and  $T\cong K_4$, a contradiction.

If $\Gamma\cong M_{23}$ then a Sylow $3$-subgroup of $\Gamma$ has order $3^{2}$. From \cite[Section 5.3.8 Table 5.11]{wil}, $\Gamma$ has a maximal subgroup isomorphic to $M_{22}$. Since a Sylow $3$-subgroup of $M_{22}$ is also a Sylow $3$-subgroup of $M_{23}$, the arguments used for $M_{22}$ can be adapted to rule out $M_{23}$.

If $\Gamma\cong HS$ then a Sylow $3$-subgroup of $\Gamma$ has order $3^{2}$. From \cite[Section 5.5.1 Table 5.3]{wil}, $\Gamma$ has a maximal subgroup isomorphic to $M_{22}$. Since a Sylow $3$-subgroup of $M_{22}$ is also a Sylow $3$-subgroup of $M_{23}$, the arguments used for $M_{22}$ can be adapted to rule out $HS$.

If $\Gamma\cong Ru$ then a Sylow $3$-subgroup of $\Gamma$ has order $3^{3}$. From \cite[Section 5.8.9 Table 5.8]{wil}, $\Gamma$ has a maximal subgroup isomorphic to $^2F_4(2)$ which contains a maximal subgroup  of order $3^{2}\cdot 2^4$ such that $V=W\rtimes S$ where $|W|=3^2$, and $S$ is a semidihedral group $SD_8$ of order $16$, a contradiction.

If $\Gamma\cong He$ then a Sylow $3$-subgroup of $\Gamma$ has order $3^{3}$. From \cite[Section 5.9.3 Table 5.11]{wil}, $\Gamma$ has a maximal subgroup
$V$ of order $2^{6}\cdot 2160$ such that $V=W\rtimes (C_3\cdot S)$ where $|W|=3^2$, and $S\cong Sym_{6}$. The complement of the normalizer of a Sylow $3$-subgroup of $C_3\cdot Sym_6$ is isomorphic to the dihedral group $D_4$ of order $8$. Therefore, there is a subgroup $W\rtimes T$ in $\Gamma$ such that $|W|=3^2$ and  $T\cong D_4$, a contradiction.

If $\Gamma\cong J_1$, then a Sylow $3$-subgroup of $\Gamma$ has order $3$. From \cite[Section 5.9 Table 5.11]{wil}, $\Gamma$ has a maximal subgroup isomorphic to the direct product $D_6\times D_{10}$ of two dihedral groups of order $12$ and $20$, respectively. Therefore, the complement of the normalizer of a Sylow $3$-subgroup contains a Klein group $K_4$, a contradiction.
\end{proof}

 Propositions \ref{pro06052025}, \ref{pro19042025B},  \ref{pro19042025A}, \ref{pro19042025C} have the following corollary.

 \begin{theorem}
\label{teo09102025} Let $p>2$. Assume that (*) and (**) hold, but (***) does not hold, and that $S_P$ has only one short orbit $\Delta_0$ other than $\{P\}$. If $\Gamma$ is an imprimitive group on $\Delta$, then (\ref{eq18122025}) holds. 
\end{theorem}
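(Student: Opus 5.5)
The proof is a case assembly. First reduce, via Lemma \ref{le11040205}, to the situation where $\Gamma$ is a non-abelian simple group and Case (iia) of Proposition \ref{prop03082024} holds. In every other situation Lemma \ref{le11040205} already gives (\ref{eq18122025}). By the fourth claim of Proposition \ref{prop290723A} we may also assume that $\Delta$ is the unique non-tame orbit of $\Gamma$. Then Lemma \ref{lem27092025A} applies, so $S_P$ is a Sylow $p$-subgroup of $\Gamma$ and $\ha|S_P|\le\gamma(\cX)$. By Proposition \ref{pro04102025A} we may further assume $|\Delta_0|>1$, so that Proposition \ref{prop280623} applies to the stabilizer $G$ of $\{P\}\cup\Delta_0$; in particular $|S_P|\ge p^2$.

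Next, invoke the classification of finite simple groups. $\Gamma$ is either an alternating group, a group of Lie type in characteristic $u\neq p$, a group of Lie type in characteristic $p$, or one of the $26$ sporadic groups. Each family is settled by a proposition above.
\begin{itemize}
\item Alternating groups: Proposition \ref{pro06052025} gives (\ref{eq18122025}).
\item Lie type in cross characteristic: Proposition \ref{pro19042025B} gives $|\Gamma|\le 225\left(\frac{p}{p-1}\right)^4\gamma(\cX)^4$, which is below the constant $900$.
\item Lie type in characteristic $p$: Proposition \ref{pro19042025A} gives $|\Gamma|\le16\gamma(\cX)^3\le 16\gamma(\cX)^4$.
\item Sporadic groups: Proposition \ref{pro19042025C} leaves only $J_2$ and $J_3$ with $p=3$, and in both cases (\ref{eq03052026}) holds. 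Since $192\left(\frac{p}{p-1}\right)^3<900\left(\frac{p}{p-1}\right)^4$, this gives (\ref{eq18122025}).
\end{itemize}
Taking the worst of these bounds yields the strict inequality in (\ref{eq18122025}).

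The main obstacle is making sure the case split is exhaustive and that the hypotheses of each cited proposition are actually met in this setting. The key inputs are that $S_P$ is a Sylow $p$-subgroup with cyclic complement in its normalizer (Result \ref{res74}), that $|S_P|\ge p^2$, and that the reductions to $p>2$, Case (iia) and $|\Delta_0|>1$ are in force. A further point to check is that in Proposition \ref{pro19042025B} the solvable-$G$ subcases are really covered by Proposition \ref{pro09102025}. Once these checks are done, the constants compare directly and no further argument should be needed.
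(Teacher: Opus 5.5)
Your proposal is correct and follows the paper's own argument. The paper obtains this theorem as a corollary: it first uses Lemma \ref{le11040205} to reduce to a non-abelian simple $\Gamma$ in Case (iia), and then runs through the classification of finite simple groups with Propositions \ref{pro06052025}, \ref{pro19042025B}, \ref{pro19042025A} and \ref{pro19042025C}.

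One small inaccuracy: Proposition \ref{pro19042025B} does not give a uniform $225\left(\frac{p}{p-1}\right)^4\gamma(\cX)^4$ bound. Its $\PSL(2,q)$ and $\PSU(3,q)$ cases go through Proposition \ref{pro09102025}, and its Suzuki and Ree cases are shown not to occur. Its stated conclusion is (\ref{eq18122025}) itself, however, so your assembly is unaffected.
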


\section{Some examples}
\label{exam}

\subsection{Example 1}
\label{ex1} For a power $q$ of $p$, let let \(\mathcal X\) be the nonsingular model of the projective closure $\cC$ of the irreducible affine plane curve of equation $F(X,Y)=0$ with
\begin{equation}
\label{eq140723}
F(X,Y)=\prod_{\alpha\in \mathbb{F}_q}(X-\alpha)^{q^2-q}(Y^p-Y)-\prod_{\beta\in \mathbb{F}_{q^2}\setminus \mathbb{F}_{q}}(X-\beta)^{q+1}.
\end{equation}
\begin{proposition}
\label{prop150723}
 $\gamma(\cX)=q(p-1)$, and $\aut(\cX)$ has a subgroup $\Gamma$ of order $p(q^3-q)$ isomorphic to the direct product $G\times M$ with $G\cong \PGL(2,q)$ and $|M|=p$. In particular, both {\rm{(*)}} and {\rm{(***)}} hold, and if $q>p$ then
\begin{equation}
\label{eqA140723} |\Gamma|> \frac{1}{p^2}\,\gamma(\cX)^3=\frac{(p-1)}{4p^2} \frac{4}{p-1}\gamma(\cX)^3.
\end{equation}
Moreover
\begin{equation}
\label{eq21092023}
\mathfrak{g}(\cX)=-p+\ha ((q-1)^2+1)(p-1)(q+1)+1.
\end{equation}
\end{proposition}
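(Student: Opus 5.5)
The plan is to read off the structure of $\cX$ from the equation $F(X,Y)=0$. The function field is $\mathbb{K}(x,y)$ with $y^p-y=f(x)$, where
$$f(x)=\prod_{\beta\in\mathbb{F}_{q^2}\setminus\mathbb{F}_q}(x-\beta)^{q+1}\Big/\prod_{\alpha\in\mathbb{F}_q}(x-\alpha)^{q^2-q}.$$
So $\cX$ is an Artin--Schreier cover of the rational curve $\mathbb{K}(x)$, and $M=\langle y\mapsto y+1\rangle$ has order $p$.

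\textbf{Step 1: the action of $G$.} Put $\Phi(x)=\prod_{\beta\in\mathbb{F}_{q^2}\setminus\mathbb{F}_q}(x-\beta)=\big((x^{q^2}-x)/(x^q-x)\big)$. Then $f=\Phi^{q+1}/(x^q-x)^{q^2-q}$. Up to a constant, this is the classical generator
$$\frac{(x^{q^2}-x)^{q+1}}{(x^q-x)^{q^2+1}}$$
of the fixed field of $\PGL(2,q)$ acting on $\mathbb{K}(x)$. Hence $f$ is $\PGL(2,q)$-invariant; if a nonzero constant factor appears, I will rescale $y$. Each $g\in\PGL(2,q)$ therefore lifts to $\cX$ by fixing $y$. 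These lifts commute with $M$, so $\Gamma=G\times M$ has order $p(q^3-q)$.

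\textbf{Step 2: genus and $p$-rank.} The poles of $f$ are the $q+1$ points $x=\alpha$ and $x=\infty$. The degree count is
$$(q^2-q)(q+1)-(q^2-q)\cdot q=q^2-q,$$
so $\infty$ is also a pole, of order $q^2-q$, while each $\alpha\in\mathbb{F}_q$ is a pole of order $q^2-q$. Since $p\mid q^2-q$, I first reduce: write $q^2-q=p^km$ and subtract $h^p-h$ terms to bring the local pole order to the value $(q-1)^2+1$ coprime to $p$. This is suggested by \eqref{eq21092023}, and by $\PGL(2,q)$-transitivity the computation is needed at only one point. The $q+1$ ramified places are exactly the $\mathbb{F}_q$-rational points. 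Deuring--Shafarevich applied to $M$ then gives
$$\gamma(\cX)-1=-p+(q+1)(p-1),$$
that is, $\gamma(\cX)=q(p-1)$. Riemann--Hurwitz with different exponent $((q-1)^2+1+1)(p-1)$ at each of the $q+1$ places gives \eqref{eq21092023}.

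\textbf{Step 3: hypotheses and the bound.} Take $P$ over $x=\infty$. Then $S_P$ is the Sylow $p$-subgroup of $\Gamma_P$, namely $M\times E$ with $E=\{x\mapsto x+b\}$, of order $pq$. Hence $\cX/S_P=\mathbb{K}(x)^E$ is rational, which is (*). The points fixed by nontrivial elements of $S_P$ are those over $\mathbb{P}^1(\mathbb{F}_q)$, and these form the $\Gamma$-orbit of $P$; this is (***). For \eqref{eqA140723},
$$\frac{\gamma^3}{p^2}=\frac{q^3(p-1)^3}{p^2}<p(q^3-q)$$
when $q>p$, which is elementary.

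\textbf{Main obstacle.} The key step is the local pole-order reduction at a rational point, which must show that the reduced pole order is exactly $(q-1)^2+1$. I would try expanding $f$ in $t=1/x$ at $\infty$. Since $\Phi(x)=x^{q^2-q}+\dots$ is a $p$-power-friendly polynomial (it is $\sum_j (x^q-x)^{(q-1)j}$-like), the leading $p$-th power parts can be absorbed successively.
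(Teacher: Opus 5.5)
Your overall route is the paper's. Your $f$ is exactly $u=(x^{q^2}-x)^{q+1}/(x^q-x)^{q^2+1}$, with no constant factor. $\PGL(2,q)$ lifts by fixing $y$, and Deuring--Shafarevich together with Stichtenoth's Proposition III.7.8 finish the job. Your Deuring--Shafarevich count, your check of (*) and (***) via $S_P=E\times M$, and the inequality \eqref{eqA140723} are fine.

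The gap is Step~2, which is both unfinished and wrong as stated. The reduced pole order at a rational place is $m=(q-1)^2$, not $(q-1)^2+1$. The ``$+1$'' in \eqref{eq21092023} is the $+1$ of the different exponent $(p-1)(m+1)$, and you have read it back into $m$.

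With your values (different exponent $(p-1)((q-1)^2+2)$ at $q+1$ places), Riemann--Hurwitz gives $\mathfrak{g}(\cX)=1-p+\frac12(q+1)(p-1)((q-1)^2+2)$. This exceeds \eqref{eq21092023} by $\frac12(q+1)(p-1)$. Moreover, $(q-1)^2+1\equiv 2\pmod p$ is even when $p=2$, so it could not be a reduced Artin--Schreier pole order there.

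The missing computation is short, but repeatedly absorbing the leading term is the wrong mechanism. By transitivity of $G$ on the poles, it suffices to work at $x=0$. There
$u=x^{-(q^2-q)}(1-x^{q^2-1})^{q+1}(1-x^{q-1})^{-(q^2+1)}$ and $(1-x^{q-1})^{q^2+1}=(1-x^{q-1})(1-x^{q^3-q^2})$. Hence the polar part of $u$ is $\sum_{i=1}^{q}x^{-i(q-1)}$.

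Only the top term is a $p$-th power. Subtracting $z^p-z$ with $z=x^{-q(q-1)/p}$ replaces it by $x^{-q(q-1)/p}$. Since $q(q-1)/p<(q-1)^2$ for $q>2$, the leading term becomes $x^{-(q-1)^2}$, and $(q-1)^2\equiv 1\pmod p$. So it is the second polar term that survives. Iterating on the first term would only lead down to $x^{-(q-1)}$.

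This computation also underlies two claims in your proposal. One is that exactly the $q+1$ rational places ramify, hence your value of $\gamma(\cX)$. The other is the irreducibility of $y^p-y=u$, which you never address. The paper obtains $|\Omega|=q+1$ without the local computation: the ramification locus is non-empty and $G$-invariant, and $G$ is transitive on the poles of $u$.

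For $q=p=2$ the two polar terms cancel: $x^{-1}+x^{-2}\equiv 2x^{-1}=0$ modulo $\{h^2-h : h\in\mathbb{K}(x)\}$. In that case $F$ is in fact reducible, so $q>2$ is genuinely needed.
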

\begin{proof} We begin by showing that the function field of $\cX$, i.e. $\mathbb{K}(x,y)$ with $F(x,y)=0$, is an Artin-Schreier extension of the rational field $\mathbb{K}(x)$ of degree $p$. With
\begin{equation}
\label{eqA160723}
u=\frac{(x^{q^2}-x)^{q+1}}{(x^q-x)^{q^2+1}},
\end{equation}
$\mathbb{K}(\cX)$ can be regarded as $\mathbb{K}(x,y)$ with $y^p-y=u$. A straightforward computation shows that each of the following maps is an automorphism of $\mathbb{K}(x)$ fixing $u$.
\begin{equation}
\label{eqB160723}
\begin{split}
& {\mbox{$u_\alpha: x\mapsto x+\alpha$ for $\alpha\in \mathbb{F}_q$,}}\\
& {\mbox{$v_\lambda: x\mapsto \lambda x$ for $\lambda\in \mathbb{F}_q^*$,}}\\
& {\mbox{$\iota:x \mapsto x^{-1}$.}}\\
\end{split}
\end{equation}
Moreover, $U=\{u_\alpha| \alpha \in \mathbb{F}_q\}$ is an elementary abelian group of order $q$, $V=\{v_\lambda | \lambda \in \mathbb{F}_q^*\}$ is a cyclic group of order $q-1$, and $U,V$ together with $\iota$ generate a subgroup $G$ of $\aut(\mathbb{K}(x))$ isomorphic to $\PGL(2,q)$. Actually, $\mathbb{K}(u)$ is the fixed field $\mathbb{K}(x)^G$ of $G$. In fact, $\mathbb{K}(u)\subseteq \mathbb{K}(x)^G$, and $[\mathbb{K}(x):\mathbb{K}(u)]=\deg(u)=q^3-q$. Since $|G|=q^3-q$, the claim follows. Next we show that the $p$-extension $\mathbb{K}(\cX):\mathbb{K}(x)$ is a non-degenerate Artin-Schreier extension. By way of a contradiction, there exists $v\in \mathbb{K}(x)$ such that $u=v^p-v$. From Galois theory, $[\mathbb{K}(x):\mathbb{K}(v)]=(q^3-q)/p$ and $\mathbb{K}(v)$ is the fixed field of some subgroup of $G$. Since no subgroup of $G$ has  order $(q^3-q)/p$ by Result \ref{resdickson}, this yields a contradiction.

We can look at $\cX$ as the Artin-Schreier extension of $\mathbb{K}(x)$ by $y^p-y=u$. Then the following map
\begin{equation*}
m_\omega (x,y)\mapsto (x,y+\omega) \text{ for } \omega\in \mathbb{F}_p
\end{equation*}
is an automorphism of $\cX$, and $M=\{m_\omega\,|\, \omega \in \mathbb{F}_p\}$ is a subgroup of $\aut(\cX)$ of order $p$. Furthermore, each map in (\ref{eqB160723}) is lifted to $\aut(\cX)$ by extending its action on $y$ as a fixed element. Thus the direct product $G\times M$ is isomorphic to a subgroup of $\aut(\cX)$. Furthermore, $\mathbb{K}(x)$ can be viewed as the function field of the quotient curve $\bar{\cX}=\cX/M$ with respect to $M$. As any $p$-extension of the rational field has some fixed points, the set $\Omega$ consisting of all fixed points of $M$ is not empty. From the Deuring-Shafarevic formula applied to $M$,
\begin{equation}
\label{eqC170723} \gamma(\cX)=(|\Omega|-1)(|M|-1).
\end{equation}

 Since the lifting $\bar G$ of $G$ is a subgroup of the normalizer of $M$ in $\aut(\cX)$, the set $\Omega$ is left invariant by $\bar G$.

Also, $\Omega$ is precisely the set of ramification places of the function field extension
$\mathbb K(\mathcal X)\mid \mathbb K(x)$. By \cite[Proposition 3.7.8]{stichtenoth1993} $\Omega$ consists of places lying above the poles of \(u\) in \(\mathbb K(x)\), that is $|\Omega|\le q+1$. Actually, equality holds since $\Omega$ is not empty and $G$ acts transitively on the poles of $u$ in $K(x)$.
Now, (\ref{eqC170723}) reads $\gamma(\cX)=q(p-1)$, and (\ref{eqA140723}) follows by a straightforward computation.

To compute the genus, we rely on the fundamental theorem on Artin-Schreier extensions; see \cite[Proposition III.7.8]{stichtenoth1993}.
With $u$ as in (\ref{eqA160723}), take a place $\bar{P}$  of $\mathbb{K}(x)$ and $z\in \mathbb{K}(\cX)$ such that $v_{\bar{P}}(u-(z^p-z))=-m<0$.
Then $\bar{P}$ is centered at a point $a$ with $a\in \mathbb{F}_q$ or $a=\infty$. Since $G$ is transitive on the set $\bar{\Delta}$ of these $q+1$ points, we may limit ourselves to the place $\bar{O}$  centered at the origin $0$. Then
$$
\begin{array}{lll}
&u=&\frac{(x^{q^2-1}-1)^{q+1}}{x^{q^2-q}(x^{q-1}-1)^{q^2+1}}=\frac{(x^{q^2-1}-1)^{q+1}}{x^{q^2-q}}(1+x^{q-1}+x^{2(q-1)}+\ldots)^{q^2+1}=\\
&&\frac{(x^{q^2-1}-1)^{q+1}}{x^{q^2-q}}(1+x^{q-1}+x^{2(q-1)}+\ldots)(1+x^{q-1}+x^{2(q-1)}+\ldots)^{q^2}=\\
&&\frac{(x^{q^2-1}-1)^{q+1}}{x^{q^2-q}}(1+x^{q-1}+x^{2(q-1)}+\ldots)(1+x^{q^3-q^2}+x^{2(q^3-q^2)}+\ldots)=\\
&&\frac{(x^{q^2-1}-1)^{q+1}}{x^{q^2-q}}(1+x^{q-1}+x^{2(q-1)}+\ldots)=\\
&&\frac{1}{x^{q^2-q}}(1-x^{q^2-1}-x^{q^3-q}+x^{q^3+q^2-q-1})(1+x^{q-1}+x^{2(q-1)}+\ldots)=\\
&&\frac{1}{x^{q^2-q}}(1+x^{q-1}+x^{2(q-1)}+\ldots)
\end{array}
$$
Now, define $z=x^{-(q^2-q)/p}$. From the above computation,
$$u-(z^p-z)=\frac{1}{x^{q^2-q}}(x^{q-1}+x^{2(q-1)}+\ldots)-x^{-(q^2-q)/p}.$$
Since $q^2-q-(q-1)>(q^2-q)/p$, this yields $v_{\bar{O}}(u-(z^p-z))=-(q-1)^2$.
From \cite[Proposition III.7.8(c)]{stichtenoth1993}, the different exponent $d(P|O)$ equals $(p-1)((q-1)^2+1)$, that is, the nontrivial ramification subgroups of $S_O$ are exactly $(q-1)^2+1$.
Thus, \cite[Proposition III.7.8(d)]{stichtenoth1993} gives (\ref{eq21092023}). \end{proof}
\begin{remark} Comparison of (\ref{eqA140723}) with (\ref{eq19122025A}) shows that the bound in Theorem \ref{themain19122025A} is sharp up to a constant only depending on $p$. 
\end{remark}
\subsection{Example 2}
For a power $q$ of $p$, let $\cX$ be a non-singular model of the irreducible algebraic space curve $\cC$  of affine equations:
\begin{equation} \label{equazionet}
Y^q+Y=X^{q+1},\quad Z^p-Z=\frac{Y+Y^{q^5}-X^{q^5+1}}{Y+Y^{q^3}-X^{q^3+1}}\left(\frac{D_1(X,Y)}{D_2(X,Y)}\right)^q,
\end{equation}
where
\begin{equation*}
D_1(X,Y)=\begin{vmatrix} X & X^{q^2} & X^{q^6} \\ Y & Y^{q^2} & Y^{q^6} \\ 1 & 1 & 1 \end{vmatrix},
\end{equation*}
and
\begin{equation*}
D_2(X,Y)=\begin{vmatrix} X & X^{q^4} & X^{q^6} \\ Y & Y^{q^4} & Y^{q^6} \\ 1 & 1 & 1 \end{vmatrix}.
\end{equation*}

Then both (*) and (***) hold. Furthermore, $\gamma(\cX)=q^3(p-1)$ and $\aut(\cX)$ has a subgroup $\Gamma$ of order $p(q^3+1)q^3(q^2-1)$ isomorphic to the direct product $G\times M$ with $G\cong \PGU(3,q)$ and $|M|=p$.
\subsection{Example 3}
\label{ex3} We need a preliminary result.
Let $\Gamma$ be a subgroup of $\aut(\cY)$ which has a unique non-tame orbit $\Delta$ and a unique tame short orbit $\Omega$ such that the following conditions are satisfied:
\begin{itemize}
\item[(i)] $\Gamma$ has a non-trivial $p$-subgroup $M$ which fixes $\Delta$ pointwise;
\item[(ii)] If a $p$-element of $\Gamma$ fixes at least two points in $\Delta$ then it belongs to $M$;
\end{itemize} For $P\in \Delta$, let $S_P$ denote the Sylow $p$-subgroup of the stabilizer $\Gamma_P$ of $P$ in $\Gamma$. Three quotient curves play a role, namely $\bar{\cY}=\cY/S_P$, $\tilde{\cY}=\cY/M$, and $\hat{\cY}=\cY/\Gamma$. To simplify notation, set $\mathfrak{g}=\mathfrak{g}(\cY)$, $\bar{\mathfrak{g}}=\mathfrak{g}(\bar{\cY})$, $\tilde{\mathfrak{g}}=\mathfrak{g}(\tilde{\cY})$, and $\hat{\mathfrak{g}}=\mathfrak{g}(\hat{\cY})$.
\begin{proposition}
\label{prop22102023} With the above notation,
$$|\Omega|=2\hat{\gg}|\Gamma|+(2\tilde{\gg}-2)|M|(|\Delta|-1)-(2\bar{\gg}-1)|\Delta||S_P|. $$
If, in addition, $S_P$ is transitive on $\Delta\setminus \{P\}$, then
$$|\Omega|=2\hat{\gg}|\Gamma|+\left((2\tilde{\gg}-2)-(2\bar{\gg}-1)|\Delta|\right)|S_P|. $$
\end{proposition}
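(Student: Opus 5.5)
The plan is to compute the Hurwitz genus formula for $\Gamma$ on $\cY$ in two ways: once directly for the cover $\cY\to\hat{\cY}$, and once by factoring the wild part of the different through the two $p$-subgroups $S_P$ and $M$. Since $\Gamma$ has exactly two short orbits, $\Delta$ non-tame and $\Omega$ tame, the formula for $\Gamma$ reads
$$2\gg-2=|\Gamma|(2\hat{\gg}-2)+|\Delta|d_P+(|\Gamma|-|\Omega|),$$
where $d_P$ is the different exponent of $\Gamma$ at $P\in\Delta$. Because $\Gamma_P=S_P\rtimes H_P$ with $H_P$ tame, we have $d_P=d_P(S_P)+|S_P|(|H_P|-1)$, where $d_P(S_P)$ is the different exponent of $S_P$ at $P$. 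Using $|\Delta||S_P||H_P|=|\Gamma|$, this gives
$$2\gg-2=2\hat{\gg}|\Gamma|-|\Omega|-|\Delta||S_P|+|\Delta|d_P(S_P).$$
Everything then reduces to eliminating $d_P(S_P)$ and $\gg$.

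Next, apply the Hurwitz formula to $S_P$. By (ii), every element of $S_P\setminus M$ fixes only $P$ in $\Delta$. By (i), elements of $M$ fix all of $\Delta$. Since $\Delta$ is the only non-tame orbit, no $p$-element fixes a point outside $\Delta$. Hence the ramification of $S_P$ consists of $P$, with exponent $d_P(S_P)$, together with the points $Q\in\Delta\setminus\{P\}$. At such a $Q$, the stabilizer $(S_P)_Q$ lies in $M$; indeed it equals $M$, since $M\le S_P$ fixes $Q$. Its different exponent there equals $d_Q(M)=d_P(M)$ by transitivity. So
$$2\gg-2=|S_P|(2\bar{\gg}-2)+d_P(S_P)+(|\Delta|-1)d_P(M).$$
Similarly, for $M$ alone, $2\gg-2=|M|(2\tilde{\gg}-2)+|\Delta|d_P(M)$.

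Now eliminate the unknowns. Solve the $M$-equation for $d_P(M)$, substitute into the $S_P$-equation to get $d_P(S_P)$ in terms of $\gg$, and plug the result into the $\Gamma$-equation. Combining the three relations should cancel all terms involving $\gg$ and the differents, leaving the stated expression for $|\Omega|$. This linear bookkeeping is the main obstacle: one has to choose the right linear combination of the three Hurwitz formulas so that $\gg$ drops out. The most likely choice is (the $\Gamma$-equation) $-|\Delta|\cdot$(the $S_P$-equation) $+(|\Delta|-1)\cdot$(the $M$-equation). I would verify the coefficients so that the $(2\gg-2)$ terms and the $d_P(M)$ terms cancel, and then check that the constants match the formula, including the $-1$ in $(2\bar{\gg}-1)$.

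For the second claim, transitivity of $S_P$ on $\Delta\setminus\{P\}$ together with (ii) gives $(S_P)_Q=M$, so $|S_P|=|M|(|\Delta|-1)$ by the orbit theorem. Substituting this into the first formula yields the second.
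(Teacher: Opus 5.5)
Your proposal is correct and is essentially the paper's proof: the paper writes the same three Hurwitz formulas for $\Gamma$, $S_P$ and $M$ (with $\sigma=d_P(S_P)-|S_P|$ and $\mu=d_P(M)$) and eliminates $\sigma,\mu$ by substitution, which amounts to your combination (the $\Gamma$-equation)$\,-\,|\Delta|\cdot$(the $S_P$-equation)$\,+\,(|\Delta|-1)\cdot$(the $M$-equation). That combination does cancel $2\gg-2$ and both differents, giving exactly the stated identity, and the second claim follows from $|S_P|=|M|(|\Delta|-1)$ as you say. One point both you and the paper use tacitly is $d_Q(M)=d_P(M)$ for $Q\in\Delta$; it holds because (i) and (ii) make $M$ the set of all $p$-elements fixing two points of $\Delta$, so $M$ is normal in $\Gamma$.
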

\begin{proof}
Set
$${\mbox{$\sigma=-1+|S_P^{(1)}|-1+\ldots+|S_P^{(k)}|-1$, with $S_P^{(k+1)}=id$}},$$
and
$${\mbox{$\mu=|M_P^{(0)}|-1+|M_P^{(1)}|-1+\ldots+|M_P^{(\ell)}|-1$, with $M_P^{(\ell+1)}=id$}}.$$
The Hurwitz genus formula applied to $\Gamma$ reads $2\mathfrak{g}-2=2\hat{\mathfrak{g}}|\Gamma|-2|\Gamma|+|\Delta|(|\Gamma_P|+\sigma)+|\Omega| (|\Gamma_Q|-1)$ whence
\begin{equation}
\label{eq22102023} 2\mathfrak{g}-2=|\Delta|\sigma -|\Omega|+ 2\hat{\mathfrak{g}}|\Gamma|.
\end{equation}
Furthermore, from the  Hurwitz genus formula applied to $S_P$,
\begin{equation}
\label{eqA22102023} 2\mathfrak{g}-2=(2\bar{\mathfrak{g}}-1)|S_P|+\sigma+ (|\Delta|-1)\mu.
\end{equation}
Also, the  Hurwitz genus formula applied to $M$ gives
\begin{equation}
\label{eqB22102023} 2\mathfrak{g}-2=(2\tilde{\mathfrak{g}}-2)|M|+|\Delta|\mu.
\end{equation}
From the above equations,
$$\sigma=\frac{2\mathfrak{g}-2+|\Omega|-2\hat{\mathfrak{g}}|\Gamma|}{|\Delta|},\quad \mu=\frac{2\mathfrak{g}-2-(2\tilde{\mathfrak{g}}-2)|M|}{|\Delta|}$$
whence
$$2\mathfrak{g}-2=(2\bar{\mathfrak{g}}-2)|S_P|+|S_P|+\frac{2\mathfrak{g}-2+|\Omega|-2\hat{\mathfrak{g}}|\Gamma|}{|\Delta|}+\frac{2\mathfrak{g}-2-(2\tilde{\mathfrak{g}}-2)|M|}{|\Delta|}(|\Delta|-1).$$
With some more direct computation, the first claim follows. If $S_P$ is transitive on $\Delta\setminus \{P\}$, then the Orbit theorem shows $|S_P|=|M|(|\Delta|-1)$ whence the second claim is obtained.
\end{proof}
For a power $q$ of $p$ and a divisor $N$ of $q+1$, let $\cY$ be a non-singular model of the irreducible algebraic space curve $\cC$  of affine equations:
\begin{equation} \label{equazionetre}
X^q-X=Z^{(q+1)/N},\quad (Y^p-Y)\prod_{\alpha\in \mathbb{F}_q}(X-\alpha)^{q^2-q}=\prod_{\beta\in \mathbb{F}_{q^2}\setminus \mathbb{F}_{q}}(X-\beta)^{q+1}.
\end{equation}
Let $\varepsilon$ denote $1$ or $\ha$ according as $p=2$, or $p$ odd.
\begin{proposition}
\label{prop190923}
$\aut(\cY)$ has a subgroup $\Gamma$ of order $p(q^3-q)(q+1)/N$ isomorphic to the direct product $G\times C_p \times C_{(q+1)/N}$ where $G\cong \PGL(2,q)$, and $\Gamma$ has exactly two short orbits, one non-tame $\Delta$ of size $q+1$ and another tame $\Omega$ of size $pq(q-1)(q+1)/N$.
Then $\gamma(\cY)=q(p-1)$, and
\begin{equation}
    \label{eqB21092023} |\Gamma|=\frac{1}{Np^3}\frac{(q^3-q)(q+1)}{q^4}\left(\frac{p}{p-1}\right)^4\gamma(\cY)^4.
\end{equation}
Furthermore,
\begin{equation}
\label{eqA21092023}
\mathfrak{g}(\cY)= \frac{q+1}{2N}(-2p+((q-1)^2+1)(q+1)(p-1)+q+1-N)+1,
\end{equation}
and both  (*) and (***) hold.
\end{proposition}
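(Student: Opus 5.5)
The plan is to regard $\mathbb{K}(\cY)$ as a compositum. It contains $\mathbb{K}(x,y)=\mathbb{K}(\cX)$, the function field of Example 1, and it also contains the Kummer-type subfield $\mathbb{K}(x,z)$ with $x^q-x=z^{(q+1)/N}$. The first step is to build the group. The group $G\times M\cong \PGL(2,q)\times C_p$ from Proposition~\ref{prop150723} acts on $\mathbb{K}(x,y)$. On $z$ we use the cyclic group $C_{(q+1)/N}$, acting by $z\mapsto \zeta z$ and fixing $x,y$. Each generator $u_\alpha,v_\lambda,\iota$ must then be lifted to $\mathbb{K}(\cY)$. Since $(x^q-x)$ changes under these maps by a factor that is a $(q+1)/N$-th power in $\mathbb{K}(x,z)$ (for instance $v_\lambda$ multiplies $x^q-x$ by $\lambda$, and $\lambda$ is a $(q+1)$-th power in $\mathbb{F}_{q^2}$), the lifts exist. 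This gives a group of order $p(q^3-q)(q+1)/N$. To show it is a direct product, and not merely an extension, I would check that the lifts commute with $z\mapsto\zeta z$; replacing $G$ by a suitable lift (a complement) should achieve this.

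Next come the orbits and the $p$-rank. The non-tame orbit $\Delta$ is the set of $q+1$ points lying over $x\in\mathbb{F}_q\cup\{\infty\}$. These points are totally ramified in both $\mathbb{K}(x,y)|\mathbb{K}(x)$ and $\mathbb{K}(x,z)|\mathbb{K}(x)$, so $\Delta$ has size $q+1$. The remaining ramification of $\mathbb{K}(x,z)|\mathbb{K}(x)$ is tame, over the zeros of $x^q-x$ and over $\infty$; away from $\Delta$ it comes from the points of $\mathbb{K}(x)$ with $x\in\mathbb{F}_{q^2}\setminus\mathbb{F}_q$. These are permuted by $G$ in a single orbit of size $q^2-q$, and the stabilizer of a point above them in $\Gamma$ is cyclic of order prime to $p$. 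This gives $\Omega$ of size $pq(q-1)(q+1)/N$. For $\gamma(\cY)$, apply the Deuring--Shafarevich formula to $C_p=M$ over $\cY/M$, whose function field is $\mathbb{K}(x,z)$. That curve has zero $p$-rank: by Result~\ref{lem11.131}, $U$ fixes only the point over $\infty$, and the quotient is rational. Since $M$ fixes exactly the $q+1$ points of $\Delta$, we get $\gamma(\cY)=(q+1-1)(p-1)=q(p-1)$. Formula~(\ref{eqB21092023}) then follows by substituting $q=\gamma/(p-1)$.

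The genus comes from Proposition~\ref{prop22102023}, or directly from Riemann--Hurwitz for $M$ acting on $\cY/M$. The genus of $\mathbb{K}(x,z)$ is $\tfrac12(q-1)((q+1)/N-1)$ from the Kummer/Artin--Schreier formula. The different of $M$ at each point of $\Delta$ is $(p-1)((q-1)^2+1)\cdot\frac{q+1}{N}$-scaled appropriately; it is obtained by composing with the tame extension. The computation repeats the local expansion of Example~1, with $x$ replaced by a uniformizer $z$ of valuation relation $v(x)=(q+1)/N$. Finally, (*) follows because $S_P\ge M$ together with the $U$-part gives a quotient isomorphic to $\mathbb{K}(u)$ (rational). (***) follows because every point fixed by a non-trivial $p$-element lies in $\Delta$.

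I expect the main obstacle to be the local computation of the $M$-different at points of $\Delta$ after the tame base change. The valuation $-(q-1)^2$ is multiplied by $(q+1)/N$, and one must check that this product is coprime to $p$, so that the Artin--Schreier normal form remains valid. Since $p\nmid (q+1)/N$, this should hold. With $m=(q-1)^2(q+1)/N$, the different exponent is $(p-1)(m+1)$, and summing over $q+1$ points yields (\ref{eqA21092023}).
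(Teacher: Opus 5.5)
There is a genuine gap in your treatment of the group structure, and it cannot be closed in general. The lifts of $u_\alpha,v_\lambda,\iota$ do commute with $T=\{z\mapsto\zeta z\}$. So the group $\tilde G$ of all lifts is a central extension $1\to T\to\tilde G\to\PGL(2,q)\to 1$, and $\Gamma=\tilde G\times W$. The problem is that a complement to $T$ need not exist.

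Suppose $e=(q+1)/N$ is even, so $q$ is odd. Every $g\in\Gamma$ inducing $x\mapsto -x$ on $\mathbb{K}(x)$ preserves $\mathbb{K}(x,z)$ and satisfies $g(z)^e=-(x^q-x)=-z^e$. Hence $g(z)=\theta z$ with $\theta^e=-1$, and then $g^2(z)=\theta^2z$ with $\theta^2\neq 1$, so $g^2\neq 1$. Now $Z(\Gamma)=W\times T$ and $\Gamma/Z(\Gamma)\cong\PGL(2,q)$. The involution $x\mapsto -x$ of this quotient therefore has no involutive preimage in $\Gamma$. In $\PGL(2,q)\times C_p\times C_e$ every involution of the quotient by the centre does lift to an involution, and an isomorphism would preserve centres. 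For $N=1$, for instance, $\tilde G$ is the stabilizer ${\rm GU}(2,q)$ of a chord of the Hermitian curve $z^{q+1}=x^q-x$.

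So your $\Gamma$ is a non-split central extension, not the claimed direct product. The paper's proof does not resolve this either: it simply asserts that $u_\alpha,v_\lambda,\iota$ generate a copy of $\PGL(2,q)$, and that assertion fails for even $(q+1)/N$. When $(q+1)/N$ is odd (e.g. $q$ even) the extension does split. This is because $H^2(\PGL(2,q),C_{(q+1)/N})=0$: no odd prime divisor of $q+1$ divides the Schur multiplier or the abelianization of $\PGL(2,q)$. So the stated isomorphism type needs that hypothesis, together with this argument rather than mere commutation. The order, the orbits, $\gamma(\cY)$, the genus, (*) and (***) are unaffected.

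There are also some smaller slips.
\begin{itemize}
\item The fixed field of $S_P=U\times M$ is $\mathbb{K}(z)$, not $\mathbb{K}(u)$. Here $|S_P|=pq$, the $p$-part of $|\Gamma_P|=pq(q-1)(q+1)/N$, while $\mathbb{K}(u)$ is the fixed field of all of $\PGL(2,q)$ on $\mathbb{K}(x)$. With this correction your direct proof of (*) is simpler than the paper's route through Proposition \ref{prop22102023}.
\item $\mathbb{K}(x,z)|\mathbb{K}(x)$ is unramified over $\mathbb{F}_{q^2}\setminus\mathbb{F}_q$. The stabilizers of order $q+1$ on $\Omega$ come from the non-split tori of $\PGL(2,q)$, not from ramification.
\item The claim of ``exactly two short orbits'' also needs that $W\times T$ acts regularly on every other fibre.
\end{itemize}

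Your genus computation is a valid alternative route. You apply Riemann--Hurwitz to $M$ over $\mathbb{K}(x,z)$, with conductor $(q-1)^2(q+1)/N$, which is prime to $p$. The paper instead applies it to the tame group $T$ over $\cY/T\cong\cX$. That only needs the fact that $\cY\to\cX$ is totally ramified exactly at the $q+1$ points of $\Delta$, and it reuses (\ref{eq21092023}). Both routes yield (\ref{eqA21092023}).
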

\begin{proof}  A straightforward computation shows that each of the following maps is an automorphism of $\mathbb{K}(\cY)$ fixing $u$.
\begin{equation}
\label{eqB20102023}
\begin{split}
& {\mbox{$u_\alpha: (x,y,z)\mapsto (x+\alpha,y,z)$ for $\alpha\in \mathbb{F}_q$,}}\\
& {\mbox{$v_\lambda: (x,y,z)\mapsto (\lambda x,y,\theta z)$ for $\lambda\in \mathbb{F}_q^*$ and $\theta \in \mathbb{K}^*$ s.t.  $\theta^{(q+1)/N}=\lambda$}} \\
& {\mbox{$\iota:(x,y,z)\mapsto (x^{-1},y,\kappa (x^{-N}z)$, for $\kappa^{(q+1)/N}=-1,$ }}\\
& {\mbox{$w_{\tau}:(x,y,z)\mapsto (x,y+\tau,z)$, for $\tau\in \mathbb{F}_p$,}}\\
& {\mbox{$t_{\rho}(x,y,z)\mapsto (x,y,\rho z)$ for $\rho^{(q+1)/N}=1, \rho \in \mathbb{F}_{q^2}$.}}
\end{split}
\end{equation}
Here, $U=\{u_\alpha| \alpha \in \mathbb{F}_q\}$ is an elementary abelian group of order $q$, $V=\{v_\lambda |\lambda^{q-1}=1\}$ for $p=2$, and $V=\{v_\lambda|\lambda^{(q-1)/2}=1\}$ for $p>2$,  is a cyclic group of order $q-1$, and $U,V$ together with $\iota$ generate a subgroup $G$ of $\aut(\mathbb{K}(\cY))$ isomorphic to $\PGL(2,q)$. Furthermore, $W=\{w_{\tau}|\tau \in \mathbb{F}_p\}$ is an elementary abelian group of order $p$ whereas $T=\{t_\rho|\rho^{(q+1)/N}=1\}$ is a cyclic group of order $(q+1)/N$. Therefore, $G\times W \times T\cong \PGL(2,q)\times C_p\times C_{(q+1)/N}$ is a subgroup of $\aut(\cY)$.

Now, look at the action of $T$ on $\cY$. Three quotient curves will be considered, namely $\tilde{\cY}=\cY/T$, $\hat{\cY}=\cY/W$, and $\bar{\cY}=\tilde{\cY}/\tilde{W}$ where $\tilde{W}$ stands for the quotient group $WT/T\cong W$. Up to birational isomorphisms, $\tilde{\cY}=\cX$ with $\cX$ of equation (\ref{eq140723}) as in  Example \ref{ex1}, $\hat{\cY}$ is the generalized Roquette curve of equation $X^q-X=Z^{(q+1)/N}$, and  $\bar{\cY}$ is the rational curve.

We show first that $T$ has some fixed points on $\cY$. Since $T$ commutes with $W$ and their orders are coprime, $\hat{T}=TW/W\cong T$ acts on $\hat{\cY}$ as the subgroup of $\aut(\hat{\cY})$ consisting of the maps $(x,z)\mapsto (x,\rho z)$ with $\rho^{(q+1)/N}=1, \rho \in \mathbb{F}_{q^2}$. Therefore, each of the $q+1$ common points $\hat{P}$ of $\hat{\cY}$ and the $X$-axis is fixed by $\hat{W}$. The $W$-orbit $\Omega$ lying over $\hat{P}$ has size either $1$, or $p$. Moreover, $T$ preserves $\Omega$. Therefore, if $\Omega$ consists a unique point $P$, then $T$ fixes $P$. Otherwise, $T$ induces a permutation group on $\Omega$ of size $p$. Thus, $TW$ is abelian group acting transitively on $\Omega$. For any $P\in \Omega$, the stabilizer of $P$ in $TW$ has order $|T|$ and fixes $\Omega$ pointwise. Since the orders of $W$ and $T$ are coprime, $TW$ has a unique subgroup of order $|T|$. Therefore $T$ fixes $\Omega$ pointwise, and the claim follows.

Let $\Omega$ be the (non-empty) set of all points of $\cY$ where the cover $\cY|\tilde{\cY}$ ramifies, that is, $\Omega$ consists of the points of $\cY$ which are fixed by some nontrivial element of $T$.
Since $W$ commutes with $T$, $W$ preserves $\Omega$, and hence $\Omega$ has a partition of $W$-orbits, say $\Omega=\Omega_1\cup\cdots\cup\Omega_k$. Here, $|\Omega_i|\in \{1,p\}$ for $1\le i \le k$.

Let $\hat{\Omega}_i$ be the point of $\hat{\cY}$ lying under $\Omega_i$ in the cover $\cY|\hat{\cY}$. As we have already observed, $\hat{\Omega}_i$ is a point of $\hat{\cY}$ on the $X$-axis, i.e. either $\hat{\Omega}_i=(\xi_i,0)$ with $\xi_i^q-\xi_i=0$, or $\hat{\Omega}_i$ is the unique point at infinity of $\hat{\cY}$. These points form a set $\hat{\Delta}$ of size $q+1$,  and $GW/W\cong \PGL(2,q)$, viewed as a subgroup of $\aut(\hat{\cY})$ acts on $\hat{\Delta}$  as $\PGL(2,q)$ does in its natural $2$-transitive permutation representation. In particular, $k=q+1$, and $G$ acts on $\{\Omega_1,\ldots,\Omega_{q+1}\}$ as on $\hat{\Delta}$.
Moreover, $|\Omega_i|=1$ if and only if $\Omega_i$ is a common fixed point of $W$ and $T$. Such points form a set $\Delta$ which is $G$-invariant. Therefore, either $|\Delta|=q+1$, or $\Delta=\emptyset$.
We show that the latter case cannot actually occur. Assume on the contrary $\Delta=\emptyset$.  Then $|\Omega_i|=p$ for $i=1,\ldots, q+1$, and $|\Omega|=(q+1)p$. Therefore, $G\times W$ viewed as a subgroup of $\aut(\tilde{\cY})$ has an orbit $\tilde{\Omega}$ of size $(q+1)p$ consisting of all points lying under those of $\Omega$ in the cover $\cY|\tilde{\cY}$. Let $\bar{\cY}=\tilde{\cY}/W$ be the quotient curve of $\tilde{\cY}$ by $W$. Then $G$, viewed as a subgroup of $\aut(\bar{\cY})$, has an orbit $\mathcal{O}$ of size $q+1$ consisting of the points lying under those of $\tilde{\Omega}$ in the cover $\tilde{\cY}|\bar{\cY}$. Since $\bar{\cY}$ is rational, the $G$-orbit $\mathcal{O}$ is uniquely determined. From Example \ref{ex1}, $\mathcal{O}$ consists of all fixed points of $W$. Hence $|\tilde{\Omega}|=q+1$, whence $|\Omega|=q+1$. This contradicts $|\Omega|=(q+1)p$, and hence $\Delta \neq \emptyset$.

It turns out that the cover $\cY|\tilde{\cY}$ is fully ramified at exactly $q+1$ points of $\cY$, and it is unramified at the other points. Since $\tilde{\cY}\cong \cX$, the Hurwitz genus formula applied to $T$ together with (\ref{eq21092023}) give
$$2\mathfrak{g}(\cY)-2=(2\mathfrak{g}(\tilde{\cY})-2)(q+1)/N+(q+1)((q+1)/N-1)=\frac{q+1}{N}(-2p+((q-1)^2+1)(q+1)(p-1)+q+1-N)$$
whence (\ref{eqA21092023}) follows.

Our arguments also show that $\Delta$ is also the set of the fully ramified points of the cover $\cY|\hat{\cY}$. To compute the  $p$-rank  $\gamma(\cY)$ of $\cY$, the Deuring-Shafarevic formula is applied to $W$ which yields $\gamma(\cY)-1=(\gamma(\hat{\cY})-1)p+(q+1)(p-1)$. Since $\hat{\cY}$ is the generalized Roquette curve of equation $X^q-X=Z^{(q+1)/N}$, its $p$-rank  equals zero. Therefore, $\gamma(\cY)=q(p-1)$.

Furthermore, $\Gamma/W\cong G\times T$ is a subgroup of $\aut(\hat{\cY})$ which has two short orbits, one is $\hat{\Delta}$ the other is tame of size $q^2-q+(q-1)(q+1-N)q/N$ consisting of all $\mathbb{F}_{q^2}$-rational points
of $\hat{\cY}$ other than those of $\Delta$. Over each of these $q^2-q+(q-1)(q+1-N)q/N$ points there are $p$ points of $\cY$ in the cover $\cY|\hat{\cY}$. They form a unique tame $\Gamma$-orbit $\Omega$ of size 
$$p\left(q^2-q+\frac{(q-1)(q+1-N)q}{N}\right)=\frac{pq(q-1)(q+1)}{N}$$

To show that hypothesis(*) is satisfied, Proposition \ref{prop22102023}  is applied. Let $\bar{\mathfrak{g}}$ be the genus of the quotient curve $\cY/S_P$. Then,
$$\frac{(q-1)(q+1-N)}{N}-2=2\tilde{\mathfrak{g}}-2=(q+1)(2\bar{\mathfrak{g}}-1)+\frac{(q-1)(q+1)}{N}-2\frac{\hat{\mathfrak{g}}|\Gamma|}{pq}$$
whence
\begin{equation}
\label{eq20102025}
    qp(q+1)\bar{\mathfrak{g}}=\hat{\mathfrak{g}}|\Gamma|.
\end{equation}
On the other hand,  $\hat{\mathfrak{g}}=0$ as $|\Gamma|> 2\gg(\cX)-2$ by a direct computation.

Therefore, $\bar{\mathfrak{g}}=0$, i.e. hypothesis (*) holds. Since $\Delta$ coincides with the set of the points of $\cX$ which are fixed by some non-trivial elements of $S_P$, hypothesis (***) also holds.
\end{proof}
\begin{remark} Comparison of (\ref{eqB21092023}) with (\ref{eq18122025}) shows that the bound in Theorem \ref{themain18122025} is sharp up to a constant depending only on $p$ when $N=1$ is chosen. 
\end{remark}

\subsection{Example 4}
 Let $q$ be a power of $p$. For any integer $m>q$ prime to $q$, let $\cX$ be a non-singular model of the function field $\mathbb{K}(x,y)$ where
\begin{equation}
\label{eq23022025}
y^q+y=x^m+\frac{1}{x^m}.
\end{equation}
We show that $\gamma(\cX)=q-1$, $|\aut(\cX)|\ge 2qm$, and $\cX$ satisfies (*). More precisely, $\aut(\cX)$ has a subgroup which is the direct product of an elementary abelian $q$-group with a dihedral group of order $2m$.

For any $\tau\in \mathbb{F}_{q^2}$ such that $\tau^q+\tau=0$, the translation $t_\tau=(x,y)\mapsto (x,y+\tau)$ belongs to $\aut(\cX)$. They form an elementary abelian $p$-group $T$ of order $q$, and the quotient curve $\bar{\cX}=\cX/T$ is rational.

The irreducible plane curve $\cC$ with affine equation
\begin{equation}
\label{eq24022025}
X^m(Y^q+Y)-X^{2m}-1=0.
\end{equation}
is a plane model of $\mathbb{K}(x,y)$. We show that $Y_\infty=(0:1:0)$ is a point of $\cC$ which is the center of exactly two branches of $\cC$ (i.e. places of $\mathbb{K}(\cX)$). The projective closure of $\cC$ has homogeneous equation
\begin{equation}
\label{eqA24022025}
X^m(Y^q+YZ^{q-1})Z^{m-q}-X^{2m}-Z^{2m}=0.
\end{equation}
The projectivity $(X:Y:Z)\mapsto (X:Z:Y)$ takes $Y_\infty$ to the origin $O=(0:0:1)$ and $\cC$ to the curve
$\cC'$ of affine equation
\begin{equation}
\label{eqB24022025}
X^mY^{m-q}+X^mY^{m-1}-X^{2m}-Y^{2m}=0.
\end{equation}
We have to show that $O$ is the center of exactly two branches of $\cC'$. As the tangents to $\cC'$ at $O$ are the lines $\ell_1$
of equation $Y=0$ and $\ell_2$ of equation $X=0$, respectively, $\cC'$ has at least two tangents at $O$. Let $\rho_1$ be a branch of $\cC'$ with center at $O$ which is tangent to $\ell_1$. Take a primitive branch representation $(x=\xi(t),y=\eta(t))$ of $\rho_1$ where $\xi(t),\eta(t)$ are formal power series over $\mathbb{K}$. Since $\rho_1$ is tangent to $\ell_1$, we have $\xi(t)=ct^{\alpha_1}+\ldots$ and $\eta(t)=dt^{\beta_1}+\ldots$ where $c,d\in \mathbb{K}^*$ and $0<\alpha_1<\beta_1$ are integers. From (\ref{eqB24022025}), $\xi(t)^m\eta(t)^{m-q}+\xi(t)^m\eta(t)^{m-1}-\xi(t)^{2m}-\eta(t)^{2m}$ is the zero formal power series. Therefore, $m\alpha_1=(m-q)\beta_1$. Since $\gcd(m,q)=1$, this implies $\beta_1\ge m$ and $\alpha_1\ge m-q$. Now, take a branch $\rho_2$ of $\cC'$  with center at $O$ which is tangent to $\ell_2$.  Then $\rho_2$ has a primitive branch representation $(x=\xi(t), y=\eta(t))$ with $\xi(t)=ct^{\alpha_2}+\ldots$ and $\eta(t)=dt^{\beta_2}+\ldots$ where $\alpha_2>\beta_2>0$.
Replacing index $1$ by $2$ in the above computation, the analog argument yields that $\beta_2\ge m$ and $\alpha_2 \ge m+q$. Let $r_{i,j}=I(\ell_j\cap \rho_i,O)$ be the intersection multiplicity of the branch $\rho_i$ with the line $\ell_j$ at $O$. Then $r_{1,1}=\beta_1, r_{1,2}=\alpha_1,r_{2,1}=\beta_2, r_{2,2}=\alpha_2$. Since $O$ is the unique common point of $\ell_1$ with the projective closure of $\cC'$, B\'ezout's theorem \cite[Theorem 3.14, 4.50]{HKT} yields $2m\ge \beta_1+\beta_2\ge m+m=2m$. Therefore, $\rho_1$ is the unique branch of $\cC'$ with center $O$ which is tangent to $\ell_1$. Similarly $2m\ge \alpha_1+\alpha_2\ge m-q+ m+q=2m$ and hence $\rho_2$ is the unique branch of $\cC'$ with center $O$ which is tangent to $\ell_2$. Therefore, $\rho_1$ and $\rho_2$ are exactly the branches of $\cC'$ with center $O$. Thus, $Y_\infty$ is the center of exactly two branches of the projective closure of $\cC$.

 Looking at the plane model $\cC$, $T$ fixes no affine point, and hence no branch of $\cC$ with center at an affine point. Moreover, $T$ fixes both branches $\rho_1$ and $\rho_2$. Since $\bar{\cX}=\cX/T$ is rational, the Deuring-Shafarevic formula applied to $T$ reads
$$\gamma(\cX)-1=q(0-1)+2(q-1)$$
whence $\gamma(\cX)=q-1$.

For every $m$-th root of unity $\lambda$, $u:=(x,y)\mapsto (\lambda x,y)$ is in $\aut(\cX)$. They form a cyclic group $U$ of order $m$. Moreover, $v:=(x,y)\mapsto (1/x,y)$ is in $\aut(\cX)$ and $U$ together with $v$ generate a dihedral group $V$ of order $2m$. Since $\langle T,V \rangle=T\times V$, we end up with a subgroup of $\aut(\cX)$ of order $2mq$ which is the direct product of an elementary abelian $p$-group of order $q$ with a dihedral group of order $2m$.

\end{document}